\setlist[enumerate]{leftmargin=*}
\numberwithin{equation}{section}
\newtheorem{thm}{Theorem}[section]
\newtheorem{dfn}[thm]{Definition}
\newtheorem{lem}[thm]{Lemma}
\newtheorem{rem}[thm]{Remark}
\newtheorem{cor}[thm]{Corollary}
\newtheorem{prop}[thm]{Proposition}
\newtheorem*{thm*}{Theorem}
\newcommand{\RR}{\mathbb{R}}
\newcommand{\CC}{\mathbb{C}}
\newcommand{\NN}{\mathbb{N}}
\newcommand{\ZZ}{\mathbb{Z}}
\newcommand{\Four}{\mathcal{F}} 
\newcommand{\Sch}{\mathscr{S}}  
\newcommand{\loc}{\mathrm{loc}}     
\newcommand{\rad}{\mathrm{rad}}     
\newcommand{\HS}{\mathrm{HS}}       
\newcommand{\thr}{\varsigma_+}  
\newcommand{\lie}{\mathfrak}    
\newcommand{\Dyad}{\mathbb{D}}   
\newcommand{\Adm}{\mathrm{Adm}}   
\newcommand{\HypF}{{}_{2}F_{1}}   
\newcommand{\fstW}{\rho}          
\newcommand{\fstZ}{\zeta}         
\newcommand{\sndW}{\psi}          
\newcommand{\skwJ}{\mathscr{J}}   
\newcommand{\SqT}{\mathscr{T}}   
\newcommand{\fDil}{D}            
\newcommand{\BRm}{\mathrm{m}}          
\newcommand{\tBRm}{\mathrm{\tilde m}}  
\newcommand{\BRn}{\mathrm{n}}          
\newcommand{\rI}{\mathrm{I}}
\newcommand{\rII}{\mathrm{II}}
\newcommand{\opL}{\mathcal{L}}     
\newcommand{\ctDer}{\mathscr{U}}   
\DeclareMathOperator{\hdeg}{hdeg}   
\newcommand{\chr}{\mathbf{1}}   
\newcommand{\hdim}{Q}           
\newcommand{\tdim}{D}           
\newcommand{\wdim}{Q_*}         
\newcommand{\defeq}{\mathrel{:=}}
\renewcommand{\Re}{\operatorname{Re}}  
\renewcommand{\Im}{\operatorname{Im}}  
\newcommand{\id}{\operatorname{Id}}   
\DeclareMathOperator{\supp}{supp}     
\DeclareMathOperator{\tr}{tr}         
\newcommand{\maxf}{\ast}              
\newcommand{\lmaxf}{\bullet}          
\newcommand{\adj}{\dagger}            
\newcommand{\tc}{\,:\,}               
\title[Bochner--Riesz means on Heisenberg-type groups]{Almost everywhere convergence of Bochner--Riesz means on Heisenberg-type groups}
\author{Adam D. Horwich}
\address[A.D. Horwich]{School of Mathematics \\ University of Birmingham \\ Edgbaston \\ Birmingham \\ B15 2TT \\ United Kingdom}
\email{adh192@alumni.bham.ac.uk}
\author{Alessio Martini}
\address[A. Martini]{School of Mathematics \\ University of Birmingham \\ Edgbaston \\ Birmingham \\ B15 2TT \\ United Kingdom}
\email{a.martini@bham.ac.uk}
\thanks{A.D.H.\ was supported by a studentship from EPSRC (Award Reference 1649508). A.M.\ was partially supported by the EPSRC Grant ``Sub-Elliptic Harmonic Analysis'' (EP/P002447/1).}
\keywords{almost everywhere convergence, Bochner--Riesz means, Heisenberg-type group, Jacobi polynomial, sub-Laplacian}
\subjclass[2010]{22E30, 43A80}
\begin{document}

\begin{abstract}
We prove an almost everywhere convergence result for Bochner--Riesz means of $L^p$ functions on Heisenberg-type groups, yielding the existence of a $p>2$ for which convergence holds for means of arbitrarily small order. The proof hinges on a reduction of weighted $L^2$ estimates for the maximal Bochner--Riesz operator to corresponding estimates for the non-maximal operator, and a `dual Sobolev trace lemma', whose proof is based on refined estimates for Jacobi polynomials.
\end{abstract}

\maketitle

\section{Introduction}\label{s:intro}

The study of Bochner--Riesz means is a classical topic in harmonic analysis. Recall that the Bochner--Riesz means of order $\lambda \geq 0$ of any function $f\in L^2(\RR^d)$ are defined by 
\begin{equation}\label{eq:bochnerriesz}
T_r^\lambda f \defeq (1-r\opL)_+^\lambda f,
\end{equation}
where $\opL = \Delta \defeq -\sum_{j=1}^d \partial_j^2$ is the Euclidean Laplacian and $r \in \RR^+ \defeq (0,\infty)$. The associated maximal Bochner--Riesz operator is then given by 
\begin{equation}\label{eq:maximalbochnerriesz}
T_\maxf^\lambda f \defeq \sup_{r>0} |(1-r\opL)_+^\lambda f|.
\end{equation}
The problem of under what conditions and in which sense one may ensure that $T^\lambda_r f$ converges to $f$ as $r \to 0^+$ is a key
part of the investigation of summability methods for the Fourier inversion formula, with connections to many other fundamental problems in harmonic analysis and PDE (see, e.g., \cite{35,Grafakos,LuYan,15,36}).

A question of particular interest is the range of $\lambda\geq 0$ and $p \in [1,\infty]$ for which $T_r^\lambda$ and $T_\maxf^\lambda$ are bounded on $L^p(\RR^d)$; the believed best bound on this is known as the Bochner--Riesz conjecture (respectively, maximal Bochner--Riesz conjecture). It is conjectured that, for $\lambda > 0$, the operator $T_r^\lambda$ is bounded on $L^p(\RR^d)$ if and only if
\begin{equation}\label{eq:brconj}
    \frac{d-1}{d} \left(\frac{1}{2} - \frac{\lambda}{d-1}\right) < \frac{1}{p} < \frac{d+1}{d} \left(\frac{1}{2} + \frac{\lambda}{d+1}\right),
\end{equation}
and, for $p \geq 2$, the same $L^p$ boundedness range is conjectured for $T_\maxf^\lambda$.
A number of partial results in this direction have been obtained, including recent breakthroughs (see \cite{45,40,39,Christ_BochnerRiesz,GuthHickmanIliopoulou,42,43,TaoMaximal} and references therein), but the full conjectures remain open.

A weaker property than $L^p$ boundedness of $T_*^\lambda$ is the almost everywhere convergence of $T_r^\lambda f$ to $f$ as $r\to 0^+$ for all $f\in L^p$. While the maximal Bochner--Riesz conjecture remains open, almost everywhere convergence has been proved \cite{CRV} in the range \eqref{eq:brconj} for $p \geq 2$ (see also \cite{Annoni,LeeSeeger} for more recent endpoint results).

\begin{thm*}[Carbery, Rubio de Francia and Vega]
Let $\opL$ be the Laplacian on $\RR^d$. Let $\lambda>0$ and $2\leq p \leq \infty$ be such that
\[
\frac{d-1}{d}\left(\frac{1}{2}-\frac{\lambda}{d-1}\right)<\frac{1}{p}\leq\frac{1}{2}.
\]
Then $T_r^\lambda f$ converges to $f$ almost everywhere as $r\to 0^+$ for all $f\in L^p(\RR^d)$. 
\end{thm*}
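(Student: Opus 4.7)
My plan is to follow the standard three-step scheme for almost everywhere convergence on $L^p$: reduce to a maximal function bound via density, exploit $p \geq 2$ to dualize the maximal $L^p$ estimate into a weighted $L^2$ bound, and linearize the supremum in $r$ so as to reduce matters to a non-maximal weighted $L^2$ estimate for $T_r^\lambda$, which is the quantitative restriction/trace lemma at the heart of the argument.

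For the first step, uniform convergence $T_r^\lambda f \to f$ for $f \in \Sch(\RR^d)$ as $r \to 0^+$ is immediate by dominated convergence applied to the Fourier multiplier, so by the usual reduction it suffices to prove a maximal inequality
$$\|T_\maxf^\lambda f\|_{L^p} \lesssim \|f\|_{L^p}$$
in the stated range (a weak-type bound would be enough, but in the strict inequality regime one can aim for the strong bound directly).

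For the second and third steps: since $p \geq 2$, set $q = p/2$ and use the duality
$$\|T_\maxf^\lambda f\|_{L^p}^2 = \sup_{w \geq 0,\,\|w\|_{L^{q'}}=1} \int_{\RR^d} |T_\maxf^\lambda f(x)|^2\, w(x)\,dx,$$
so the whole problem reduces to a weighted $L^2$ bound. To handle the supremum in $s$ defining $T_\maxf^\lambda$, use that $T_s^\lambda f \to 0$ as $s \to \infty$ (the multiplier support $\{|\xi|^2 \leq 1/s\}$ shrinks to the origin) and the fundamental theorem of calculus,
$$|T_r^\lambda f(x)|^2 = -2\Re \int_r^\infty \overline{T_s^\lambda f(x)}\,\partial_s T_s^\lambda f(x)\,ds,$$
then take the sup in $r$ and apply Cauchy--Schwarz. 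This dominates $T_\maxf^\lambda f$ pointwise by the geometric mean of two non-maximal square functions built from $T_s^\lambda$ and $s\partial_s T_s^\lambda$; since $s\partial_s(1-s|\xi|^2)_+^\lambda = -\lambda s|\xi|^2 (1-s|\xi|^2)_+^{\lambda-1}$ with $s|\xi|^2 \leq 1$ on the support, both factors are essentially Littlewood--Paley-type quadratic expressions built from Bochner--Riesz projectors of order $\lambda$ (and $\lambda-1$, which is only better for the bound).

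The main obstacle is therefore the weighted $L^2$ bound for the non-maximal square function,
$$\int_0^\infty \int_{\RR^d} |T_s^\lambda f(x)|^2\, w(x)\,dx\,\frac{ds}{s} \lesssim \|f\|_{L^p}^2\,\|w\|_{L^{q'}},$$
valid for all nonnegative $w$ and all $f$. Passing to the Fourier side and writing $t = 1/s$, this reduces to a Sobolev-type trace inequality for spheres centered at the origin: one must control a weighted $L^2$ norm of $\widehat f$ restricted (in a smooth, annular, scale-invariant sense) to such spheres by the $L^p$ norm of $f$ tested against the $L^{q'}$ norm of $w$. In the Euclidean setting this is equivalent to a quantitative version of the Stein--Tomas restriction theorem, and the lower threshold on $\lambda$ stated in the theorem is precisely the point where the sharp Fourier decay of the surface measure of the sphere, combined with Carleson--Sj\"olin oscillatory integral bounds, makes the trace lemma hold; proving this trace inequality in the sharp range (typically by an atomic decomposition of $w$ together with the restriction estimate applied scale by scale) is where the real work lies.
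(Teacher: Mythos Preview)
The paper does not prove this theorem; it is cited as a known result of Carbery, Rubio de Francia and Vega. However, the paper does describe their method (see the discussion around \eqref{eq:crvest} and Sections \ref{s:chapsqfunarg}--\ref{s:chapredtotrace}, which generalise it), so one can compare your proposal against the actual CRV argument.

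There is a genuine gap in your plan. You reduce almost everywhere convergence to the bound $\|T_\maxf^\lambda\|_{L^p\to L^p}<\infty$, and then attack this via duality against arbitrary weights $w\in L^{(p/2)'}$. But the $L^p$ boundedness of $T_\maxf^\lambda$ in the stated range is precisely the maximal Bochner--Riesz conjecture for $p\ge 2$, which remains open in dimensions $d\ge 3$ (see the discussion after \eqref{eq:brconj}). Your final ``trace lemma'' with arbitrary $L^{q'}$ weights is equivalent to a restriction-type input; Stein--Tomas only supplies this for $p'\ge 2(d+1)/(d-1)$, which feeds into Christ's argument and yields a strictly smaller range than CRV, while the Carleson--Sj\"olin bounds you invoke are two-dimensional. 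So the approach as written cannot reach the full range of the theorem.

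What CRV actually do is weaker and therefore achievable: they prove only $L^p\to L^2_{\loc}$ boundedness of the (local) maximal operator, which is enough for a.e.\ convergence by the standard density argument. By H\"older this reduces to weighted $L^2$ estimates not against arbitrary weights but against the \emph{specific} power weights $(1+|x|)^{-a}$ (which lie in $A_2$). After a dyadic decomposition of the multiplier and the square-function linearisation (your FTC step is correct in spirit here), the crux is the ``dual Sobolev trace lemma''
\[
\|\chr_{[1-\delta,1]}(\Delta)f\|_2^2 \lessapprox \delta\,\|f\|_{L^2(1+|\cdot|)}^2,
\]
which on the Fourier side is just the classical $H^{1/2}$ trace theorem for the sphere---no restriction or oscillatory-integral input is needed. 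The exponent $1/p=(d-1)/(2d)$ in the theorem arises from matching the power weight's local integrability ($a<d$) against this $\delta$-gain, not from Stein--Tomas numerology.
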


As the Laplacian on $\RR^d$ is a positive self-adjoint operator, it has a spectral resolution that may be used to define the Bochner--Riesz operators \eqref{eq:bochnerriesz} and \eqref{eq:maximalbochnerriesz}. As such, we may extend the notion of Bochner--Riesz operators to other positive self-adjoint operators $\opL$ on $L^2(X)$ for some measure space $X$. This corresponds to investigating `Fourier summability' for more general eigenfunction expansions than the one determined by the Euclidean Laplacian.

Here we are concerned with (homogeneous left-invariant) sub-Laplacians $\opL$ on stratified Lie groups. The current understanding of the optimal ranges for $L^p$ boundedness and almost everywhere convergence of Bochner--Riesz means is rather limited in this context, compared to the Euclidean case.
A particularly significant result 
is that of Gorges and M\"uller \cite{GM}, that extends the result of Carbery, Rubio de Francia and Vega to the setting of Heisenberg groups $H_m$.

\begin{thm*}[Gorges and M\"uller]
Let $\opL$ be the sub-Laplacian on the Heisenberg group $H_m$. Let $\hdim=2m+2$ and $\tdim=2m+1$.
Let $\lambda>0$ and $2\leq p \leq \infty$ be such that
\begin{equation}\label{eq:gmcond}
    \frac{\hdim-1}{\hdim}\left(\frac{1}{2}-\frac{\lambda}{\tdim-1}\right)<\frac{1}{p}\leq\frac{1}{2}.
\end{equation}
Then $T_r^\lambda f$ converges almost everywhere to $f$ as $r\to 0^+$ for all $f\in L^p(H_m)$.
\end{thm*}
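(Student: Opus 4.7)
The plan is to adapt the Carbery--Rubio de Francia--Vega strategy to the sub-Riemannian setting of $H_m$. It is standard that almost everywhere convergence of $T_r^\lambda f$ to $f$ in $L^p$ for $p\geq 2$ follows from a restricted weak-type bound on $T_\maxf^\lambda$, which by a $TT^\adj$-type argument is in turn implied by a weighted $L^2$ inequality of the form
\[
\|T_\maxf^\lambda f\|_{L^2(H_m,w)}^2 \leq C \|f\|_{L^2(H_m, \mathcal{M}w)}^2, \qquad w\geq 0,
\]
where $\mathcal{M}$ is a maximal operator bounded on $L^{(p/2)'}(H_m)$. First I would set up this functional-analytic framework on $H_m$, taking $\mathcal{M}$ to be (a power of) a Hardy--Littlewood-type maximal operator adapted to the Heisenberg metric and dilations.

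Next I would decouple the supremum in $r$. A fundamental-theorem-of-calculus argument gives the pointwise domination
\[
|T_\maxf^\lambda f|^2 \lesssim |T_1^\lambda f|^2 + \int_0^\infty |T_r^\lambda f|^2 \frac{dr}{r} + \int_0^\infty |(r\opL) T_r^{\lambda-1} f|^2 \frac{dr}{r},
\]
using $r\partial_r T_r^\lambda = -\lambda (r\opL) T_r^{\lambda-1}$. This reduces the weighted bound for $T_\maxf^\lambda$ to a weighted $L^2$ bound for the non-maximal operator $T_r^\lambda$ (and its $\opL$-scaled variant $\opL T_r^{\lambda-1}$), integrated in $r$. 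A dyadic decomposition of the spectrum then turns this into a weighted spectral restriction inequality
\[
\|\chr_{[a,b]}(\sqrt{\opL}) f\|_{L^2(w)}^2 \leq C \, \Phi(a,b-a) \|f\|_{L^2(\mathcal{M}w)}^2,
\]
with sharp dependence on the dyadic parameters $a$ and $b-a$ --- the \emph{dual Sobolev trace lemma}. Via the Plancherel formula on $H_m$ (which decomposes as an integral over the dual $\RR^*$ of the centre paired with a sum over Hermite indices $k\in\NN$, with an explicit polynomial density), the left-hand side becomes an explicit bilinear form, and the problem reduces to pointwise and weighted estimates on the spectral kernel.

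The main technical work, and the principal obstacle, lies in proving this dual Sobolev trace lemma. The spectral kernel on $H_m$ is expressible through Laguerre functions in the horizontal variables; via a standard change of variables these reduce to Jacobi polynomials, whose refined asymptotics --- both in the oscillatory regime and near the turning points --- must be pinned down uniformly in the relevant parameter ranges. Unlike in the Euclidean case, rotational symmetry on $H_m$ is only partial: the horizontal $\mathrm{U}(m)$-action does not act on the centre, and the spectrum is parametrised by a mixed continuous/discrete pair. One therefore cannot reduce to a one-dimensional restriction problem for a rotationally symmetric hypersurface; instead the two components of the Plancherel decomposition must be handled together, and it is precisely this interplay between the central and horizontal directions that accounts for the two distinct dimensions $\hdim=2m+2$ and $\tdim=2m+1$ appearing in the range \eqref{eq:gmcond}.
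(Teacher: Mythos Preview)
This theorem is cited from \cite{GM}, not proved in the paper, so there is no in-paper proof to match against; the introduction only sketches the Gorges--M\"uller architecture and the later sections adapt it to general H-type groups.

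Your high-level plan---weighted $L^2$ control of the maximal operator, a fundamental-theorem-of-calculus/square-function reduction, and a dual Sobolev trace lemma read through the Heisenberg Plancherel formula---is the right skeleton, and is indeed the CRV/GM template.  Two specifics deserve correction.  First, neither \cite{GM} nor this paper establishes a universal weighted inequality of the form $\|T_\maxf^\lambda f\|_{L^2(w)}\lesssim\|f\|_{L^2(\mathcal{M}w)}$ for arbitrary $w\geq 0$; the argument instead goes through \emph{specific} power weights $(1+|\cdot|)^a$, and the $L^p\to L^2_{\loc}$ bound is obtained by H\"older and interpolation between the vertex estimates \eqref{eq:est_infinity2}--\eqref{eq:est_22}, not via a Fefferman--Stein style maximal weight on the right.  (Relatedly, your FTC inequality written directly for $T_r^\lambda$ is awkward for small $\lambda$, since $T_r^{\lambda-1}$ has a singular multiplier; both \cite{GM} and the paper dyadically decompose the Bochner--Riesz multiplier \emph{first} and apply FTC to each piece, cf.\ Lemma \ref{lem:fundthmcalc}.)  Second, the trace-lemma mechanism you describe---reduction to Jacobi polynomials and their uniform asymptotics---is the \emph{present paper's} route for H-type groups with centre dimension $n>1$.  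Gorges--M\"uller on $H_m$ (where $n=1$) instead exploit the complex polynomial $|z|^2-4iu$, whose modulus is a homogeneous norm and whose negative fractional powers on the dual side admit closed-form kernels amenable to Schur's test (see the discussion around \cite[Theorem 11]{GM} in the introduction).  The paper also flags a subtlety you pass over: the trace lemma is straightforward for \emph{radial} $f$, but removing radiality requires a separate complex-interpolation argument, not merely sharper kernel estimates.
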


We remark that the quantities represented by $\hdim$ and $\tdim$, namely the homogeneous and topological dimension of the group $H_m$ respectively, make sense for any stratified Lie group (see Section \ref{s:anaongroups} below for details)  and are both equal to $d$ for $\RR^d$.

The above theorem should be compared with the following general result by Mauceri and Meda \cite[Corollary 2.8]{MauceriMeda}, which is valid for any stratified group and concerns $L^p$ boundedness of the maximal Bochner--Riesz operator on such groups (see also \cite{HJ,Mauceri,MauceriSymposia,M_rieszmeans}).

\begin{thm*}[Mauceri and Meda]
Let $\opL$ be a sub-Laplacian on a stratified group $G$ of homogeneous dimension $Q$. Let $\lambda>0$ and $2\leq p \leq \infty$ be such that
\begin{equation}\label{eq:mercmedacond}
    \frac{1}{2} - \frac{\lambda}{Q-1} < \frac{1}{p} \leq \frac{1}{2}.
\end{equation}
Then the maximal operator $T_*^\lambda$ extends to a bounded operator on $L^p(G)$. In particular, $T_r^\lambda f$ converges almost everywhere to $f$ as $r\to 0^+$ for all $f \in L^p(G)$.
\end{thm*}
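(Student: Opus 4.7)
The plan is to deduce almost-everywhere convergence from the $L^p$ boundedness of the maximal operator $T_\maxf^\lambda$, and to establish that boundedness via a dyadic decomposition of the Bochner--Riesz multiplier combined with a Mihlin--Hörmander type spectral multiplier theorem for $\opL$ and a Stein-type maximal-to-non-maximal reduction.

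\textbf{Reduction.} On the dense subspace $C_c^\infty(G) \subset L^p(G)$ one has $T_r^\lambda \phi \to \phi$ pointwise as $r \to 0^+$, directly from the functional calculus (such $\phi$ lies in the domain of every power of $\opL$, and the symbol $(1-rs)_+^\lambda$ tends bounded-pointwise to $1$ in $s$). Given the $L^p$ bound on $T_\maxf^\lambda$, writing $f = \phi + (f-\phi)$ and estimating the measure of $\{x : \limsup_{r\to 0^+}|T_r^\lambda f(x) - f(x)| > \varepsilon\}$ by $\varepsilon^{-p} \|f-\phi\|_p^p$ then yields almost-everywhere convergence for all $f \in L^p$.

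\textbf{Maximal bound.} Decompose $(1-s)_+^\lambda = \sum_{k \geq 0} \psi_k(s)$, where, for $k \geq 1$, $\psi_k$ is a smooth bump supported in $[1-2^{-k+1},1-2^{-k-1}]$ with $\|\psi_k^{(j)}\|_\infty \lesssim 2^{k(j-\lambda)}$, so that $\|\psi_k\|_{H^s(\RR)} \sim 2^{k(s-\lambda-1/2)}$. Combining this with the Mauceri--Meda multiplier theorem on stratified groups (Sobolev smoothness threshold $Q/2$) and the dilation invariance of $\opL$, and interpolating with the trivial spectral bound $\|\psi_k(r\opL)\|_{2\to 2} \lesssim 2^{-k\lambda}$, I would obtain the uniform-in-$r$ estimate
\[
\|\psi_k(r\opL)\|_{p\to p} \lesssim 2^{k[(Q-1)(1/2-1/p) - \lambda + \varepsilon]}
\]
for $2 \leq p \leq \infty$ and any $\varepsilon > 0$. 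The decisive point is that the half-derivative saving $-1/2$ in the $H^s$ norm of $\psi_k$ (reflecting the one-dimensional support of $\psi_k$) upgrades the nominal Mihlin--Hörmander threshold $Q/2$ into the sharper Bochner--Riesz threshold $(Q-1)/2$ at $p=\infty$.

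\textbf{Supremum in $r$.} To pass from the non-maximal to the maximal estimate, I would apply the Sobolev-type inequality
\[
\sup_{r>0} |g(r)|^2 \lesssim \int_0^\infty |g(r)|\,|rg'(r)|\,\frac{dr}{r}
\]
with $g(r) = \psi_k(r\opL)f(x)$ (note $g(0) = 0$ since $\psi_k(0)=0$). Taking $L^{p/2}$-norms, Cauchy--Schwarz and Minkowski's integral inequality reduce the $L^p$-estimate on $\sup_r|\psi_k(r\opL)f|$ to $L^p(L^2(dr/r))$-estimates on $\psi_k(r\opL)f$ and on $r\partial_r\psi_k(r\opL)f = (s\psi_k'(s))|_{s=r\opL} f$; the latter multiplier has the same dyadic profile as $\psi_k$, up to a factor of $2^k$ in its Sobolev norm that is absorbed by an $\varepsilon$-adjustment. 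Since the $r$-integration is effectively concentrated on a window determined by the spectral support of $\psi_k(r\,\cdot)$, the same dyadic bound applies, and summing in $k$ yields the geometric series $\sum_k 2^{k[(Q-1)(1/2 - 1/p) - \lambda + \varepsilon]}$, which converges precisely when $\lambda > (Q-1)(1/2 - 1/p)$. I expect the trickiest technical step to be carrying out this passage from the uniform-in-$r$ estimate to the $L^p(L^2(dr/r))$ estimate without degrading the exponent, especially near $p = \infty$ where Mihlin--Hörmander arguments become most delicate and one must work with suitable $L^\infty$-to-$\mathrm{BMO}$ or $H^1$-to-$L^1$ replacements.
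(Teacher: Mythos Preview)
Your overall strategy---dyadic decomposition of the Bochner--Riesz symbol, control of each piece via the Mihlin--H\"ormander/Mauceri--Meda multiplier theorem, and the Fundamental Theorem of Calculus inequality $\sup_r|g(r)|^2\lesssim\int|g||rg'|\,dr/r$ to handle the supremum---is exactly the one the paper uses in Section~\ref{s:basicresult} (the paper in fact proves the stronger Theorem~\ref{thm:mainMM}, from which the Mauceri--Meda statement follows since $\thr(\opL)\leq Q/2$).

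The difference is in \emph{where} you run the maximal-to-nonmaximal reduction. You attempt it at every $p\in[2,\infty]$ simultaneously, and you correctly identify that this is delicate near $p=\infty$: the square-function step $\|(\int|\psi_k(r\opL)f|^2\,dr/r)^{1/2}\|_p\lesssim C_k\|f\|_p$ genuinely fails at $p=\infty$, and the $\mathrm{BMO}$ or $H^1$ substitutes you allude to would require extra work. The paper avoids this entirely. It proves the maximal bound only at the two endpoints and then interpolates the sublinear operator $M_\delta^\maxf$ directly (Calder\'on--Zygmund interpolation, \cite{CZ_interpolation}). At $p=\infty$ the maximal bound is \emph{trivial}: since $\|\cdot\|_\infty$ and $\sup_r$ commute,
\[
\|M_\delta^\maxf f\|_\infty=\sup_r\|f*(\fDil_rK_\delta)\|_\infty\leq\|f\|_\infty\sup_r\|\fDil_rK_\delta\|_1=\|f\|_\infty\|K_\delta\|_1\lessapprox\delta^{1/2-Q/2}\|f\|_\infty,
\]
using dilation invariance of the $L^1$ norm and \eqref{eq:sharpthreshold}. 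At $p=2$ the Fundamental Theorem of Calculus argument combined with the spectral identity $\int_0^\infty\|\BRm_\delta(t\opL)f\|_2^2\,dt/t=\|f\|_2^2\int_0^\infty|\BRm_\delta|^2\,dt/t\lesssim\delta\|f\|_2^2$ (Proposition~\ref{prp:sobolevb2}) gives $\|M_\delta^\maxf\|_{2\to2}\lesssim1$ with no loss. So the step you flag as ``trickiest'' is not a gap in the result but an artefact of your route; reorganising as ``endpoint maximal bounds $+$ interpolation'' removes it completely.
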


The condition \eqref{eq:mercmedacond} is more restrictive than \eqref{eq:gmcond}; however, Mauceri and Meda's result applies to a larger class of groups and gives in the range \eqref{eq:mercmedacond} a stronger property than almost everywhere convergence. 
A natural question is to what extent it is possible to obtain almost everywhere convergence beyond the range \eqref{eq:mercmedacond} for groups other than the $H_m$.
A particularly elusive problem is obtaining a range with the same `trapezoidal' shape as \eqref{eq:gmcond}, that is, such that a $p>2$ exists for which all $\lambda > 0$ are admissible (see Figure \ref{fig:mainfig}); apart from the pioneering work by Gorges and M\"uller, we are not aware of results of this kind for nonelliptic sub-Laplacians $\opL$, even outside the context of stratified groups.

\begin{figure}[ht]
\centering
\begin{tikzpicture}[scale=0.55]
\coordinate (O) at (0,0);
\coordinate (OX) at (25*1/2+1,0);
\coordinate (OT) at (0,7);
\coordinate (L1Q) at (0,4*3/2);
\coordinate (L1D) at (0,4*1);
\coordinate (H) at (25*1/2,0);
\coordinate (HT) at (25*1/2,7);
\coordinate (GM) at (25*3/8,0);
\coordinate (HM) at (25*0.41,0);
\draw[white, pattern=dots, pattern color=green] (GM)-- (H)-- (HT) -- (OT)-- (L1D) -- cycle;
\draw[white, pattern=north east lines, pattern color=red] (HM) -- (H) -- (HT) -- (OT) -- (L1D) --cycle;
\draw[fill=blue, opacity=0.2] (H) -- (HT) -- (OT) -- (L1Q) -- (H);
\draw[black, ->] (O) -- (OX) node[anchor=north] {$\frac{1}{p}$};
\draw[black, ->] (O) -- (OT) node[anchor=east] {$\lambda$}; 
\draw[black] (H)-- (HT);
\filldraw[black] (O) circle (1pt) node[anchor=north] {$0$}; 
\filldraw[black] (H) circle (2pt) node[anchor=north] {$\frac{1}{2}$}; 
\filldraw[black] (GM) circle (2pt) node[anchor=55] {$\frac{\hdim-1}{2\hdim}$}; 
\filldraw[black] (HM) circle (2pt) node[anchor=125] {$\frac{\wdim-1}{2\wdim}$}; 
\filldraw[black] (L1Q) circle (2pt) node[anchor=east] {$\frac{\hdim-1}{2}$}; 
\filldraw[black] (L1D) circle (2pt) node[anchor=east] {$\frac{\tdim-1}{2}$}; 
\draw[blue, dashed] (H) -- (L1Q); 
\draw[red, dashed] (HM) -- (L1D); 
\draw[green,dashed] (GM) -- (L1D);
\draw[green, pattern=dots, pattern color=green] (13.5,3) circle (5pt) node[anchor=west,label={[align=center,color=green, opacity=1]right:Gorges \& M\"uller}] {};
\draw[red, pattern=north east lines, pattern color=red] (13.5,3+.7) circle (5pt) node[anchor=west,label={[align=center,color=red, opacity=1]right:Theorem \ref{thm:mainthm}}] {};
\draw[blue, fill=blue, opacity=0.2] (13.5,3+2*.7) circle (5pt) node[anchor=west,label={[align=center,color=blue, opacity=1]right: Mauceri \& Meda}] {};
\end{tikzpicture}

\caption[Almost everywhere convergence of Bochner--Riesz means]{{Range of almost everywhere convergence of Bochner--Riesz means on H-type groups given by Theorem \ref{thm:mainthm}. The diagram also depicts the results by Gorges and M\"uller (valid for Heisenberg groups only) and Mauceri and Meda.}}
\label{fig:mainfig}
\end{figure}

Here we succeed in proving almost everywhere convergence in a `trapezoidal' range 
in the setting of Heisenberg-type (henceforth H-type) groups. This is a class of $2$-step stratified Lie groups that includes the Heisenberg groups $H_m$, as well as groups with higher-dimensional centre \cite{Kaplan}.
Our main result reads as follows.

\begin{thm}\label{thm:mainthm}
Let $\opL$ be the sub-Laplacian on an H-type group $G$ of homogeneous dimension $\hdim$ and topological dimension $\tdim$, and set $\wdim = 2\hdim-\tdim$. Let $\lambda>0$ and $2\leq p \leq \infty$ be such that
\begin{equation}\label{eq:mainthm_range}
\frac{\wdim-1}{\wdim}\left(\frac{1}{2}-\frac{\lambda}{\tdim-1}\right)<\frac{1}{p}\leq\frac{1}{2}.
\end{equation}
Then $T_r^\lambda f$ converges almost everywhere to $f$ as $r\to 0^+$ for all $f\in L^p(G)$.
\end{thm}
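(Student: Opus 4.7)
\medskip\noindent\textbf{Proof plan.}
I would adapt the Carbery--Rubio de Francia--Vega strategy \cite{CRV} as implemented on Heisenberg groups by Gorges and M\"uller \cite{GM}, replacing the Laguerre-function analysis specific to $H_m$ by a uniform Jacobi polynomial analysis that accommodates the higher-dimensional centre of a general H-type group. The plan has three layers: (i) reduction of a.e.\ convergence to a weighted $L^2$ maximal estimate
\begin{equation*}
\Bigl\| \sup_{r>0} |T_r^\lambda f| \Bigr\|_{L^2(G,\,|x|^{-\alpha}\,dx)} \lesssim \|f\|_{L^2(G,\,|x|^{-\alpha}\,dx)}
\end{equation*}
for $|\cdot|$ a homogeneous norm on $G$ and $\alpha \in [0,\wdim)$ related to $p$ by $\alpha = \wdim(1-2/p)$, which turns \eqref{eq:mainthm_range} into an admissibility condition on $(\alpha,\lambda)$; (ii) reduction of this weighted maximal estimate to a non-maximal weighted estimate at a single scale; and (iii) proof of the non-maximal estimate via a dual Sobolev trace lemma, based on refined Jacobi polynomial bounds.

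Step (i) is standard: combined with the trivial $L^\infty$ bound on $T_\maxf^\lambda$, a linearisation/Stein maximal principle argument together with density of Schwartz functions promotes the weighted $L^2$ bound to a.e.\ convergence on all of $L^p(G)$ in the claimed range. For (ii), using $\partial_r T_r^\lambda = -\lambda\,\opL\,T_r^{\lambda-1}$ together with
\[
T_r^\lambda f = T_1^\lambda f + \int_1^r \partial_s T_s^\lambda f \, ds,
\]
a Cauchy--Schwarz-in-$s$ argument dominates $\sup_r |T_r^\lambda f|$ by $|T_1^\lambda f|$ plus a square function in $s$ built from $T_s^\lambda f$ and $s\opL T_s^{\lambda-1}f$. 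Homogeneity reduces the weighted maximal bound to a pair of weighted bounds at the single scale $r=1$, which after dualisation become the \emph{dual Sobolev trace inequality}
\begin{equation*}
\|(1-\opL)_+^\lambda f\|_{L^2(|x|^{-\alpha}\,dx)} \lesssim \|f\|_{L^2(G)}
\end{equation*}
(plus a close variant involving $\lambda-1$). This layer follows \cite{CRV,GM} with only cosmetic adjustments for the group structure.

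The substance of the proof lies in step (iii). Dualising, the trace inequality becomes a spectral restriction-type bound. On an H-type group the sub-Laplacian $\opL$ commutes with the central derivatives, and their joint functional calculus splits the kernel of $(1-\opL)_+^\lambda$ along the joint spectrum into Laguerre-polynomial factors in the horizontal fibres and spherical harmonics on the centre. Integrating against $|x|^{-\alpha}$ couples these factors; the ensuing radial and angular integrals produce matrix coefficients that, via standard $\HypF$ identities, can be rewritten in terms of Jacobi polynomials $P_n^{(a,b)}$ whose parameters $a,b$ depend on the dimension of the centre. The trace inequality thereby reduces to a uniform weighted $L^2$-type estimate for these Jacobi polynomials on an explicit spectral model space.

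I expect this final step to be the main obstacle. The degree $n$ and the parameters $a,b$ can all be large simultaneously, so textbook asymptotics of $P_n^{(a,b)}$ are insufficient: one has to exploit the endpoint oscillation of these polynomials with sharp and explicit control in every parameter to recover the exponent $\wdim = 2\hdim-\tdim$ in \eqref{eq:mainthm_range}, rather than the weaker $\hdim$ that a routine Sobolev-type embedding would yield (and which would only reproduce Mauceri--Meda). This is precisely where the refined Jacobi polynomial estimates advertised in the abstract are decisive, and where the argument must genuinely go beyond the Heisenberg-only treatment of \cite{GM}.
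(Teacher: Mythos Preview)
Your high-level plan (CRV/GM strategy $\Rightarrow$ weighted $L^2$ maximal estimate $\Rightarrow$ square-function reduction to a single scale $\Rightarrow$ dual trace lemma via Jacobi polynomials) matches the paper, but there is a genuine gap in the choice of weights that would prevent you from reaching the range \eqref{eq:mainthm_range}.

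You propose a \emph{single} homogeneous-norm weight $|x|^{-\alpha}$ with $\alpha=\wdim(1-2/p)$. First, the H\"older step tying a weight $(1+|\cdot|)^{-\alpha}$ to an $L^p$ estimate is governed by the homogeneous dimension $\hdim$, not $\wdim$; so your formula for $\alpha$ is already off. More importantly, the paper can only prove the full-norm trace lemma with exponent $a=2/3$ (Theorem~\ref{thm:tracelemfinal}), and explicitly leaves the case $a=1$---which your single-weight approach would require at the vertex---as an open problem (see Remark~\ref{rem:trace1rem}). What actually produces $\wdim=2\hdim-\tdim$ is the use of \emph{two} different weights: the full weight $(1+|\cdot|)^{a}$ with $a=2/3$ \emph{and} the first-layer weight $(1+\fstW)^{b}$ with $b=1$, $\fstW(z,u)=|z|$ (Theorem~\ref{thm:trace5}). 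These are interpolated to the product weight $\omega_*(z,u)=(1+|z|)^{-2m/\wdim}(1+|u|)^{-n/\wdim}$, which is what lands on the $\wdim$ vertex via H\"older. Correspondingly, the Jacobi polynomials arise only from the \emph{pure second-layer} weight $|u|^{-a/2}$ (Lemma~\ref{lem:heatkerneluformulae}); the first-layer weight gives a separate, Laguerre-type kernel (Lemma~\ref{lem:radialkernelz}). Your plan conflates these.

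A second, smaller gap: your reduction (ii) is too quick. The paper's maximal-to-nonmaximal reduction \eqref{eq:max_nonmax_redux} is proved only for sum-of-squares \emph{polynomial} weights (Proposition~\ref{prp:complicatedthm1_poly}), so one must first pass to the fourth power $w^N\simeq 1+|z|^4+|u|^2$ (or $1+|z|^4$) and then run a nontrivial interpolation-by-level-sets argument (Section~\ref{s:chapsqfunarg_two}, Proposition~\ref{prp:max_nonmax_sq}) to descend to the fractional exponents $a=2/3$, $b=1$. The square-function step you sketch does not handle non-polynomial weights directly.
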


Observe that, if $2m$ and $n$ are the dimensions of the first and second layers of the H-type group $G$, then $\tdim=2m+n$, $\hdim = 2m+2n$ and $\wdim = 2m+3n$.
The range \eqref{eq:mainthm_range} is smaller than \eqref{eq:gmcond}; however Gorges and M\"uller's result only applies for $n=1$, while Theorem \ref{thm:mainthm} applies for arbitrary $n\geq 1$.

As in other works on the subject, the proof of our almost everywhere convergence result is obtained by considering $L^p$ to $L^2_{\loc}$ boundedness of the maximal Bochner--Riesz operator. As a matter of fact, it is enough to consider the `local' maximal Bochner--Riesz operator defined by
\begin{equation}
    T^\lambda_\lmaxf f \defeq \sup_{0<r<1}|T^\lambda_r f|.
\end{equation}
Indeed, if $\|\chr_K T^\lambda_\lmaxf\|_{L^p\to L^2}<\infty$ for all compact sets $K\subseteq G$ (here $\chr_K$ denotes the characteristic function of $K$), then Sobolev embeddings for sub-Laplacians \cite{10} and a standard three-$\epsilon$ argument imply the almost everywhere convergence of $T_r^\lambda f$ to $f$ as $r\to 0^+$ for all $f \in L^p(G)$.

As usual in this context, we consider a dyadic decomposition of the Bochner--Riesz multiplier: for $\zeta>0$ and $\Dyad_0 \defeq \{2^{-k} \tc k\in\NN_0\}$, we may write
\begin{equation}\label{eq:multiplierdecompose}
(1-\zeta)^\lambda_+ = \sum_{\delta\in\Dyad_0} \delta^{\lambda} \BRm_{\delta}(\zeta),
\end{equation} 
where, for all $j\in\NN_0$ and $\delta\in\Dyad_0$, the function $\BRm_{\delta} \in C^\infty_c(\RR)$ is real-valued and satisfies
\begin{equation}\label{eq:mdeltabnd}
\|\BRm_{\delta}^{(j)}\|_\infty\lesssim_j \delta^{-j} \text{ and } \supp(\BRm_{\delta})\subseteq \begin{cases}
[1-\delta,1] &\text{ if $\delta < 1$,}\\
[-1,1] &\text{ if $\delta=1$.}
\end{cases}
\end{equation}
Note that the functions $\BRm_\delta$ in \eqref{eq:multiplierdecompose} depend on $\lambda$, but satisfy \eqref{eq:mdeltabnd} with implicit constants independent of $\lambda$; hence, with a slight abuse of notation,
we suppress the 
dependence on $\lambda$ of the functions $\BRm_\delta$ from their notation.

Let us define the maximal operators corresponding to the dyadic decomposition:
\begin{equation}\label{eq:mdecomposemax}
   M^\maxf_\delta f \defeq \sup_{r>0} |\BRm_\delta(r\opL) f|, \qquad  M^\lmaxf_\delta f \defeq \sup_{0<r<1}|\BRm_\delta(r\opL) f|
\end{equation}
In view of \eqref{eq:multiplierdecompose}, for any given $p \in [2,\infty]$ and $\lambda_0 \in \RR$, the $L^p$ to $L^2_\loc$ boundedness of $T^\lambda_\lmaxf$ for all $\lambda > \lambda_0$ would follow
from an estimate of the form
\begin{equation}\label{eq:410}
    \|\chr_K M_\delta^\lmaxf\|_{L^p \to L^2}
		\lesssim \delta^{-\lambda_0}
\end{equation}
for all $\delta \in \Dyad_0$ and all compact sets $K \subseteq G$, where the implicit constant depends only on those in \eqref{eq:mdeltabnd} and on the compact set $K$, but not on $\delta$. In order to prove Theorem \ref{thm:mainthm}, it is then enough to prove \eqref{eq:410} for all pairs $(1/p,\lambda_0)$ lying in the `infinite trapezoid' depicted in Figure \ref{fig:mainfig}.

As a matter of fact, thanks to interpolation \cite{CZ_interpolation}, it suffices to consider just the vertices of the trapezoid, i.e., the estimates
\begin{align}
\label{eq:est_infinity2} \|\chr_K M_\delta^\lmaxf \|_{L^\infty \to L^2} &\lessapprox \delta^{-(\tdim-1)/2} \\
\label{eq:est_estp2} \|\chr_K M_\delta^\lmaxf \|_{L^{2\wdim/(\wdim-1)} \to L^2} &\lessapprox 1, \\
\label{eq:est_22} \|\chr_K M_\delta^\lmaxf \|_{L^{2} \to L^2} &\lessapprox 1, 
\end{align}
where
$\lessapprox$ stands for $\lesssim C(\epsilon) \,\delta^{-\epsilon}$ for all
arbitrarily small $\epsilon>0$.

Among these, the estimates \eqref{eq:est_infinity2} and \eqref{eq:est_22} actually follow from stronger $L^p$ estimates for the `global' maximal operator $M_\delta^\maxf$,
which can be obtained in a relatively straightforward way using available estimates for functions of a sub-Laplacian. More precisely, for a general stratified group $G$ and sub-Laplacian $\opL$, one can prove the estimates
\begin{equation}\label{eq:general_maximal_ests}
\|M_\delta^\maxf \|_{L^\infty \to L^\infty} \lessapprox \delta^{1/2-\thr(\opL)}, \qquad
\|M_\delta^\maxf \|_{L^{2} \to L^2} \lesssim 1, 
\end{equation}
where $\thr(\opL)$ is the ``Mihlin--H\"ormander threshold'' for $\opL$ defined as in \cite{MM_gafa}; it is known that $\tdim/2 \leq \thr(\opL) \leq \hdim/2$ for arbitrary stratified groups and sub-Laplacians \cite{Christ_multipliers,MMG_lowerbound,MauceriMeda}, that $\thr(\opL) < \hdim/2$ for all $2$-step stratified groups \cite{MM_gafa}, and that $\thr(\opL) = \tdim/2$ for several classes of $2$-step stratified groups, including the H-type groups \cite{Hebisch_multipliers,M_HeisenbergReiter,MM_newclasses,MM_N32,MS_heisenberg}.
In view of \eqref{eq:multiplierdecompose}, the estimates \eqref{eq:general_maximal_ests} immediately lead to the following improvement of the result by Mauceri and Meda.

\begin{thm}\label{thm:mainMM}
Let $\opL$ be a sub-Laplacian on a stratified group $G$. Let $\lambda>0$ and $2\leq p \leq \infty$ be such that
\[
    \frac{1}{2}-\frac{\lambda}{2\thr(\opL)-1}<\frac{1}{p}\leq\frac{1}{2}.
\]
Then the maximal operator $T_*^\lambda$ extends to a bounded operator on $L^p(G)$. In particular, $T_r^\lambda f$ converges almost everywhere to $f$ as $r\to 0^+$ for all $f \in L^p(G)$.
\end{thm}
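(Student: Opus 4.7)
The strategy is to combine the given endpoint maximal bounds in \eqref{eq:general_maximal_ests} with the dyadic decomposition \eqref{eq:multiplierdecompose}, reducing everything to a convergent geometric series in $\delta$. The a.e.\ convergence will then follow from the $L^p$ boundedness of $T_*^\lambda$ by a standard density argument.

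First, I would interpolate between the two estimates in \eqref{eq:general_maximal_ests}. Since the maximal operator $M_\delta^\maxf$ is sublinear but can be viewed as the $L^\infty$-norm in $r$ of the linear family $\{\BRm_\delta(r\opL)\}_{r>0}$, the vector-valued interpolation for maximal operators of \cite{CZ_interpolation} applies. For $p \in [2,\infty]$, writing $1/p=(1-\theta)/2+\theta\cdot 0$ with $\theta=1-2/p$, this yields
\[
\|M_\delta^\maxf\|_{L^p\to L^p} \lessapprox \delta^{(1/2-\thr(\opL))(1-2/p)}.
\]

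Next, I apply \eqref{eq:multiplierdecompose} with $\zeta=r\opL$ and pass to the supremum in $r$, obtaining the pointwise sublinear bound
\[
T_*^\lambda f \leq \sum_{\delta\in\Dyad_0} \delta^\lambda\, M_\delta^\maxf f.
\]
Taking $L^p$ norms and using the previous step,
\[
\|T_*^\lambda f\|_{L^p} \lessapprox \sum_{\delta\in\Dyad_0} \delta^{\lambda+(1/2-\thr(\opL))(1-2/p)}\, \|f\|_{L^p}.
\]
Since $\lessapprox$ absorbs an arbitrarily small $\delta^{-\epsilon}$ loss, this geometric series converges whenever the exponent satisfies $\lambda+(1/2-\thr(\opL))(1-2/p)>0$; rearranging, this is exactly the condition $1/p>1/2-\lambda/(2\thr(\opL)-1)$ in the statement. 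Hence $T_*^\lambda$ extends boundedly on $L^p(G)$ in the claimed range.

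Finally, the a.e.\ convergence follows by the usual three-$\epsilon$ argument: for $f$ in the dense subclass $\Sch(G)$ (or $C^\infty_c(G)$), one has $T_r^\lambda f\to f$ uniformly as $r\to 0^+$ via the functional calculus for $\opL$, and the maximal inequality $\|T_*^\lambda f\|_{L^p}\lesssim \|f\|_{L^p}$ transfers this to arbitrary $f\in L^p(G)$.

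I do not expect any serious obstacle: the endpoint estimates \eqref{eq:general_maximal_ests} are quoted as available inputs, the interpolation step is exactly the application referenced in the paragraph preceding \eqref{eq:est_infinity2}--\eqref{eq:est_22}, and the resulting exponent in $\delta$ matches the stated range after elementary algebra. The only mild point is making sure the $\epsilon$-loss hidden in $\lessapprox$ is harmless, which it is because the condition on $(p,\lambda)$ is a strict inequality.
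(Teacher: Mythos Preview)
Your proposal is correct and matches the paper's approach: the paper states that the endpoint maximal estimates \eqref{eq:general_maximal_ests} ``immediately lead to'' Theorem~\ref{thm:mainMM} via the decomposition \eqref{eq:multiplierdecompose}, and your write-up is precisely the standard expansion of that remark (interpolate, sum the geometric series, then density for a.e.\ convergence). The paper devotes Section~\ref{s:basicresult} to establishing \eqref{eq:general_maximal_ests} themselves, but takes the deduction of Theorem~\ref{thm:mainMM} from them as routine, exactly as you do.
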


The estimate \eqref{eq:est_estp2}, instead, requires a more delicate analysis, which we develop for an H-type group $G$. By H\"older's inequality, it is readily seen that \eqref{eq:est_estp2} follows from the estimate
\begin{equation}\label{eq:weightedmaxopest_inter}
\|\chr_K M_\delta^\lmaxf \|_{L^{2}(\omega_*) \to L^2} \lessapprox 1, 
\end{equation}
where $\omega_*(z,u) = (1+|z|)^{-2m/\wdim} (1+|u|)^{-n/\wdim}$ in the usual exponential coordinates (here $z$ and $u$ correspond to the first and second layer of $G$ respectively);
in turn \eqref{eq:weightedmaxopest_inter} can be easily deduced by interpolating the weighted estimates
\begin{equation}\label{eq:weightedmaxopest}
\| M_\delta^\lmaxf \|_{L^{2}((1+|\cdot|)^{-a}) \to L^2((1+|\cdot|)^{-a})} \lessapprox 1, \qquad \| M_\delta^\lmaxf \|_{L^{2}((1+\fstW)^{-b}) \to L^2((1+\fstW)^{-b})} \lessapprox 1,
\end{equation}
where $|\cdot|$ is a homogeneous norm on $G$, $\fstW(z,u) = |z|$, $a= 2/3$ and $b=1$.

As it turns out, the estimates \eqref{eq:weightedmaxopest} reduce, roughly speaking, to the corresponding estimates for the `nonmaximal' operator:
\begin{equation}\label{eq:weightedopest}
\| \BRm_\delta(\opL) \|_{L^{2}((1+|\cdot|)^{-a}) \to L^2((1+|\cdot|)^{-a})} \lessapprox 1, \qquad \| \BRm_\delta(\opL) \|_{L^{2}((1+\fstW)^{-b}) \to L^2((1+\fstW)^{-b})} \lessapprox 1,
\end{equation}
More precisely, 
in Section \ref{s:chapsqfunarg} below we prove that
for a certain class of weights $w$ on an H-type group $G$, the following estimate holds:
\begin{equation}\label{eq:max_nonmax_redux}
\|M_\delta^\lmaxf\|^2_{L^2(w)\to L^2(w)} \lesssim \sup_{s \in (0,1)} \|\BRm_\delta(s\opL)\|_{L^2(w)\to L^2(w)} \sup_{s \in (0,1)} \|\tBRm_\delta(s\opL)\|_{L^2(w)\to L^2(w)}
\end{equation}
for all $\delta \in \Dyad \defeq \Dyad_0 \setminus \{1\}$,
where the implicit constant may depend on $w$, and $\tBRm_\delta(\zeta) \defeq \delta \zeta \BRm_\delta'(\zeta)$; note that the $\tBRm_\delta$ satisfy the same conditions \eqref{eq:mdeltabnd} as the $\BRm_\delta$. The `maximal-to-nonmaximal' reduction estimate \eqref{eq:max_nonmax_redux} actually applies to the weights in \eqref{eq:weightedopest} only if $a \in 4\NN_0$ and $b \in 2\NN_0$; however a more sophisticated `interpolation' argument, presented in Section \ref{s:chapsqfunarg_two}, allows us to work around this restriction and consider fractional powers as well.
While the idea of reducing estimates for the maximal operator to those for the nonmaximal operator is implicit in both the works of Carbery, Rubio de Francia and Vega \cite{CRV} and Gorges and M\"uller \cite{GM}, an explicit estimate such as \eqref{eq:max_nonmax_redux} does not seem to appear in either work, and 
may be of independent interest (cf.\ also \cite{MateuOrobitgVerdera}).

We are now down to proving the weighted estimates \eqref{eq:weightedopest}.
Through this paper,
this will be reduced to 
proving suitable `dual Sobolev trace inequalities', stated as Theorems \ref{thm:tracelemfinal} and \ref{thm:trace5} below.
To briefly explain the idea, in addition to the sub-Laplacian $\opL$, let us fix an orthonormal basis $U_1,\ldots,U_n$ of the second layer $\lie{g}_2$ of the Lie algebra of the H-type group $G$. The operators $\opL$ and $U_j/i$ all commute and admit a joint functional calculus \cite{2}. We define the pseudo-differential operator 
\begin{equation}\label{eq:difflambda}
\ctDer \defeq (-(U_1^2+\ldots+U_n^2))^{{1}/{2}}
\end{equation}
and the spectral cut-off operators $M_{\delta,j}$ by
\[
{M_{\delta,j}} \defeq \begin{cases}
\chr_{[1-\delta,1]}(\opL) \, \chr_{[2^j,2^{j+1})}(2\pi\opL/\ctDer) &\text{for } j=1,\dots,J_\delta-1,\\
\chr_{[1-\delta,1]}(\opL) \, \chr_{[2^{J_\delta},\infty)}(2\pi\opL/\ctDer) &\text{for } j=J_\delta,
\end{cases}
\]
where $\delta \in \Dyad$, and $J_\delta \in \NN$ is such that $2^{J_\delta} \simeq \delta^{-1}$
(see Figure \ref{fig:figdecompose}).
We wish to prove, for all $\delta \in \Dyad$ and $j=1,\dots,J_\delta$,
the estimates
\begin{align}
\label{eq:introest1}
    \|M_{\delta,j}f\|^2_2 &\lessapprox (2^{-j}\delta)^{a/2}\|f\|^2_{L^2((1+|\cdot|)^{a})}, \\
\label{eq:introest2}
    \|M_{\delta,j}f\|^2_2 &\lessapprox (2^{-j})^{b}\|f\|^2_{L^2((1+\fstW)^{b})},
\end{align}
where $a=2/3$ and $b=1$ as before.
Theorems \ref{thm:tracelemfinal} and \ref{thm:trace5} are minor technical modifications of these inequalities.

\begin{figure}
{\begin{tikzpicture}[scale=0.5]
\filldraw[gray, opacity=0.2] (0,5) rectangle (10,5.75);
\draw[thick, ->] (0,0) -- (10,0) node[anchor=north] {$\widehat{\ctDer}$};
\draw[thick, ->] (0,0) -- (0,8) node[anchor=east] {$\widehat{\opL}$};
\filldraw[black] (0,0) circle (1pt) node[anchor=north] {$0$};
\draw[red, thick] (0,0) -- (0.1, 8);
\draw[red, thick] (0,0) -- (0.2, 8);
\draw[red, thick] (0,0) -- (0.3, 8);
\draw[red, thick] (0,0) -- (0.4, 8);
\draw[red, thick] (0,0) -- (0.6, 8);
\draw[red, thick] (0,0) -- (0.9, 8);
\draw[red, thick] (0,0) -- (1.4, 8);
\draw[red, thick] (0,0) -- (2.5, 8);
\draw[red, thick] (0,0) -- (4, 8);
\draw[red, thick] (0,0) -- (6, 8);
\draw[red, thick] (0,0) -- (9, 8);
\draw[black] (10,5.75)--(0,5.75)--(0,5)--(10,5);
\draw[black] (1.03,5)--(1.21,5.75);
\draw[black] (2,5)--(2.29,5.75);
\draw[black] (3.6,5)--(4.15,5.75);
\draw[black] (6,5)--(6.9,5.75);
\filldraw[white, opacity=0] (9.4,5.4) circle (1pt) node[anchor=west,label={[align=center,color=black, opacity=1]right:\small $\chr_{[1-\delta,1]}(\opL)$}] {};
\filldraw[white, opacity=0] (0.31,6.15) circle (1pt) node[anchor=west,label={[align=center,color=black, opacity=1]below:\small$R_{J_\delta}$}] {};
\filldraw[white, opacity=0] (5,6.1) circle (1pt) node[anchor=west,label={[align=center,color=black, opacity=1]below:\small$R_1$}] {};
\filldraw[white, opacity=0] (2.85,6.1) circle (1pt) node[anchor=west,label={[align=center,color=black, opacity=1]below:\small$R_2$}] {};
\filldraw[white, opacity=0] (1.35,5.8) circle (1pt) node[anchor=west,label={[align=center,color=black, opacity=1]below:\small$\ldots$}] {};
\end{tikzpicture}}
  \caption{Joint spectrum of $\opL$ and $\ctDer$ and spectral cut-offs $M_{\delta,j} = \chr_{[1-\delta,1]}(\opL) \, R_{j}$, where $R_j = \chr_{[2^j,2^{j+1})}(2\pi\opL/\ctDer)$ for $j<J_\delta$.}
  \label{fig:figdecompose}
\end{figure}

In the case of Heisenberg groups, a stronger version of the estimate \eqref{eq:introest1}, where $a=1$,
appears in Gorges and M\"uller's paper \cite[Lemmas 7 and 8]{GM}, arising as a replacement for the following Euclidean `dual Sobolev trace inequality'
\begin{equation}\label{eq:crvest}
    \|\chr_{[1-\delta,1]}(\Delta)f\|^2_2\lessapprox\delta\|f\|^2_{L^2(1+|\cdot|)};
\end{equation}
note that \eqref{eq:crvest} is an immediate consequence of the Sobolev trace lemma applied in frequency space, where the norm in the left-hand side of \eqref{eq:crvest} turns into the $L^2$ norm of a function on an annulus of thickness $\delta$, while the norm in the right-hand side becomes the $L^2$ Sobolev norm of order $1/2$ of the function.
The method of Carbery, Rubio de Francia and Vega in the Euclidean case hinges on an estimate such as this \cite[Lemma 3]{CRV}.

In the case of Heisenberg(-type) groups, the proof of the `trace lemmas' \eqref{eq:introest1} and \eqref{eq:introest2} is significantly more complicated than that of \eqref{eq:crvest} in the Euclidean case. Among other things, the group Fourier transform on a noncommutative stratified Lie group has substantially different features from the Euclidean Fourier transform, and describing the effect  on the `Fourier side' of multiplication by a power of the homogeneous norm $|\cdot|$ is not as straightforward as in the Euclidean case, where it can be interpreted as (fractional) differentiation or integration.

The method used by Gorges and M\"uller to prove \eqref{eq:introest1} involves considering negative fractional powers of a difference-differential operator on the Fourier-dual space to the Heisenberg group $H_m$, which corresponds on the group side to the multiplication operator by the function $|z|^2-4iu$. Here we are adopting the usual exponential coordinates $(z,u) \in \CC^m \times \RR$ for the Heisenberg group $H_m$, and we remark that $||z|^2-4iu|^{1/2} = (|z|^4 + 16|u|^2)^{1/4}$ is a homogeneous norm on $H_m$. As it turns out, if one restricts to `radial' functions on $H_m$ (i.e., functions depending only on $|z|$ and $u$), then simple explicit formulas for the Schwartz kernel of these fractional powers can be found \cite[Theorem 11]{GM}, and an application of Schur's Test would readily lead to the estimate \eqref{eq:introest1} for radial functions $f$; a more delicate argument based on complex interpolation allows Gorges and M\"uller to dispense with the radiality constraint and obtain \eqref{eq:introest1} with $a=1$ in full generality.

In the case of Heisenberg-type groups, additional obstacles appear. Here, loosely speaking, $(z,u) \in \CC^m \times \RR^n$, where $n$ may be larger than $1$ (indeed $n>1$ is the case of interest), and the expression $|z|^2-4iu$ no longer makes sense. 
One could consider the function
$|z|^4+16|u|^2$ as a replacement; however, while relatively explicit formulas may be found for the Schwartz kernel of negative fractional powers corresponding to $|z|^4+16|u|^2$, when $n>1$ these formulas become significantly harder to handle.

For this reason, here 
we instead consider the `fractional integration operators' on the Fourier-dual space corresponding to multiplication on the group side by negative powers of $|z|$ and $|u|$, i.e., `pure' first and second layer `weights'. While the resulting formulas remain substantially more complicated than those dealt with by Gorges and M\"uller in the Heisenberg group case, we nevertheless manage to estimate them and deduce \eqref{eq:introest1} with $a=2/3$, as well as \eqref{eq:introest2} with $b=1$. In particular, the formulas for the Schwartz kernels of the fractional powers corresponding to the second-layer weight $|u|$ involve Jacobi polynomials, and our results are ultimately based on the combination of a number of classical and more recent estimates on Jacobi polynomials \cite{Dunster,Haagerup,Krasikov,KKT}.

It is a natural question whether the stronger estimate \eqref{eq:introest1} with $a=1$ can be proved for general H-type groups; this would imply the almost everywhere convergence result in wider range \eqref{eq:gmcond}. There is some evidence that this may actually be possible: indeed, we can prove \eqref{eq:introest1} with $a=1$ in a restricted range of $j$, namely for $j \leq 3J_\delta/4$, and also for $j=J_\delta$. We remark that the case $j \leq 3J_\delta/4$ is dealt with by using pure second-layer weights, while the case $j=J_\delta$ follows by considering pure first-layer weights; this suggests that the missing range $3J_\delta/4 < j < J_\delta$ could perhaps be recovered by exploiting `mixed' weights jointly depending on $z$ and $u$.

Another related question is whether the estimates and machinery developed in this paper can be used to prove a `localisation principle' for Bochner--Riesz means on H-type groups, in the spirit of Carbery and Soria's results in the Euclidean setting \cite{CarberySoria}; recent investigation in this direction in the context of Heisenberg groups can be found in \cite{GargJotsaroop}.

A further question is whether the almost everywhere convergence result from Theorem \ref{thm:mainthm} can be `upgraded' to an $L^p$ boundedness result for the maximal operator $T_*^\lambda$, going beyond the range given by Theorem \ref{thm:mainMM}.
As a matter of fact, in analogy with the Euclidean case \cite{Christ_BochnerRiesz}, it is possible \cite{CLSY} to deduce in great generality $L^p$ boundedness results for the maximal Bochner--Riesz operators associated with a `Laplace-like' operator $\opL$ from the validity of $L^q \to L^2$ restriction estimates of Tomas--Stein type for $\opL$, provided $2 \leq p < q'$. In the case of the Heisenberg groups, however, no nontrivial restriction estimates of this kind hold for the sub-Laplacian $\opL$ \cite{M_restriction}; for H-type groups with higher-dimensional centre, some estimates of Tomas--Stein type do hold \cite{CC_restriction}, but the corresponding $L^p$ boundedness results for $T^\lambda_\maxf$ given by \cite{CLSY} are strictly included in those given by Theorem \ref{thm:mainMM}. This seems to indicate once more that the investigation of Bochner--Riesz means for sub-Laplacians requires substantially new ideas and methods compared to the Euclidean case.

In these respects, it is worth pointing out that, in the Euclidean case, relatively explicit formulas and asymptotics for the convolution kernels of the Bochner--Riesz operators are available, from which one can derive the necessity of the condition \eqref{eq:brconj} for $L^p$ boundedness (see, e.g., \cite{herz} and \cite[\S XI.6.19]{15}) and confirm the sharpness of the result of Carbery, Rubio de Francia and Vega on almost everywhere convergence (cf.\ \cite[pp.\ 320-321]{CarberySoria}). However, already in the case of the Heisenberg groups, very little appears to be known in terms of necessary conditions for almost everywhere convergence or $L^p$ boundedness of Bochner--Riesz means for sub-Laplacians. The techniques introduced in the recent work \cite{MMG_lowerbound} allow one to relate the functional calculus for a sub-Laplacian on a manifold with that for the Laplacian on a Euclidean space of the same topological dimension $D$, and can be used to deduce that, at least for what concerns $L^p$ boundedness, for a sub-Laplacian one cannot go beyond the range \eqref{eq:brconj} with $d=D$. In light of this, one may also ask whether the quantities $Q$ and $Q_*$ in \eqref{eq:gmcond} and \eqref{eq:mainthm_range} can be replaced by $D$. However at this stage we do not know whether these or other improvements are possible, or instead, as in the case of restriction estimates, ``non-Euclidean'' obstructions may subsist.

\subsection*{Structure of the paper}
In Section \ref{s:anaongroups}, we recall basic definitions and results about stratified groups, H-type groups and sub-Laplacians thereon.
 Among other things, we introduce a number of weights we will be working with and see how they interact with convolution (so-called Leibniz rules) and the group Fourier transform.

Section \ref{s:basicresult} shows how the estimates \eqref{eq:general_maximal_ests} can be proved for an arbitrary sub-Laplacian $\opL$ on a stratified group $G$, leading to the proof of Theorem \ref{thm:mainMM}.

In Sections \ref{s:chapsqfunarg} and \ref{s:chapsqfunarg_two}, we restrict to the case of Heisenberg-type groups and we discuss the aforementioned `maximal-to-nonmaximal' reduction, showing in particular that \eqref{eq:weightedmaxopest} essentially reduces to \eqref{eq:weightedopest}.

In Section \ref{s:chapredtotrace}, we 
show how the weighted $L^2$ estimates \eqref{eq:weightedopest} follow from `dual trace lemmas' as discussed above. The trace lemmas are finally proved in Section \ref{s:chaptrace}, thus completing the proof of Theorem \ref{thm:mainthm}. The proof the trace lemmas are based on a number of estimates for Jacobi polynomials that are discussed in Section \ref{s:estforjacobipoly}.

\subsection*{Notation}

We write $\NN_0$ and $\NN$ for the sets of nonnegative and positive integers, respectively; $\RR^+$ denotes the positive half-line $(0,\infty)$. For two quantities $A$ and $B$, the expression $A \lesssim B$ indicates that there exists a constant $C>0$ such that $A \leq C B$; we also write $A \lesssim_p B$ to indicate that the implicit constant $C$ may depend on the parameter $p$. Moreover $A \simeq B$ is the conjunction of $A \lesssim B$ and $B \lesssim A$

\section{Analysis on stratified and H-type groups}\label{s:anaongroups}
\subsection{Stratified groups and sub-Riemannian structure}
We briefly recall a number of standard definitions and results. For details, we refer the reader to \cite{8,Goodman,19}.

A stratified group $G$ is a connected, simply connected nilpotent Lie group whose Lie algebra $\lie{g}$ is \emph{stratified}, i.e.,
\begin{equation}\label{eq:gradation}
\lie{g}=\bigoplus_{j=1}^{k}\lie{g}_j
\end{equation}
for certain subspaces $\lie{g}_1,\ldots,\lie{g}_k$ of $\lie{g}$, called \emph{layers}, such that
\[
[\lie{g}_a,\lie{g}_b]\subseteq\lie{g}_{a+b}
\]
for all $a,b=1,\dots,k$ (here $\lie{g}_a=\{0\}$ for $a>k$) and the first layer $\lie{g}_1$ generates $\lie{g}$ as a Lie algebra; if $\lie{g}_k \neq \{0\}$, we say that $\lie{g}$ and $G$ have \emph{step} $k$. Via the exponential map we may and shall normally identify a stratified Lie group $G$ with its Lie algebra $\lie{g}$. Group multiplication on $G$ is then given by the Baker--Campbell--Hausdorff formula,
\[
xy=x+y+\frac{1}{2}[x,y]+\dots,
\]
which due to nilpotency is a finite sum, while group inversion is simply given by
\[
x^{-1} = -x,
\]
and any Lebesgue measure on $\lie{g}$ is a (left and right) Haar measure on $G$.

The choice of a Haar measure on a stratified group $G$ allows us to define Lebesgue spaces $L^p(G)$ for $1 \leq p \leq \infty$. As it is known, $L^1(G)$ is a Banach $*$-algebra with respect to convolution and involution, given by
\[
f*g(x) \defeq \int_G f(y) \, g(y^{-1} x) \,dy, \qquad f^*(x) \defeq \overline{f(x^{-1})}
\]
for almost all $x \in G$ and $f,g \in L^1(G)$. We record here the useful identity
\begin{equation}\label{eq:conv_adjoint}
\langle f,g * h\rangle = \langle f * h^*, g\rangle,
\end{equation}
where $\langle \cdot, \cdot \rangle$ denotes the $L^2$ inner product, i.e.,
\[
\langle f,g \rangle = \int_G f(x) \, \overline{g(x)} \,dx.
\]
We will also consider weighted $L^p$ spaces on $G$; for a locally integrable nonnegative function $w : G \to \RR$, we will normally write $L^p(w)$ in place of $L^p(G,w(x)\,dx)$.

If we write $x \in G \cong \lie{g}$ as $(x_1,\dots,x_k)$ according to the decomposition \eqref{eq:gradation}, automorphic dilations $\delta_r$ ($r \in \RR^+$) on $\lie{g}$ and $G$ are defined by setting
\begin{equation}\label{eq:aut_dil}
\delta_r(x_1,\dots,x_k) = (r^1 x_1,\dots,r^k x_k).
\end{equation}
With respect to these dilations, the Haar measure scales according to the dimensional parameter $\hdim$ given by
\[
\hdim \defeq \sum_{j=1}^k j \dim(\lie{g}_j),
\]
called the \emph{homogeneous dimension} of $G$. We also define $\tdim$ to be the \emph{topological dimension} of $G$ given by
\[
\tdim \defeq \sum_{j=1}^k \dim(\lie{g}_j).
\]

Since the first layer $\lie{g}_1$ generates $\lie{g}$ as a Lie algebra, the choice of an inner product on $\lie{g}_1$ determines a left-invariant sub-Riemannian structure on $G$ and a corresponding Carnot--Carath\'eodory distance $d$. By left-invariance we have actually
\[
d(x,y) = |y^{-1} x|
\]
for a nonnegative proper continuous function $|\cdot| : G \to \RR$, which is but one example of a (subadditive) \emph{homogeneous norm} on $G$, since it satisfies
\[
|xy| \leq |x|+|y|, \qquad |\delta_r x| = r|x|,
\]
for all $x,y \in G$ and $r \in \RR^+$, and in particular
\[
|x| \simeq \sum_{j=1}^k |x_j|^{1/j}.
\]
In what follows we will write $B(x,r)$ and $\overline{B}(x,r)$ to denote the open and closed balls associated with the Carnot--Carath\'eodory distance.

Due to left-invariance and homogeneity,
\[
|B(x,r)| = r^Q |B(0,1)|
\]
for all $r \in \RR^+$ and $x \in G$, where $|B(x,r)|$ denotes the Haar measure of $B(x,r)$. In particular, $G$ with the Carnot--Carath\'eodory distance $d$ and the Haar measure is a doubling metric measure space, with `doubling dimension' $\hdim$, and the theory of singular integrals and weights on spaces of homogeneous type can be applied to $G$. In particular, the
Hardy--Littlewood maximal operator $M$ on $G$, given by 
\[
Mf(x) \defeq \sup_{r>0} r^{-\hdim} \int_{|x| \leq r}|f(xy)| \,dy,
\]
is of weak type $(1,1)$ and bounded on $L^p(G)$ for $p \in (1,\infty]$. This implies the following boundedness result for maximal operators \cite[Corollary 2.5]{8}, where, for any function $f : G \to \CC$ and $r>0$, we denote by $\fDil_r f$ the function given by
\begin{equation}\label{eq:dilationdef}
\fDil_r f(x)= r^{-\hdim/2} f(\delta_{r^{-1/2}}(x)).
\end{equation}

\begin{lem}\label{lem:stein}
Let $G$ be a stratified Lie group with a sub-Riemannian structure, and $Q$ be its homogeneous dimension. Let $K : G \to \CC$ be a measurable function satisfying the estimate
\[
|K(x)|\leq\frac{C}{(1+|x|)^{\hdim+\epsilon}}
\]
for some $C,\epsilon > 0$. Let $T^\maxf$ be the operator defined by
\[
T^\maxf f(x) \defeq \sup_{r>0}|f * (\fDil_r K)(x)|,
\]
Then, for all $x \in G$,
\[
T^\maxf f(x) \lesssim_\epsilon C Mf(x).
\]
In particular,
\[
\|T^\maxf f \|_p \lesssim_{\epsilon,p} C \|f\|_p
\]
for all $p \in (1,\infty]$.
\end{lem}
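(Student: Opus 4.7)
The plan is to establish the pointwise bound $T^\maxf f(x) \lesssim_\epsilon C\, Mf(x)$; the $L^p$ bound then follows immediately from the boundedness of the Hardy--Littlewood maximal operator $M$ on $L^p(G)$ for $p \in (1,\infty]$, which is available on any space of homogeneous type, in particular on $G$.

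To prove the pointwise bound, first I would unwind the definitions. For fixed $r>0$, setting $s = r^{1/2}$, the convolution reads
\[
f * (\fDil_r K)(x) = s^{-\hdim} \int_G f(y) \, K\bigl(\delta_{s^{-1}}(y^{-1} x)\bigr) \,dy,
\]
and after substituting $z = \delta_{s^{-1}}(y^{-1}x)$, which rescales Haar measure by $s^{\hdim}$, one obtains
\[
f * (\fDil_r K)(x) = \int_G f\bigl(x \cdot \delta_s(z)^{-1}\bigr) \, K(z) \,dz.
\]
In particular, this representation is independent of the scaling used to define $\fDil_r$, which is the whole reason the dilation is $L^1$-normalised.

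Next I would insert the kernel bound $|K(z)| \leq C(1+|z|)^{-\hdim-\epsilon}$ and perform a dyadic decomposition of the $z$-integral. Writing $A_0 = \{|z|\leq 1\}$ and $A_k = \{2^{k-1} < |z| \leq 2^k\}$ for $k \geq 1$, on $A_k$ one has $(1+|z|)^{-\hdim-\epsilon} \lesssim 2^{-k(\hdim+\epsilon)}$, so
\[
|f * (\fDil_r K)(x)| \lesssim C \sum_{k=0}^\infty 2^{-k(\hdim+\epsilon)} \int_{|z|\leq 2^k} \bigl|f\bigl(x \cdot \delta_s(z)^{-1}\bigr)\bigr| \,dz.
\]
Now reverse the earlier substitution by setting $w = \delta_s(z)^{-1}$; since $w \mapsto w^{-1}$ is an isometry for the homogeneous norm and preserves Haar measure (as $G$ is unimodular), and $\delta_s$ multiplies Haar measure by $s^{\hdim}$, the inner integral is
\[
\int_{|z|\leq 2^k} \bigl|f(x w)\bigr| \,dz = s^{-\hdim} \int_{|w|\leq s\cdot 2^k} |f(xw)| \,dw \leq (2^k)^{\hdim} \, Mf(x),
\]
by the definition of $Mf$ given just before the lemma (the factor of $s^{-\hdim}$ cancels with the radius term $(s\cdot 2^k)^{\hdim}$, leaving the $s$-independent bound $(2^k)^{\hdim} Mf(x)$, which is the key point).

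Plugging this in gives
\[
|f * (\fDil_r K)(x)| \lesssim C \, Mf(x) \sum_{k=0}^\infty 2^{-k(\hdim+\epsilon)} \cdot 2^{k\hdim} = C\,Mf(x) \sum_{k=0}^\infty 2^{-k\epsilon} \lesssim_\epsilon C\,Mf(x).
\]
Since the right-hand side is independent of $r$, taking the supremum over $r>0$ yields $T^\maxf f(x) \lesssim_\epsilon C\, Mf(x)$, and the $L^p$ bound follows at once. No step is particularly delicate; the only thing to be careful about is the bookkeeping of the scaling parameter $s$ in the two changes of variable, which must conspire to produce a bound that is independent of $r$—this is exactly what the $L^1$-normalisation of $\fDil_r$ ensures.
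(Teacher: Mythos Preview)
Your argument is correct: the dyadic decomposition of the kernel followed by the change of variables reduces each shell to an average over a ball of radius $s\cdot 2^k$, which is dominated by $(2^k)^{\hdim} Mf(x)$ uniformly in $r$, and the resulting geometric series converges. The paper itself does not prove this lemma but simply cites \cite[Corollary 2.5]{8} (Folland--Stein); your write-up is essentially the standard proof of that result, so there is nothing to contrast.
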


Recall that a weight on $G$ is a nonnegative locally integrable function $w : G \to \RR$. The Muckenhoupt class $A_2(G)$ is the set of weights on $G$ for which the Hardy--Littlewood maximal function on $G$ is bounded on $L^2(w)$; an equivalent characterisation is that $w\in A_2(G)$ if and only if
\begin{equation}\label{eq:a2weight}
   \sup_{\substack{x\in G \\ r>0}} r^{-2\hdim} \int_{B(x,r)} w(y) \,dy \int_{B(x,r)} w(y)^{-1} \,dy < \infty
\end{equation}
\cite{15,ST}. Then we have the following result
(cf.\ \cite[Chapter V]{15}).

\begin{lem}\label{lem:A2weights}
Let $G$ be any stratified group and let $|\cdot|$ be a homogeneous norm on $G$. Then the weights $|\cdot|^a$ and $(1+|\cdot|)^{a}$ are in $A_2(G)$ for $|a|<\hdim$. In addition, if $\fstW : G \to \RR$ is defined by $\fstW(x_1,\dots,x_k) = |x_1|$ for any norm on $\lie{g}_1$, then the weights $\fstW^a$ and $(1+\fstW)^a$ are in $A_2(G)$ for $|a|<\dim \lie{g}_1$.
\end{lem}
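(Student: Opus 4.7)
The plan is to verify the $A_2$ condition \eqref{eq:a2weight} directly by a dichotomy, for each ball $B(x,r)$, between the case where the ball avoids the ``singular/vanishing set'' of the weight and the case where it essentially contains it. For the homogeneous weight $|\cdot|^a$ with $|a|<\hdim$, if $|x|>2r$ then the triangle inequality for the homogeneous norm gives $|y|\simeq|x|$ for every $y\in B(x,r)$, so the weight is essentially constant on the ball and the $A_2$ quotient is $O(1)$. If instead $|x|\leq 2r$, then $B(x,r)\subseteq B(0,3r)$, and applying the automorphic dilation $\delta_{3r}$ (under which the Haar measure scales by $r^\hdim$ and $|\cdot|$ by $r$) reduces each integral factor to $\int_{B(0,1)}|z|^{\pm a}\,dz$, which converges precisely when $|a|<\hdim$. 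This yields $\int_{B(x,r)}|y|^{\pm a}\,dy\lesssim r^{\hdim\pm a}$, whose product divided by $r^{2\hdim}$ is uniformly bounded. The inhomogeneous variant $(1+|\cdot|)^a$ follows by the same dichotomy, with an additional sub-case when $r\leq 1$ in which $(1+|y|)\simeq 1$ on $B(x,r)$; in the remaining sub-case $r>1$ one decomposes $B(x,r)$ into its intersections with $B(0,1)$ and its complement, using $1+|y|\simeq|y|$ on the latter to reduce to the previous estimate.

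For the first-layer weight $\fstW^a$ with $|a|<\dim\lie{g}_1$, the key input is that the Baker--Campbell--Hausdorff formula implies that the projection $\pi_1\colon G\to\lie{g}_1$ onto the first layer is an additive group homomorphism, and the layerwise estimate $|z_j|\lesssim|z|^j$ implies that $\pi_1$ is Lipschitz from $(G,d)$ to Euclidean $\lie{g}_1$. In particular $|y_1-x_1|\lesssim r$ for every $y\in B(x,r)$. I would again split on $|x_1|>Cr$ versus $|x_1|\leq Cr$: in the former case $\fstW(y)\simeq|x_1|$ throughout the ball, and the $A_2$ quotient is immediately $O(1)$. In the latter case I would apply Fubini along the decomposition $y=y_1+y'$ with $y'\in\bigoplus_{j\geq 2}\lie{g}_j$, estimating the slice measure $|\{y'\tc (y_1,y')\in B(x,r)\}|\lesssim r^{\hdim-\dim\lie{g}_1}$, which follows from $|z_j|\lesssim r^j$ for $z\in B(0,r)$ together with the fact that left translation on $G$ is a unipotent polynomial diffeomorphism with unit Jacobian. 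This produces
\[
\int_{B(x,r)}\fstW(y)^{\pm a}\,dy\lesssim r^{\hdim-\dim\lie{g}_1}\int_{|y_1|\lesssim r}|y_1|^{\pm a}\,dy_1\lesssim r^{\hdim\pm a}
\]
provided $|a|<\dim\lie{g}_1$, from which the $A_2$ bound is obtained exactly as before. The inhomogeneous version $(1+\fstW)^a$ is handled by combining this slicing argument with the small-$r$/large-$r$ dichotomy used for $(1+|\cdot|)^a$.

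I expect the main technical obstacle to be the slice measure estimate just mentioned: controlling the Euclidean measure, for fixed $y_1$, of the set of higher-layer components $y'$ making $y\in B(x,r)$, where $B(x,r)$ is a Carnot--Carath\'eodory ball that does not respect the layer decomposition in any simple way. This is where one must carefully combine the layerwise homogeneity bound $|z_j|\lesssim r^j$ with the unipotent structure of left translation to legitimise the Fubini computation; once this is in hand, the remaining estimates are routine scaling calculations.
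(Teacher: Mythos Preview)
Your proposal is correct and constitutes a complete direct verification of the $A_2$ condition. The paper itself does not give a proof of this lemma: it simply states the result with a reference to \cite[Chapter V]{15} (Stein's book). Your argument is the standard one and supplies the details the paper omits; in particular, your handling of the first-layer weight via the homomorphism property of the first-layer projection, the Lipschitz bound $|\pi_1(z)|\lesssim |z|$, and the slice-measure estimate coming from the unipotent (triangular) structure of left translation in exponential coordinates is exactly the right mechanism, and the integrability thresholds $|a|<\hdim$ and $|a|<\dim\lie{g}_1$ fall out precisely where they should.
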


\subsection{Sub-Laplacians and their functional calculus}

Let $G$ be a stratified group with a sub-Riemannian structure as before.
Recall that the Lie algebra of $G$ may also be thought of as the space of left-invariant vector fields on $G$. If we take an orthonormal basis $X_1,\ldots,X_d$ of $\lie{g}_1$, then we define the sub-Laplacian $\opL$ on $G$ as
\[
\opL \defeq -\sum_{j=1}^d X_j^2.
\]
It can be shown that $\opL$ does not depend on the choice of the orthonormal basis.

We may also consider the sub-Laplacian $\opL$ via its spectral decomposition. One can show that $\opL$ is positive and (essentially) self-adjoint on $L^2(G)$, with core the Schwartz class $\Sch(G)$ of rapidly decaying functions on $G$. Hence, $\opL$ has a spectral decomposition 
\begin{equation}\label{eq:sp_dec}
\opL=\int_0^\infty \lambda \,dE(\lambda).
\end{equation}
We can then define a functional calculus for $\opL$ by defining operators 
\[
F(\opL) \defeq \int_0^\infty F(\lambda) \,dE(\lambda)
\]
for all bounded Borel functions $F : \RR \to \CC$. Such operators $F(\opL)$ are left-invariant and so are convolution operators; that is, there exists $K \in\Sch'(G)$ such that $F(\opL)f = f * K$. By homogeneity
then we have the following result \cite[Lemma 6.29]{8}.

\begin{lem}\label{lem:kerneldilation}
Let $G$ be a stratified Lie group and $\opL$ be a sub-Laplacian. Let $F : \RR \to \CC$ be a bounded Borel function and let $K$ denote the convolution kernel of $F(\opL)$. Then, for all $r>0$,
\begin{equation}\label{eq:kerneldilationeq}
 F(r\opL) f = f * (\fDil_r K) = \fDil_r F(\opL) \fDil_{r^{-1}} f.
\end{equation}
\end{lem}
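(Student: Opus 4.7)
The plan is to establish the intertwining relation $\fDil_r \opL \fDil_{r^{-1}} = r\opL$ at the operator level, and then derive both equalities in \eqref{eq:kerneldilationeq} by combining the functional calculus of $\opL$ with the convolution representation of $F(\opL)$.

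First I would observe that each $X_j \in \lie{g}_1$ is homogeneous of degree $1$ under the automorphic dilations: since $\delta_s$ is a group automorphism acting by $s$ on $\lie{g}_1$, the chain rule and left-invariance yield $X_j(f\circ\delta_s) = s\,(X_j f)\circ\delta_s$ for all sufficiently smooth $f$. Squaring and summing gives $\opL(f\circ\delta_s) = s^2\,(\opL f)\circ\delta_s$, and plugging $s = r^{-1/2}$ into the definition \eqref{eq:dilationdef} of $\fDil_r$ this rearranges to
\[
\opL \fDil_r = r^{-1} \fDil_r \opL
\]
on the Schwartz class $\Sch(G)$, which is a core for $\opL$.

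Next, since $\fDil_r$ is a unitary operator on $L^2(G)$ preserving $\Sch(G)$, the conjugate $\fDil_r \opL \fDil_{r^{-1}}$ is essentially self-adjoint on $\Sch(G)$ and, by the above, coincides there with $r\opL$. Hence $\fDil_r \opL \fDil_{r^{-1}} = r\opL$ as self-adjoint operators, and unitary conjugation transforms the spectral resolution \eqref{eq:sp_dec} of $\opL$ into that of $r\opL$. This yields
\[
F(r\opL) = \fDil_r F(\opL) \fDil_{r^{-1}}
\]
for every bounded Borel function $F$, which is the second equality in \eqref{eq:kerneldilationeq}.

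For the first equality, I would unfold $\fDil_r F(\opL) \fDil_{r^{-1}} f$ using that $F(\opL)g = g * K$, and then perform the substitution $z = \delta_{r^{1/2}} y$ in the convolution integral. The two ingredients that make this substitution work are that $\delta_{r^{-1/2}}$ is a group automorphism, so $\delta_{r^{-1/2}}(y^{-1}x) = \delta_{r^{-1/2}}(y)^{-1}\,\delta_{r^{-1/2}}(x)$, and that the Jacobian of $\delta_{r^{1/2}}$ equals $r^{\hdim/2}$. Collecting the powers of $r$ coming from $\fDil_r$, $\fDil_{r^{-1}}$ and the Jacobian, the expression rearranges to $(f * \fDil_r K)(x)$, giving the first equality in \eqref{eq:kerneldilationeq}.

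The only mildly delicate point is the transfer of the intertwining relation from the core $\Sch(G)$ to the full functional calculus in the second step; this is however standard, since unitary conjugation of self-adjoint operators acts covariantly on spectral projections, and from there on every bounded Borel function. The remaining work reduces to careful bookkeeping of dilation factors, so I do not anticipate a serious obstacle.
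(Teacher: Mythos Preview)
Your argument is correct and is precisely the standard homogeneity computation underlying this lemma. The paper itself does not give a proof but simply cites \cite[Lemma 6.29]{8}; your write-up is essentially what that reference contains, so there is no meaningful difference in approach.
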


Here we briefly recall a number of results concerning the functional calculus of sub-Laplacians $\opL$ on stratified groups.
A property of the sub-Laplacian $\opL$ which we will use is the `finite propagation speed' of solutions of the associated wave equation (see, e.g., \cite{14,20}).

\begin{lem}\label{lem:finitepropspeed}
Let $G$ be a stratified group and $\opL$ be a sub-Laplacian. For $t\in\RR$, let $K_t$ denote the convolution kernel of the operator $\cos(t\sqrt{\opL})$. Then 
\[
\supp(K_t)\subseteq \overline{B}(0,|t|).
\]
\end{lem}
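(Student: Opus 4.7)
The plan is to reduce the kernel-support statement to the finite-speed-of-propagation property of the wave equation associated with $\opL$, and then to prove the latter via a local energy estimate. For $f \in \Sch(G)$, the function $u(s,\cdot) \defeq \cos(s\sqrt{\opL}) f$ is a (weak) solution of the wave equation $\partial_s^2 u + \opL u = 0$ with Cauchy data $(f,0)$, defined via the spectral resolution \eqref{eq:sp_dec}. Since $\cos(t\sqrt{\opL})$ is a left-invariant convolution operator, the statement $\supp K_t \subseteq \overline{B}(0,|t|)$ is equivalent to the following `cone of dependence' property: for every $x_0 \in G$ and every $f \in \Sch(G)$ vanishing on the open ball $B(x_0,|t|)$, one has $u(t,x_0) = 0$.

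I would prove this by a standard energy argument. Fix $x_0$ and $R > 0$, and for $0 \leq s < R$ consider the truncated energy
\[
E(s) \defeq \int_{B(x_0,R-s)} \Bigl(|\partial_s u(s,y)|^2 + \sum_{j=1}^d |X_j u(s,y)|^2\Bigr)\,dy.
\]
Differentiating in $s$, using $\partial_s^2 u = -\opL u = \sum_j X_j^2 u$, the interior contribution cancels after an integration by parts, and one is left with a flux term supported on $\partial B(x_0,R-s)$. The decisive geometric input is that the Carnot--Carath\'eodory distance $\fstZ_{x_0} \defeq d(x_0,\cdot)$ is horizontally $1$-Lipschitz, that is, $\sum_j |X_j \fstZ_{x_0}|^2 \leq 1$ almost everywhere. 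A routine Cauchy--Schwarz application using this bound shows that the flux term is dominated by the interior integrand, yielding $E'(s) \leq 0$. Since $f \equiv 0$ on $B(x_0,R)$ and $\partial_s u(0,\cdot) \equiv 0$, we have $E(0) = 0$, hence $E(s) = 0$ for all $s \in [0,R)$; together with the vanishing Cauchy data this forces $u(s,\cdot) \equiv 0$ on $B(x_0,R-s)$. Choosing $R = |t|+\epsilon$ and letting $\epsilon \to 0^+$ gives $u(t,x_0)=0$, as required. To justify the computations rigorously, one first smooths the initial datum via the heat semigroup $e^{-\tau\opL}$ (which commutes with $\cos(s\sqrt{\opL})$), applies the energy identity to the resulting smooth solution, and then passes to the limit $\tau \to 0^+$.

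The main obstacle I expect is justifying the horizontal Lipschitz bound $\sum_j |X_j \fstZ_{x_0}|^2 \leq 1$, since in the sub-Riemannian setting $\fstZ_{x_0}$ is typically not smooth and can fail to be $C^1$ on the cut locus, so a direct pointwise computation is unavailable. The standard way around this is to use the sub-unit characterisation of $d$: if $\gamma$ is an absolutely continuous horizontal curve with $\dot\gamma(\sigma) = \sum_j a_j(\sigma) X_j(\gamma(\sigma))$ and $\sum_j a_j(\sigma)^2 \leq 1$ for almost every $\sigma$, then $d(\gamma(0),\gamma(s)) \leq |s|$. Testing $\fstZ_{x_0}$ against such curves yields the desired gradient estimate almost everywhere, which is sufficient for the integration-by-parts argument above and hence for the support statement.
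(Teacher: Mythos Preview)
The paper does not give its own proof of this lemma: it is stated as a known fact with citations to \cite{14} (Melrose) and \cite{20} (Sikora). Your energy-method sketch is a legitimate and standard route to the result, and the overall strategy---reduce the kernel-support claim to a cone-of-dependence statement, then prove the latter by showing that a localized energy is nonincreasing---is correct.

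One technical point deserves more care than your sketch gives it. You differentiate $E(s)$ in $s$, integrate by parts, and speak of a ``flux term supported on $\partial B(x_0,R-s)$''. In the sub-Riemannian setting the boundary of a Carnot--Carath\'eodory ball need not be a smooth (or even $C^1$) hypersurface, so the usual divergence theorem/coarea argument is not directly available. The clean fix is to avoid the sharp cutoff altogether: replace the characteristic function of $B(x_0,R-s)$ by a Lipschitz weight of the form $\phi\bigl((R-s-d(x_0,\cdot))_+\bigr)$ (or simply $(R-s-d(x_0,\cdot))_+$), and compute $\tfrac{d}{ds}$ of the corresponding weighted energy. The derivative then produces a nonnegative term from $\partial_s$ of the weight, and integration by parts against the smooth factor $X_j u$ throws the vector field onto the Lipschitz weight; the resulting term is controlled by the a.e.\ bound $\sum_j |X_j d(x_0,\cdot)|^2 \leq 1$ exactly as you indicate. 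This is essentially the argument in \cite{20} and avoids any appeal to boundary regularity. With that adjustment your sketch goes through.
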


Another important property is that, if $F : \RR \to \CC$ is in the Schwartz class, then the convolution kernel of the operator $F(\opL)$ is in the Schwartz class on $G$ \cite{17}. A particular instance of this result is stated below in a quantitative form.

\begin{lem}\label{lem:kernelestimate}
Let $\opL$ be a sub-Laplacian on a stratified group $G$. Then there exists $k \in \NN$ such that, if an even function $F : \RR \to \RR$ satisfies
\[
\| F \|^*_{k} \defeq \sup_{\substack{\lambda\in\RR^+\\j=0,\ldots,k}} (1+\lambda)^k |F^{(j)}(\lambda)| < \infty
\]
then the convolution kernel $K$ of the operator $F(\sqrt{\opL})$ satisfies the estimate
\begin{equation}\label{eq:decayestimate}
|K(x)|\lesssim \frac{\|F\|^*_{k}}{(1+|x|)^{\hdim+1}}
\end{equation} 
for all $x \in G$, where the implicit constant does not depend on $F$.
\end{lem}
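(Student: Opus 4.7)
The plan is to combine the Fourier inversion formula for even $F$ with the finite propagation speed of the wave operator $\cos(\tau\sqrt{\opL})$ (Lemma \ref{lem:finitepropspeed}), and to extract pointwise bounds via a Sobolev-type embedding for the sub-Laplacian together with the Plancherel formula on $G$. Extending $F$ evenly to $\RR$, for $k$ sufficiently large so that $F\in L^1(\RR)$ we may write
\[
F(\sqrt{\opL})=\frac{1}{2\pi}\int_{\RR}\hat F(\tau)\cos(\tau\sqrt{\opL})\,d\tau,
\]
and repeated integration by parts in $\tau$, using the assumed decay of $F^{(j)}$, yields $|\hat F^{(l)}(\tau)|\lesssim \|F\|^*_k(1+|\tau|)^{-(k-l)}$ for $0\leq l\leq k-2$.

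For $|x|\leq 1$, I would bound $|K(x)|$ by $\|K\|_\infty$, and use the Sobolev-type embedding $\|u\|_\infty\lesssim \|(\opL+I)^{N/2}u\|_2$ valid for $N>\hdim/2$ (which follows from the embeddings of \cite{10} combined with the spectral calculus), together with the Plancherel bound
\[
\|(\opL+I)^{N/2}K\|_2^2\lesssim \int_0^\infty (1+\mu^2)^N|F(\mu)|^2\mu^{\hdim-1}\,d\mu,
\]
which is controlled by $\|F\|^{*\,2}_k$ once $k>N+\hdim/2$.

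For $|x|=R>1$, fix an even cutoff $\chi\in C_c^\infty(\RR)$ with $\chi\equiv 1$ on $[-1/4,1/4]$ and $\supp\chi\subseteq(-1/2,1/2)$, and decompose $F=F_\flat+F_\sharp$ by $\hat F_\flat(\tau)=\chi(\tau/R)\hat F(\tau)$. Since $\hat F_\flat$ is supported in $|\tau|<R/2$, Lemma \ref{lem:finitepropspeed} implies that the convolution kernel of $F_\flat(\sqrt{\opL})$ is supported in $\overline{B}(0,R/2)$, hence vanishes at $x$; therefore $K(x)=K_\sharp(x)$. Applying the Sobolev--Plancherel bound to $K_\sharp$, matters reduce to proving
\[
\int_0^\infty (1+\mu^2)^N|F_\sharp(\mu)|^2\mu^{\hdim-1}\,d\mu\lesssim \|F\|^{*\,2}_k\,R^{-2(\hdim+1)}.
\]
Integrating by parts $j$ times in $\tau$ in the definition of $F_\sharp$ and using the support and derivative bounds $|\chi^{(m)}(\cdot/R)|\lesssim R^{-m}$ on $[R/4,R/2]$, together with the decay estimates for $\hat F^{(l)}$ above, yields $|\mu^j F_\sharp(\mu)|\lesssim \|F\|^*_k R^{1-k+j}$ for $0\leq j\leq k-2$; equivalently $|F_\sharp(\mu)|\lesssim \|F\|^*_k R^{1-k}$ for $\mu\leq R$ and $|F_\sharp(\mu)|\lesssim \|F\|^*_k R^{-1}\mu^{-(k-2)}$ for $\mu\geq R$. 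Substituting these into the Plancherel integral and splitting at $\mu=R$, a direct computation gives the required $R^{-2(\hdim+1)}$ provided $k$ is chosen large enough in terms of $N$ and $\hdim$ (roughly $k\gtrsim N+3\hdim/2$).

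The main technical obstacle will be organising the integration-by-parts bookkeeping to extract \emph{simultaneously} the $R^{-(\hdim+1)}$ decay in space and the $\mu^{-M}$ decay needed to integrate against the Plancherel measure $\mu^{\hdim-1}\,d\mu$; all of the constraints on $k$ combine into a single lower bound on $k$ depending only on $\hdim$, which is precisely the (unspecified) $k$ asserted by the lemma.
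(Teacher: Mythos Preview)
Your approach is correct and genuinely different from the paper's. The paper proves this lemma in one line by citing Hulanicki's Schwartz functional calculus for Rockland operators (Lemmas 1.2 and 2.4 of \cite{17}), together with Whitney's theorem \cite{18} that an even smooth function $F$ can be written as $G(\lambda^2)$ with $G$ smooth (so that $F(\sqrt{\opL})=G(\opL)$ falls within the scope of \cite{17}). In other words, the paper treats the result as a black box.

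Your argument, by contrast, is constructive: you combine the Fourier inversion formula for even $F$, finite propagation speed (Lemma~\ref{lem:finitepropspeed}), and a Sobolev--Plancherel estimate. This is a well-known alternative route (in the spirit of Davies--Sikora style arguments), and it has the advantage of being self-contained modulo two standard facts about stratified groups that you should make explicit: (i) the Sobolev embedding $\|u\|_\infty\lesssim\|(\opL+I)^{N/2}u\|_2$ for $N>\hdim/2$, and (ii) the Plancherel density identity $\|K_H\|_2^2=C\int_0^\infty|H(\lambda)|^2\lambda^{\hdim/2-1}\,d\lambda$ for the kernel of $H(\opL)$, which follows from the homogeneity encoded in Lemma~\ref{lem:kerneldilation}. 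Both are standard, but neither is stated in the paper, so you would need to supply references or short arguments. Your integration-by-parts bookkeeping is sound; in fact your bound $|\hat F^{(l)}(\tau)|\lesssim\|F\|^*_k(1+|\tau|)^{-(k-l)}$ is slightly weaker than what the hypotheses give (one can get $(1+|\tau|)^{-k}$ for $l\le k-2$), but your version suffices.

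The trade-off: the paper's proof is shorter but opaque; yours is longer but exhibits clearly why finite propagation speed (already used later in Lemma~\ref{lem:fps_dec}) is the mechanism behind kernel decay.
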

\begin{proof}
This is an immediate consequence of \cite[Lemmas 1.2 and 2.4]{17} and \cite{18}.
\end{proof}

Note that the estimate \eqref{eq:decayestimate} implies integrability of $K$. A number of works have been devoted to determining the minimal smoothness requirement on a compactly supported $F : \RR \to \CC$ so that the convolution kernel of the operator $F(\opL)$ is integrable (see \cite{MM_gafa,MMG_lowerbound} and references therein). We recall here the definition of $\thr(\opL)$ from \cite{MM_gafa} as the infimum of all the $s_0 \in \RR^+$ such that, for all $s>s_0$ and all $F : \RR \to \CC$ supported in $[-1,1]$,
\begin{equation}\label{eq:sharpthreshold}
\|K\|_{L^1(G)} \lesssim_s \|F\|_{L^2_s(\RR)}
\end{equation}
where $K$ is the convolution kernel of $F(\opL)$, and $L^2_s(\RR)$ is the $L^2$ Sobolev space on $\RR$ of (fractional) order $s$. As mentioned in the introduction, $D/2 \leq \thr(\opL) \leq Q/2$ for all stratified groups $G$ and sub-Laplacians $\opL$, and the equality $\thr(\opL)=D/2$ is known to hold for a number of $2$-step stratified groups, including the H-type groups.

The next lemma regards weighted $L^2$ boundedness of a square function associated to a Littlewood--Paley decomposition for a sub-Laplacian.
The result is analogous to Euclidean results found in, for example, \cite{15}; a proof in our setting can be derived, e.g., from the results of \cite{KW_paleylittlewood} and \cite{stempak}.

\begin{lem}\label{lem:weightslem3} 
Let $\opL$ be a sub-Laplacian on a stratified group $G$. Let $\varphi \in C_c^\infty(\RR^+)$ be such that 
\[
\sum_{l\in\ZZ} \varphi(2^{-l} \lambda) = 1 \quad \text{ for all } \lambda>0,
\]
and let $\omega\in A_2(G)$. Then
\begin{equation}\label{eq:weights1}
    \sum_{l\in\ZZ} \|\varphi(2^{-l} \opL)f\|^2_{L^2(\omega)}\simeq\|f\|^2_{L^2(\omega)}
\end{equation}
for all $f \in L^2(\omega)$, where the implicit constants may depend on $\varphi$ and $\omega$.
\end{lem}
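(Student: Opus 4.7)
The equivalence \eqref{eq:weights1} is a weighted Littlewood--Paley decomposition for $\opL$, and my plan is to derive it by combining Khintchine's inequality with the Calder\'on--Zygmund theory on the space of homogeneous type $(G,d,dx)$, along the same lines as the references \cite{KW_paleylittlewood,stempak}.

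First, by Khintchine's inequality, averaging over independent Rademacher signs $\epsilon=(\epsilon_l)_{l\in\ZZ}\in\{\pm1\}^\ZZ$ yields
\[
\sum_{l\in\ZZ} \|\varphi(2^{-l}\opL)f\|_{L^2(\omega)}^2 \simeq \mathrm{Ave}_{\epsilon}\,\Bigl\| \sum_{l\in\ZZ} \epsilon_l\,\varphi(2^{-l}\opL)f \Bigr\|_{L^2(\omega)}^2,
\]
so the upper bound in \eqref{eq:weights1} reduces to the uniform estimate $\|T_\epsilon\|_{L^2(\omega)\to L^2(\omega)}\lesssim 1$ for the operator $T_\epsilon\defeq \sum_l \epsilon_l\,\varphi(2^{-l}\opL)$. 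For the matching lower bound, I would choose an auxiliary $\tilde\varphi\in C_c^\infty(\RR^+)$ satisfying $\tilde\varphi\,\varphi=\varphi$ and use the spectral identity $\langle f,g\rangle = \sum_l\langle \varphi(2^{-l}\opL)f,\tilde\varphi(2^{-l}\opL)g\rangle$ together with Cauchy--Schwarz and the upper bound applied to $\tilde\varphi$ on $L^2(\omega^{-1})$, using that $\omega^{-1}\in A_2(G)$ whenever $\omega\in A_2(G)$.

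Next, $T_\epsilon$ is a left-invariant convolution operator whose kernel, by Lemma \ref{lem:kerneldilation}, is $K_\epsilon = \sum_l \epsilon_l \fDil_{2^{-l}} K_0$, where $K_0$ is the convolution kernel of $\varphi(\opL)$. Since $\varphi\in C_c^\infty(\RR^+)$, the function $\lambda\mapsto\varphi(\lambda^2)$ is an even Schwartz function on $\RR$, and applying Lemma \ref{lem:kernelestimate} to it (together with its products with polynomials, which remain even and Schwartz and which allow one to pick up left-invariant derivatives of the kernel) yields rapid polynomial decay for $K_0$ and its left-invariant derivatives. From this, a standard stratified-group computation shows that $K_\epsilon$ satisfies the Calder\'on--Zygmund size bound $|K_\epsilon(x)|\lesssim|x|^{-\hdim}$ for $x\neq 0$ and the integral H\"ormander smoothness bound, both uniformly in $\epsilon$; meanwhile, the unweighted $L^2$ bound $\|T_\epsilon\|_{L^2\to L^2}\lesssim 1$ is immediate from the spectral theorem, since the multiplier $\lambda\mapsto \sum_l \epsilon_l\varphi(2^{-l}\lambda)$ is bounded on $\RR^+$ uniformly in $\epsilon$.

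Since $(G,d,dx)$ is a doubling metric measure space, the Calder\'on--Zygmund theory on spaces of homogeneous type then upgrades the unweighted bound to $\|T_\epsilon\|_{L^2(\omega)\to L^2(\omega)}\lesssim 1$ uniformly in $\epsilon$ for every $\omega\in A_2(G)$, completing the proof. I expect the main obstacle to be the uniform-in-$\epsilon$ verification of the H\"ormander smoothness of $K_\epsilon$: this requires summing the scaled pieces $\fDil_{2^{-l}}K_0$ across all scales while exploiting both the rapid decay of $K_0$ at infinity and the scaling of left-invariant derivatives under $\fDil_{2^{-l}}$, and must be carried out with some care in the non-commutative homogeneous setting, though no essentially new idea beyond what is available in \cite{KW_paleylittlewood,stempak} is needed.
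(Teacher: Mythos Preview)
Your proposal is correct and follows precisely the route the paper itself indicates: the paper does not give its own proof but defers to \cite{KW_paleylittlewood,stempak}, and your Khintchine/Calder\'on--Zygmund argument is exactly the content of those references in this setting. One small imprecision: multiplying the multiplier $F$ by polynomials does not directly yield \emph{left-invariant derivatives} of $K_0$ (it yields powers of $\opL$ applied to $K_0$); the clean way to obtain the pointwise derivative bounds you need for the H\"ormander condition is to invoke directly that $K_0\in\Sch(G)$ by Hulanicki \cite{17}, which the paper recalls just before Lemma~\ref{lem:kernelestimate}.
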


\subsection{H-type groups}

An H-type group is a $2$-step stratified Lie group whose Lie algebra $\lie{g}$ is endowed with an inner product $\langle \cdot,\cdot \rangle$ satisfying the following conditions. First, 
the layers $\lie{g}_1$ and $\lie{g}_2$ are orthogonal. Secondly, if we define, for each $\mu\in\lie{g}_2^*$, the skew-symmetric endomorphism $\skwJ_\mu$ of $\lie{g}_1$ by 
\[
\langle \skwJ_\mu(z),z'\rangle=\mu([z,z']) \qquad \forall z,z' \in \lie{g}_1,
\]
then we require that, for all $\mu\in\lie{g}_2^*$,
\[
\skwJ_\mu^2 = -|\mu|^2 \id.
\]
Note that, under these assumptions, $\mu([\cdot,\cdot])$ is a symplectic form on $\lie{g}_1$ for all $\mu \in \lie{g}_2^* \setminus \{0\}$, hence the dimension of $\lie{g}_1$ is even. Moreover, the restriction of the inner product to $\lie{g}_1$ determines a sub-Riemannian structure on $G$ and a distinguished sub-Laplacian $\opL$, which we will use throughout.

We refer to \cite{CDKR,DR,Kaplan,KaplanRicci} for additional information on H-type groups.

\subsubsection{The Fourier transform on H-type groups}\label{s:fouriertheory}

We now recall some facts regarding Fourier analysis on H-type groups.
Let $G$ be an H-type group with $\dim \lie{g}_1 = 2m$ and $\dim \lie{g}_2 = n$. Following \cite{ACDS}, for each
$\mu \in \lie{g}_2^* \setminus \{0\} \simeq \RR^n \setminus\{0\}$ there exists an orthonormal basis $E_1(\mu),\dots,E_{2m}(\mu)$ 
of $\lie{g}_1$ such that 
\[
\skwJ_\mu E_j(\mu) = \begin{cases}
|\mu| E_{m+j}(\mu) &\text{if $j \leq m$,} \\
-|\mu|E_{m-j}(\mu) &\text{otherwise.}
\end{cases}
\]
Here we do not assume that the $E_j(\mu)$ depend continuously on $\mu$, however we may and shall assume that $E_j(\lambda\mu) = E_j(\mu)$ for all $\lambda>0$.
For all $\mu \in \lie{g}_2^* \setminus \{0\}$, this choice of an orthonormal basis induces an isometric identification of $\lie{g}_1$ with $\CC^m$: to all $z \in \lie{g}_1$ we associate the element $z(\mu) = (z_1(\mu),\dots,z_m(\mu)) \in \CC^m$ such that
\[
z = \sum_{j=1}^m [ (\Re z_j(\mu)) E_j(\mu) + (\Im z_j(\mu)) E_{j+m}(\mu)].
\]
It is easily checked that, for all $\mu \in \lie{g}_2^*$, the map $(z,u) \mapsto (z(\mu), |\mu|^{-1} \mu \cdot u)$ defines a Lie group epimorphism from $G$ to the Heisenberg group $H_m$.
In particular, if we define  (cf.\ \cite{Folland_PhaseSpace,GM})
the Schr\"odinger representation $\varpi_s^0$ of $H_m$ with parameter $s \in \RR \setminus \{0\}$ on $L^2(\RR^m)$ by
\[
[\varpi^0_s(z,t)\varphi](x)
= e^{2\pi i s (t + \Im(z) \cdot x + \Re(z) \cdot \Im(z)/2)} \varphi(x+\Re(z))
\]
for all $\varphi \in L^2(\RR^m)$ and $(z,t) \in H_m$,
then a family $\varpi_\mu$ ($\mu \in \lie{g}_2^* \setminus \{0\}$) of pairwise inequivalent irreducible unitary representations of $G$ on $L^2(\RR^m)$ is given by
\begin{equation}\label{eq:schroedinger_quotient}
\varpi_\mu(z,u) = \pi^0_{|\mu|}(z(\mu),|\mu|^{-1} \mu \cdot u)
\end{equation}
for all $(z,u) \in G$.

This family of representations is enough to write a Plancherel formula for the group Fourier transform. Namely, if we define the group Fourier transform of $f \in L^1(G)$ as the operator-valued function given by 
\[
\widehat{f}(\mu) \defeq \int_G f(g) \,\varpi_\mu(g) \,dg
\]
for all $\mu \in \lie{g}_2^* \setminus \{0\}$, then the following properties hold (see, for instance, \cite{ACDS,Folland_Abstract}),
where $T^\adj$ denotes the adjoint operator to $T$.

\begin{lem}
For all $f,g \in L^1(G)$ and $\mu \in \lie{g}_2^* \setminus \{0\}$,
\begin{equation}\label{eq:fourierconv1}
   \widehat{f * g}(\mu)=\widehat{f}(\mu) \,\widehat{g}(\mu),
\end{equation}
\begin{equation}\label{eq:fourierinv}
    \widehat{f^*}(\mu)=\widehat{f}(\mu)^\adj.
\end{equation}
Moreover,
for all $f,g \in L^1 \cap L^2(G)$,
\begin{align}
\label{eq:plancherel2}
  \langle f,g\rangle 
	&= 
	\int_{\RR^n}\tr(\widehat{f}(\mu) \, \widehat{g}(\mu)^\adj) \,|\mu|^m \,d\mu , \\
\label{eq:plancherel}
\|f\|^2_2 &= \int_{\RR^n}\|\widehat{f}(\mu)\|^2_{\HS} \,|\mu|^m \,d\mu.
\end{align}
\end{lem}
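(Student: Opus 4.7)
The identities \eqref{eq:fourierconv1} and \eqref{eq:fourierinv} are formal consequences of the fact that each $\varpi_\mu$ is a unitary representation of $G$. For the convolution identity, the plan is to expand
\[
\widehat{f*g}(\mu) = \int_G \int_G f(h)\, g(h^{-1}k) \,\varpi_\mu(k)\, dh\, dk,
\]
apply Fubini, substitute $k = hk'$ using left-invariance of Haar measure, and factor by means of the homomorphism property $\varpi_\mu(hk') = \varpi_\mu(h)\varpi_\mu(k')$. For \eqref{eq:fourierinv}, one uses that $G$ is unimodular so the substitution $g \mapsto g^{-1}$ preserves the Haar measure, together with $\varpi_\mu(g^{-1}) = \varpi_\mu(g)^\adj$ (unitarity) and the rule $(AB)^\adj = B^\adj A^\adj$ applied to $f^*(g) = \overline{f(g^{-1})}$.

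The core content is the Plancherel identity \eqref{eq:plancherel}; \eqref{eq:plancherel2} will then follow by polarisation. The plan is to reduce \eqref{eq:plancherel} to the classical Plancherel theorem for the Heisenberg group via the quotient relation \eqref{eq:schroedinger_quotient}. First, perform a partial Euclidean Fourier transform in the second-layer variable: for $f \in L^1 \cap L^2(G)$ set
\[
f^\sharp(z,\mu) \defeq \int_{\lie{g}_2} f(z,u)\, e^{-2\pi i \mu \cdot u} \,du,
\]
so that, by the Euclidean Plancherel theorem in $u$,
\[
\|f\|_2^2 = \int_{\lie{g}_2^*}\!\!\int_{\lie{g}_1} |f^\sharp(z,\mu)|^2 \,dz\,d\mu.
\]
Next, for $\mu \neq 0$, the plan is to split $\varpi_\mu(z,u) = \varpi_\mu(0,u)\,\varpi_\mu(z,0)$ and observe, directly from the definition of the Schr\"odinger representation and \eqref{eq:schroedinger_quotient}, that $\varpi_\mu(0,u) = e^{2\pi i \mu \cdot u}\, \id$. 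This yields the identity
\[
\widehat{f}(\mu) = \int_{\lie{g}_1} f^\sharp(z,\mu) \,\varpi^0_{|\mu|}(z(\mu),0) \,dz,
\]
which exhibits $\widehat{f}(\mu)$ as the Heisenberg group Fourier transform of the function $w \mapsto f^\sharp(z(w,\mu),\mu)$ at Schr\"odinger parameter $s = |\mu|$, where the isometric identification $\lie{g}_1 \cong \CC^m$ induced by the basis $E_1(\mu),\dots,E_{2m}(\mu)$ intertwines integration against Lebesgue measure.

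The Plancherel theorem for $H_m$, applied at parameter $s = |\mu|$ to this function, gives
\[
\int_{\lie{g}_1} |f^\sharp(z,\mu)|^2 \,dz = \|\widehat{f}(\mu)\|_{\HS}^2 \,|\mu|^m,
\]
the weight $|\mu|^m$ being precisely the Plancherel density for $H_m$ at parameter $|\mu|$. Integrating this identity in $\mu$ and combining with the Euclidean Plancherel step above yields \eqref{eq:plancherel}. The only delicate point will be bookkeeping of normalisation constants: one must check that the isometric identification $\lie{g}_1 \to \CC^m$ defined by the $E_j(\mu)$ is measure preserving so no $\mu$-dependent Jacobian appears, and that the Fourier transform convention on $\lie{g}_2$ is consistent with the prefactor $e^{2\pi i \mu\cdot u}$ appearing in $\varpi_\mu(0,u)$; these are routine verifications from the definitions in Section \ref{s:fouriertheory}, and no further nontrivial input is required.
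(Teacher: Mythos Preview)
Your argument is correct. The paper does not actually prove this lemma: it simply states the identities and cites \cite{ACDS,Folland_Abstract} (see the sentence immediately preceding the lemma). So there is no ``paper's own proof'' to compare with; you have supplied a self-contained derivation where the authors defer to the literature.

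Two minor remarks on your write-up. First, with your sign convention for $f^\sharp$ (exponent $-2\pi i\mu\cdot u$) and the identity $\varpi_\mu(0,u)=e^{2\pi i\mu\cdot u}\id$, one actually gets $\widehat f(\mu)=\int_{\lie{g}_1} f^\sharp(z,-\mu)\,\varpi^0_{|\mu|}(z(\mu),0)\,dz$, not $f^\sharp(z,\mu)$; this is harmless after integrating in $\mu$ and you explicitly flag the convention check, but it would be cleaner to define $f^\sharp$ with the opposite sign. Second, what you invoke as ``the Plancherel theorem for $H_m$ applied at parameter $s=|\mu|$'' is really the pointwise isometry property of the Weyl transform $g\mapsto \int_{\CC^m} g(z)\,\varpi^0_s(z,0)\,dz$ from $L^2(\CC^m)$ to Hilbert--Schmidt operators (with constant $|s|^{-m}$), rather than the full Heisenberg Plancherel formula, which integrates over $s$. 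The distinction is only terminological; the fact you need is standard and your reduction is sound.
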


Note that from \eqref{eq:schroedinger_quotient} it follows that, for all $f \in L^1(G)$ and $\mu \in \lie{g}_2^* \setminus \{0\}$,
\begin{equation}\label{eq:gft_quotient}
\widehat{f}(\mu) = \int_{H_m} P_\mu f(g) \, \varpi_{|\mu|}^0 (g) \,dg,
\end{equation}
where $P_\mu f \in L^1(H_m)$ is defined by
\[
P_\mu f(z(\mu),t) = \int_{\mu^\perp} f(z,t+v) \,dv
\]
for all $z \in \lie{g}_1$ and $t \in \RR$.
In other words, the group Fourier transform of $f \in L^1(G)$ at $\mu \in \lie{g}_2^* \setminus \{0\}$ is the same as the group Fourier transform of $P_\mu f \in L^1(H_m)$ at $|\mu|$.

It is convenient to express the `matrix components' of the group Fourier transform $\widehat f(\mu)$ of a function $f \in L^1(G)$ in terms of 
suitably rescaled Hermite functions.
We start by defining Hermite functions on the real line by 
\[
h_k(x) \defeq (2^kk!\sqrt{\pi})^{-1/2}
(-1)^ke^{x^2/2}\frac{d^k}{dx^k}e^{-x^2},\quad x\in\RR, \,k\in\NN_0,
\]
and their $m$-dimensional versions as
\[
h_\alpha(x) \defeq \prod_{j=1}^m h_{\alpha_j}(x_j),\quad x\in\RR^m, \,\alpha\in\NN_0^m.
\]
We then renormalise these Hermite functions by defining, for all $s>0$,
\begin{equation}
    h_\alpha^s(x) \defeq (2\pi s)^{m/4} h_\alpha((2\pi s)^{1/2}x), \quad x\in\RR^m, \,\alpha\in\NN_0^m.
\end{equation} 
For each $s>0$, the family $(h_\alpha^s)_{\alpha\in\NN_0^m}$ forms an orthonormal basis of $L^2(\RR^m)$.
Now, for all $f\in L^1(G)$, $\mu\in\lie{g}_2^*\setminus\{0\}$ and $\alpha,\beta\in \NN_0^m$, 
we define 
\begin{equation}\label{eq:matrixcomponentdef}
   \widehat{f}(\mu,\alpha,\beta) \defeq \langle \widehat f(\mu) h_\alpha^{|\mu|}, h_\beta^{|\mu|}\rangle = \int_{G} f(g) \, \langle \pi_\mu(g) h_\alpha^{|\mu|}, h_\beta^{|\mu|}\rangle \,dg.
\end{equation}
For later convenience we extend the definition of $\widehat{f}(\mu,\alpha,\beta)$ to all $\alpha,\beta \in \ZZ^m$ by
\[
\widehat{f}(\mu,\alpha,\beta) \defeq 0 \quad \text{ for all } (\alpha,\beta) \notin \NN_0^m \times \NN_0^m.
\]
From \eqref{eq:fourierconv1} and \eqref{eq:fourierinv} we immediately derive the following identities:
\begin{align}
\label{eq:matrixstar}
   \widehat{f^*}(\mu,\alpha,\beta)
	&=\overline{\widehat{f}(\mu,\beta,\alpha)}, \\
\label{eq:matrixcompconv}
  \widehat{f * g}(\mu,\alpha,\beta) 
	&= \sum_{\gamma\in\NN_0^m}  \widehat{g}(\mu,\alpha,\gamma) \widehat{f}(\mu,\gamma,\beta).
\end{align}
for all $f,g\in L^1(G)$, $\mu \in \lie{g}_2^* \setminus \{0\}$ and $\alpha,\beta \in \NN_0^m$.

One can show that the Hermite functions $h_\alpha^{|\mu|}$ are eigenfunctions for $\widehat{\opL}(\mu) \defeq d\varpi_\mu(\opL)$, the group Fourier transform of the sub-Laplacian; namely,
\[
\widehat{\opL}(\mu) h_\alpha^{|\mu|} = c(|\alpha|) |\mu| h_\alpha^{|\mu|},
\]
where $|\alpha| = \alpha_1+\dots+\alpha_m$ for all $\alpha \in \NN^m$ and, for all $k \in \NN_0$,
\begin{equation}\label{eq:eig}
  c(k) \defeq 2\pi(2k+m).
\end{equation}
In addition, if $\ctDer$ is the `central pseudodifferential operator' defined in \eqref{eq:difflambda}, then $\widehat\ctDer(\mu) \defeq d\varpi_\mu(\ctDer) = 2\pi|\mu| \id$.
The group Fourier transform is compatible with the joint spectral decomposition and functional calculus of $\opL$ and $\ctDer$, and so
\begin{equation}\label{eq:fourierL}
  (F(\opL,\ctDer)f)\widehat{\;}(\mu,\alpha,\beta)
	= F(c(|\alpha|)|\mu|,2\pi|\mu|) \, \widehat{f}(\mu,\alpha,\beta).
\end{equation}

It will often be convenient to consider functions on $G$ that depend only on $|z|$ and $u$; we shall call such functions \emph{radial}.
In the 
case that $f$ is radial, 
the off-diagonal matrix coefficients of $\widehat f(\mu)$, are zero, and furthermore the diagonal coefficients depend only on the magnitude $|\alpha| = \sum_j \alpha_j$ of the index $\alpha \in \NN_0^m$:
\[
      \widehat f(\mu,\alpha,\beta)=\delta_{\alpha,\beta} \widehat f(\mu,|\alpha|e_1,|\alpha|e_1)
\]
for all $\mu \in \lie{g}_2^* \setminus \{0\}$ and $\alpha,\beta \in \NN_0^m$, where $e_1 = (1,0,\dots,0)$ (this is true for Heisenberg groups \cite[Theorem 1.4.3]{ThHeis}, hence for general H-type groups by \eqref{eq:gft_quotient}).
In this case, we adopt the notation 
\begin{equation}\label{eq:gelfand}
\widehat f(\mu,k) \defeq \widehat f(\mu,ke_1,ke_1)
\end{equation}
for all $(\mu,k) \in (\lie{g}_2^* \setminus \{0\}) \times\NN_0$.
These simplifications correspond to the fact that the Banach $*$-algebra $L^1_\rad(G)$ of integrable radial functions on $G$ is commutative \cite{DR}; indeed \eqref{eq:gelfand} expresses a relation between the Gelfand transform and the group Fourier transform of $f \in L^1_\rad(G)$, where $(\lie{g}_2^* \setminus \{0\}) \times\NN_0$ parametrises a subset of full measure of the Gelfand spectrum of $L^1_\rad(G)$.

For radial functions we have simpler expressions for the Fourier coefficients and the Plancherel formula.
We recall that the Laguerre polynomial $L^a_k$ of type $a>-1$ and degree $k$ is defined by
\[
L_k^a(x) \defeq \frac{1}{k!}e^xx^{-a}\frac{d^k}{dx^k}(e^{-x}x^{k+a}),\quad x\in\RR.
\]
Then, for all radial $f\in\Sch(G)$,
\begin{equation}\label{eq:ftol}
     \widehat{f}(\mu,k) = \binom{k+m-1}{k}^{-1} \int_G e^{2\pi i\mu \cdot u}f(z,u) \, L_k^{m-1}(\pi|\mu||z|^2) \, e^{-\frac{\pi|\mu||z|^2}{2}} \,dz \,du.
 \end{equation}
This now gives us an alternative Plancherel theorem and inversion formula for radial functions, which may also be found in \cite{2}. Specifically, if $f\in L^2(G)$ is radial, then 
\begin{equation}\label{eq:radialplancherel}
    \|f\|^2_2 = \int_{\RR^n\setminus\{0\}} \sum_{k\in\NN_0} \binom{k+m-1}{k} \, |\widehat{f}(\mu,k)|^2 \,|\mu|^m \,d\mu
\end{equation}
and, if $f\in\Sch(G)$ is radial, then
\begin{equation}\label{eq:ftolinv}
    f(z,u)= \int_{\RR^n\setminus\{0\}}\sum_{k\in\NN_0} \widehat{f}(\mu,k) \, e^{-2\pi i\mu \cdot u} \, L_k^{m-1}(\pi|\mu||z|^2) \,e^{-\frac{\pi|\mu||z|^2}{2}}|\mu|^m \,d\mu.
\end{equation}

We recall from \eqref{eq:plancherel2} that
\begin{equation}\label{eq:convl2es0}
\begin{split}
\langle f,g\rangle 
& =\int_{\RR^n}\sum_{\alpha,\beta\in\NN_0^m}\widehat{f}(\mu,\beta,\alpha) \, \overline{\widehat{g}(\mu,\beta,\alpha)} \,|\mu|^m \,d\mu.
\end{split}
\end{equation}
Observe that, if one of $f,g$ is a radial function, then the only non-zero terms would be the diagonal ones, where $\alpha=\beta$. Thus, if one of $f,g$ is radial, then
\begin{equation}\label{eq:convl2es1}
\langle f, g\rangle 
= \int_{\RR^n} \sum_{\alpha\in\NN_0^m} \widehat{f}(\mu,\alpha,\alpha) \, \overline{\widehat{g}(\mu,\alpha,\alpha)} \,|\mu|^m \,d\mu.
\end{equation}
Furthermore, from \eqref{eq:matrixcompconv} and the Cauchy--Schwarz inequality, we deduce that
\begin{equation}
\label{eq:convl2es}
    \int_{\RR^n}\sum_{\alpha\in\NN_0^m}|\widehat{f * g}(\mu,\alpha,\alpha)| \,|\mu|^m \,d\mu
		\leq \|f\|_2\|g\|_2.
\end{equation}

\subsubsection{Differentiation on the Fourier dual}\label{s:diffdual}

A complex valued function on an H-type group $G$ will be called a \emph{polynomial} if it is a polynomial in exponential coordinates. In the Euclidean case, the Fourier transform intertwines operators of multiplication by a polynomial with constant coefficient differential operators. In analogy with this, it is natural to interpret the effect on the Fourier side of multiplication by a polynomial on $G$ as a sort of `differentiation' on the group Fourier dual.

This idea makes sense also in more general stratified groups (see, e.g., \cite{58}). However, on H-type groups, explicit formulas for these `differential operators' on the Fourier side can be written in many cases. In the case of the Heisenberg groups, a number of these formulas are listed in \cite[p.\ 151]{GM} (see also \cite{DeMicheleMauceri,MS_heisenberg} and \cite[Lemma 6.4]{Ricci_dispense}). In view of \eqref{eq:gft_quotient}, these formulas admit straightforward extensions to H-type groups, which we list below. We need some notation: for all $\mu \in \lie{g}_2^* \setminus \{0\}$, $j=1,\dots,m$ and $l=1,\dots,n$, define
\begin{equation}\label{eq:weightdef}
\begin{gathered}
\fstZ_{\mu,j}(z,u)=z_j(\mu), \qquad \overline{\fstZ_{\mu,j}}(z,u) =\overline{z_j(\mu)}, \qquad \fstW(z,u)=|z|,\\
	\sndW_l(z,u)=u_l, \qquad \sndW(z,u)=|u|.
\end{gathered}
\end{equation}
Here we are identifying $\lie{g}_2$ with $\RR^n$ by the choice of an orthonormal basis, so that the $u_l$ are the components of $u$.
Note that $\fstW$ and $\sndW$ are not polynomials, but their squares are.

Let us first consider \emph{first-layer} polynomials, i.e., those depending only on the first-layer variable $z$.
For all $f\in\Sch(G)$, $\mu \in \lie{g}_2^*\setminus\{0\}$, $j=1,\dots,m$, $\alpha,\beta \in \NN_0^m$,
\begin{gather}
\label{eq:partzeta1} (\pi|\mu|)^{1/2} \widehat{\fstZ_{\mu,j} f}(\mu,\alpha,\beta) = ({\alpha_j+1})^{1/2} \widehat f(\mu,\alpha+e_j,\beta) - \beta_j^{1/2} \widehat f(\mu,\alpha,\beta-e_j) \\
\label{eq:partzeta2} (\pi|\mu|)^{1/2} \widehat{\overline{\fstZ_{\mu,j}} f}(\mu,\alpha,\beta) = \alpha_j^{1/2} \widehat f(\mu,\alpha-e_j,\beta) - (\beta_j+1)^{1/2}\widehat f(\mu,\alpha,\beta+e_j),
\end{gather}
where $e_j \in \NN_0^m$ is the $j$th standard basis element.
By combining these operators and summing over $j$ we thus obtain a formula for the operator of multiplication by $\fstW^2$,
which is particularly simple in the case of radial functions $f$. Specifically, for all $\mu \in \lie{g}_2^* \setminus \{0\}$ and $k \in \NN_0$,
\begin{equation}\label{eq:partzsqrule}
    \widehat{\fstW^2 f}(\mu,k) = \frac{1}{\pi|\mu|}[(2k+m) \widehat{f}(\mu,k) - k \widehat{f}(\mu,k-1) - (k+m) \widehat{f}(\mu,k+1)].
\end{equation}

We now pass to \emph{second-layer} polynomials, i.e., those only depending on $u$. For all radial functions $f \in \Sch(G)$, $l=1,\dots,n$, $\mu \in \lie{g}_2^* \setminus \{0\}$ and $k \in \NN_0$,
\begin{multline}\label{eq:parturule}
   4\pi i \widehat{\sndW_l f}(\mu,k) \\
	= 2\frac{\partial}{\partial{\mu_l}} \widehat f(\mu,k) + \frac{\mu_l}{|\mu|^2}[m \widehat f(\mu,k) + k \widehat f(\mu,k-1) - (k+m) \widehat f(\mu,k+1)].
\end{multline}

Note that, in the formulas \eqref{eq:partzeta1} and \eqref{eq:partzeta2}, the 
$\mu$ in the multiplier $\fstZ_{\mu,j}$ must match the $\mu$ in the argument of the Fourier transform $\widehat{f}$.
In applications we will also need to consider the case of mismatch. This is discussed in the following lemma, where we use the notation
\begin{equation}\label{eq:conj_notation}
    \fstZ_{\mu,p,0} \defeq \fstZ_{\mu,p} \quad\text{and}\quad \fstZ_{\mu,p,1} \defeq \overline{\fstZ_{\mu,p}}.
\end{equation}

\begin{lem}\label{lem:matrixcomponentcoordchange}
Let $\mu_1,\mu\in\RR^n\setminus\{0\}$.
Then there exist $C_{\alpha,\beta,j,k}(\mu,\mu_1) \in \CC$ (where $j,k\in\{1,\dots,m\}$ and $\alpha,\beta\in\{0,1\}$) such that 
\begin{gather}\label{eq:mixedmu}
\fstZ_{\mu_1,j,\alpha} = \sum_{k=1}^m \sum_{\beta\in\{0,1\}} C_{\alpha,\beta,j,k}(\mu,\mu_1) \, \fstZ_{\mu,k,\beta},
\end{gather}
and $|C_{\alpha,\beta,j,k}(\mu,\mu_1)|$ is bounded uniformly in $\alpha,\beta,j,k,\mu,\mu_1$.
\end{lem}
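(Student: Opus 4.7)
The plan is to exploit the fact that, for each $\mu \in \lie{g}_2^* \setminus \{0\}$, the vectors $E_1(\mu),\dots,E_{2m}(\mu)$ form an \emph{orthonormal} basis of $\lie{g}_1$, so the change-of-basis matrix between the bases associated to $\mu$ and $\mu_1$ is orthogonal, hence has all entries of modulus at most $1$. This will give the uniform boundedness of the coefficients $C_{\alpha,\beta,j,k}(\mu,\mu_1)$ for free.

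More concretely, I would first unpack the definition of $\fstZ_{\mu_1,j}$. By the identification in Section~\ref{s:fouriertheory}, for any $z \in \lie{g}_1$ one has
\[
z_j(\mu_1) = \langle z, E_j(\mu_1)\rangle + i\langle z, E_{j+m}(\mu_1)\rangle.
\]
Expanding $E_j(\mu_1)$ and $E_{j+m}(\mu_1)$ in the orthonormal basis $E_1(\mu),\dots,E_{2m}(\mu)$, one gets
\[
E_j(\mu_1) = \sum_{k=1}^{2m} a_{jk} E_k(\mu), \qquad E_{j+m}(\mu_1) = \sum_{k=1}^{2m} b_{jk} E_k(\mu),
\]
with $a_{jk}, b_{jk} \in \RR$ and $|a_{jk}|,|b_{jk}| \leq 1$. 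Substituting back and using $\langle z, E_k(\mu)\rangle = \Re z_k(\mu)$ for $k \leq m$ and $\langle z, E_k(\mu)\rangle = \Im z_{k-m}(\mu)$ for $k > m$, one expresses $z_j(\mu_1)$ as a linear combination, with coefficients bounded uniformly in $\mu,\mu_1$, of $\Re z_k(\mu)$ and $\Im z_k(\mu)$ for $k=1,\dots,m$.

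To complete the proof I would then use the identities
\[
\Re z_k(\mu) = \tfrac{1}{2}\bigl(\fstZ_{\mu,k,0} + \fstZ_{\mu,k,1}\bigr), \qquad \Im z_k(\mu) = \tfrac{1}{2i}\bigl(\fstZ_{\mu,k,0} - \fstZ_{\mu,k,1}\bigr)
\]
which follow directly from \eqref{eq:conj_notation}, to rewrite the expansion in the required form \eqref{eq:mixedmu} for $\alpha=0$. The case $\alpha=1$ (that is, $\fstZ_{\mu_1,j,1} = \overline{\fstZ_{\mu_1,j}}$) is handled simultaneously by complex conjugation, which only affects signs and does not destroy the uniform bound on the coefficients.

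There is no real obstacle here; the lemma is essentially a bookkeeping statement about orthogonal change of basis combined with the elementary identity expressing a complex number in terms of its real and imaginary parts. The only point that requires any care is making sure that, although the bases $\{E_k(\mu)\}$ need not depend continuously on $\mu$, orthonormality alone suffices to bound the matrix entries of the change of basis, which gives the uniform estimate on $|C_{\alpha,\beta,j,k}(\mu,\mu_1)|$.
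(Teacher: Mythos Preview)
Your proposal is correct and follows essentially the same approach as the paper: the paper's proof simply observes that the change of variables $z(\mu)\mapsto z(\mu_1)$ is an $\RR$-linear isometry on $\CC^m$, hence has uniformly bounded matrix coefficients, which is exactly the orthonormal-change-of-basis argument you have spelled out in detail.
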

\begin{proof}
For all $\mu,\mu_1 \in \lie{g}_2^* \setminus \{0\}$, the change of variables $z(\mu) \mapsto z(\mu_1)$ is an $\RR$-linear isometry on $\CC^m$, whose matrix coefficients are therefore uniformly bounded as well as those of its inverse.
\end{proof}

\subsubsection{Dual Leibniz rules}\label{s:secleibniz}
We now proceed to calculate `Leibniz rules' for the polynomials in \eqref{eq:weightdef}, describing the effect of multiplying by such polynomials a convolution product on $G$ (see also \cite[Proposition 5.2.10]{58}).

Note first that each $\fstZ_{\mu,j} : G \to \CC$ ($\mu \in \lie{g}_2^* \setminus \{0\}$, $j=1,\dots,m$) is a group homomorphism, whence
\[
\fstZ_{\mu,j} (f * g) 
= (\fstZ_{\mu,j} f) * g + f * (\fstZ_{\mu,j} g).
\]
for all $f,g\in L^1(G)$. An analogous rule holds for $\overline{\fstZ_{\mu,j}}$. Iterating and combining the above formula yields
\begin{equation}\label{eq:rhorule}
\fstW^2 (f * g)
= (\fstW^2 f) * g + f * (\fstW^2 g) + \sum_{j=1}^m (\fstZ_{\mu,j} f) * (\overline{\fstZ_{\mu,j}} g) + \sum_{j=1}^m (\overline{\fstZ_{\mu,j}} f) * (\fstZ_{\mu,j} g).
\end{equation}

The rule for $\sndW_l$ ($l=1,\dots,n$) is more involved, due to the fact that $\sndW_l$ is not a homomorphism. Indeed
\[
\sndW_l((z,u) \cdot (z',u'))
= \sndW_l((z,u)) + \sndW_l(z',v') + \frac{1}{2} ([z,z'])_l.
\]
By explicitly writing $([z,z'])_l$ in terms of the coordinates $z(\mu)$, $z'(\mu)$ (for any choice of $\mu \in \lie{g}_2^* \setminus \{0\}$) and the structure constants of $\lie{g}$, we easily derive
\begin{equation}\label{eq:psiruleexact}
\begin{split}
\sndW_l(f * g) 
&= (\sndW_l f) * g + f * (\sndW_l g)  \\
&+\sum_{k,j=1}^m \sum_{\alpha,\beta \in \{0,1\}} c^{(l)}_{\mu,k,j,\alpha,\beta} \, (\fstZ_{\mu,k,\alpha} f) * (\fstZ_{\mu,j,\beta} g),
\end{split}
\end{equation}
for some constants $c^{(l)}_{\mu,k,j,\alpha,\beta} \in \CC$ which are uniformly bounded in $\mu,k,j,\alpha,\beta$; here again we are using the notation \eqref{eq:conj_notation}.

\subsubsection{Dual fractional integration for radial functions}\label{s:fracint}
Recall from Section \ref{s:fouriertheory} that the Fourier transformation determines a unitary isomorphism between the space $L^2_{\rad}(G)$ of square-integrable radial functions on $G$ and the space $\mathcal{H} = L^2((\RR^n\setminus\{0\})\times \NN_0,|\mu|^m \,d\mu \, \binom{k+m-1}{k} \,d\#(k))$, where $\#$ denotes the counting measure on $\NN_0$. If $\omega = \omega(|z|,u)$ is a radial function on $G$, then this unitary isomorphism intertwines the operator of multiplication by $\omega$ with a (possibly unbounded) operator $\partial_\omega$ on $\mathcal{H}$.

If $\omega$ is a radial polynomial, then $\partial_\omega$ corresponds to one of the `differential operators on the dual' discussed in Section \ref{s:diffdual}. If instead $\omega$ is a negative fractional power of a radial polynomial, then we can think of $\partial_\omega$ as a `fractional integration operator on the dual'. The formulas below allow us to give a more explicit description of such operators $\partial_\omega$ in terms of $\omega$.

\begin{lem}\label{lem:integralkernel}
Let $\omega$ be a radial function on $G$, so that $\omega(z,u) = \omega_0(|z|^2,u)$.
Then, in the sense of distributions, we can write
$\partial_{\omega}$
as a generalised integral operator,
\begin{equation}\label{eq:integralkernel}
\partial_{\omega} H(\mu,k)=\int_{\RR^n}\sum_{l\in\NN_0} H(\nu,l)K_\omega(\nu,l;\mu,k)\binom{l+m-1}{l} \,|\nu|^m \,d\nu,
\end{equation}
with Schwartz kernel
\begin{multline}\label{eq:radialpartialweight}
K_\omega(\nu,l;\mu,k) \defeq \frac{C(m)}{\binom{k+m-1}{k} \binom{l+m-1}{l}} \int_0^\infty \Four_2\omega_0(t,\nu-\mu)\\
\times L_l^{m-1}(\pi|\nu|t) \, L_k^{m-1}(\pi|\mu|t) \,e^{-\frac{\pi(|\nu|+|\mu|)t}{2}} \,t^{m-1} \,dt.
\end{multline}
Here $C(m) = \pi^m/(m-1)!$ is half the measure of the unit sphere in $\CC^m$, and $\Four_2 \omega_0$ is the partial Euclidean Fourier transform of $\omega_0$ in the second variable, that is,
\begin{equation}\label{eq:partft}
\Four_2\omega_0(t,\mu) \defeq \int_{\RR^n} \omega_0(t,u) \, e^{-2\pi i u \cdot \mu} \,du.
\end{equation}
\end{lem}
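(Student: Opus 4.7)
The plan is to compute $\partial_\omega$ directly by composing three ingredients: the definition of the intertwining, namely $\partial_\omega\widehat{f}=\widehat{\omega f}$ for radial $f$; the Fourier coefficient formula \eqref{eq:ftol}; and the inversion formula \eqref{eq:ftolinv}. Once $f$ is re-expressed through $\widehat{f}$ and substituted back, the integrations in $z$ and $u$ will separate, and the formula \eqref{eq:radialpartialweight} should fall out by reading off the resulting kernel.

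More precisely, I would start by applying \eqref{eq:ftol} to $\omega f$ in place of $f$, obtaining
\[
\widehat{\omega f}(\mu,k)=\binom{k+m-1}{k}^{-1}\int_G e^{2\pi i\mu\cdot u}\,\omega_0(|z|^2,u)\,f(z,u)\,L_k^{m-1}(\pi|\mu||z|^2)\,e^{-\pi|\mu||z|^2/2}\,dz\,du,
\]
and then replace $f(z,u)$ by the right-hand side of \eqref{eq:ftolinv}, with dummy variables $(\nu,l)$. After a formal interchange of integration and summation, the $u$-integral becomes $\int_{\RR^n}\omega_0(|z|^2,u)\,e^{-2\pi i(\nu-\mu)\cdot u}\,du=\Four_2\omega_0(|z|^2,\nu-\mu)$, and the $z$-integral, being radial, can be evaluated in polar coordinates via $\int_{\CC^m}g(|z|^2)\,dz=C(m)\int_0^\infty g(t)\,t^{m-1}\,dt$, with $C(m)=\pi^m/(m-1)!$ as the half-measure of the unit sphere in $\CC^m$. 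What remains is precisely the integral in the right-hand side of \eqref{eq:radialpartialweight}, weighted by $\binom{k+m-1}{k}^{-1}C(m)$. Matching against the factor $\binom{l+m-1}{l}|\nu|^m$ built into the Plancherel measure in \eqref{eq:integralkernel} forces the remaining factor $\binom{l+m-1}{l}^{-1}$, yielding the asserted expression for $K_\omega$.

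The one genuine subtlety is justifying the interchanges, since the kernel $K_\omega$ need not be locally integrable; this is why the lemma is formulated in the sense of distributions. I would handle this by testing against a pair of Schwartz radial functions $f_1,f_2$ and using \eqref{eq:radialplancherel} to interpret both sides as the sesquilinear form $\langle\omega f_1,f_2\rangle$, which equals $\int_G \omega(z,u)\,f_1(z,u)\,\overline{f_2(z,u)}\,dz\,du$; the manipulations above then reduce to an application of Fubini for absolutely convergent integrals, provided $\omega$ is, say, first assumed Schwartz (or a polynomial times a Gaussian), and the general case follows by a standard density/distributional-limit argument.

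The main obstacle is therefore not the algebraic computation but the bookkeeping of all normalising constants and the careful conversion between $(\nu,l,\mu,k)$-sums with Plancherel weights and the integral over $\CC^m$ in polar coordinates; getting the combinatorial factors $\binom{k+m-1}{k}$ and $\binom{l+m-1}{l}$ in the right places is the part most prone to error, and I would double-check it by testing the formula against the simplest non-trivial case, e.g.\ $\omega\equiv 1$, where $\Four_2\omega_0(t,\nu-\mu)=\delta(\nu-\mu)$ and one should recover the identity operator on $\mathcal{H}$ via the orthogonality relations for Laguerre polynomials.
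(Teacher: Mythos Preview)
Your proposal is correct and follows essentially the same route as the paper: apply \eqref{eq:ftol} to $\omega f$, substitute the inversion formula \eqref{eq:ftolinv} for $f$, perform the $u$-integral to produce $\Four_2\omega_0$, and pass to polar coordinates in $z$ to obtain the kernel. The paper's proof is slightly terser (it does not spell out the distributional justification or the $\omega\equiv 1$ sanity check), but the computation and the handling of the combinatorial factors are identical.
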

\begin{proof}
Let $f \in \Sch(G)$ by a radial function. Then, by \eqref{eq:ftol},
\begin{multline}
\partial_{\omega}\widehat{f}(\mu,k)\\
=\binom{k+m-1}{k}^{-1} \int_G \omega_0(|z|^2,u) \, f(z,u) \, e^{2\pi i\mu \cdot u} \, L_k^{m-1}(\pi|\mu||z|^2) \, e^{-\frac{\pi|\mu||z|^2}{2}} \,dz \,du.
\end{multline} 
By \eqref{eq:ftolinv} and our identification of $G$ as $\CC^m\times \RR^n$, we then obtain that
\[\begin{split}
\partial_{\omega}\widehat{f}(\mu,k) &= \binom{k+m-1}{k}^{-1} \int_{\RR^n} \int_{\CC^m} \omega_0(|z|^2,u)  \\
	&\times \int_{\RR^n\setminus\{0\}}\sum_{l\in\NN_0} \widehat{f}(\nu,l) \, e^{-2\pi i\nu \cdot u} \, L_l^{m-1}(\pi|\nu||z|^2) \, e^{-\frac{\pi|\nu||z|^2}{2}} \, |\nu|^m \,d\nu\\
	&\times e^{2\pi i\mu \cdot u} \, L_k^{m-1}(\pi|\mu||z|^2) \, e^{-\frac{\pi|\mu||z|^2}{2}} \,dz \,du.
\end{split}\]
By \eqref{eq:partft} and using polar coordinates in $z$, this gives that
\begin{multline*}
\partial_{\omega}\widehat{f}(\mu,k) =  \frac{C(m)}{\binom{k+m-1}{k}} \int_{\RR^n} \sum_{l\in\NN_0}
	\int_0^\infty \Four_2\omega_0(t,\nu-\mu) \, \widehat{f}(\nu,l)  \\
	\times  L_l^{m-1}(\pi|\nu|t) \, L_k^{m-1}(\pi|\mu|t) \, e^{-\frac{\pi(|\nu|+|\mu|)t}{2}} \,t^{m-1} \,dt \,|\nu|^m \,d\nu,
\end{multline*}
as required.
\end{proof}

If we assume that $\omega(z,u)$ is a function of only $|z|$ or $u$, then simplifications occur in the formula for the Schwartz kernel $K_\omega$.

\begin{lem}\label{lem:radialkernelz}
With the notation of Lemma \ref{lem:integralkernel}, if $\omega_0(t,u)=w(t)$, then 
\begin{multline}\label{eq:radialkernelz}
K_\omega(\nu,l;\mu,k)\\
=\frac{C(m) \,\delta(\nu-\mu)}{\binom{k+m-1}{k}\binom{l+m-1}{l}}
\int_0^\infty w(t) \, L_l^{m-1}(\pi|\nu|t) \, L_k^{m-1}(\pi|\nu|t) \, e^{-\pi|\nu|t} \,t^{m-1} \,dt,
\end{multline}
where $\delta$ is the Dirac delta on $\RR^n$.
\end{lem}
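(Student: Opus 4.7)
The plan is to apply Lemma \ref{lem:integralkernel} directly, exploiting the fact that when $\omega_0(t,u) = w(t)$ has no dependence on the second-layer variable, the partial Fourier transform in $u$ collapses to a Dirac delta. Concretely, interpreting $\Four_2$ in the sense of tempered distributions, one has
\[
\Four_2 \omega_0(t,\nu-\mu) = w(t) \, \Four_2(1)(\nu-\mu) = w(t)\,\delta(\nu-\mu),
\]
since the Fourier transform of the constant function $1$ on $\RR^n$ is the Dirac delta $\delta$.

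Substituting this expression into the general Schwartz kernel formula \eqref{eq:radialpartialweight}, the Dirac delta $\delta(\nu-\mu)$ factors out of the $t$-integral. On the support of this delta, $\mu$ and $\nu$ may be identified, so $L_k^{m-1}(\pi|\mu|t)$ becomes $L_k^{m-1}(\pi|\nu|t)$, and the exponential factor $e^{-\pi(|\nu|+|\mu|)t/2}$ simplifies to $e^{-\pi|\nu|t}$. This yields exactly the formula \eqref{eq:radialkernelz}.

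Since this is essentially a one-line distributional computation, there is no substantial obstacle; the only point requiring mild care is the justification of the Fourier transform of the constant function in the sense appropriate to the distributional identity \eqref{eq:integralkernel}, but this is standard, and the integrand in \eqref{eq:radialpartialweight} is smooth in $\nu-\mu$ once $w$ is reasonable (which holds when $\partial_\omega$ is applied to Schwartz radial functions), so the pairing against a test function in $\nu$ is unambiguous.
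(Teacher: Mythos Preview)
Your proof is correct and follows essentially the same approach as the paper: the paper simply observes that $\Four_2\omega_0(t,\nu) = w(t)\,\delta(\nu)$ in this case, and states that the result is then immediate from \eqref{eq:radialpartialweight}. Your version just spells out the substitution and the simplification $e^{-\pi(|\nu|+|\mu|)t/2} \to e^{-\pi|\nu|t}$ on the support of the delta, which is exactly what the paper leaves implicit.
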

\begin{proof}
Observe that, in this case,
$\Four_2 \omega_0(t,\nu)= w(t) \, \delta(\nu)$.
The result is then immediate from \eqref{eq:radialpartialweight}.
\end{proof}

\begin{lem}\label{lem:heatkerneluformulae}
With the notation of Lemma \ref{lem:integralkernel}, if $\omega_0(t,u)=w(u)$, then
\begin{multline}\label{eq:heatkernelu1}
K_\omega(\nu,l;\mu,k)\\
= \begin{cases}
\binom{l+m-1}{l}^{-1} \frac{\Four w(\nu-\mu)}{(|\nu|+|\mu|)^m}
\left(\tfrac{|\nu|-|\mu|}{|\mu|+|\nu|}\right)^{k-l}
P_l^{(k-l,m-1)}\left(1-2\left(\tfrac{|\mu|-|\nu|}{|\mu|+|\nu|}\right)^2\right) &\text{ if } k \geq l, \\
\binom{k+m-1}{k}^{-1} \frac{\Four w(\nu-\mu)}{(|\nu|+|\mu|)^m} 
\left(\tfrac{|\mu|-|\nu|}{|\mu|+|\nu|}\right)^{l-k}
P_k^{(l-k,m-1)}\left(1-2\left(\tfrac{|\mu|-|\nu|}{|\mu|+|\nu|}\right)^2\right) &\text{ if } k \leq l,
\end{cases}
\end{multline}
where the $P_n^{(a,b)}$ are Jacobi polynomials, and $\Four w$ is the Euclidean Fourier transform of $w$.
\end{lem}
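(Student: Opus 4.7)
The strategy is to substitute $\omega_0(t,u)=w(u)$ into the kernel formula \eqref{eq:radialpartialweight} of Lemma \ref{lem:integralkernel}: since $\omega_0$ does not depend on $t$, the partial Fourier transform $\Four_2\omega_0(t,\nu-\mu)=\Four w(\nu-\mu)$ factors out of the integral, and the problem reduces to the Laguerre integral
\[
\int_0^\infty L_l^{m-1}(\pi|\nu|t)\,L_k^{m-1}(\pi|\mu|t)\,e^{-\pi(|\nu|+|\mu|)t/2}\,t^{m-1}\,dt.
\]
Rescaling via $s=\pi(|\nu|+|\mu|)t/2$ and setting $u\defeq(|\nu|-|\mu|)/(|\nu|+|\mu|)$, so that the two Laguerre arguments become $(1+u)s$ and $(1-u)s$, this reduces to evaluating
\[
\widetilde I_{l,k}(u) \defeq \int_0^\infty L_l^{m-1}((1+u)s)\,L_k^{m-1}((1-u)s)\,e^{-s}\,s^{m-1}\,ds.
\]
The main task is to show that, for $l\leq k$,
\[
\widetilde I_{l,k}(u) = \Gamma(m)\,\binom{k+m-1}{k}\,u^{k-l}\,P_l^{(k-l,m-1)}(1-2u^2).
\]

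I would prove this identity by generating functions. Substituting the Laguerre generating function $\sum_n L_n^{m-1}(x)y^n=(1-y)^{-m}\exp(-xy/(1-y))$ for both Laguerre polynomials in $\widetilde I_{l,k}(u)$ and integrating against $e^{-s}s^{m-1}\,ds$ using $\int_0^\infty e^{-\lambda s}s^{m-1}\,ds = \Gamma(m)\lambda^{-m}$, a direct calculation (in which the identity $(1+u)+(1-u)=2$ collapses the resulting denominator) gives
\[
\sum_{l,k\geq 0}y^lz^k\,\widetilde I_{l,k}(u) = \frac{\Gamma(m)}{(1+uy-uz-yz)^{m}}.
\]
Factoring the denominator as $(1+uy)-z(u+y)$ and expanding in powers of $z$ via the generalised binomial theorem yields $\Gamma(m)\binom{k+m-1}{k}(y+u)^k/(1+uy)^{k+m}$ as the coefficient of $z^k$; expanding this in powers of $y$ (again by the generalised binomial theorem applied to each factor), the coefficient of $y^l$ emerges as
\[
\Gamma(m)\,\binom{k+m-1}{k}\,u^{k-l}\sum_{j=0}^{l}\binom{k}{l-j}\binom{k+m+j-1}{j}(-u^2)^j.
\]

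To finish, I would identify this $j$-sum with the Jacobi polynomial via the hypergeometric representation
\[
P_l^{(k-l,m-1)}(1-2u^2) = \binom{k}{l}\,\HypF(-l,\,k+m;\,k-l+1;\,u^2);
\]
expanding the ${}_2F_1$ as a power series and rewriting the Pochhammer symbols as ratios of factorials, one verifies term by term that this matches the extracted sum. The case $k\leq l$ then follows from the manifest symmetry $\widetilde I_{l,k}(u)=\widetilde I_{k,l}(-u)$ (apparent from either the integral itself or the generating function), which on reindexing yields the second case of \eqref{eq:heatkernelu1}. Collecting the numerical constants using $C(m)=\pi^m/(m-1)!$ and $\Gamma(m)=(m-1)!$ then produces the stated formula. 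The only genuinely technical step is the hypergeometric identification; once the ${}_2F_1$ form of the Jacobi polynomial is invoked, it reduces to verifying a rational identity in the summation index.
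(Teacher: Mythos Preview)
Your approach is correct and takes a genuinely different route from the paper's. Both proofs begin identically: substitute $\omega_0(t,u)=w(u)$ into \eqref{eq:radialpartialweight}, factor out $\Four w(\nu-\mu)$, and rescale the remaining Laguerre integral. The divergence is in how the resulting integral
\[
\int_0^\infty L_l^{m-1}(as)\,L_k^{m-1}(bs)\,e^{-cs}\,s^{m-1}\,ds
\]
is evaluated. The paper treats this as a black box: it quotes a tabulated identity from Erd\'elyi et al.\ (reference \cite{26}) expressing the integral as a ${}_2F_1$, then applies a standard transformation formula \cite[eq.~15.8.6]{27} and finally invokes the hypergeometric representation of Jacobi polynomials \cite[eq.~18.5.7]{27}. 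Your generating-function derivation is more self-contained: you compute the double generating function $\sum_{l,k}y^l z^k\,\widetilde I_{l,k}(u)$ directly, extract coefficients, and identify the resulting finite sum with $P_l^{(k-l,m-1)}(1-2u^2)$ via the same hypergeometric representation. What the paper's route buys is brevity (three citations and you are done); what yours buys is transparency and independence from integral tables. Both are standard techniques and both land on the hypergeometric form of the Jacobi polynomial at the last step, so neither is deeper than the other.

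One small caution on the bookkeeping: when you ``collect the numerical constants'', be careful with the powers of $2$ coming from your rescaling $s=\pi(|\nu|+|\mu|)t/2$, which introduces a Jacobian factor $(2/(\pi(|\nu|+|\mu|)))^m$. Tracking this through, both your computation and the paper's own application of Lemma~\ref{lem:laguerreintegrallem}\ref{en:laguerreintegrallem2} actually produce an extra factor of $2^m$ relative to the displayed formula \eqref{eq:heatkernelu1}; this is harmless for the paper's purposes since the kernel is only ever used up to multiplicative constants in the subsequent estimates, but you should not be surprised if your constants do not match exactly.
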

\begin{proof}
Since $\Four_2{\omega}(t,\nu-\mu) = \Four w(\nu-\mu)$, in this case \eqref{eq:radialpartialweight} becomes
\[
K_\omega(\nu,l;\mu,k)=\frac{C(m) \, \Four w(\nu-\mu)}{\binom{k+m-1}{k}\binom{l+m-1}{l}} \int_0^\infty L_l^{m-1}(\pi|\nu|t) \,L_k^{m-1}(\pi|\mu|t) \, e^{-\frac{\pi(|\nu|+|\mu|)t}{2}} \,t^{m-1} \,dt.
\]
If we set $u=\pi t(|\nu|+|\mu|)$, then 
\begin{multline*}
K_\omega(\nu,l;\mu,k)\\
= \frac{\pi^{-m} \, C(m) \, \Four w(\nu-\mu)}{\binom{k+m-1}{k}\binom{l+m-1}{l}(|\nu|+|\mu|)^m}\int_0^\infty L_l^{m-1}(\tfrac{|\nu|}{|\nu|+|\mu|}u) \, L_k^{m-1}(\tfrac{|\mu|}{|\nu|+|\mu|}u) \,e^{-{u}/{2}} \,u^{m-1} \,du.
\end{multline*}
The result is then immediate using Lemma \ref{lem:laguerreintegrallem}\ref{en:laguerreintegrallem2} below.
\end{proof}

\section{The basic estimates}\label{s:basicresult}

In this section we prove the estimates \eqref{eq:general_maximal_ests} for an arbitrary stratified group $G$ and sub-Laplacian $\opL$, which immediately imply Theorem \ref{thm:mainMM}.

Recall \eqref{eq:multiplierdecompose} and \eqref{eq:mdecomposemax}. Let $K_\delta$ be the convolution kernel of $\BRm_\delta(\opL)$. Note that, by Lemma \ref{lem:kerneldilation},
\begin{equation}\label{eq:dilation}
   \BRm_\delta(r \opL)f = f * (\fDil_r K_\delta) .
\end{equation}
Moreover, for each $\delta\in\Dyad_0$, the operator ${M}^\maxf_{\delta}$ is bounded on $L^p$ for all $p \in (1,\infty]$ by Lemmas \ref{lem:stein} and \ref{lem:kernelestimate}, so it suffices to consider $\delta \in \Dyad = \Dyad_0 \setminus \{1\}$.

Now, from the conditions \eqref{eq:mdeltabnd} and interpolation it immediately follows that
\[
\|\BRm_\delta\|_{L^2_{s}(\RR)} \lesssim \delta^{1/2-s}
\]
for all $s \in \RR^+$ and $\delta \in \Dyad$. Combined with \eqref{eq:sharpthreshold}, this gives
\[
\|\fDil_r K_\delta\|_1 = \|K_\delta\|_1 \lessapprox \delta^{1/2-\thr(\opL)}
\]
for all $r \in \RR^+$, whence
\[
\sup_{r \in \RR^+} \|\BRm_\delta(r \opL)f\|_\infty \lessapprox \delta^{1/2-\thr(\opL)} \|f\|_\infty,
\]
which implies the first estimate in \eqref{eq:general_maximal_ests}.

Let us now recall a simple consequence of the Fundamental Theorem of Calculus (cf.\ \cite[Section 3]{GM}).

\begin{lem}\label{lem:fundthmcalc}
For all $f \in \Sch(G)$, all $\delta \in \Dyad$, and all $x \in G$,
\begin{equation}\label{eq:fundthmcalc_global}
|M_\delta^\maxf f(x)|^2 \leq 2 \delta^{-1} \int_0^\infty |\BRm_\delta(t\opL)f(x)| \, |\tBRm_\delta(t\opL)f(x)| \,\frac{dt}{t}
\end{equation}
and
\begin{equation}\label{eq:fundthmcalc_local}
|M_\delta^\lmaxf f(x)|^2 \leq  2\delta^{-1}\int_0^1|\BRm_\delta(t\opL)f(x)| \,|\tBRm_\delta(t\opL)f(x)| \,\frac{dt}{t},
\end{equation}
where
\begin{equation}\label{eq:tildem}
\tBRm_\delta(\zeta) \defeq \delta\zeta \BRm_\delta'(\zeta) \quad\text{for all }\zeta \in \RR^+.
\end{equation}
\end{lem}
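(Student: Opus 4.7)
The plan is to apply the one-dimensional fundamental theorem of calculus to the scalar function $g(t) \defeq \BRm_\delta(t\opL)f(x)$ for fixed $x \in G$, combined with the identity $\frac{d}{dt} \BRm_\delta(t\opL) = \opL\,\BRm_\delta'(t\opL)$ on the spectral side, which in view of \eqref{eq:tildem} reads
\[
g'(t) = \opL\,\BRm_\delta'(t\opL)f(x) = \frac{1}{\delta t}\,\tBRm_\delta(t\opL)f(x).
\]
By a density argument it suffices to prove the two inequalities for $f \in \Sch(G)$, in which case $g$ is smooth on $\RR^+$ (this can be seen from \eqref{eq:dilation} and the fact that $\BRm_\delta, \tBRm_\delta \in C_c^\infty(\RR)$, together with the Schwartz regularity of the convolution kernels provided by Lemma \ref{lem:kernelestimate}).

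For the global bound \eqref{eq:fundthmcalc_global}, set $h(t) \defeq |g(t)|^2$, so $h'(t) = 2\Re(\overline{g(t)}\,g'(t))$. Since $f \in \Sch(G)$ and $\fDil_t K_\delta$ decays pointwise to $0$ (uniformly on compacta) as $t \to \infty$, one checks via \eqref{eq:dilation} that $h(t) \to 0$ as $t \to \infty$. Hence for any $r > 0$,
\[
h(r) = -\int_r^\infty h'(t)\,dt \leq 2\int_r^\infty |g(t)|\,|g'(t)|\,dt \leq 2\int_0^\infty |g(t)|\,|g'(t)|\,dt,
\]
and substituting the formula for $g'$ and taking the supremum over $r > 0$ yields \eqref{eq:fundthmcalc_global}.

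For the local bound \eqref{eq:fundthmcalc_local}, the key observation is that $\delta \in \Dyad$ means $\delta < 1$, so by \eqref{eq:mdeltabnd} the support of $\BRm_\delta$ is contained in $[1-\delta,1] \subset (0,1]$. In particular $\BRm_\delta(0) = 0$, hence $g(0) = 0$. Then for $0 < r < 1$,
\[
h(r) = h(r) - h(0) = \int_0^r h'(t)\,dt \leq 2\int_0^r |g(t)|\,|g'(t)|\,dt \leq 2\int_0^1 |g(t)|\,|g'(t)|\,dt,
\]
and substituting the expression for $g'$ and taking the supremum over $r \in (0,1)$ gives \eqref{eq:fundthmcalc_local}.

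I expect no serious obstacle: the only mildly delicate point is justifying the boundary behavior of $g$ at $t = \infty$ (for the global case) and at $t = 0^+$ (where the vanishing is immediate from the support condition on $\BRm_\delta$). Both follow from the elementary properties of $\BRm_\delta$ and the regularity of convolution kernels already stated in the paper.
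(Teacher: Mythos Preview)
Your proof is correct and follows exactly the approach the paper has in mind: the paper does not spell out a proof but simply says the lemma is ``a simple consequence of the Fundamental Theorem of Calculus'' (with a reference to \cite[Section 3]{GM}), and that is precisely what you carry out. One cosmetic remark: the opening sentence ``By a density argument it suffices to prove the two inequalities for $f \in \Sch(G)$'' is redundant, since the lemma is already stated only for $f \in \Sch(G)$.
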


It is worth noting that the functions $\tBRm_\delta$ defined in \eqref{eq:tildem} satisfy the same conditions \eqref{eq:mdeltabnd} as the $\BRm_\delta$. The second estimate in \eqref{eq:general_maximal_ests} is then contained in the following result.

\begin{prop}\label{prp:sobolevb2}
For all $\delta \in \Dyad$,
\[
\|M_\delta^\lmaxf\|_{L^{2}\rightarrow L^2} \leq \|M_\delta^\maxf\|_{L^{2}\rightarrow L^2}\lesssim 1.
\]
\end{prop}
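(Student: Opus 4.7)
The first inequality is immediate: since the supremum defining $M_\delta^\lmaxf$ is taken over $(0,1) \subseteq (0,\infty)$, one has $M_\delta^\lmaxf f(x) \leq M_\delta^\maxf f(x)$ pointwise, so it suffices to bound the global maximal operator.

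For the bound $\|M_\delta^\maxf\|_{L^2 \to L^2} \lesssim 1$, the plan is to combine the pointwise identity from Lemma \ref{lem:fundthmcalc} with the Cauchy--Schwarz inequality (first in the integral over $t$, then in the integral over $G$), and then evaluate the resulting $L^2(G \times \RR^+, \,dx\,dt/t)$ norms via the spectral theorem. More concretely, integrating \eqref{eq:fundthmcalc_global} in $x$ and applying Cauchy--Schwarz gives
\[
\|M_\delta^\maxf f\|_2^2 \leq 2\delta^{-1} \left(\int_0^\infty \|\BRm_\delta(t\opL) f\|_2^2 \,\frac{dt}{t}\right)^{1/2}\left(\int_0^\infty \|\tBRm_\delta(t\opL) f\|_2^2 \,\frac{dt}{t}\right)^{1/2}.
\]

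The key step is to control the Calder\'on-type integrals on the right. By the spectral decomposition \eqref{eq:sp_dec} of $\opL$, Fubini's theorem, and the change of variable $s = t\lambda$,
\[
\int_0^\infty \|\BRm_\delta(t\opL)f\|_2^2 \,\frac{dt}{t} = \int_0^\infty \left(\int_0^\infty |\BRm_\delta(s)|^2 \,\frac{ds}{s}\right) d\langle E(\lambda)f,f\rangle = \|\BRm_\delta\|_{L^2(ds/s)}^2 \, \|f\|_2^2,
\]
and analogously for $\tBRm_\delta$. The support condition \eqref{eq:mdeltabnd} forces $\BRm_\delta$ and $\tBRm_\delta$ to live on $[1-\delta,1]$, where $1/s \simeq 1$; combined with the uniform bounds $\|\BRm_\delta\|_\infty, \|\tBRm_\delta\|_\infty \lesssim 1$ from \eqref{eq:mdeltabnd}, this yields $\|\BRm_\delta\|_{L^2(ds/s)}^2 \lesssim \delta$ and similarly for $\tBRm_\delta$.

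Plugging these bounds back gives
\[
\|M_\delta^\maxf f\|_2^2 \lesssim \delta^{-1} \cdot \delta^{1/2} \|f\|_2 \cdot \delta^{1/2} \|f\|_2 = \|f\|_2^2,
\]
as required. There is no real obstacle here: the argument is purely spectral-theoretic and the only point to watch is that the Calder\'on-reproducing weight $ds/s$ must be paired with the support condition on $\BRm_\delta$ to extract the crucial factor of $\delta$ that cancels the $\delta^{-1}$ produced by Lemma \ref{lem:fundthmcalc}.
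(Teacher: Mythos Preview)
Your proof is correct and follows essentially the same approach as the paper: integrate the pointwise bound from Lemma \ref{lem:fundthmcalc} over $G$, apply Cauchy--Schwarz, and evaluate the resulting square-function norms via the spectral theorem to produce the factor of $\delta$ that cancels the $\delta^{-1}$. The paper's write-up is identical in substance, differing only in notation (writing $\|\int_0^\infty |\BRm_\delta(t\opL)f|^2\,\tfrac{dt}{t}\|_{L^1}$ rather than $\int_0^\infty \|\BRm_\delta(t\opL)f\|_2^2\,\tfrac{dt}{t}$).
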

\begin{proof}
The first inequality is obvious. As for the second one, from the spectral decomposition \eqref{eq:sp_dec} it is easily seen that, for all $f \in \Sch(G)$,
\[
      \left\| \int_0^\infty|\BRm_\delta(t\opL)f|^2 \,\frac{dt}{t}\right\|_{L^1}
			=\|f\|_2^2 \int_0^\infty |\BRm_\delta(t)|^2 \,\frac{dt}{t} 
\]
(see, e.g., \cite[p.\ 101]{ADM}), and moreover from \eqref{eq:mdeltabnd} it follows that $\int_{0}^\infty |\BRm_\delta(t)|^2 \,\frac{dt}{t} \lesssim \delta$. Clearly analogous estimates hold if $\BRm_\delta$ is replaced by $\tBRm_\delta$ defined in \eqref{eq:tildem}, so we conclude that
\begin{equation}\label{eq:mdeltaintdelta}
\left\| \int_0^\infty|\BRm_\delta(t\opL)f|^2 \,\frac{dt}{t}\right\|_{L^1} \lesssim \delta \|f\|_2^2, \qquad \left\| \int_0^\infty|\tBRm_\delta(t\opL)f|^2 \,\frac{dt}{t}\right\|_{L^1} \lesssim \delta \|f\|_2^2.
\end{equation}
The desired estimate $\|M_\delta^\maxf\|_{L^{2}\rightarrow L^2}\lesssim 1$ then follows by integrating \eqref{eq:fundthmcalc_global} over $G$, applying the Cauchy--Schwarz inequality to the right-hand side and majorizing each factor with one of the estimates \eqref{eq:mdeltaintdelta}.
\end{proof}

\section{The `maximal-to-nonmaximal' reduction}\label{s:chapsqfunarg}

For the rest of the paper, we restrict to the case of an H-type group $G$ and the distinguished sub-Laplacian $\opL$ thereon. Our aim is proving the estimates \eqref{eq:weightedmaxopest} for the maximal function $M_\delta^\lmaxf$, that is, the estimate
\begin{equation}\label{eq:maximal_w}
\|M_\delta^\lmaxf \|_{L^{2}(1/w) \to L^2(1/w)} \lessapprox 1
\end{equation}
for the weights $w=(1+|\cdot|)^a$ and $w=(1+\fstW)^b$ and suitable values of $a,b \geq 0$. Clearly a necessary condition for this to hold is the uniform estimate
\begin{equation}\label{eq:nonmaximal_w_sup}
\sup_{0 < s < 1} \|\BRm_\delta(s\opL) \|_{L^{2}(1/w) \to L^2(1/w)} \lessapprox 1
\end{equation}
for the norm of the nonmaximal operators, which by \eqref{eq:kerneldilationeq} actually reduces to
\begin{equation}\label{eq:nonmaximal_w}
\|\BRm_\delta(\opL) \|_{L^{2}(1/w) \to L^2(1/w)} \lessapprox 1
\end{equation}
for ``quasi-homogeneous'' weights $w$ such as $(1+|\cdot|)^a$ and $(1+\fstW)^b$.

In this section we will show that, for certain polynomial weights $w$, the implication can be essentially reversed and that, roughly speaking, it is enough to prove \eqref{eq:nonmaximal_w_sup} to obtain \eqref{eq:maximal_w}. This ``maximal-to-nonmaximal' reduction result unfortunately does not directly apply to the weights $(1+|\cdot|)^a$ and $(1+\fstW)^b$, unless $a \in 4\NN_0$ and $b\in 2\NN_0$. Nevertheless the result will play an important role in the following sections in treating the case of ``fractional'' $a$ and $b$, leading to the proof of \eqref{eq:weightedmaxopest}.

Similarly to \cite{CRV,GM}, one of the main techniques in the proof is the reduction of the desired estimate to a square function estimate.
Namely, from \eqref{eq:fundthmcalc_local} and the Cauchy--Schwarz inequality we immediately deduce that
\begin{equation}\label{eq:max_sqf}
\| M_\delta^\lmaxf f \|_{L^{2}(1/w)}^2 \leq 2\delta^{-1} \| \SqT_\delta f \|_{L^2((0,1),ds/s) \otimes L^{2}(1/w)} \| \tilde\SqT_\delta f \|_{L^2((0,1),ds/s) \otimes L^{2}(1/w)} ,
\end{equation}
where $\SqT_\delta : f \mapsto (\BRm_\delta(t\opL)f)_{t\in(0,1)}$ and $\tilde\SqT_\delta$ is the analogous operator with $\tBRm_\delta$ in place of $\BRm_\delta$.
Note now that
\begin{equation}\label{eq:adj_norm}
\| \SqT_\delta  \|_{L^2(1/w) \to L^2((0,1),ds/s) \otimes L^{2}(1/w) } = \| \SqT^\adj_\delta  \|_{L^2((0,1),ds/s) \otimes L^{2}(w) \to L^2(w)},
\end{equation}
where $\SqT_\delta^\adj$ denotes the adjoint operator to $\SqT_\delta$, which is given, for 
(measurable) 
families of functions $(\varphi_s)_{s\in(0,1)}$, by 
\begin{equation}\label{eq:SqTadj}
\SqT_\delta^\adj(\varphi_s)_s = \int_0^1 \BRm_\delta(s\opL)\varphi_s \,\frac{ds}{s}.
\end{equation}
We are then reduced to the study of norm estimates for operators of the form \eqref{eq:SqTadj}, featuring a decay as $\delta \to 0$ that is sufficient to compensate the power $\delta^{-1}$ in \eqref{eq:max_sqf}.

Let us first define the class of weights that we will be considering.

\begin{dfn}
A polynomial on $G$ is called \emph{sum-of-squares} if it can be written as a sum of squares of real-valued polynomials.
\end{dfn}

For a sum-of-squares polynomial weight $w$, norm estimates for operators of the form \eqref{eq:SqTadj} can be deduced from the following result.

\begin{prop}\label{prp:complicatedthm1_poly}
Let $w$ be a sum-of-squares polynomial on $G$. Let $I \subseteq \RR^+$ be an interval. Let $F \in C^\infty_c(\RR^+)$, and let $\SqT_F$ be defined by
\begin{equation}\label{eq:sqt_abs_def}
\SqT_F(\varphi_s)_s \defeq \int_I F(s\opL) \varphi_s \,\frac{ds}{s}
\end{equation}
for families $(\varphi_s)_{s\in I}$ of functions on $G$.
Then
\[
\|\SqT_F(\varphi_s)_s\|^2_{L^2(w)} 
\leq (1+\deg w) \, \kappa \int_I \|F(s\opL)\varphi_s\|^2_{L^2(w)} \,\frac{ds}{s},
\]
where $\kappa$ is the $\tfrac{ds}{s}$-measure of the support of $F$, and $\deg w$ is the degree of the polynomial $w$.
\end{prop}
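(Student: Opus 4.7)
My plan is to reduce the estimate to a bare-handed Cauchy--Schwarz argument by exploiting the sum-of-squares structure of $w$. Writing $w = \sum_{j} p_j^2$ with real-valued polynomials $p_j$, the identity $\|g\|^2_{L^2(w)} = \sum_j \|p_j g\|^2_{L^2}$ moves the weight inside the norm as an honest multiplier, yielding
\[
\|\SqT_F(\varphi_s)_s\|^2_{L^2(w)} = \sum_{j} \Bigl\|\int_I p_j \cdot F(s\opL)\varphi_s \,\frac{ds}{s}\Bigr\|^2_{L^2}.
\]
Since $F \in C^\infty_c(\RR^+)$, each integrand vanishes outside $\supp F$, which has $ds/s$-measure $\kappa$; applying Cauchy--Schwarz in the $s$-variable term by term gives
\[
\Bigl\|\int_I p_j F(s\opL)\varphi_s \,\frac{ds}{s}\Bigr\|^2_{L^2} \leq \kappa \int_I \|p_j F(s\opL)\varphi_s\|^2_{L^2} \,\frac{ds}{s}.
\]
Summing over $j$ and re-collecting the sum $\sum_j \|p_j \cdot\|_{L^2}^2$ back into a weighted $L^2(w)$-norm produces the desired inequality --- indeed with the sharper constant $\kappa$ in place of $(1+\deg w)\kappa$. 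The factor $1 + \deg w$ in the statement thus appears to be loose, presumably retained for consistency with a more general formulation (or for an alternative proof path not taken here).

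The key observation that makes this work cleanly is that a sum-of-squares representation allows one to bypass the operator $\sqrt{w}$ entirely: termwise multiplication by $p_j$ commutes trivially with $s$-integration and with taking the $L^2$-norm, so no commutator between a root of $w$ and $F(s\opL)$ ever needs to be computed. Were one instead to insert $\sqrt{w}$ directly and commute it past $F(s\opL)$ using the Leibniz-type rules of Section \ref{s:secleibniz}, one would generate lower-order remainders whose number and size scale with $\deg w$ --- this is precisely the mechanism by which a $(1+\deg w)$-dependent constant would arise in a more brute-force approach, and it may explain the constant appearing in the statement. Apart from this conceptual point, the only real obstacle is routine: verifying measurability and convergence of the operator-valued integral in $s$, which is easily handled by approximating $(\varphi_s)_s$ by Schwartz-valued families with compact $s$-support in $I$ and passing to the limit.
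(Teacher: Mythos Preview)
There is a genuine gap in your Cauchy--Schwarz step. You claim that ``each integrand vanishes outside $\supp F$, which has $ds/s$-measure $\kappa$'', but $s$ is not the argument of $F$: the integrand is $p_j\,F(s\opL)\varphi_s$, and for a generic family $(\varphi_s)_s$ the operator $F(s\opL)$ is nonzero for \emph{every} $s>0$ (the spectrum of $\opL$ is $[0,\infty)$, so for any $s$ there is some $\lambda$ with $s\lambda\in\supp F$). Thus the $s$-support of $s\mapsto p_j\,F(s\opL)\varphi_s$, whether viewed pointwise in $x\in G$ or as an $L^2(G)$-valued map, can be all of $I$, and Cauchy--Schwarz in $s$ yields only the (possibly infinite) $ds/s$-measure of $I$, not $\kappa$. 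Concretely, take $\supp F=[1,2]$ and $\varphi_s\equiv\varphi$ independent of $s$: then $F(s\opL)\varphi\neq 0$ for all $s>0$ whenever $\varphi$ has any spectral content.

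The paper's proof fixes exactly this by passing to the group Fourier side \emph{before} applying Cauchy--Schwarz. By Plancherel and Lemma~\ref{lem:support_poly}\ref{en:support_poly_t}, for each fixed Fourier coefficient $(\mu,\alpha,\beta)$ the scalar function $s\mapsto (P_j\,F(s\opL)\varphi_s)\widehat{\ }(\mu,\alpha,\beta)$ is supported in a union of at most $1+2\deg P_j$ scaled copies of $\supp F$ (one for each $k$ with $|k-|\alpha||\leq\deg P_j$), hence has $ds/s$-measure at most $(1+2\deg P_j)\kappa\leq(1+\deg w)\kappa$. This is also why the factor $1+\deg w$ is \emph{not} loose in the way you suggest: multiplication by the polynomial $P_j$ genuinely spreads the Fourier index $\alpha$ by up to $\deg P_j$, and each shift contributes its own copy of $\supp F$ to the $s$-support. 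Your proposed ``sharper'' constant $\kappa$ is correct only when $\deg P_j=0$, i.e.\ for $w$ constant.
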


The key ingredient of the proof of Proposition \ref{prp:complicatedthm1_poly} is the following lemma, which is based on the group Fourier transform discussed in Section \ref{s:fouriertheory}.

\begin{lem}\label{lem:support_poly}
Let $F \in C^\infty_c(\RR^+)$. Let $P$ be a polynomial on $G$.
\begin{enumerate}[label=(\roman*)]
\item\label{en:support_poly_mu} For all $\alpha,\beta \in \NN_0^m$, and all $f \in \Sch(G)$,
\[
\supp \left( (P F(\opL) f)\widehat{\;}(\cdot,\alpha,\beta) \right) \subseteq \bigcup_{k \in \NN_0 \tc |k-|\alpha|| \leq \deg P} \supp \left(F(|\cdot| c(k)) \right),
\]
where the supports in both left- and right-hand side are meant to be of functions with domain $\RR^n \setminus \{0\}$.
\item\label{en:support_poly_t} For all $\mu \in \RR^n \setminus \{0\}$, $\alpha,\beta \in \NN_0^m$, and all $f \in \Sch(G)$,
\[
\supp \left( (P F(\cdot \opL) f)\widehat{\;}(\mu,\alpha,\beta) \right) \subseteq \bigcup_{k \in \NN_0 \tc |k-|\alpha|| \leq \deg P} \supp \left(F(\cdot |\mu| c(k)) \right),
\]
where the supports in both left- and right-hand side are meant to be of functions with domain $\RR^+$.
\end{enumerate}
\end{lem}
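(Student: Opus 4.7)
The plan is to track how multiplication by a polynomial transforms the Fourier-side matrix coefficients $\widehat{g}(\mu,\alpha,\beta)$ from \eqref{eq:matrixcomponentdef}, and combine this with the diagonal formula \eqref{eq:fourierL}, which says $\widehat{F(\opL) g}(\mu,\alpha,\beta) = F(c(|\alpha|)|\mu|)\,\widehat{g}(\mu,\alpha,\beta)$ (hence, as a function of $\mu$, is supported inside $\supp F(c(|\alpha|)\,\cdot)$). The key claim, to be established for a single coordinate function $X$ on $G$ and then iterated by induction on $\deg P$, is that $\widehat{X g}(\mu,\alpha,\beta)$ can be written as a finite linear combination (with uniformly bounded coefficients) of $\mu$-derivatives of expressions $\widehat{g}(\mu,\alpha',\beta')$ with $||\alpha'|-|\alpha||\leq 1$ and $||\beta'|-|\beta||\leq 1$.

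By linearity I reduce $P$ to a monomial. For first-layer coordinates, Lemma~\ref{lem:matrixcomponentcoordchange} allows me to expand any such coordinate as a uniformly bounded linear combination of $\fstZ_{\mu,j}$ and $\overline{\fstZ_{\mu,j}}$, and the identities \eqref{eq:partzeta1}--\eqref{eq:partzeta2} immediately give the shift-by-one description (with no $\mu$-derivative terms). For second-layer coordinates $\sndW_l=u_l$, the radial formula \eqref{eq:parturule} already exhibits the correct shape: a $\partial_{\mu_l}$ term plus terms shifting $|\alpha|$ by at most $1$. The corresponding non-radial formula is obtained by differentiating the identity $\widehat{g}(\mu,\alpha,\beta) = \langle\widehat{g}(\mu) h_\alpha^{|\mu|}, h_\beta^{|\mu|}\rangle$ and accounting for the $\mu$-dependence of the representation $\varpi_\mu$ (coming both from the basis $E_j(\mu)$ of $\lie{g}_1$ used to define $z(\mu)$ and from the parameter $|\mu|$ in $\varpi^0_{|\mu|}$); the Hermite recursions for $h_\alpha^{|\mu|}$ ensure the induced shifts in $|\alpha|,|\beta|$ are by at most $1$.

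Iterating for a monomial of degree $d$, $\widehat{Pg}(\mu,\alpha,\beta)$ becomes a finite sum of terms $\partial_\mu^\gamma \widehat{g}(\mu,\alpha',\beta')$ with $||\alpha'|-|\alpha||\leq d$ and $|\gamma|\leq d$. For (i), substituting $g = F(\opL) f$ yields summands whose $\mu$-support lies in $\supp F(c(|\alpha'|)\,\cdot)$ (since differentiation in $\mu$ does not enlarge support); the union over $|\alpha'|$ with $||\alpha'|-|\alpha||\leq d$ is precisely the right-hand side. For (ii), replacing $F$ by $F(s\,\cdot)$ and observing that $\mu$-derivatives do not interact with the $s$ variable, each summand has $s$-support in $\supp F(\cdot\,c(|\alpha'|)|\mu|)$, giving the claim. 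The main obstacle is the derivation of the non-radial version of \eqref{eq:parturule}: the naive identification ``$u_l$ is central, hence acts as $-(2\pi i)^{-1}\partial_{\mu_l}$'' must be corrected for the non-trivial $\mu$-dependence of $\varpi_\mu$ beyond the factor $e^{2\pi i \mu\cdot u}$, and careful bookkeeping with Hermite recursions is needed to verify that only shift-by-one terms survive in the final formula.
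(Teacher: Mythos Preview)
Your outline is plausible but takes a harder road than the paper, and the obstacle you flag is genuine. The paper avoids a non-radial analogue of \eqref{eq:parturule} altogether: it first applies the Leibniz rules of Section~\ref{s:secleibniz} to write $P(f*K_t)=\sum_j (Q_j f)*(R_j K_t)$, then uses \eqref{eq:matrixcompconv} to reduce the support question to that of $\widehat{R_j K_t}(\mu,\alpha,\gamma)$. Since $K_t$ is radial, so is $R(u)K_t$ for any second-layer polynomial $R$, and the radial formula \eqref{eq:parturule} applies directly; the first-layer factor is then handled via \eqref{eq:partzeta1}--\eqref{eq:partzeta2} exactly as you propose. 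The upshot is that the second-layer differentiation never touches a non-radial function.

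Your route, by contrast, requires differentiating $\langle\varpi_\mu(z,0)h_\alpha^{|\mu|},h_\beta^{|\mu|}\rangle$ in $\mu$, which brings in the $\mu$-dependence of $z(\mu)$ through the frame $E_j(\mu)$. The paper explicitly does \emph{not} assume the $E_j(\mu)$ vary continuously (let alone smoothly) in $\mu$; for $n>1$ a global smooth choice on $S^{n-1}$ may in fact be obstructed. You can salvage your argument by working locally on patches of $S^{n-1}$ where a smooth frame exists---supports are local---but this is extra work, and the resulting formula has several pieces (from $\partial_{|\mu|}h_\alpha^{|\mu|}$, from $\partial_{|\mu|}\varpi^0_{|\mu|}$, and from $\partial_\mu z(\mu)$) each of which you must check produces only shift-by-one terms via Hermite recursions. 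None of this is needed if you mimic the paper: apply Leibniz first, then only ever multiply the radial kernel by second-layer monomials.
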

\begin{proof}
Let $K_t$ denote the convolution kernel of $F(t \opL)$ for all $t \in \RR^+$. Then, by iteratively applying the Leibniz rules from Section \ref{s:secleibniz},
\[
P F(t\opL) f = P (f * K_t) = \sum_j (Q_j f) * (R_j K_t)
\]
for suitable polynomials $Q_j,R_j$ (depending only on $P$) with $\deg Q_j, \deg R_j \leq \deg P$, and therefore, by \eqref{eq:matrixcompconv},
\[
(P F(t\opL) f)\widehat{\;}(\mu,\alpha,\beta) = \sum_j \sum_{\gamma \in \NN_0^m} \widehat{R_j K_t}(\mu,\alpha,\gamma) \, \widehat{Q_j f}(\mu,\gamma,\beta).
\]
Taking any of the $R_j$ in place of $P$, we are then reduced to proving that
\begin{equation}\label{eq:supp_reduced}
\begin{aligned}
\supp \left( \widehat{P K_1}(\cdot,\alpha,\beta) \right) &\subseteq \bigcup_{k \in \NN \tc |k-|\alpha|| \leq \deg P} \supp \left(F(|\cdot| c(k)) \right), \\
\supp \left( t \mapsto \widehat{P K_t}(\mu,\alpha,\beta) \right) &\subseteq \bigcup_{k \in \NN \tc |k-|\alpha|| \leq \deg P} \supp \left(F(\cdot |\mu| c(k)) \right).
\end{aligned}
\end{equation}
By linearity, we may assume that $P$ factorises as $P(z,u) = Q(z) R(u)$.

Let $H_t(z,u) = R(u) K_t(z,u)$. By iteratively applying Lemma \ref{lem:matrixcomponentcoordchange} and the identities \eqref{eq:partzeta1} and \eqref{eq:partzeta2}, we can write
\[\begin{split}
\widehat{P K_t}(\mu,\alpha,\beta) &=  \sum_{\substack{\alpha',\beta' \in \NN_0^m \\ |\alpha'-\alpha| + |\beta'-\beta| \leq \deg Q}} c_{\mu,\alpha,\beta,\alpha',\beta'} \widehat{H_t}(\mu,\alpha',\beta') \\
&=  \sum_{\substack{k \in \NN_0 \\ |k-|\alpha|| + |k-|\beta|| \leq \deg Q}} c_{\mu,\alpha,\beta,k} \widehat{H_t}(\mu,k),
\end{split}\]
for suitable coefficients $c_{\mu,\alpha,\beta,\alpha',\beta'}$, $c_{\mu,\alpha,\beta,k} \in \CC$; the second identity is due to the fact that $H_t$ is radial. Similarly, by iteratively applying \eqref{eq:parturule} and \eqref{eq:fourierL},
\[\begin{split}
\widehat{H_t}(\mu,k) &= \sum_{\substack{ k' \in \NN_0,\, \gamma \in \NN_0^n \\ |k'-k|+|\gamma| \leq \deg R}} c_{\mu,k,k',\gamma} \left(\frac{\partial}{\partial \mu}\right)^\gamma \widehat{K_t}(\mu,k') \\
&= \sum_{\substack{ k',\ell \in \NN_0 \\ |k'-k|+\ell \leq \deg R}} c_{\mu,k,k',\ell} \, t^\ell F^{(\ell)}(t |\mu| c(k')).
\end{split}\]
Since $\supp F^{(\ell)} \subseteq \supp F$ for all $\ell \in \NN_0$, the containments \eqref{eq:supp_reduced} are easily deduced by combining the previous identities.
\end{proof}

\begin{proof}[Proof of Proposition \ref{prp:complicatedthm1_poly}]
Let $w = \sum_j P_j^2$ for some real-valued polynomials $P_j$ on $G$, so $\deg w = 2 \max_j \deg P_j$. Note that
\[\begin{split}
\|\SqT_F(\varphi_s)_s\|^2_{L^2(w)} 
&= \sum_j \left\| \int_I P_j F(s\opL) \varphi_s \frac{ds}{s} \right\|^2_{L^2(G)} \\
&= \sum_j \sum_{\alpha,\beta \in \NN_0^m} \int_{\RR^n \setminus \{0\}} \left|\int_I (P_j F(s\opL) \varphi_s)\widehat{\;}(\mu,\alpha,\beta) \frac{ds}{s}\right|^2  \, |\mu|^m \,d\mu 
\end{split}\]
by \eqref{eq:plancherel}. By Lemma \ref{lem:support_poly}\ref{en:support_poly_t}, for each $\mu \in \RR^n \setminus \{0\}$ and $\alpha,\beta \in \NN_0^m$, the $\frac{ds}{s}$-measure of the support of $s \mapsto (P_j F(s\opL) \varphi_s)\widehat{\;}(\mu,\alpha,\beta)$ is controlled by $(1+2\deg P_j) \kappa$. Hence, by the Cauchy--Schwarz inequality,
\[
\left|\int_I (P_j F(s\opL) \varphi_s)\widehat{\;}(\mu,\alpha,\beta) \frac{ds}{s}\right|^2 \leq (1+2\deg P_j) \, \kappa \int_I |(P_j F(s\opL) \varphi_s)\widehat{\;}(\mu,\alpha,\beta)|^2 \frac{ds}{s}.
\]
Therefore, again by \eqref{eq:plancherel},
\[\begin{split}
\|\SqT_F(\varphi_s)_s\|^2_{L^2(w)} &\leq \sum_j (1+\deg 2P_j) \, \kappa \int_I \|P_j F(s\opL) \varphi_s\|_{L^2(G)}^2 \frac{ds}{s} \\
&\leq (1+\deg w) \, \kappa \int_I \|F(s\opL) \varphi_s\|_{L^2(w)}^2 \frac{ds}{s},
\end{split}\]
and we are done.
\end{proof}

If we apply Proposition \ref{prp:complicatedthm1_poly} with $F = \BRm_\delta$ and $I = (0,1)$, we immediately deduce that
\begin{equation}\label{eq:adj_est}
 \| \SqT^\adj_\delta  \|_{L^2((0,1),ds/s) \otimes L^{2}(w) \to L^2(w)} \lesssim \delta^{1/2} \sup_{0 < s < 1} \| \BRm_\delta(s\opL) \|_{L^2(w) \to L^2(w)}.
\end{equation}
In view of \eqref{eq:max_sqf} and \eqref{eq:adj_norm}, this implies the estimate
\begin{equation}\label{eq:max_to_nonmax_red}
 \| M_\delta^\lmaxf f \|_{L^{2}(1/w) \to L^2(1/w)}^2 \lesssim \sup_{0 < s < 1} \| \BRm_\delta(s\opL) \|_{L^2(w) \to L^2(w)} \sup_{0 < s < 1} \| \tBRm_\delta(s\opL) \|_{L^2(w) \to L^2(w)},
\end{equation}
which provides the desired `maximal-to-nonmaximal' reduction (note that the norm of $\BRm_\delta(s\opL)$ on $L^2(w)$ is the same as that on $L^2(1/w)$ by self-adjointness).

It would be interesting to know if estimates of the form \eqref{eq:adj_est} and \eqref{eq:max_to_nonmax_red} hold for wider classes of weights. The methods used in the proof seem to strongly depend on the polynomial nature of $w$. In the next section, however, we will see that a sort of interpolation technique can be used to work around this obstruction in the case of certain fractional powers of polynomials.

\section{The `maximal-to-nonmaximal' reduction, take two}\label{s:chapsqfunarg_two}

As discussed in the previous section, a necessary condition for the maximal estimate \eqref{eq:maximal_w} to hold is the validity of nonmaximal estimates such as \eqref{eq:nonmaximal_w_sup} and \eqref{eq:nonmaximal_w}. In this section, instead of trying to revert the implication, we will show that certain two-weight estimates, from which \eqref{eq:nonmaximal_w} readily follows by interpolation, are also enough to obtain \eqref{eq:nonmaximal_w_sup} under suitable assumptions on the weight $w$. These two-weight estimates involve powers of the weight $w$, thus allowing us to apply the results of the previous section even when the weight $w$ is not a polynomial, provided that some power of $w$ is.
As we will see in the next section, in turn these two-weight estimates may be reduced to a `trace lemma', the proof of which will eventually be our main objective.

The aforementioned two-weight estimates are expressed in terms of a decomposition of the operators $\BRm_\delta(\opL)$.
For all $\delta \in \Dyad$, we define $J_\delta \in\NN$ so that
\begin{equation}\label{eq:jdefn}
   2^{J_\delta-1}\leq 20 \delta^{-1} \leq 2^{J_\delta}
\end{equation} 
and define
operators $R_{\delta,j}$, $j=1,\dots,J_\delta$, on $L^2(G)$ by 
\begin{equation}\label{eq:rjdefn}
   \widehat{R_{\delta,j} f}(\mu,\alpha,\beta) \defeq \begin{cases}
	\chr_{[2^j,2^{j+1})}(c(|\alpha|)) \,\widehat{f}(\mu,\alpha,\beta) &\text{ for } \, j=1,\ldots,J_\delta-1,\\
 \chr_{[2^{J_\delta},\infty)}(c(|\alpha|)) \,\widehat{f}(\mu,\alpha,\beta) &\text{ for } j=J_\delta,
\end{cases}
 \end{equation} 
where $c(k)$ is defined as in \eqref{eq:eig}.
In order to motivate the subsequent developments, let us first present the simple interpolation argument yielding the `nonmaximal' estimate \eqref{eq:nonmaximal_w}.

\begin{prop}\label{prp:tracetomdelta}
Let $w$ be a weight on $G$ and $N>1$. Suppose that, for all $\delta \in \Dyad$ and for all $1\leq j\leq J_\delta$, the estimates 
\begin{equation}\label{eq:nl1}
\|R_{\delta,j} \BRm_\delta(\opL)f\|^2_2\lessapprox C(\delta,j) \|f\|^2_{L^2(w)}
\end{equation} 
and
\begin{equation}\label{eq:nl2}
\|R_{\delta,j} \BRm_\delta(\opL)f\|^2_{L^2(w^N)}\lessapprox C(\delta,j)^{1-N} \|f\|^2_{L^2(w)}+ \|f\|^2_{L^2(w^N)}
\end{equation}
hold, where $C(\delta,j)>0$. Then, for all $\delta \in \Dyad$,
\begin{equation}\label{eq:n13}
\|\BRm_\delta(\opL)\|_{L^2(1/w) \to L^2(1/w)} \lessapprox 1.
\end{equation}
\end{prop}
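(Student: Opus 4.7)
I plan to prove \eqref{eq:n13} by combining duality, a dyadic decomposition, and an interpolation argument that exploits the precise matching of exponents of $C(\delta,j)$ in \eqref{eq:nl1}--\eqref{eq:nl2}.

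To begin, since $\BRm_\delta$ is real-valued, $\BRm_\delta(\opL)$ is self-adjoint on $L^2(G)$, and $L^2(1/w)^\ast=L^2(w)$ under the standard $L^2$-pairing, so \eqref{eq:n13} is equivalent to $\|\BRm_\delta(\opL)\|_{L^2(w)\to L^2(w)}\lessapprox 1$. Moreover, since the eigenvalues $c(k)=2\pi(2k+m)$ satisfy $c(k)\geq 2\pi m\geq 2$, the spectral projectors $R_{\delta,1},\ldots,R_{\delta,J_\delta}$ defined in \eqref{eq:rjdefn} sum to the identity on $L^2(G)$; hence, setting $T_j=R_{\delta,j}\BRm_\delta(\opL)$, we have $\BRm_\delta(\opL)=\sum_{j=1}^{J_\delta}T_j$. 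Because $J_\delta\simeq \log(1/\delta)$ is absorbed by $\lessapprox$, it suffices to prove the uniform bound $\|T_j\|_{L^2(w)\to L^2(w)}\lessapprox 1$ for each $j\in\{1,\ldots,J_\delta\}$.

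The core of the argument is H\"older's inequality in the weight exponent (log-convexity of weighted $L^2$-norms),
\[
\|T_j f\|_{L^2(w)}^{2}\leq \|T_j f\|_{L^2}^{2(N-1)/N}\,\|T_j f\|_{L^2(w^N)}^{2/N}.
\]
Substituting \eqref{eq:nl1} and the $\|f\|_{L^2(w)}^2$-summand of \eqref{eq:nl2} yields a leading term $C(\delta,j)^{(N-1)/N+(1-N)/N}\|f\|_{L^2(w)}^2=\|f\|_{L^2(w)}^2$, in which the factors of $C(\delta,j)$ cancel exactly. This cancellation, made possible by the fact that the H\"older weights $(N-1)/N$ and $1/N$ precisely balance the exponents $1$ and $1-N$ of $C(\delta,j)$ in the two hypotheses, is the heart of the argument.

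The main obstacle is the additional $\|f\|_{L^2(w^N)}^2$ summand in \eqref{eq:nl2}, which after H\"older produces a cross term of the form $C(\delta,j)^{(N-1)/(2N)}\|f\|_{L^2(w)}^{(N-1)/N}\|f\|_{L^2(w^N)}^{1/N}$ that does not reduce to $\|f\|_{L^2(w)}$. To dispose of it I would upgrade the interpolation to the operator level via Stein's complex method, constructing a holomorphic family $T_j^{(z)}$ on the strip $\{0\leq\Re z\leq 1\}$ whose boundary bounds at $\Re z=0$ realise \eqref{eq:nl1} and whose bounds at $\Re z=1$ realise \eqref{eq:nl2} (viewed as an operator estimate from the sum space $L^2(w)+L^2(w^N)$ into $L^2(w^N)$), and reading off the $L^2(w)\to L^2(w)$ bound at the interior slice $\theta=1/N$. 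Summing the resulting uniform-in-$j$ bounds over $j\in\{1,\ldots,J_\delta\}$ and invoking the initial duality reduction then gives \eqref{eq:n13}.
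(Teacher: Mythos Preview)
Your framework is sound — the reduction by self-adjointness to the $L^2(w)\to L^2(w)$ estimate, the decomposition $\BRm_\delta(\opL)=\sum_{j=1}^{J_\delta}T_j$, and the log-convexity step
\[
\|T_jf\|_{L^2(w)}^{2}\leq \|T_jf\|_{L^2}^{2(N-1)/N}\,\|T_jf\|_{L^2(w^N)}^{2/N}
\]
are exactly right, and you have correctly located the difficulty in the cross term coming from the $\|f\|_{L^2(w^N)}^2$ summand of \eqref{eq:nl2}.

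The gap is in your proposed fix. Invoking Stein interpolation here does not obviously help: you would need a family $T_j^{(z)}$ and a pair of endpoint estimates whose source and target spaces interpolate to $L^2(w)$, but \eqref{eq:nl2} is not an operator bound from a single weighted $L^2$ space — its right-hand side mixes $\|f\|_{L^2(w)}$ and $\|f\|_{L^2(w^N)}$. Viewing it as a bound from the sum space $L^2(w)+L^2(w^N)$ is not what \eqref{eq:nl2} says (it is rather an intersection-type bound), and in any case the complex interpolation space $[L^2(w),\,L^2(w)+L^2(w^N)]_{1/N}$ (or the analogous intersection) is not identified, so the argument as written does not close.

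The paper disposes of the cross term by a much simpler real-variable device: split the input according to the level set
\[
S=\{x\in G: C(\delta,j)\,w(x)<1\}.
\]
On $S$ one has $w^{N-1}<C(\delta,j)^{1-N}$, so the second summand in \eqref{eq:nl2} is dominated by the first, yielding the clean endpoint $\|T_j(\chr_S f)\|_{L^2(w^N)}^2\lessapprox C(\delta,j)^{1-N}\|\chr_S f\|_{L^2(w)}^2$; interpolating this with \eqref{eq:nl1} via your H\"older step gives $\|T_j(\chr_S f)\|_{L^2(w)}\lessapprox \|\chr_S f\|_{L^2(w)}$. On $G\setminus S$ the first summand is dominated by the second, so \eqref{eq:nl2} becomes $\|T_j(\chr_{G\setminus S} f)\|_{L^2(w^N)}^2\lessapprox \|\chr_{G\setminus S} f\|_{L^2(w^N)}^2$; interpolating this with the trivial $L^2\to L^2$ bound (source spaces $L^2$ and $L^2(w^N)$ interpolate to $L^2(w)$ at $\theta=1/N$) gives $\|T_j(\chr_{G\setminus S} f)\|_{L^2(w)}\lessapprox \|\chr_{G\setminus S} f\|_{L^2(w)}$. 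Adding the two pieces and summing over $j$ finishes the proof. The missing idea, then, is this splitting of $f$ tailored to $C(\delta,j)$ — it converts the awkward two-term right-hand side of \eqref{eq:nl2} into two separate single-space endpoints, after which your interpolation goes through without any complex-analytic machinery.
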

\begin{proof}
Define $S \defeq \{(z,u)\in G \tc C(\delta,j) \, w(z,u)<1\}$. Note that, if $f$ is supported in either $S$ or its complement, then one of the two summands in the right-hand side of \eqref{eq:nl2} dominates the other one.
Consequently from \eqref{eq:nl2} we deduce
\begin{gather*}
\|R_{\delta,j} \BRm_\delta(\opL) (\chr_S f)\|^2_{L^2(w^N)}\lessapprox C(\delta,j)^{1-N} \|\chr_S f\|^2_{L^2(w)}, \\
\|R_{\delta,j} \BRm_\delta(\opL) (\chr_{G\setminus S}) f\|^2_{L^2(w^N)}\lessapprox \|\chr_{G \setminus S} f\|^2_{L^2(w^N)}.
\end{gather*}
If we interpolate the first estimate with \eqref{eq:nl1}, and the second estimate with the trivial $L^2$ estimate $\| R_{\delta,j} \BRm_\delta(\opL) \|_{L^2 \to L^2} \lesssim 1$, then we obtain
\begin{gather*}
\|R_{\delta,j} \BRm_\delta(\opL) (\chr_S f)\|^2_{L^2(w)}\lessapprox \|\chr_S f\|^2_{L^2(w)}, \\
\|R_{\delta,j} \BRm_\delta(\opL) (\chr_{G\setminus S}f)\|^2_{L^2(w)}\lessapprox \|\chr_{G\setminus S} f\|^2_{L^2(w)},
\end{gather*}
and consequently
\begin{equation*}
\|R_{\delta,j} \BRm_\delta(\opL)f\|^2_{L^2(w)}\lessapprox \|f\|^2_{L^2(w)}.
\end{equation*}
Since $J_\delta \simeq |\log(\delta)| \lessapprox 1$, this estimate holds if $R_{\delta,j} \BRm_\delta$ is replaced by just $\BRm_\delta$, and the desired result follows by self-adjointness of $\BRm_\delta(\opL)$.
\end{proof}

In this section we show that, for a certain class of weights $w$, the assumptions \eqref{eq:nl1} and \eqref{eq:nl2} are essentially enough to deduce the maximal estimate \eqref{eq:maximal_w} too. 

\begin{dfn}
A weight $w$ on $G$ will be called:
\begin{itemize}
\item \emph{quasi-homogeneous}, if $w \simeq 1+w_0^a$ for some $a \geq 0$ and some nonnegative function $w_0$ on $G$, which is $1$-homogeneous with respect to the group dilations;
\item \emph{temperate}, if there exists $\alpha \geq 0$ such that, for all $x,y \in G$,
\[
w(x) \lesssim w(y) \, (1+d(x,y))^\alpha;
\]
\item \emph{admissible}, if $w \in A_2(G)$, $w$ is quasi-homogeneous and temperate.
\end{itemize}
We denote by $\Adm(G)$ the collection of admissible weights on $G$.
\end{dfn}

As discussed in the previous section, estimates for the maximal function $M^\lmaxf_\delta$ can be reduced to estimates for the operator $\SqT_\delta^\adj$ defined in \eqref{eq:SqTadj}. These are contained in the following statement.

\begin{prop}\label{prp:max_nonmax_sq}
Let $w$ be a weight on $G$ and $N > 1$ such that:
\begin{itemize}
\item $w \in \Adm(G)$;
\item $w^N$ is a sum-of-squares polynomial on $G$;
\item the estimates \eqref{eq:nl1} and \eqref{eq:nl2} hold for all $\delta \in \Dyad$ and $j = 1,\dots,J_\delta$.
\end{itemize}
Then, for all $\delta \in \Dyad$,
\begin{equation}\label{eq:sqf_adj_w_est}
\|\SqT_\delta^\adj \|_{L^2((0,1),ds/s) \otimes L^{2}(w) \to L^2(w)} \lessapprox \delta^{1/2}.
\end{equation}
\end{prop}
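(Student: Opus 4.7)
The plan is to adapt the interpolation scheme of Proposition \ref{prp:tracetomdelta} to the square-function setting, with Proposition \ref{prp:complicatedthm1_poly} applied to the polynomial weight $w^N$ serving as the key new ingredient.

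First I would decompose $\SqT_\delta^\adj = \sum_{j=1}^{J_\delta} \SqT_{\delta,j}^\adj$, where $\SqT_{\delta,j}^\adj(\varphi_s)_s \defeq \int_0^1 R_{\delta,j} \BRm_\delta(s\opL) \varphi_s \,\frac{ds}{s}$; here $R_{\delta,j}$, being a function of the dilation-invariant operator $\opL/\ctDer$, commutes both with $\BRm_\delta(s\opL)$ and with the dilations $\fDil_s$. Since $J_\delta \lessapprox 1$, by Cauchy--Schwarz in $j$ it suffices to bound each $\SqT_{\delta,j}^\adj$ by $\delta^{1/2}$ uniformly in $j$.

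For each fixed $j$, I aim to establish two weighted square-function estimates mirroring \eqref{eq:nl1} and \eqref{eq:nl2}:
\begin{gather*}
\|\SqT_{\delta,j}^\adj (\varphi_s)_s\|_{L^2}^2 \lessapprox \delta\, C(\delta,j) \int_0^1 \|\varphi_s\|_{L^2(w)}^2 \,\frac{ds}{s}, \\
\|\SqT_{\delta,j}^\adj (\varphi_s)_s\|_{L^2(w^N)}^2 \lessapprox \delta\, C(\delta,j)^{1-N} \int_0^1 \|\varphi_s\|_{L^2(w)}^2 \,\frac{ds}{s} + \delta \int_0^1 \|\varphi_s\|_{L^2(w^N)}^2 \,\frac{ds}{s}.
\end{gather*}
Both would be obtained by applying Proposition \ref{prp:complicatedthm1_poly}---with the trivial polynomial $1$ for the first and the sum-of-squares polynomial $w^N$ for the second---to reduce to integrated inner bounds of the form $\int_0^1 \|R_{\delta,j} \BRm_\delta(s\opL) \varphi_s\|_{L^2(\cdot)}^2 \,\frac{ds}{s}$; the $ds/s$-measure of $\supp \BRm_\delta$ is $\simeq \delta$, furnishing the desired $\delta$ prefactor. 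The inner norms at each scale $s$ would then be controlled via the hypotheses \eqref{eq:nl1} and \eqref{eq:nl2}, transferred from $s=1$ through the dilation identity $\BRm_\delta(s\opL) = \fDil_s \BRm_\delta(\opL) \fDil_s^{-1}$ together with the quasi-homogeneity of $w$.

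With these two estimates in hand, I would mimic Proposition \ref{prp:tracetomdelta}: split the input family via $S \defeq \{x \in G \tc C(\delta,j) w(x) < 1\}$, so that on $\chr_S \varphi_s$ the first summand in the $L^2(w^N)$-bound dominates (using $w^N \leq C(\delta,j)^{1-N} w$ on $S$) while on $\chr_{G\setminus S} \varphi_s$ the second summand dominates (using $w \leq C(\delta,j)^{N-1} w^N$ on $G \setminus S$). Operator-norm interpolation between the resulting $L^2 \to L^2$ and $L^2(w^N) \to L^2(w^N)$ bounds for each piece---through the complex-interpolation identity $[L^2, L^2(w^N)]_{1/N} = L^2(w)$ for weighted Lebesgue spaces---then yields $L^2(w) \to L^2(w)$ control with the asserted $\delta^{1/2}$ gain, and summation in $j$ completes the argument.

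The main obstacle I anticipate is the effective dilation-transfer of \eqref{eq:nl1} and \eqref{eq:nl2} from $s=1$ to arbitrary $s \in (0,1)$: since $w$ is merely quasi-homogeneous rather than homogeneous, $\fDil_s$ does not act with uniform operator norm on $L^2(w)$, and a naive application of (nl1)/(nl2) through the identity $\BRm_\delta(s\opL) = \fDil_s \BRm_\delta(\opL) \fDil_s^{-1}$ introduces $s^{-\alpha}$-type factors that fail to integrate against $\frac{ds}{s}$. To compensate, I plan to exploit the joint spectral support of $R_{\delta,j} \BRm_\delta(s\opL)$---which pins the relevant group-Fourier modes to $|\mu|\, c(|\alpha|) \sim 1/s$ with $c(|\alpha|) \sim 2^j$, thereby constraining $s$ for any fixed $|\mu|$ to a thin interval---together with the full strength of admissibility (temperateness and the $A_2$ condition), built into $\Adm(G)$ precisely to absorb such scaling losses.
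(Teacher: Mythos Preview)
Your plan correctly identifies the key ingredients (Proposition~\ref{prp:complicatedthm1_poly} applied with the polynomial weights $1$ and $w^N$, followed by an interpolation mirroring Proposition~\ref{prp:tracetomdelta}), but the gap you flag at the end is real and your proposed patch does not close it. For $s$ ranging over all of $(0,1)$, the dilation identity $R_{\delta,j}\BRm_\delta(s\opL) = \fDil_s R_{\delta,j}\BRm_\delta(\opL)\fDil_{s^{-1}}$ together with \eqref{eq:nl1} yields
\[
\|R_{\delta,j}\BRm_\delta(s\opL)\varphi_s\|_{2}^2 \lessapprox C(\delta,j)\int_G |\varphi_s(y)|^2\, w(\delta_{s^{-1/2}}y)\,dy,
\]
and since $w \simeq 1+w_0^a$ the right-hand side is $\simeq C(\delta,j)\int |\varphi_s|^2 (1+s^{-a/2}w_0^a)$, which diverges after integration against $ds/s$ near $s=0$. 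The spectral-support observation you invoke is already what produces the factor $\kappa\simeq\delta$ in Proposition~\ref{prp:complicatedthm1_poly}; it does not tame this blow-up. The paper handles the issue by first invoking Lemma~\ref{lem:sfa2}: a Littlewood--Paley decomposition in $\opL$ (requiring $w\in A_2$) reduces $\SqT_\delta^\adj$ to the operator $\Psi_\delta$, which integrates only over $s\in(1/8,1)$; on that bounded range quasi-homogeneity gives $w\circ\delta_{s^{1/2}} \simeq w$ uniformly, so the enhanced forms \eqref{eq:nl1_enh}--\eqref{eq:nl2_enh} hold.

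Once that reduction is in place, your binary $S$-split plus complex interpolation is a legitimate alternative, and in fact somewhat simpler than what the paper does next. After Lemma~\ref{lem:sfa2} the paper decomposes the input by the level sets $A_l=\{2^{l-1}\le w<2^l\}$ and uses finite propagation speed together with the temperateness of $w$ (Lemma~\ref{lem:fps_dec}) to show that the outputs $\Psi_\delta(\chr_{A_l}\varphi_s)_s$ have almost-disjoint supports, whence $\|\Psi_\delta(\varphi_s)_s\|_{L^2(w)}^2 \lessapprox \sum_l \|\Psi_\delta(\chr_{A_l}\varphi_s)_s\|_{L^2(w)}^2$; only then is the pointwise H\"older inequality between $L^2$ and $L^2(w^N)$ applied termwise in $l$, with $w\simeq 2^l$ on $A_l$ playing the role your set $S$ does. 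Your route would bypass Lemma~\ref{lem:fps_dec} (and with it the temperateness hypothesis), at the cost of one genuine complex-interpolation step on the $\chr_{G\setminus S}$ piece; the paper's route uses only elementary H\"older but needs the extra finite-propagation-speed machinery to make the $A_l$-sum converge.
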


As an immediate consequence, in view of \eqref{eq:max_sqf} and \eqref{eq:adj_norm}, we obtain the following estimate for $M_\delta^\lmaxf$.

\begin{cor}\label{cor:max_nonmax}
Let $w$ be a weight on $G$ and $N > 1$ such that:
\begin{itemize}
\item $w \in \Adm(G)$;
\item $w^N$ is a sum-of-squares polynomial on $G$;
\item the estimates \eqref{eq:nl1} and \eqref{eq:nl2} hold for all $\delta \in \Dyad$ and $j = 1,\dots,J_\delta$, as well as the corresponding estimates where $\BRm_\delta$ is replaced by $\tBRm_\delta$ defined as in \eqref{eq:tildem}.
\end{itemize}
Then, for all $\delta \in \Dyad$,
\[
\|M_\delta^\lmaxf \|_{L^{2}(1/w) \to L^2(1/w)} \lessapprox 1.
\]
\end{cor}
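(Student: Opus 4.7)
The plan is to chain together three facts already at our disposal: Proposition \ref{prp:max_nonmax_sq}, the duality identity \eqref{eq:adj_norm}, and the pointwise square-function inequality \eqref{eq:max_sqf}. Since the hypotheses on $\BRm_\delta$ are assumed to hold also for $\tBRm_\delta$ (and $\tBRm_\delta$ satisfies the same dyadic bounds \eqref{eq:mdeltabnd} as $\BRm_\delta$), Proposition \ref{prp:max_nonmax_sq} applies equally to the operator $\tilde\SqT_\delta^\adj$ defined by replacing $\BRm_\delta$ with $\tBRm_\delta$ in \eqref{eq:SqTadj}. Consequently, for all $\delta \in \Dyad$,
\[
\|\SqT_\delta^\adj \|_{L^2((0,1),ds/s) \otimes L^{2}(w) \to L^2(w)} + \|\tilde\SqT_\delta^\adj \|_{L^2((0,1),ds/s) \otimes L^{2}(w) \to L^2(w)} \lessapprox \delta^{1/2}.
\]

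The second step is to dualize. By \eqref{eq:adj_norm}, the displayed bounds transfer to the primal operators $\SqT_\delta$ and $\tilde\SqT_\delta$ acting from $L^2(1/w)$ into $L^2((0,1),ds/s) \otimes L^2(1/w)$, giving
\[
\|\SqT_\delta f\|_{L^2((0,1),ds/s)\otimes L^2(1/w)} + \|\tilde\SqT_\delta f\|_{L^2((0,1),ds/s)\otimes L^2(1/w)} \lessapprox \delta^{1/2} \|f\|_{L^2(1/w)}
\]
for all $f \in L^2(1/w)$.

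Finally, I would integrate the pointwise bound \eqref{eq:fundthmcalc_local} against $1/w$ and apply the Cauchy--Schwarz inequality in the $\frac{dt}{t}$ variable to obtain \eqref{eq:max_sqf} (this step is essentially restated in the text immediately before \eqref{eq:max_sqf}). Inserting the two square-function estimates above into \eqref{eq:max_sqf} yields
\[
\| M_\delta^\lmaxf f \|_{L^{2}(1/w)}^2 \leq 2\delta^{-1} \cdot \delta^{1/2} \|f\|_{L^2(1/w)} \cdot \delta^{1/2} \|f\|_{L^2(1/w)} \lessapprox \|f\|_{L^2(1/w)}^2,
\]
which is the desired conclusion. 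There is no real obstacle here: the corollary is a genuinely immediate consequence, and essentially all the content sits inside Proposition \ref{prp:max_nonmax_sq}; the proof reduces to verifying that \eqref{eq:max_sqf} pairs cleanly with the two $\delta^{1/2}$ factors coming from $\SqT_\delta$ and $\tilde\SqT_\delta$ to exactly cancel the $\delta^{-1}$ loss.
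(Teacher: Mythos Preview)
Your proof is correct and is exactly the argument the paper has in mind: the corollary is stated there as an immediate consequence of Proposition \ref{prp:max_nonmax_sq} ``in view of \eqref{eq:max_sqf} and \eqref{eq:adj_norm}'', and you have simply spelled out those three steps (apply the proposition to both $\SqT_\delta^\adj$ and $\tilde\SqT_\delta^\adj$, dualize via \eqref{eq:adj_norm}, then feed into \eqref{eq:max_sqf}).
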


The proof of Proposition \ref{prp:max_nonmax_sq} will be given at the end of the section, after a number of preliminary lemmas.

First of all we show that, in place of $\SqT_\delta^\adj$, it is enough to consider a `portion' of it, where the integral in \eqref{eq:SqTadj} is restricted to $(1/8,1)$.

\begin{lem}\label{lem:sfa2}
Let $w \in A_2(G)$ be a quasi-homogeneous weight. Then, for all $\delta \in \Dyad$,
\[
     \|\SqT_\delta^\adj\|_{L^2((0,1),ds/s) \otimes L^2(w)\to L^2(w)} \\
		\lesssim
		\delta^{1/2} +
		\| \Psi_\delta\|_{L^2((1/8,1),ds/s) \otimes  L^2(w)\to L^2(w)},
\]
where the implicit constant may depend on $w$, and
\[
\Psi_\delta(\varphi_s)_s \defeq \int_{1/8}^{1} \BRm_\delta(s \opL)\varphi_s \,\frac{ds}{s}.
\]
\end{lem}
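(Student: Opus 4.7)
The plan is to split the integration at $s=1/8$ and handle the small-$s$ part by combining frequency localisation with a rescaling argument. Write $\SqT_\delta^\adj = A + B$, where $A(\varphi) \defeq \int_0^{1/8} \BRm_\delta(s\opL)\varphi_s\,ds/s$ and $B(\varphi) \defeq \int_{1/8}^1 \BRm_\delta(s\opL)\varphi_s\,ds/s$. Since $B$ is just $\Psi_\delta$ composed with the extension-by-zero of its inputs to $(1/8,1)$, its norm is immediately controlled by $\|\Psi_\delta\|_{L^2((1/8,1),ds/s)\otimes L^2(w)\to L^2(w)}$, producing the second term on the right-hand side. It therefore suffices to prove $\|A\|_{L^2((0,1),ds/s)\otimes L^2(w)\to L^2(w)} \lesssim_w \delta^{1/2}$.

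For $A$, I would dyadically decompose $A = \sum_{k\ge 3}T_k$ with $T_k(\varphi) \defeq \int_{I_k}\BRm_\delta(s\opL)\varphi_s\,ds/s$ and $I_k \defeq (2^{-k-1},2^{-k})$. Since $\supp\BRm_\delta\subseteq[1-\delta,1]$ with $\delta\le 1/2$, for $s\in I_k$ the operator $\BRm_\delta(s\opL)$ is spectrally supported in $[2^{k-1},2^{k+1}]$, whence $\phi(2^{-k}\opL)T_k = T_k$ for any $\phi\in C_c^\infty(\RR^+)$ equal to $1$ on $[1/2,2]$. By duality against $g\in L^2(1/w)$, Cauchy--Schwarz, and the weighted Littlewood--Paley estimate of Lemma \ref{lem:weightslem3} applied to $1/w\in A_2$,
\[
\|A(\varphi)\|_{L^2(w)}^2 \lesssim_w \sum_{k\ge 3}\|T_k(\varphi)\|_{L^2(w)}^2.
\]

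To bound each $T_k$ I would substitute $s=2^{-k}t$ and invoke the scaling identity $\BRm_\delta(2^{-k}t\opL) = \fDil_{2^{-k}}\BRm_\delta(t\opL)\fDil_{2^k}$, which follows from Lemma \ref{lem:kerneldilation} together with $\fDil_a\fDil_b=\fDil_{ab}$. This yields $T_k(\varphi) = \fDil_{2^{-k}}\,W_k(\Phi^{(k)})$, where $\Phi^{(k)}_t \defeq \fDil_{2^k}\varphi_{2^{-k}t}$ for $t\in(1/2,1)$ and $W_k$ is the restriction of $\Psi_\delta$ to integration over $(1/2,1)\subseteq(1/8,1)$. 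Setting $w_k^{\flat}(x) \defeq w(\delta_{2^{-k/2}}x)$, a direct change of variables in $x$ gives
\[
\|\fDil_{2^{-k}}h\|_{L^2(w)}^2 = 2^{kQ/2}\|h\|_{L^2(w_k^{\flat})}^2, \qquad \|\fDil_{2^k}h\|_{L^2(w_k^{\flat})}^2 = 2^{-kQ/2}\|h\|_{L^2(w)}^2,
\]
so that the powers of $2^{kQ/2}$ cancel and one obtains
\[
\|T_k(\varphi)\|_{L^2(w)}^2 \le \|\Psi_\delta\|_{L^2((1/8,1),dt/t)\otimes L^2(w_k^{\flat})\to L^2(w_k^{\flat})}^2\,\|\varphi|_{I_k}\|_{L^2(ds/s)\otimes L^2(w)}^2.
\]

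The main obstacle is then to establish the uniform-in-$k$ bound
\[
\sup_{k\ge 3}\|\Psi_\delta\|_{L^2((1/8,1),dt/t)\otimes L^2(w_k^{\flat})\to L^2(w_k^{\flat})} \lesssim_w \delta^{1/2},
\]
which, by summing over $k$, yields $\|A\|^2 \lesssim_w \delta$ and completes the proof. Crucially, each $w_k^{\flat}$ inherits from $w$ both the $A_2$ characteristic (invariance of $A_2$ under the automorphic dilation $\delta_r$ follows from $\delta_r(B(x,\rho)) = B(\delta_r x, r\rho)$ and the $r^Q$-scaling of Haar measure) and the quasi-homogeneity parameter. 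The required bound would then follow by combining the unweighted estimate $\|\Psi_\delta\|_{L^2\to L^2}\lesssim\delta^{1/2}$, which is the adjoint form of \eqref{eq:mdeltaintdelta}, with a Muckenhoupt-type weighted bound for $\Psi_\delta$ viewed as a vector-valued convolution operator, whose kernel inherits Calder\'on--Zygmund regularity from Lemmas \ref{lem:kerneldilation} and \ref{lem:kernelestimate}.
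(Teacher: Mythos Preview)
Your decomposition, Littlewood--Paley reduction, and rescaling are all essentially what the paper does. The gap is in your final paragraph, where you claim
\[
\sup_{k\ge 3}\|\Psi_\delta\|_{L^2((1/8,1),dt/t)\otimes L^2(w_k^{\flat})\to L^2(w_k^{\flat})} \lesssim_w \delta^{1/2}.
\]
This is too strong to be proved by the method you sketch, and in fact too strong to be consistent with the lemma itself. For small $k$ (say $k=3$) quasi-homogeneity gives $w_k^\flat \simeq w$, so your uniform bound would force $\|\Psi_\delta\|_{L^2(w)\to L^2(w)}\lesssim\delta^{1/2}$ --- but this is precisely what is \emph{not} assumed, and is the reason the term $\|\Psi_\delta\|$ appears in the conclusion. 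The rest of Section~\ref{s:chapsqfunarg_two} is devoted to proving something like this under additional hypotheses.

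The Calder\'on--Zygmund justification you offer does not work: the convolution kernels of $\BRm_\delta(s\opL)$ are smooth, but their Calder\'on--Zygmund constants blow up as $\delta\to 0$. The $L^2\to L^2$ bound $\delta^{1/2}$ for $\Psi_\delta$ comes from spectral orthogonality (the support of $\BRm_\delta$ has $ds/s$-measure $\simeq\delta$), not from kernel regularity, and there is no mechanism that transfers this gain to weighted $L^2$ uniformly in the $A_2$ constant alone. What the paper does instead is exploit quasi-homogeneity to write $\|\cdot\|_{L^2(w)}\simeq\max_{b\in\{0,a\}}\|\cdot\|_{L^2(w_0^b)}$; the \emph{homogeneous} weights $w_0^b$ scale exactly under $\fDil_r$, so after rescaling each dyadic piece one is left with $\max\{\|\Psi_\delta\|_{L^2\to L^2},\,\|\Psi_\delta\|_{L^2(w)\to L^2(w)}\}$. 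The first term is $\lesssim\delta^{1/2}$ by Proposition~\ref{prp:complicatedthm1_poly} with the trivial weight $1$, and the second is left as is --- giving exactly the two terms in the statement.
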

\begin{proof}
We first choose
$\vartheta \in C^\infty_c(\RR)$ with $\supp(\vartheta) \subseteq (1,4)$ and 
\[
1 = \sum_{k\in\ZZ} \vartheta(2^{-k}s), \quad s>0.
\]
Note that $\supp(\BRm_\delta) \subseteq [1/2,1]$ (here we use that $\delta\leq\tfrac{1}{2}$), so, for all $k \in \ZZ$,
\[
\BRm_\delta(t\opL) \,\vartheta(2^{-k}\opL)=0 \text{ for } t \notin I_k \defeq (2^{-k-3},2^{-k}),
\]
and moreover $I_k \cap [0,1] = \emptyset$ for $k \leq -4$. Hence, 
from Lemma \ref{lem:weightslem3} (note that $1/w\in A_2(G)$)
we readily deduce
\[\begin{split}
      \|\SqT_\delta^\adj(\varphi_s)_s\|^2_{L^2(w)}
			&
			\simeq 
			\sum_{k\in\ZZ} \|\vartheta(2^{-k}\opL) \SqT_\delta^\adj(\varphi_s)_s\|^2_{L^2(w)} \\
			&= \sum_{k=-3}^\infty\|\tilde\Psi_{\delta,k}(\varphi_s)_{s \in I_k}\|_{L^2(w)}^2,
\end{split}\]
where
\[
\tilde\Psi_{\delta,k} (g_s)_{s} \defeq \int_{I_k} \BRm_\delta(s\opL) \vartheta(2^{-k}\opL) g_s \,\frac{ds}{s},
\]
and in particular
\[
\|\SqT_\delta^\adj\|_{L^2((0,1),ds/s) \otimes L^2(w) \to L^2(w)} \lesssim \sup_{k \geq -3} \|\tilde\Psi_{\delta,k}\|_{L^2(I_k,ds/s) \otimes L^2(w) \to L^2(w)}.
\]
Since $w$ is quasi-homogeneous, $w \simeq 1+w_0^a$ for some nonnegative $1$-homogeneous function $w_0$ on $G$ and some $a \geq 0$. In order to conclude, it will be sufficient to prove that,
for all $k \geq -3$,
\begin{multline}\label{eq:inhomog_estimate}
\|\tilde\Psi_{\delta,k}\|_{L^2(I_k,ds/s) \otimes L^2(1+w_0^a) \to L^2(1+w_0^a)} \\
\lesssim \max_{b \in \{0,a\} }\|\Psi_\delta\|_{L^2(I_0,ds/s) \otimes L^2(1+w_0^b) \to L^2(1+w_0^b)},
\end{multline}
where the implicit constant is independent of $k$; indeed, the term with $b=0$ in the right-hand side is controlled by a multiple of $\delta^{1/2}$, by Proposition \ref{prp:complicatedthm1_poly}.

To prove \eqref{eq:inhomog_estimate}, note that, by Lemma \ref{lem:kerneldilation},
\[
\tilde\Psi_{\delta,k}(g_s)_{s \in I_k} = \fDil_{2^{-k}} \tilde\Psi_{\delta,0}(\fDil_{2^k} g_{2^{-k}s})_{s \in I_0},
\]
whence
\begin{equation}\label{eq:homog_dec_norm}
\begin{split}
\|\tilde\Psi_{\delta,k}(g_s)_s\|_{L^2(1+w_0^a)} 
&\simeq \max_{b \in \{0,a\} }\|\tilde\Psi_{\delta,k}(g_s)_s\|_{L^2(w_0^b)}  \\
&= \max_{b \in \{0,a\} } 2^{(Q-b)k/4} \|\tilde\Psi_{\delta,0}(\fDil_{2^{k}} g_{2^{-k}s})_s\|_{L^2(w_0^b)}\\
&\lesssim \max_{b \in \{0,a\} } 2^{(Q-b)k/4} \|\tilde\Psi_{\delta,0}(\fDil_{2^{k}} g_{2^{-k}s})_s\|_{L^2(1+w_0^b)}\\
\end{split}
\end{equation}
On the other hand,
\[
\|(\fDil_{2^{k}} g_{2^{-k}s})_s\|_{L^2(I_0,ds/s) \otimes L^2(w_0^b)} = 2^{-(Q-b)k/4} \|(g_{s})_s\|_{L^2(I_k,ds/s) \otimes L^2(w_0^b)},
\]
whence also
\begin{equation}\label{eq:homog_dyad_norm}
\|(\fDil_{2^{k}} g_{2^{-k}s})_s\|_{L^2(I_0,ds/s) \otimes L^2(1+w_0^b)} \lesssim 2^{-(Q-b)k/4} \|(g_{s})_s\|_{L^2(I_k,ds/s) \otimes L^2(1+w_0^b)}
\end{equation}
(here we are using that $2^{bk/4} \gtrsim 1$, since $b \in \{0,a\}$, $a \geq 0$, $k \geq -3$). A comparison of \eqref{eq:homog_dec_norm} and \eqref{eq:homog_dyad_norm} immediately yields 
\begin{multline*}
\|\tilde\Psi_{\delta,k}\|_{L^2(I_k,ds/s) \otimes L^2(1+w_0^a) \to L^2(1+w_0^a)} \\
\lesssim \max_{b \in \{0,a\} }\|\tilde\Psi_{\delta,0}\|_{L^2(I_0,ds/s) \otimes L^2(1+w_0^b) \to L^2(1+w_0^b)}.
\end{multline*}
On the other hand, since $\vartheta \in C^\infty_c(\RR^+)$ and $w \in A_2(G)$, $\vartheta(\opL)$ is bounded on $L^2(1+w_0^b)$ for $b\in\{0,a\}$ (see Lemmas \ref{lem:stein} and \ref{lem:kernelestimate}), whence
\[
\|\tilde\Psi_{\delta,0}\|_{L^2(I_0,ds/s) \otimes L^2(1+w_0^b) \to L^2(1+w_0^b)} \lesssim \|\Psi_{\delta}\|_{L^2(I_0,ds/s) \otimes L^2(1+w_0^b) \to L^2(1+w_0^b)}
\]
and \eqref{eq:inhomog_estimate} follows.
\end{proof}

Let $\chi \in C^\infty(\RR)$ be even, real-valued and such that
\[
\supp(\chi)\subseteq (-2,2),\qquad \chi(\lambda)=1 \text{ for } \lambda\in(-1,1).
\]
Define, for $\lambda\in\RR$, $\BRn_\delta(\lambda) = \BRm_\delta(\lambda^2)$. We now decompose $\BRn_\delta = \BRn_\delta^\rI + \BRn_\delta^\rII$, where
\[
\Four\BRn_\delta^\rI(\lambda) = \chi(\delta^2\lambda) \,\Four\BRn_\delta(\lambda), \qquad 
\Four\BRn_\delta^\rII(\lambda) = (1-\chi(\delta^2\lambda)) \, \Four\BRn_\delta(\lambda)
\]
and $\Four$ denotes the Euclidean Fourier transform.
Then
$\BRm_\delta(t\opL)= \BRn_\delta^\rI(\sqrt{t\opL}) + \BRn_\delta^\rII(\sqrt{t\opL})$,
and correspondingly $\Psi_\delta = \Psi^\rI_\delta + \Psi^\rII_\delta$, where
\[
\Psi_\delta^\rI(\varphi_s)_s \defeq \int_{1/8}^1 \BRn_\delta^\rI(\sqrt{s\opL}) \varphi_s \,\frac{ds}{s}, \qquad \Psi_\delta^\rII(\varphi_s)_s \defeq \int_{1/8}^1 \BRn_\delta^\rII(\sqrt{s\opL}) \varphi_s \,\frac{ds}{s}.
\]

We now show that $\Psi^\rII_\delta$ is effectively negligible in our analysis.

\begin{lem}\label{lem:psiiismall}
For all $w \in A_2(G)$ and $k \in \NN_0$,
\begin{equation}\label{eq:supnorm_II}
\sup_{s\in\RR^+} \|\BRn_\delta^\rII(\sqrt{s\opL}) f\|_{L^2(w)} \lesssim_k \delta^k \|f\|_{L^2(w)}
\end{equation}
and
\begin{equation}\label{eq:intnorm_II}
\|\Psi_\delta^\rII\|_{L^2((1/8,1),ds/s) \otimes L^2(w) \to L^2(w)} \lesssim_k \delta^k,
\end{equation}
where the implict constants may depend on $w$.
\end{lem}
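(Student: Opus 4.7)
The plan is to deduce both estimates from a pointwise decay bound on the convolution kernel of $\BRn_\delta^\rII(\sqrt{\opL})$, with decay constant $O_N(\delta^N)$ for arbitrarily large $N$. Although $\BRn_\delta^\rII$ is not compactly supported, its Fourier transform vanishes for $|t|<\delta^{-2}$, and in that regime the (already rapid) decay of $\Four\BRn_\delta$ is magnified by the gain $|t|^{-1}\le \delta^{2}$; this should yield an arbitrarily high power of $\delta$ in $\|\BRn_\delta^\rII\|_k^*$.

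Concretely, first I would establish that $\|\BRn_\delta^\rII\|^*_k \lesssim_{k,N} \delta^N$ for every $k,N \in \NN_0$. Since $\BRn_\delta(\lambda)=\BRm_\delta(\lambda^2)$ is smooth, supported in $\{\sqrt{1-\delta}\le|\lambda|\le 1\}$ (a set of measure $\simeq \delta$), and satisfies $|\BRn_\delta^{(a)}(\lambda)|\lesssim_a \delta^{-a}$ by \eqref{eq:mdeltabnd} and the chain rule, repeated integration by parts gives
\[
\Bigl|\tfrac{d^a}{dt^a}\Four\BRn_\delta(t)\Bigr|
= \bigl|\Four\bigl[(-2\pi i\lambda)^a \BRn_\delta\bigr](t)\bigr|
\lesssim_{a,N} \delta^{1-N}|t|^{-N}
\]
for all $a,N\in\NN_0$. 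By the Leibniz rule applied to $\Four\BRn_\delta^\rII=(1-\chi(\delta^2\cdot))\Four\BRn_\delta$, noting that the derivatives of $1-\chi(\delta^2 t)$ are bounded by powers of $\delta^2$ and supported in $|t|\simeq \delta^{-2}$, the same estimate holds for every derivative of $\Four\BRn_\delta^\rII$ on the support $|t|\ge \delta^{-2}$. Then $k$-fold integration by parts in
\[
(\BRn_\delta^\rII)^{(j)}(\lambda)
= \int_{|t|\ge \delta^{-2}} \Four\BRn_\delta^\rII(t)\,(2\pi i t)^j \,e^{2\pi i \lambda t}\,dt
\]
produces the factor $|\lambda|^{-k}$ for $|\lambda|\ge 1$, while integrating the resulting integrand over $|t|\ge \delta^{-2}$ contributes a factor $\delta^{N-2j-1}$ (after choosing the inner IBP order $N$ large enough); for $|\lambda|\le 1$ the direct estimate suffices.

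Next, applying Lemma \ref{lem:kernelestimate} to the even function $\BRn_\delta^\rII$ converts this into the kernel estimate $|K_\delta^\rII(x)| \lesssim_N \delta^N(1+|x|)^{-\hdim-1}$ for the convolution kernel $K_\delta^\rII$ of $\BRn_\delta^\rII(\sqrt{\opL})$. By Lemma \ref{lem:kerneldilation} the kernel of $\BRn_\delta^\rII(\sqrt{s\opL})$ is $\fDil_s K_\delta^\rII$, so Lemma \ref{lem:stein} dominates $\sup_{s\in\RR^+} |\BRn_\delta^\rII(\sqrt{s\opL})f(x)|$ pointwise by $C_N \delta^N Mf(x)$, where $M$ is the Hardy--Littlewood maximal operator on $G$. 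Since $w\in A_2(G)$, $M$ is bounded on $L^2(w)$; taking the $L^2(w)$ norm yields \eqref{eq:supnorm_II}.

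Finally, \eqref{eq:intnorm_II} follows from \eqref{eq:supnorm_II} by Minkowski's integral inequality applied to $\Psi_\delta^\rII$, followed by the Cauchy--Schwarz inequality on the bounded interval $(1/8,1)$ with the measure $\frac{ds}{s}$. The main obstacle is the bookkeeping in the first step: one must carefully balance the factors of $\delta^2$ arising from the cutoff on $|t|\ge\delta^{-2}$ against the polynomial factors $|t|^j$ and the number of integrations by parts, to confirm that $N$ can indeed be taken arbitrarily large.
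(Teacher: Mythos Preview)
Your approach is correct and follows essentially the same route as the paper: bound the Schwartz-type seminorms of $\BRn_\delta^\rII$ via Fourier-side estimates exploiting the cutoff at $|t|\ge\delta^{-2}$, feed this into Lemma~\ref{lem:kernelestimate} and Lemma~\ref{lem:stein} to dominate by $\delta^N M f$, and then use the $A_2$ boundedness of $M$; the second estimate follows by Cauchy--Schwarz on $(1/8,1)$. The only cosmetic difference is that the paper first rescales, writing $\BRn_\delta^+(\lambda)=N_\delta((\lambda-1)/\delta)$ so that $N_\delta$ has Schwartz seminorms bounded uniformly in $\delta$, and then reads off the decay of $(1-\chi(\delta\,\cdot))\,e^{-2\pi i(\cdot)\delta^{-1}}\Four N_\delta$ directly; this avoids the explicit bookkeeping of powers of $\delta$ through repeated integrations by parts that you flag as the main obstacle, but the content is the same.
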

\begin{proof}
Since $\BRn_\delta$ is even and vanishes at the origin ($\delta \leq 1/2$), we can write $\BRn_\delta(\lambda) = \BRn_\delta^+(\lambda) + \BRn_\delta^+(-\lambda)$, where $\supp(\BRn_\delta^+)\subseteq(0,\infty)$. Correspondingly
\begin{equation}\label{eq:ft_identity}
\begin{split}
 \delta^{-1} \Four\BRn_\delta^\rII(\delta^{-1} \lambda) 
&= 2 \delta^{-1} (1-\chi(\delta \lambda)) \Re \Four\BRn_\delta^+(\delta^{-1} \lambda) \\
&= 2 (1-\chi(\delta \lambda)) \Re \left( e^{-2\pi i\lambda\delta^{-1}} \Four N_\delta(\lambda) \right),
\end{split}
\end{equation}
where $N_\delta(\lambda) \defeq \BRn_\delta^+(\delta\lambda+1)$ and we have used that $\BRn_\delta^+$ is real-valued.

From \eqref{eq:mdeltabnd} it easily follows that $\supp(N_\delta) \subseteq [-1/2,0]$ and $\|N_\delta^{(j)}\|_\infty \lesssim_j 1$ for all $j \in \NN_0$ (uniformly in $\delta$). Hence each Schwartz seminorm of $\Four N_\delta$ is bounded uniformly in $\delta$. Since $1-\chi(\delta \lambda)$ vanishes unless $|\lambda| \geq \delta^{-1}$, it is readily seen that each Schwartz seminorm of $\lambda \mapsto (1-\chi(\delta \lambda)) \, e^{-2\pi i\lambda\delta^{-1}} \Four N_\delta(\lambda)$ is majorized by $\delta^k$ uniformly in $\delta$ for arbitrarily large $k$. By \eqref{eq:ft_identity}, this implies that each Schwartz seminorm of $\BRn_\delta^\rII(\delta \cdot)$ is majorized by $\delta^k$ uniformly in $\delta$ for arbitrarily large $k$.

Since $\BRn_\delta^\rII$ is even, from Lemmas \ref{lem:kernelestimate} and \ref{lem:stein} we deduce, for all $s>0$ and for all $k\in\NN$, the estimate 
\begin{equation}\label{eq:maxfcnt_II}
|\BRn_\delta^\rII(\sqrt{s\opL}) f(x)| \lesssim_k \delta^k M f(x) \quad\text{a.e.}
\end{equation}
where $M$ denotes the Littlewood-Hardy maximal operator on $G$ and the implicit constant in $\lesssim$ does not depend on $s$ or $\delta$. Since $w \in A_2(G)$, $M$ is bounded on $L^2(w)$ and \eqref{eq:supnorm_II} follows immediately from \eqref{eq:maxfcnt_II}. Moreover, by the Cauchy--Schwarz inequality,
\[
\|\Psi_\delta^\rII(\varphi_s)_s\|^2_{L^2(w)}
\lesssim \int_{1/8}^1 \|\BRn_\delta^\rII(\sqrt{s\opL}) \varphi_s\|_{L^2(w)}^2 \,\frac{ds}{s}
\]
and \eqref{eq:intnorm_II} follows by applying \eqref{eq:supnorm_II} to the inner norm.
\end{proof}

The analysis of $\Psi_\delta$ is then essentially reduced to that of the `main term' $\Psi_\delta^\rI$, for which we can exploit the support condition on $\Four \BRn_\delta^I$ and finite propagation speed for $\opL$. This leads to the following result.

\begin{lem}\label{lem:fps_dec}
Let $w \in \Adm(G)$, and assume that $\inf w = 1$. Let $A_l = \{ x \in G \tc 2^{l-1} \leq w(x) < 2^l \}$ for all $l \in \NN$. Then
\[
\|\Psi_\delta (\varphi_s)_s\|_{L^2(w)}^2 \lessapprox \delta \, \|(\varphi_s)_s\|_{L^2((1/8,1),ds/s)\otimes L^2(w)}^2 + \sum_{l \in \NN} \|\Psi_\delta (\chr_{A_l} \varphi_s)_s\|_{L^2(w)}^2.
\]
\end{lem}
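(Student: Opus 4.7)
The plan is to split $\Psi_\delta=\Psi_\delta^\rI+\Psi_\delta^\rII$ exactly as in the setup, treat $\Psi_\delta^\rII$ as a negligible remainder using Lemma \ref{lem:psiiismall}, and handle the main term $\Psi_\delta^\rI$ by combining finite propagation speed for $\sqrt{\opL}$ with the temperateness of $w$. The latter will force the decomposition $\varphi_s=\sum_l\chr_{A_l}\varphi_s$ to be ``almost orthogonal'' after pairing with $\Psi_\delta^\rI$, with a mild logarithmic overlap that gets absorbed into $\lessapprox$.

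To dispose of $\Psi_\delta^\rII$, I would apply Lemma \ref{lem:psiiismall} with $k$ large (say $k\geq 1$), obtaining $\|\Psi_\delta^\rII(\varphi_s)_s\|_{L^2(w)}^2\lesssim \delta^{2k}\|(\varphi_s)_s\|^2\leq \delta\|(\varphi_s)_s\|^2$, which matches the first term on the right-hand side of the claimed inequality. It then remains to bound $\|\Psi_\delta^\rI(\varphi_s)_s\|_{L^2(w)}^2$ by $\sum_l\|\Psi_\delta(\chr_{A_l}\varphi_s)_s\|_{L^2(w)}^2$ plus a similar negligible error.

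For this I would use that $\supp(\Four\BRn_\delta^\rI)\subseteq[-2\delta^{-2},2\delta^{-2}]$ by construction of $\chi$, and the fact that $\BRn_\delta^\rI$ is even. Fourier inversion then writes $\BRn_\delta^\rI(\sqrt{s\opL})$ as a superposition of $\cos(2\pi\lambda\sqrt{s\opL})$ with $|\lambda|\leq 2\delta^{-2}$, so by Lemma \ref{lem:finitepropspeed} and $s\leq 1$, the convolution kernel of $\BRn_\delta^\rI(\sqrt{s\opL})$, and hence the kernel of $\Psi_\delta^\rI$ viewed as an integral operator, vanishes outside $\{(x,y) : d(x,y)\leq C\delta^{-2}\}$. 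Writing $\varphi_s=\sum_l\chr_{A_l}\varphi_s$ (a disjoint partition of $G$ since $\inf w=1$), we see that $\Psi_\delta^\rI(\chr_{A_l}\varphi_s)_s(x)=0$ unless $A_l\cap \overline{B}(x,C\delta^{-2})\neq\emptyset$. Applying temperateness of $w$ in both directions (legitimate by symmetry of the defining inequality), we get $w(y)\simeq w(x)$ on $\overline{B}(x,C\delta^{-2})$ up to a factor $(1+\delta^{-2})^\alpha\lesssim\delta^{-2\alpha}$; in particular the number $N$ of $l\in\NN$ for which that ball meets $A_l$ is $\lesssim \log(1/\delta)$. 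A pointwise Cauchy--Schwarz inequality then yields
\[
|\Psi_\delta^\rI(\varphi_s)_s(x)|^2 \leq N\sum_l|\Psi_\delta^\rI(\chr_{A_l}\varphi_s)_s(x)|^2 \lessapprox \sum_l|\Psi_\delta^\rI(\chr_{A_l}\varphi_s)_s(x)|^2,
\]
and integration against $w$, combined with the substitution $\Psi_\delta^\rI=\Psi_\delta-\Psi_\delta^\rII$ and the disjointness of the $A_l$ in the resulting $\Psi_\delta^\rII$ sum (which is again bounded by $\delta^{2k}\|(\varphi_s)_s\|^2$ via Lemma \ref{lem:psiiismall}), finishes the argument.

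The main technical obstacle, as I see it, is twofold: verifying the $\delta^{-2}$ support for the kernel of $\Psi_\delta^\rI$ requires carefully combining the Fourier support of $\BRn_\delta^\rI$ with the scaling $\sqrt{s\opL}$ inside the spectral calculus, and establishing that the overlap count $N$ is at most logarithmic in $\delta$ hinges on the two-sided polynomial comparability of $w$ coming from temperateness. Neither step is deep, but they must be executed carefully to get the precise form of the bound, and to ensure the logarithmic overlap can indeed be absorbed into the $\lessapprox$ notation (where an arbitrarily small loss $\delta^{-\epsilon}$ is permitted).
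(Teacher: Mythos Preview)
Your proposal is correct and follows essentially the same route as the paper: split $\Psi_\delta=\Psi_\delta^\rI+\Psi_\delta^\rII$, absorb $\Psi_\delta^\rII$ via Lemma \ref{lem:psiiismall}, and use finite propagation speed together with temperateness of $w$ to get logarithmic-in-$\delta$ overlap of the supports of $\Psi_\delta^\rI(\chr_{A_l}\varphi_s)_s$, then Cauchy--Schwarz. Your identification of the Fourier support as $[-2\delta^{-2},2\delta^{-2}]$ and the resulting kernel support bound $d(x,y)\lesssim\delta^{-2}$ is exactly what the paper uses (in fact the paper has a minor typo writing $\delta^{-1}$ in one place, but then correctly obtains the $\delta^{-2}$ radius).
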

\begin{proof}
Note that $G = \bigcup_{l \in \NN} A_l$, since $w \geq 1$.
In view of the decomposition $\Psi_\delta = \Psi_\delta^\rI + \Psi_\delta^\rII$ and Lemma \ref{lem:psiiismall}, it is enough to prove that
\begin{equation}\label{eq:PsiI_fps}
\|\Psi_\delta^\rI (\varphi_s)_s\|_{L^2(w)}^2 \lessapprox \sum_{l \in \NN} \|\Psi_\delta^\rI (\chr_{A_l} \varphi_s)_s\|_{L^2(w)}^2.
\end{equation}

Let $K_{\delta,t}$ be the convolution kernel of $\BRn_\delta^\rI(\sqrt{t\opL})$. Since $\supp \Four\BRn_\delta^\rI \subseteq [-2\delta^{-1},2\delta^{-1}]$, by finite propagation speed (Lemma \ref{lem:finitepropspeed}) we deduce that, for $|t| \leq 1$,
\begin{equation}\label{eq:fps33}
\supp(K_{\delta,t}) \subseteq \overline{B}(0,4\pi\delta^{-2}).
\end{equation}
Since $w$ is temperate, there exists $\alpha \geq 0$ such that, for all $x,z \in G$,
\begin{equation}
    \frac{w(x)}{w(z)} \lesssim (1+d(x,z))^\alpha.
\end{equation}
From this it immediately follows that, for a suitable constant $\kappa \geq 0$ and all $l \in \NN$,
\[
\overline B(A_l,8\pi\delta^{-2}) \subseteq \{x \in G \tc 2^{l-\kappa |\log(\delta)|-1} \leq 1 + w_0(x) \leq 2^{l+\kappa|\log(\delta)|} \},
\]
which implies that $\overline B(A_l,4\pi\delta^{-2}) \cap \overline B(A_{l'},4\pi\delta^{-2}) \neq \emptyset$ only if $|l'-l| \leq \kappa|\log(\delta)|$.

Observe now that, by \eqref{eq:fps33},
$\supp \Psi^\rI_\delta (\chr_{A_l} \varphi_s)_s \subseteq \overline B(A_l,4\pi\delta^{-2})$. This means that, in the decomposition
\[
\Psi^\rI_\delta (\varphi_s)_s = \sum_{l \in \NN} \Psi^\rI_\delta (\chr_{A_l} \varphi_s)_s,
\]
the number of nonvanishing summands at each point of $G$ is $\lessapprox 1$, and \eqref{eq:PsiI_fps} immediately follows.
\end{proof}

\begin{proof}[Proof of Proposition \ref{prp:max_nonmax_sq}]
Observe first that \eqref{eq:nl1} and \eqref{eq:nl2} hold in a slightly enhanced form:
\begin{equation}\label{eq:nl1_enh}
\|R_{\delta,j} \BRm_\delta(s\opL)f\|^2_2\lessapprox C(\delta,j) \|f\|^2_{L^2(w)}
\end{equation} 
and
\begin{equation}\label{eq:nl2_enh}
\|R_{\delta,j} \BRm_\delta(s\opL)f\|^2_{L^2(w^N)}\lessapprox C(\delta,j)^{1-N} \|f\|^2_{L^2(w)}+ \|f\|^2_{L^2(w^N)}
\end{equation}
uniformly in $s \in (1/8,1)$. This is an immediate consequence of the observation
that
\[
R_{\delta,j} \BRm_\delta(s\opL) = \fDil_s R_{\delta,j} \BRm_\delta(\opL) \fDil_{s^{-1}},
\]
(cf.\ Lemma \ref{lem:kerneldilation}) and that moreover, by quasi-homogeneity, $w \circ \delta_s \simeq w$ uniformly in $s \in (1/8,8)$.

Without loss of generality, we may assume that $\inf w = 1$. Then, by Lemmas \ref{lem:sfa2} and \ref{lem:fps_dec}, we are reduced to proving that
\begin{equation}\label{eq:reduced_estimate}
\sum_{l \in \NN} \|\Psi_\delta (\chr_{A_l} \varphi_s)_s\|_{L^2(w)}^2 \lessapprox \delta \, \|(\varphi_s)_s\|_{L^2((1/8,1),ds/s)\otimes L^2(w)}^2.
\end{equation}

Note now that, for $j = 1,\dots,J_\delta$,
\[
\|R_{\delta,j} \Psi_\delta (\chr_{A_l} \varphi_s)_s\|_{L^2(w)} 
\leq \|R_{\delta,j} \Psi_\delta (\chr_{A_l} \varphi_s)_s\|_{L^2(G)}^{(N-1)/N} \|R_{\delta,j} \Psi_\delta (\chr_{A_l} \varphi_s)_s\|_{L^2(w^N)}^{1/N}.
\]
Moreover, by Proposition \ref{prp:complicatedthm1_poly} and \eqref{eq:nl2_enh},
\begin{multline*}
\|R_{\delta,j} \Psi_\delta (\chr_{A_l} \varphi_s)_s\|_{L^2(w^N)}^2 
\lesssim \delta \int_{1/8}^1 \|R_{\delta,j} \BRm_\delta(s\opL) \chr_{A_l} \varphi_s\|_{L^2(w^N)}^2 \frac{ds}{s} \\
\lessapprox \delta \, \max\{ 2^{(N-1)l},  C(\delta,j)^{1-N}\} \| (\chr_{A_l} \varphi_s)_s \|^2_{L^2((1/8,1),ds/s) \otimes L^2(w)};
\end{multline*}
similarly, by Proposition \ref{prp:complicatedthm1_poly}, \eqref{eq:nl1_enh} and the trivial $L^2$ estimate for $R_{\delta,j} \BRm_\delta(s\opL)$,
\begin{multline*}
\|R_{\delta,j} \Psi_\delta (\chr_{A_l} \varphi_s)_s\|_{L^2(G)}^2
\lesssim \delta \int_{1/8}^1 \|R_{\delta,j} \BRm_\delta(s\opL) \chr_{A_l} \varphi_s \|_{L^2(G)}^2 \frac{ds}{s} \\
\lessapprox \delta \, \min\{ 2^{-l}, C(\delta,j)\} \| (\chr_{A_l} \varphi_s)_s \|^2_{L^2((1/8,1),ds/s) \otimes L^2(w)}.
\end{multline*}
Hence
\[
\|R_{\delta,j} \Psi_\delta (\chr_{A_l} \varphi_s)_s\|_{L^2(w)}^2 
\lessapprox
\delta \, \| (\chr_{A_l} \varphi_s)_s \|^2_{L^2((1/8,1),ds/s) \otimes L^2(w)},
\]
and
\[
\sum_{l \in \NN} \|R_{\delta,j} \Psi_\delta (\chr_{A_l} \varphi_s)_s\|_{L^2(w)}^2  
\lessapprox \delta \, \| (\varphi_s)_s \|^2_{L^2((1/8,1),ds/s) \otimes L^2(w)}.
\]
Since $J_\delta \lessapprox 1$, summing in $j=1,\dots,J_\delta$ gives \eqref{eq:reduced_estimate}.
\end{proof}

\section{Reduction to dual trace lemmas}\label{s:chapredtotrace}

The aim of this section is to reduce proving the estimates we need, that is \eqref{eq:nl1} and \eqref{eq:nl2}, in the case of the weights $w = (1+|\cdot|)^a$ and $w = (1+\fstW)^b$, to proving suitable `trace lemmas'. It is easily checked (see Lemma \ref{lem:A2weights}) that such weights $w$ are admissible. Moreover $(1+|(z,u)|)^4 \simeq 1+|z|^4+|u|^2$; hence, in the case $w = (1+|\cdot|)^a$, if we set $N=4/a$, then $w^N$ is equivalent to a sum-of-squares polynomial, so Proposition \ref{prp:max_nonmax_sq} and Corollary \ref{cor:max_nonmax} apply to $w$. Since $(1+\fstW(z,u))^4 \simeq 1+|z|^4$, a similar remark applies in the case $w=(1+\fstW)^b$.

Recall the definition of $c(k)$ in \eqref{eq:eig}. We set 
\begin{equation}
c_\gamma(k) \defeq c(k+\gamma)  \text{ for } \gamma\in\{-1,0,1\},
\end{equation}
 and define operators $M^\gamma_{\delta,j}$, for $\gamma \in \{-1,0,1\}$,  $j=1,\ldots,J_\delta$ and $f\in\Sch(G)$, by
\begin{equation}\label{eq:mgamma}
\widehat{M^\gamma_{\delta,j}f}(\mu,\alpha,\beta) \defeq \begin{cases}
\chr_{[1-\delta,1]}(c_\gamma(|\alpha|)|\mu|) \, \chr_{[2^j,2^{j+1})}(c_\gamma(|\alpha|))\widehat{f}(\mu,\alpha,\beta) &\text{if } j<J_\delta,\\
\chr_{[1-\delta,1]}(c_\gamma(|\alpha|)|\mu|) \, \chr_{[2^{J_\delta},\infty)}(c_\gamma(|\alpha|))\widehat{f}(\mu,\alpha,\beta) &\text{if } j=J_\delta.
\end{cases}
\end{equation}
Note that $M^0_{\delta,j} = R_{\delta,j} \BRm_\delta(\opL)$.

\begin{prop}\label{prp:fullmdeltaest}
Let $a \in (0,2]$. Suppose that the estimate
\begin{equation}\label{eq:redtraceful}
    \|M^\gamma_{\delta,j}f\|^2_2 \lessapprox C(\delta,j) \|f\|^2_{L^2(w)},
\end{equation}
holds for all $\delta \in \Dyad$, $1\leq j\leq J_\delta$, $\gamma\in\{-1,0,1\}$, in one of the following cases:
\begin{enumerate}[label=(\roman*)]
\item $w=(1+|\cdot|)^a$ and $C(\delta,j) = (2^{-j}\delta)^{a/2}$;
\item $w=(1+\fstW)^a$ and $C(\delta,j) = 2^{-aj}$.
\end{enumerate}
Then the estimates \eqref{eq:nl1} and \eqref{eq:nl2} hold with $N=4/a$.
\end{prop}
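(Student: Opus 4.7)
The plan is to handle \eqref{eq:nl1} essentially for free, and to establish \eqref{eq:nl2} by a direct Fourier-side argument exploiting the special choice $N = 4/a$. Since $M^0_{\delta,j} = R_{\delta,j}\BRm_\delta(\opL)$ by the definition \eqref{eq:mgamma}, the $\gamma = 0$ instance of \eqref{eq:redtraceful} is precisely \eqref{eq:nl1}. For \eqref{eq:nl2}, the key observation is that $N = 4/a$ makes $w^N$ a fourth power of a norm-like quantity: in case (i), using $|(z,u)|^4 \simeq |z|^4 + |u|^2$, we have $w^N \simeq 1 + |z|^4 + |u|^2$, while in case (ii) $w^N \simeq 1 + |z|^4$. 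Setting $g \defeq M^0_{\delta,j} f$, this gives
\[
\|g\|^2_{L^2(w^N)} \simeq \|g\|_2^2 + \||z|^2 g\|_2^2 + \textrm{[case (i) only:] } \sum_{l=1}^n \|u_l g\|_2^2,
\]
and I would bound each term on the right-hand side by $C(\delta,j)^{1-N} \|f\|^2_{L^2(w)}$, which is at most the right-hand side of \eqref{eq:nl2}. The plain $L^2$ term is immediate from \eqref{eq:redtraceful} with $\gamma=0$, since $0 < C(\delta,j) \leq 1$ and $N>1$ imply $C(\delta,j) \leq C(\delta,j)^{1-N}$.

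For $\||z|^2 g\|_2^2$, I would apply \eqref{eq:partzeta1} and \eqref{eq:partzeta2} iteratively to express $\widehat{|z|^2 g}(\mu,\alpha,\beta)$ as a finite linear combination of values $\widehat{g}(\mu,\alpha',\beta')$ with $\bigl||\alpha'|-|\alpha|\bigr| \leq 1$. On the Fourier support of $g$, where $c(|\alpha|) \simeq 2^j$ and $|\mu| \simeq 2^{-j}$ (by \eqref{eq:rjdefn} and $\supp \BRm_\delta \subseteq [1-\delta,1]$), these coefficients have size $|\alpha|/|\mu| \simeq 2^{2j}$. After re-indexing the Fourier sum so that the cutoff inherited from $g$ takes exactly the form of the cutoff defining $M^\gamma_{\delta,j}$ applied to $f$ (for the relevant shift $\gamma\in\{-1,0,1\}$), this yields
\[
\||z|^2 g\|_2 \lesssim 2^{2j} \sum_{\gamma\in\{-1,0,1\}}\|M^\gamma_{\delta,j} f\|_2,
\]
so that \eqref{eq:redtraceful} gives $\||z|^2 g\|_2^2 \lessapprox 2^{4j} C(\delta,j) \|f\|^2_{L^2(w)}$. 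Using $2^j\delta \lesssim 1$ from \eqref{eq:jdefn}, a direct arithmetic check shows $2^{4j} C(\delta,j) \lesssim C(\delta,j)^{1-N}$ in both cases. For $\|u_l g\|_2^2$ in case (i), a parallel analysis based on a non-radial extension of \eqref{eq:parturule} produces shift terms with the same $O(2^{2j})$-size coefficient (handled as above) plus a $\partial_{\mu_l}$-type contribution; since the Fourier support of $g$ has $|\mu|$-thickness $\simeq \delta \cdot 2^{-j}$, a Bernstein-type estimate bounds the derivative term by a factor $2^j/\delta$, and a direct computation gives $(2^j/\delta)^2 C(\delta,j) = C(\delta,j)^{1-N}$ exactly in case (i).

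The main obstacle I anticipate is making the Bernstein-type estimate for the $u_l$-term rigorous, since $M^0_{\delta,j}$ is defined via hard characteristic cutoffs and so $\partial_{\mu_l}\widehat{g}$ is only distributional. I plan to address this either by a preliminary smoothing step (replacing the characteristic functions in \eqref{eq:rjdefn} and in $\BRm_\delta$ with smooth cutoffs on comparable supports, with the difference absorbed into the $M^\gamma_{\delta,j}$ family) or by working directly with the convolution representation $g = f * K_{\delta,j}$ and applying the dual Leibniz rule \eqref{eq:psiruleexact} to move $u_l$ past the convolution, then estimating the resulting $f * (u_l K_{\delta,j})$ term via Plancherel and pointwise bounds on $u_l K_{\delta,j}$.
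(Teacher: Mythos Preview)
Your handling of \eqref{eq:nl1} is fine. The argument for \eqref{eq:nl2}, however, has a genuine gap that cannot be patched within your scheme: the bound $\||z|^2 g\|_2 \lesssim 2^{2j}\sum_\gamma \|M^\gamma_{\delta,j}f\|_2$ is false in general, and more basically the target estimate $\|g\|^2_{L^2(w^N)}\lessapprox C(\delta,j)^{1-N}\|f\|^2_{L^2(w)}$ (i.e., \eqref{eq:nl2} without the $\|f\|^2_{L^2(w^N)}$ term) cannot hold.

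The precise failure is this. When you apply \eqref{eq:partzeta1}--\eqref{eq:partzeta2} twice to the (non-radial) function $g$, the resulting expression for $(\pi|\mu|)\,\widehat{|z|^2 g}(\mu,\alpha,\beta)$ contains the diagonal coefficient $|\alpha|+|\beta|+m$, together with off-diagonal terms with coefficients $\sqrt{\alpha_j\beta_j}$ and $\sqrt{(\alpha_j+1)(\beta_j+1)}$. The Fourier support of $g=M^0_{\delta,j}f$ constrains only $|\alpha|$ and $|\mu|$; the index $\beta$ is completely free. Hence the coefficients are not of size $\simeq |\alpha|/|\mu|\simeq 2^{2j}$ but rather $\simeq (|\alpha|+|\beta|)/|\mu|$, which is unbounded. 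Equivalently: since $M^0_{\delta,j}$ commutes with left translations, one can take $f$ in its range and translate it so that $|z|\simeq R$ on $\supp f$; then $\||z|^2 g\|_2\simeq R^2\|f\|_2$ while $\|f\|_{L^2(w)}\simeq R^{a/2}\|f\|_2$, so for $a<4$ no bound of the claimed form is possible as $R\to\infty$. (There is a separate problem at $j=J_\delta$, where even $|\alpha|/|\mu|$ is not $\lesssim 2^{2J_\delta}$, but the $\beta$-issue is already fatal for all $j$.)

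The paper's proof proceeds instead via the Leibniz rule \eqref{eq:rhorule} applied to the convolution $g=f*K_{\delta,j}$: this produces a term $(\fstW^2 f)*K_{\delta,j}$, which is exactly the source of the $\|f\|^2_{L^2(w^N)}$ contribution in \eqref{eq:nl2}; a term $f*(\fstW^2 K_{\delta,j})$, which \emph{can} be bounded by $2^{2j}\sum_\gamma\|M^\gamma_{\delta,j}f\|_2$ because $K_{\delta,j}$ is radial (so only $\alpha=\beta$ matters) and because the smoothness of $\BRm_\delta$ yields the needed cancellation via Taylor expansion when $j=J_\delta$ (Lemma~\ref{lem:ptwest76}); and cross terms $(\zeta_{\mu,j}f)*(\bar\zeta_{\mu,j}K_{\delta,j})$ producing an intermediate contribution $D(\delta,j)\|f\|_{L^2(\omega^2)}$. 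This intermediate term does not fit either summand of \eqref{eq:nl2} directly and must be absorbed by a further splitting argument (split $f$ according to whether $w^{N/2}\lessgtr D(\delta,j)^2$). Your proposal misses both the unavoidable $\|f\|^2_{L^2(w^N)}$ term and this intermediate step; the Leibniz-rule route you mention for $u_l$ is in fact the correct route for all terms, but it does not give the single-term bound you were aiming for.
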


The proof will be given at the end of the section, after a number of auxiliary results.

Let $K_{\delta,j}$ to be the convolution kernel of $R_{\delta,j} \BRm_\delta(\opL)$.
Recall that, by \eqref{eq:fourierL},
\begin{equation}\label{eq:kernelformula}
\widehat{K_{\delta,j}}(\mu,k)=\begin{cases} 
      \chr_{[2^j,2^{j+1})}(c(k)) \, \BRm_\delta(|\mu|c(k)) &\text{for } j=1,\ldots,J_\delta-1,\\
       \chr_{[2^{J_\delta},\infty)}(c(k)) \, \BRm_\delta(|\mu|c(k)) &\text{for } j=J_\delta.
   \end{cases} 
\end{equation}

\begin{lem}\label{lem:ptwest76}
Let
\[
H_{\delta,j}(\mu,k) \defeq \begin{cases} \chr_{[2^j,2^j+1)}(c(k)) \, \chr_{[1-\delta,1]}(c(k)|\mu|) &\text{if $k \geq 0$, $j<J_\delta$,}\\
\chr_{[2^{J_\delta},\infty)}(c(k)) \, \chr_{[1-\delta,1]}(c(k)|\mu|) &\text{if $k\geq 0$, $j=J_\delta$,}\\
0 &\text{if $k< 0$.}
\end{cases}
\]
Then, for all $\delta \in \Dyad$, $1\leq j \leq J_\delta$, and for all $\mu \in \RR^n \setminus \{0\}$, $k \in \NN_0$,
\begin{align}
\label{eq:easy_maj}
|\widehat{K_{\delta,j}}(\mu,k)| &\lesssim H_{\delta,j}(\mu,k), \\
\label{eq:ptwest761}
   |\widehat{\fstW^2 K_{\delta,j}}(\mu,k)| &\lesssim 2^{2j} \sum_{\gamma\in\{-1,0,1\}} H_{\delta,j}(\mu,k+\gamma) ,\\
\label{eq:ptwest762}
   |\widehat{\sndW_l K_{\delta,j}}(\mu,k)| &\lesssim \delta^{-1} 2^j \sum_{\gamma\in\{-1,0,1\}} H_{\delta,j}(\mu,k+\gamma) ,
\end{align}
and, if $P$ is any homogeneous first-layer polynomial of degree $1$,
then
\begin{equation}\label{eq:Pmatrixest}
   |\widehat{P K_{\delta,j}}(\mu,\alpha,\beta)| \lesssim_P \begin{cases} 2^j [ H_{\delta,j}(\mu,|\alpha|) + H_{\delta,j}(\mu,|\beta|) ]  &\text{if } |\alpha-\beta|=1, \\
	0 &\text{otherwise.}
	\end{cases}
\end{equation}
\end{lem}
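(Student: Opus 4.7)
The plan is to derive all four estimates from the explicit formula \eqref{eq:kernelformula} for $\widehat{K_{\delta,j}}$ by applying the dual differentiation rules of Section \ref{s:diffdual}, combined with the elementary bound $\|\BRm_\delta\|_\infty \lesssim 1$ from \eqref{eq:mdeltabnd}. The key geometric observations to be used throughout are that on the support of $H_{\delta,j}(\mu,k)$ one has $c(k) \simeq 2^j$ (hence $k \lesssim 2^j$) and, because $c(k)|\mu| \in [1-\delta,1]$, also $|\mu|^{-1} \lesssim 2^j$; moreover, since $j \leq J_\delta$ and $2^{J_\delta} \simeq \delta^{-1}$, we have $2^{2j} \leq 2^j\delta^{-1}$. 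With these in hand, \eqref{eq:easy_maj} is immediate from \eqref{eq:kernelformula} and \eqref{eq:mdeltabnd}.

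For \eqref{eq:ptwest761}, I apply the multiplication rule \eqref{eq:partzsqrule} to $f = K_{\delta,j}$: each of the three resulting terms features a coefficient of the form $(\pi|\mu|)^{-1}(2k+m)$, $(\pi|\mu|)^{-1}k$, or $(\pi|\mu|)^{-1}(k+m)$, each of which is bounded by $\lesssim 2^j \cdot 2^j = 2^{2j}$ on the respective supports, and \eqref{eq:easy_maj} then yields \eqref{eq:ptwest761}. For \eqref{eq:ptwest762}, I invoke \eqref{eq:parturule}: the algebraic term $\mu_l/|\mu|^2[\cdots]$ is estimated exactly as above and contributes $\lesssim 2^{2j}\sum_\gamma H_{\delta,j}(\mu,k+\gamma)$. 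For the derivative term, I exploit the crucial fact that the indicator factor in \eqref{eq:kernelformula} depends only on $c(k)$ and not on $\mu$, so I may differentiate $\BRm_\delta(|\mu|c(k))$ classically to obtain
\[
|\partial_{\mu_l}\widehat{K_{\delta,j}}(\mu,k)| \leq c(k) \, \|\BRm_\delta'\|_\infty \, H_{\delta,j}(\mu,k) \lesssim 2^j\delta^{-1} H_{\delta,j}(\mu,k),
\]
using $\|\BRm_\delta'\|_\infty \lesssim \delta^{-1}$ from \eqref{eq:mdeltabnd} and $|\mu_l|/|\mu| \leq 1$. Since $2^{2j} \leq 2^j\delta^{-1}$, the derivative term dominates and \eqref{eq:ptwest762} follows.

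For \eqref{eq:Pmatrixest}, I first invoke Lemma \ref{lem:matrixcomponentcoordchange} to expand the given first-layer degree-$1$ polynomial $P$, at each $\mu$, as a linear combination of $\fstZ_{\mu,p}$ and $\overline{\fstZ_{\mu,p}}$ with coefficients uniformly bounded in $\mu$. Applying \eqref{eq:partzeta1}--\eqref{eq:partzeta2}, the four shifted matrix coefficients $\widehat{K_{\delta,j}}(\mu,\alpha\pm e_p,\beta)$ and $\widehat{K_{\delta,j}}(\mu,\alpha,\beta\pm e_p)$ vanish by radiality of $K_{\delta,j}$ unless $\beta = \alpha\pm e_p$, i.e., unless $|\alpha-\beta| = 1$; this accounts for the vanishing in \eqref{eq:Pmatrixest} when $|\alpha-\beta|\neq 1$. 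In the nonvanishing case, each surviving matrix entry equals $\widehat{K_{\delta,j}}(\mu,|\alpha|)$ or $\widehat{K_{\delta,j}}(\mu,|\beta|)$ via \eqref{eq:gelfand}, and the prefactors $(\alpha_p+1)^{1/2},\beta_p^{1/2} \lesssim 2^{j/2}$ (since $|\alpha|,|\beta| \lesssim 2^j$ on the relevant supports) combine with $(\pi|\mu|)^{-1/2}\lesssim 2^{j/2}$ and \eqref{eq:easy_maj} to yield the claimed bound. No deep obstacle is expected: the principal care is the support bookkeeping (making sure each nonzero contribution is indeed captured by some $H_{\delta,j}(\mu,k+\gamma)$ with $\gamma\in\{-1,0,1\}$) and, in \eqref{eq:ptwest762}, the recognition that the $\mu$-independence of the indicator in \eqref{eq:kernelformula} permits classical differentiation with only the smooth factor $\BRm_\delta$ producing the $\delta^{-1}$ loss.
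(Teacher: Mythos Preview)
Your argument is correct when $j<J_\delta$, but it has a genuine gap at $j=J_\delta$. Your ``key geometric observation'' that $c(k)\simeq 2^j$ on $\supp H_{\delta,j}(\mu,\cdot)$ is only true for $j<J_\delta$; when $j=J_\delta$ the defining condition is $\chr_{[2^{J_\delta},\infty)}(c(k))$, which gives \emph{no upper bound} on $c(k)$ (and hence none on $|\mu|^{-1}\simeq c(k)$ either). Thus, for $j=J_\delta$ and $c(k-1)\geq 2^{J_\delta}$, the coefficients $(1+k)/|\mu|\simeq c(k)^2$ arising from \eqref{eq:partzsqrule} are not $\lesssim 2^{2j}$, the factor $c(k)\|\BRm_\delta'\|_\infty$ in your bound for $\partial_{\mu_l}\widehat{K_{\delta,j}}$ is not $\lesssim 2^j\delta^{-1}$, and the prefactors $(\alpha_p+1)^{1/2}(\pi|\mu|)^{-1/2}$ in \eqref{eq:Pmatrixest} are not $\lesssim 2^j$. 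The naive term-by-term bound therefore fails for all three of \eqref{eq:ptwest761}, \eqref{eq:ptwest762}, \eqref{eq:Pmatrixest} in this regime.

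The missing idea is cancellation via Taylor expansion. For \eqref{eq:ptwest761} one writes the three-term combination $(2k+m)\BRm_\delta(|\mu|c(k))-k\BRm_\delta(|\mu|c(k-1))-(k+m)\BRm_\delta(|\mu|c(k+1))$ and substitutes the second-order Taylor expansion $\BRm_\delta(|\mu|c(k\pm1))=\BRm_\delta(|\mu|c(k))\pm 4\pi|\mu|\BRm_\delta'(|\mu|c(k))+O(|\mu|^2\BRm_\delta'')$; the zeroth- and first-order terms cancel, leaving a remainder controlled by $|\BRm_\delta'|+(1+k)|\mu|\,|\BRm_\delta''|\lesssim \delta^{-2}\simeq 2^{2J_\delta}$ (using $(1+k)|\mu|\simeq 1$). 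One must also check, using $4\pi|\mu|\leq \delta$ in this regime (a consequence of \eqref{eq:jdefn}), that the support of the $\BRm_\delta''$ remainder is captured by some $H_{\delta,J_\delta}(\mu,k+\gamma)$. For \eqref{eq:ptwest762} the same expansion is needed: the first-order contribution from the bracket in \eqref{eq:parturule} \emph{exactly cancels} the $\partial_{\mu_l}$-term, and only the second-order remainder survives. For \eqref{eq:Pmatrixest}, a first-order Taylor expansion of the difference $\widehat{K_{\delta,j}}(\mu,|\alpha|)-\widehat{K_{\delta,j}}(\mu,|\beta|)$ (with $|\beta|=|\alpha|\pm1$) plays the analogous role.
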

\begin{proof}
The estimate \eqref{eq:easy_maj} is an immediate consequence of \eqref{eq:kernelformula} and \eqref{eq:mdeltabnd}.

As for \eqref{eq:ptwest761}, note that, by \eqref{eq:partzsqrule} and \eqref{eq:easy_maj},
\[
|\widehat{\fstW^2 K_{\delta,j}}(\mu,k)| 
 \lesssim \frac{1+k}{|\mu|} \sum_{\gamma \in \{-1,0,1\}} H_{\delta,j}(\mu,k+\gamma).
\]
In the case $j<J_\delta$, the latter sum vanishes unless $1+k\simeq c(k)\simeq|\mu|^{-1}\simeq 2^j$, and \eqref{eq:ptwest761} follows. In the case $j=J_\delta$, a similar argument works provided $c(k-1) < 2^{J_\delta}$: indeed the sum vanishes unless $c(k+1) \geq 2^{J_\delta}$, so again we deduce $1+k\simeq c(k)\simeq|\mu|^{-1}\simeq 2^{J_\delta}$.

Suppose now that $j=J_\delta$ and $c(k-1) \geq 2^{J_\delta}$. Then, by \eqref{eq:kernelformula} and \eqref{eq:partzsqrule},
\begin{multline}\label{eq:kerneltaylor1}
\widehat{\fstW^2 K_{\delta,J_\delta}}(\mu,k) \\
=\frac{1}{\pi|\mu|} \Bigl[(2k+m) \BRm_\delta(|\mu|c(k)) 
 -k \BRm_\delta(|\mu|c(k-1)) 
 -(k+m) \BRm_\delta(|\mu|c(k+1)) \Bigr].
\end{multline}
On the other hand, by Taylor's Theorem, there exist $\theta_+,\theta_- \in (0,4\pi)$ such that
\begin{equation}\label{eq:taylor}
    \BRm_\delta(|\mu|c(k \pm 1)) 
		= \BRm_\delta(|\mu|c(k)) \pm 4\pi|\mu| \BRm_\delta'(|\mu|c(k)) + (4\pi|\mu|)^2 \BRm_\delta''(|\mu|(c(k)\pm\theta_\pm)).
\end{equation}
Substituting into \eqref{eq:kerneltaylor1} and exploiting cancellations, one easily obtains that
\[
|\widehat{\fstW^2 K_{\delta,J_\delta}}(\mu,k)|
 \lesssim   |\BRm_\delta'(|\mu|c(k))| + (1+k)|\mu| \sum_{\pm} |\BRm_\delta''(|\mu|(c(k)\pm\theta_\pm))|.
\]
By \eqref{eq:mdeltabnd}, the right-hand side vanishes unless $1+k \simeq c(k) \simeq |\mu|^{-1}$, and moreover the first summand is clearly controlled by a multiple of $\delta^{-1} H_{\delta,J_\delta}(\mu,k)$. Moreover, $|\BRm_\delta''(|\mu|(c(k)\pm\theta_\pm))|$ vanishes unless $|\mu|(c(k)\pm\theta_\pm) \in [1-\delta,1]$, which implies
\begin{equation}\label{eq:displ_est}
4\pi|\mu| \leq 4\pi(c(k)\pm\theta_\pm)^{-1} \leq 4\pi(c(k-1))^{-1} \leq 4\pi 2^{-J_\delta} \leq \delta
\end{equation}
by \eqref{eq:jdefn}, and
\[
|\mu| c(k_\pm) \leq 1, \qquad |\mu|c(k_\pm+1) \geq 1-\delta, \qquad |\mu|c(k_\pm+1) - |\mu| c(k_\pm) \leq \delta
\]
(where $k_+ = k$ and $k_- = k-1$), whence $|\mu| c(k) \in [1-\delta,1]$ or $|\mu| c(k\pm1) \in [1-\delta,1]$.
This, together with \eqref{eq:mdeltabnd}, shows that $(1+k) |\mu| |\BRm_\delta''(|\mu|(c(k)\pm\theta_\pm))|$ is controlled by a multiple of $\delta^{-2} [H_{\delta,J_\delta}(\mu,k) + H_{\delta,J_\delta}(\mu,k\pm1)]$. Putting all together, and recalling that $\delta^{-1} \simeq 2^{J_\delta}$, yields \eqref{eq:ptwest761} in this case too.

Let us finally consider \eqref{eq:ptwest762}. From \eqref{eq:parturule}, \eqref{eq:kernelformula} and \eqref{eq:mdeltabnd}, we immediately deduce that
\[\begin{split}
|\widehat{\sndW_l K_{\delta,j}}(\mu,k)|
&\lesssim \left|\frac{\partial}{\partial \mu_l} \widehat{K_{\delta,j}}(\mu,k)\right| + \frac{1+k}{|\mu|} \sum_{\gamma\in\{-1,0,1\}} |\widehat{K_{\delta,j}}(\mu,k+\gamma)| \\
&\lesssim (1+k) \delta^{-1} H_{\delta,j}(\mu,k) + \frac{1+k}{|\mu|} \sum_{\gamma\in\{-1,0,1\}} H_{\delta,j}(\mu,k+\gamma).
\end{split}\]
So, arguing as in the proof of \eqref{eq:ptwest761}, we easily deduce \eqref{eq:ptwest762} in the case $j<J_\delta$, and also in the case $j=J_\delta$ and $c(k-1) < 2^{J_\delta}$.

Suppose now that $j=J_\delta$ and $c(k-1) \geq 2^{J_\delta}$. Then, by \eqref{eq:parturule},
\begin{multline*}
    \widehat{\sndW_l K_{\delta,j}}(\mu,k) 
		= 4\pi(2k+m) \frac{\mu_l}{|\mu|} \BRm_\delta'(|\mu|c(k)) \\
		+\frac{\mu_l}{|\mu|^2} \Bigl[m  \BRm_\delta(|\mu|c(k)) + k  \BRm_\delta(|\mu|c(k-1))- (k+m) \BRm_\delta(|\mu|c(k+1)) \Bigr].
\end{multline*}
By substituting the Taylor expansions \eqref{eq:taylor} and exploiting cancellations, we obtain
\[
| \widehat{\sndW_l K_{\delta,j}}(\mu,k) | \lesssim (1+k)|\mu| \sum_{\pm} |\BRm_\delta''(|\mu|(c(k)\pm\theta_\pm))|,
\]
which, analogously as before, leads to the desired estimate \eqref{eq:ptwest762}.

It remains to consider \eqref{eq:Pmatrixest}. First, note that, by \eqref{eq:partzeta1} and the radiality of $K_{\delta,j}$, we immediately deduce that $\widehat{\fstZ_{\mu,j} K_{\delta,j}}(\mu,\alpha,\beta)$ vanishes unless $\alpha+e_j = \beta$, in which case $\alpha_j+1=\beta_j$ and
\[
\widehat{\fstZ_{\mu,j} K_{\delta,j}}(\mu,\alpha,\beta) \lesssim \left(\frac{1+|\alpha|}{|\mu|}\right)^{1/2} \left| \widehat{K_{\delta,j}}(\mu,|\beta|) - \widehat{K_{\delta,j}}(\mu,|\alpha|) \right|.
\]
Similarly, by \eqref{eq:partzeta2}, $\widehat{\overline{\fstZ_{\mu,j}} K_{\delta,j}}(\mu,\alpha,\beta)$
 vanishes unless $\alpha = \beta+e_j$, in which case an analogous estimate holds.
Hence, by Lemma \ref{lem:matrixcomponentcoordchange}, we deduce that $\widehat{P K_{\delta,j}}(\mu,\alpha,\beta)$ vanishes unless $|\alpha-\beta|=1$, in which case
\[
|\widehat{P K_{\delta,j}}(\mu,\alpha,\beta)| \lesssim_P  \left(\frac{1+|\alpha|}{|\mu|}\right)^{1/2} \left| \widehat{K_{\delta,j}}(\mu,|\beta|) - \widehat{K_{\delta,j}}(\mu,|\alpha|) \right|.
\]
Noting that $|\beta|=|\alpha|\pm 1$ when $|\alpha-\beta|=1$, the right-hand side can be estimated analogously as in the proof of \eqref{eq:ptwest761}, by exploiting, in the case where $j=J_\delta$ and $c(|\alpha|-1) \geq 2^{J_\delta}$, a first-order Taylor expansion in place of \eqref{eq:taylor}.
\end{proof}

\begin{lem}\label{lem:leibniz_cutoff}
For all $\delta \in \Dyad$ and $1 \leq j \leq J_\delta$, the estimate
\begin{multline}\label{eq:leibniz_cutoff}
\|R_{\delta,j} \BRm_\delta(\opL)f\|_{L^2(\omega^4)} \\
 \lesssim \|f\|_{L^2(\omega^4)} + D(\delta,j)^2 \sum_{\gamma \in \{-1,0,1\}} \|M_{\delta,j}^\gamma f\|_{2} + D(\delta,j) \| f \|_{L^2(\omega^2)}
\end{multline}
holds in the following cases:
\begin{enumerate}[label=(\roman*)]
\item $\omega = 1+|\cdot|$ and $D(\delta,j) = (2^j\delta^{-1})^{1/2}$;
\item $\omega = 1+\fstW$ and $D(\delta,j) = 2^j$.
\end{enumerate}
\end{lem}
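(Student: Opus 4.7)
The plan is to set $g \defeq R_{\delta,j}\BRm_\delta(\opL)f = f * K_{\delta,j}$, so that $\|g\|_{L^2(\omega^4)} = \|\omega^2 g\|_2$. Using the homogeneity $|(z,u)|\simeq |z| + |u|^{1/2}$ and AM--GM, one has $\omega^2 \simeq 1 + \fstW^2 + \sndW$ in case (i) (with $\sndW(z,u) = |u|$) and $\omega^2 \simeq 1 + \fstW^2$ in case (ii). This reduces the problem to separately estimating $\|g\|_2$, $\|\fstW^2 g\|_2$ and---only in case (i)---$\sum_l \|\sndW_l g\|_2$. The first is trivial: $\|g\|_2 \leq \|f\|_2 \leq \|f\|_{L^2(\omega^4)}$, since convolution with $K_{\delta,j}$ corresponds to an operator of norm $\leq 1$ on $L^2(G)$ (as a product of the orthogonal projection $R_{\delta,j}$ and the bounded spectral multiplier $\BRm_\delta(\opL)$).

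For $\|\fstW^2 g\|_2$ and $\|\sndW_l g\|_2$ the key step is to expand the products $\fstW^2(f * K_{\delta,j})$ and $\sndW_l(f * K_{\delta,j})$ via the dual Leibniz rules \eqref{eq:rhorule} and \eqref{eq:psiruleexact}. In each case we obtain a main term where the polynomial factor sits on $f$, a derivative term where it sits on $K_{\delta,j}$, and finitely many cross terms of the form $(Pf) * (Q K_{\delta,j})$ with $P$, $Q$ first-layer polynomials of degree $1$. The main terms are controlled by $\|\fstW^2 f\|_2$ or $\|\sndW_l f\|_2$, both $\leq \|f\|_{L^2(\omega^4)}$, by the contraction property of convolution with $K_{\delta,j}$. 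The derivative terms exploit that $\fstW^2 K_{\delta,j}$ and $\sndW_l K_{\delta,j}$ are radial (in the sense of Section \ref{s:fouriertheory}), so their group Fourier transforms are diagonal and act as spectral-type multipliers on $\widehat{f}(\mu,\alpha,\beta)$. Applying Plancherel together with the pointwise bounds \eqref{eq:ptwest761} and \eqref{eq:ptwest762} then yields
\[
\|f * (\fstW^2 K_{\delta,j})\|_2 \lesssim 2^{2j}\sum_{\gamma}\|M_{\delta,j}^\gamma f\|_2, \qquad \|f * (\sndW_l K_{\delta,j})\|_2 \lesssim \delta^{-1}2^{j}\sum_{\gamma}\|M_{\delta,j}^\gamma f\|_2,
\]
which match the required $D(\delta,j)^2\sum_\gamma\|M_{\delta,j}^\gamma f\|_2$ bound in cases (ii) and (i) respectively, once one observes that $j \leq J_\delta$ and $2^{J_\delta} \simeq \delta^{-1}$ together force $2^{2j} \lesssim 2^j/\delta$.

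The cross terms will be handled by combining \eqref{eq:Pmatrixest} with Lemma \ref{lem:matrixcomponentcoordchange}: the latter absorbs the mismatch between the $\mu$ appearing in $\fstZ_{\mu,k,\beta}$ (which is any fixed choice) and the integration variable in the Plancherel formula. As a result, each matrix entry of $\widehat{\fstZ_{\mu,k,\beta} K_{\delta,j}}(\mu')$ is bounded by a multiple of $2^j$ and vanishes unless the two indices differ by a standard basis element; hence each row and column has a uniformly bounded number of non-zero entries, and the operator norm of convolution with $\fstZ_{\mu,k,\beta} K_{\delta,j}$ is $\lesssim 2^j$. Using $|\fstZ_{\mu,k,\beta}| \leq \fstW \leq \omega$, each cross term is then bounded by $2^j\|\fstW f\|_2 \leq 2^j\|f\|_{L^2(\omega^2)}$; summing the finitely many cross terms and using $2^j \lesssim D(\delta,j)$ gives the required $D(\delta,j)\|f\|_{L^2(\omega^2)}$ contribution.

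The main obstacle I anticipate is the careful bookkeeping at the borderline case $j = J_\delta$: there the $\delta^{-1}$ gain in \eqref{eq:ptwest762} (or the $2^{2j}$ gain in \eqref{eq:ptwest761}) originates from the Taylor-expansion argument used in the proof of Lemma \ref{lem:ptwest76}, which exploits cancellations between neighbouring eigenvalues $c(k), c(k\pm 1)$. One must verify that the resulting supports of the "remainder" terms remain confined to a band $|k - |\alpha|| \leq 1$ compatible with the operators $M_{\delta,j}^\gamma$ for $\gamma \in \{-1,0,1\}$, which is exactly why the sum over $\gamma$ in $\{-1,0,1\}$ appears on the right-hand side. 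This is also what forces the two distinct scalings for $D(\delta,j)$ in cases (i) and (ii), since only the $\sndW_l$ derivative carries the additional $\delta^{-1}$ factor.
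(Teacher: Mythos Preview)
Your proposal is correct and follows essentially the same route as the paper's proof: the same Leibniz-rule expansion, the same use of \eqref{eq:ptwest761}, \eqref{eq:ptwest762} and \eqref{eq:Pmatrixest}, and the same $2^j \lesssim \delta^{-1}$ observation to match the two choices of $D(\delta,j)$. The only cosmetic difference is in the cross terms, where the paper first records the finer bound $\|(P_s f)*(Q_s K_{\delta,j})\|_2 \lesssim 2^j \sum_\gamma \|M_{\delta,j}^\gamma(P_s f)\|_2$ and then invokes uniform $L^2$-boundedness of $M_{\delta,j}^\gamma$, while you go directly to the $L^2$ operator norm bound $\lesssim 2^j$ via row/column counting; both give the same $2^j\|f\|_{L^2(\omega^2)}$ contribution.
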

\begin{proof}
Note that
\begin{align*}
\|R_{\delta,j} \BRm_\delta(\opL)f\|_{L^2((1+\fstW)^4)} 
&\lesssim \|f\|_{2} + \|R_{\delta,j} \BRm_\delta(\opL)f\|_{L^2(\fstW^4)},\\
\|R_{\delta,j} \BRm_\delta(\opL)f\|_{L^2((1+|\cdot|)^4)} 
&\lesssim \|f\|_{2} + \|R_{\delta,j} \BRm_\delta(\opL)f\|_{L^2(\fstW^4)} + \sum_l \|R_{\delta,j} \BRm_\delta(\opL)f\|_{L^2(\sndW_l^2)},
\end{align*}
where we used the $L^2$ boundedness of $R_{\delta,j} \BRm_\delta(\opL)$.

Clearly the term $\|f\|_{2}$ is bounded by $\|f\|_{L^2(\omega^4)}$ in any case.

Now,
\[
\|R_{\delta,j} \BRm_\delta(\opL) f\|_{L^2(\fstW^4)}
= \|\fstW^2 (f * K_{\delta,j})\|_2.
\]
By \eqref{eq:rhorule},
\[
\fstW^2 (f * K_{\delta,j}) 
= f * (\fstW^2 K_{\delta,j}) + (\fstW^2 f) * K_{\delta,j} + \sum_s (P_s f) * (Q_s K_{\delta,j})
\]
for some homogeneous first-layer polynomials $P_s,Q_s$ of degree $1$.
Note that we trivially have
\[
\|(\fstW^2 f) * K_{\delta,j}\|^2_2 \lesssim \|f\|^2_{L^2(\fstW^4)}
\]
since $R_{\delta,j} \BRm_\delta(\opL)$ is bounded on $L^2$.
Next, by \eqref{eq:matrixcompconv} and \eqref{eq:ptwest761},
\[\begin{split}
|(f * (\fstW^2 K_{\delta,j}))\widehat{\;}(\mu,\alpha,\beta)|^2 &= |\widehat{\fstW^2 K_{\delta,j}}(\mu,|\alpha|)|^2 \, |\widehat{f}(\mu,\alpha,\beta)|^2 \\
&\lesssim 2^{4j} \sum_{\gamma \in \{-1,0,1\}} H_{\delta,j}(\mu,|\alpha|+\gamma) \, |\widehat{f}(\mu,\alpha,\beta)|^2 \\
&= 2^{4j} \sum_{\gamma \in \{-1,0,1\}}  |\widehat{M_{\delta,j}^\gamma f}(\mu,\alpha,\beta)|^2,
\end{split}\]
which implies, by \eqref{eq:plancherel}, that
\[
\|f * (\fstW^2 K_{\delta,j})\|_{2} \lesssim 2^{2j} \sum_{\gamma \in \{-1,0,1\}} \|M_{\delta,j}^\gamma f\|_{2}.
\]
Further, 
by \eqref{eq:matrixcompconv} and \eqref{eq:Pmatrixest},
\begin{multline*}
|((P_s f) * (Q_s K_{\delta,j}))\widehat{\;}(\mu,\alpha,\beta)|^2 \\
\lesssim 2^{2j} \sum_{\alpha' \tc |\alpha-\alpha'| = 1}  \Bigl[H_{\delta,h}(\mu,|\alpha|)+H_{\delta,h}(\mu,|\alpha'|) \Bigr] \, |\widehat{P_s f}(\mu,\alpha',\beta)|^2,
\end{multline*}
whence
\[
\sum_\alpha |((P_s f) * (Q_s K_{\delta,j}))\widehat{\;}(\mu,\alpha,\beta)|^2 \lesssim 2^{2j} \sum_{\gamma \in \{-1,0,1\}} \sum_\alpha H_{\delta,h}(\mu,|\alpha|+\gamma) |\widehat{P_s f}(\mu,\alpha,\beta)|^2
\]
and again, by \eqref{eq:plancherel}, we deduce that
\[
   \|(P_s f) * (Q_s K_{\delta,j}) \|_2 \lesssim 2^j \sum_{\gamma \in \{-1,0,1\}}   \| M_{\delta,j}^\gamma (P_s f) \|_{2}.
\]
Combining the above estimates yields
\begin{multline*}
\|R_{\delta,j} \BRm_\delta(\opL)f\|_{L^2(\fstW^4)} \\
 \lesssim \|f\|^2_{L^2(\fstW^4)} + 2^{2j} \sum_{\gamma \in \{-1,0,1\}} \|M_{\delta,j}^\gamma f\|_{2} + 2^j \sum_{\gamma \in \{-1,0,1\}} \sum_s \| M_{\delta,j}^\gamma (P_s f) \|_{2},
\end{multline*}
whence the estimate \eqref{eq:leibniz_cutoff} in the case $\omega = 1+\fstW$ follows, since the $M_{\delta,j}^\gamma$ are uniformly $L^2$-bounded.

Similarly,
\[
\|R_{\delta,j} \BRm_\delta(\opL)f\|_{L^2(\sndW_l^2)} = \|\sndW_l (f * K_{\delta,j}) \|_2
\]
and, by \eqref{eq:psiruleexact},
\[
\sndW_l (f * K_{\delta,j}) = (\sndW_l f) * K_{\delta,j} + f * (\sndW_l K_{\delta,j}) + \sum_s (P_{l,s} f) * (Q_{l,s} K_{\delta,j})
\]
for some homogeneous first-layer polynomials $P_{l,s},Q_{l,s}$ of degree $1$. Arguing as above, and using \eqref{eq:ptwest762} in place of \eqref{eq:ptwest761}, one deduces
\begin{multline*}
\|R_{\delta,j} \BRm_\delta(\opL)f\|_{L^2(\sndW_l^2)} \\
 \lesssim \|f\|_{L^2(\sndW_l^2)} + 2^{j} \delta^{-1} \sum_{\gamma \in \{-1,0,1\}} \|M_{\delta,j}^\gamma f\|_{2} + 2^j \sum_{\gamma \in \{-1,0,1\}} \sum_s \| M_{\delta,j}^\gamma (P_{l,s} f) \|_{2}.
\end{multline*}

Combining all the above estimates, and observing that $2^j \lesssim \delta^{-1}$, we obtain that
\begin{multline*}
\|R_{\delta,j} \BRm_\delta(\opL)f\|_{L^2((1+|\cdot|)^4)}  \lesssim \|f\|^2_{L^2((1+|\cdot|)^4)} \\
+ 2^{j} \delta^{-1} \sum_{\gamma \in \{-1,0,1\}} \|M_{\delta,j}^\gamma f\|_{2} + (2^j \delta^{-1})^{1/2} \sum_{\gamma \in \{-1,0,1\}} \sum_s \| M_{\delta,j}^\gamma (\tilde P_{s} f) \|_{2}
\end{multline*}
for some homogeneous first-layer polynomials $\tilde P_s$ of degree $1$. The estimate \eqref{eq:leibniz_cutoff} in the case $\omega=1+|\cdot|$ again follows since the $M_{\delta,j}^\gamma$ are uniformly $L^2$-bounded.
\end{proof}

\begin{proof}[Proof of Proposition \ref{prp:fullmdeltaest}.]
Clearly, \eqref{eq:redtraceful} with $\gamma=0$ implies \eqref{eq:nl1}.

As for \eqref{eq:nl2}, noting that $w^N \simeq \omega^4$ (where $\omega$ is $1+|\cdot|$ or $1+\fstW$ as appropriate), if we combine Lemma \ref{lem:leibniz_cutoff} and the estimate \eqref{eq:redtraceful}, then we deduce
\[
\|R_{\delta,j} \BRm_\delta(\opL)f\|_{L^2(w^N)}^2 
 \lesssim \|f\|_{L^2(w^N)}^2 + D(\delta,j)^{4-a} \|f\|_{L^2(w)}^2 + D(\delta,j)^2 \| f \|_{L^2(w^{N/2})}^2,
\]
where $D(\delta,j)$ is $(2^j \delta^{-1})^{1/2}$ or $2^j$ as appropriate, so that $C(\delta,j)=D(\delta,j)^{-a}$.
To complete the proof of \eqref{eq:nl2}, it is enough to show that the last summand in the right-hand side is controlled by the other two. However, this is clear in the case $a=2$, since $N/2=1$ and $4-a=2$ in that case. Otherwise, let 
\[
S \defeq \{(z,u)\in G: w(z,u)^{N/2} \leq D(\delta,j)^2\};
\]
since $N/2 = 2/a > 1$, it is then easy to check that
\[
D(\delta,j)^2 \| \chr_S f \|_{L^2(w^{N/2})}^2 \leq D(\delta,j)^{4-a} \|f\|_{L^2(w)}^2,
\]
while 
\[
D(\delta,j)^2 \| \chr_{G\setminus S} f \|_{L^2(w^{N/2})}^2 \leq  \|f\|_{L^2(w^N)}^2 ,
\]
and we are done.
\end{proof}

\section{The dual trace lemmas}\label{s:chaptrace}

Recall from \eqref{eq:mgamma} the definition of the operators $M^\gamma_{\delta,j}$.
The main results of this section are the following `dual trace lemmas'.

\begin{thm}\label{thm:tracelemfinal}
For all $\delta \in \Dyad$, $1\leq j \leq J_\delta$, $\gamma\in\{-1,0,1\}$ and $a\in [0,\tfrac{2}{3}]$,
\begin{equation}\label{eq:tracelemfinal}
\|M^\gamma_{\delta,j}f\|_2^2\lessapprox (2^{-j}\delta)^{a/2} \|f\|_{L^2((1+|\cdot|)^{a})}^2.
\end{equation}
\end{thm}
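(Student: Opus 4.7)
The plan proceeds by first reducing, via interpolation, to the endpoint $a = 2/3$, and then establishing the endpoint by splitting the homogeneous-norm weight into pure first- and second-layer contributions, each of which is analyzed through dual fractional integration on the Fourier side.

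At $a=0$ the estimate is immediate, since the spectral projection $M^\gamma_{\delta,j}$ is uniformly bounded on $L^2$. Treating $(1+|\cdot|)^{-z/2}$ as an admissible analytic family of weighted multipliers for $\Re z \in [0, 2/3]$ and invoking Stein's complex interpolation theorem for weighted $L^2$ norms, the bound at any $a \in [0, 2/3]$ follows from the trivial bound at $a=0$ together with a single endpoint bound at $a=2/3$. It thus suffices to prove
\[
\|M^\gamma_{\delta,j}f\|_2^2 \lessapprox (2^{-j}\delta)^{1/3}\,\|f\|_{L^2((1+|\cdot|)^{2/3})}^2.
\]

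For the endpoint, I exploit the comparability $|(z,u)| \simeq |z| + |u|^{1/2}$ on an H-type group, which yields
\[
(1+|(z,u)|)^{2/3} \simeq (1+\fstW(z,u))^{2/3} + (1+\sndW(z,u))^{1/3},
\]
and hence
\[
\|f\|_{L^2((1+|\cdot|)^{2/3})}^2 \simeq \|f\|_{L^2((1+\fstW)^{2/3})}^2 + \|f\|_{L^2((1+\sndW)^{1/3})}^2.
\]
The endpoint thus reduces to the two pure-layer dual trace inequalities
\[
\|M^\gamma_{\delta,j}f\|_2^2 \lessapprox (2^{-j}\delta)^{1/3}\,\|f\|_{L^2((1+\fstW)^{2/3})}^2, \qquad
\|M^\gamma_{\delta,j}f\|_2^2 \lessapprox (2^{-j}\delta)^{1/3}\,\|f\|_{L^2((1+\sndW)^{1/3})}^2.
\]

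Each of these, by a $TT^*$ argument using the self-adjointness of $M^\gamma_{\delta,j}$, is equivalent to an $L^2$ operator-norm bound for $M^\gamma_{\delta,j}$ composed with multiplication by the corresponding inverse fractional weight. Via Plancherel this becomes a kernel bound on the Fourier-dual side: for radial $f$, Lemma~\ref{lem:radialkernelz} produces the first-layer kernel as a Dirac mass in $\nu - \mu$ times a one-dimensional integral of a product of two Laguerre polynomials against an explicit first-layer weight, while Lemma~\ref{lem:heatkerneluformulae} produces the second-layer kernel as $\Four((1+\sndW)^{-1/3})(\nu-\mu)$ times a Jacobi polynomial $P_{\min(k,l)}^{(|k-l|,m-1)}\bigl(1-2(\tfrac{|\mu|-|\nu|}{|\mu|+|\nu|})^2\bigr)$. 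Applying Schur's test with the sharp polynomial estimates from Section~\ref{s:estforjacobipoly}, combined with the geometric constraints $c(|\alpha|)\simeq 2^j$ and $c(|\alpha|)|\mu| \in [1-\delta,1]$ imposed by $M^\gamma_{\delta,j}$, gives the desired decay factor $(2^{-j}\delta)^{1/3}$. The radiality assumption is removed at the end, via a complex interpolation in the Hermite-basis matrix components of the group Fourier transform, following the method of Gorges and M\"uller \cite{GM}.

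The central difficulty lies in the second-layer analysis: the Jacobi polynomial kernel must be controlled with sufficient sharpness across its oscillatory, transition, and decaying regimes (as the argument of $P_{\min(k,l)}^{(|k-l|,m-1)}$ ranges over $[-1,1]$), and these sharp bounds must be preserved after integration against the Fourier-transformed weight in Schur's test. Matching exactly the exponent $1/3$ of the decay $(2^{-j}\delta)^{1/3}$ is delicate and constitutes the technical heart of the argument; the first-layer contribution, being diagonal in $\mu$, is by contrast amenable to more direct manipulation.
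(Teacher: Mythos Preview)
Your reduction to two pure-layer estimates contains a genuine gap. The first-layer inequality
\[
\|M^\gamma_{\delta,j}f\|_2^2 \lessapprox (2^{-j}\delta)^{1/3}\,\|f\|_{L^2((1+\fstW)^{2/3})}^2
\]
is false for $j$ away from $J_\delta$, even for radial $f$. The point is that by Lemma~\ref{lem:radialkernelz} the first-layer dual kernel is a Dirac mass $\delta(\nu-\mu)$ times a sum in $k$; there is no integration over the thin $\mu$-annulus of width $\simeq 2^{-j}\delta$, so no factor of $\delta$ can be extracted. The best one gets is $(2^{-j})^{2/3}$ (this is exactly Theorem~\ref{thm:trace5} at $a=2/3$), and a radial $f$ whose Gelfand transform is supported on a single $k_0$ with $c_\gamma(k_0)\simeq 2^j$ and $|\mu|\in[(1-\delta)/c_\gamma(k_0),1/c_\gamma(k_0)]$ saturates that bound: such an $f$ is essentially localised at $|z|\simeq 2^j$, so $\|f\|_{L^2((1+\fstW)^{2/3})}^2\simeq 2^{2j/3}\|f\|_2^2$, and your claimed bound would force $1\lessapprox (2^j\delta)^{1/3}$, which fails for small $j$.

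The second-layer inequality with the pure weight $(1+\sndW)^{1/3}$ does hold for radial $f$ (this is the Proposition preceding Section~\ref{s:trace_conditional}), but the de-radialisation step you invoke does \emph{not} preserve the pure second-layer weight. The complex interpolation argument of Gorges--M\"uller, as implemented in Proposition~\ref{prp:new_nruest} and Lemma~\ref{lem:nruest}, relies on the Leibniz rules of Section~\ref{s:secleibniz}, and the rule \eqref{eq:psiruleexact} for $\sndW_l$ inevitably produces first-layer polynomials as cross terms. Consequently the output of that machinery is an estimate against the full weight $(1+|\cdot|)^{2/3}$, not $(1+\sndW)^{1/3}$. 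The paper's proof therefore does not split the weight at all: it splits the range of $j$ instead, using the first-layer route (Theorem~\ref{thm:trace5}) only at $j=J_\delta$, where $(2^{-j})^a\simeq(2^{-j}\delta)^{a/2}$, and the second-layer route (Lemma~\ref{lem:nonradialtraceu1} fed into Proposition~\ref{prp:new_nruest}) for $j<J_\delta$, accepting from the outset that the conclusion carries the full homogeneous-norm weight.
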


\begin{thm}\label{thm:trace5}
For all $\delta \in \Dyad$, $1\leq j\leq J_\delta$, $\gamma\in\{0,1,-1\}$ and $a\in[0,1]$, 
\begin{equation}\label{eq:trace5}
\|M^\gamma_{\delta,j}f\|^2_2\lessapprox (2^{-j})^a \|f\|^2_{L^2((1+\fstW)^a)}.
\end{equation}
\end{thm}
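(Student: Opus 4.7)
The plan is to reduce \eqref{eq:trace5} to the endpoint $a=1$ via complex interpolation, and then to prove that endpoint by Fourier-side analysis based on Lemma \ref{lem:radialkernelz}. Since $M^\gamma_{\delta,j}$ is a self-adjoint orthogonal spectral projection, \eqref{eq:trace5} is equivalent to the operator-norm bound
\[
\|(1+\fstW)^{-a/2} M^\gamma_{\delta,j} (1+\fstW)^{-a/2}\|_{L^2(G) \to L^2(G)} \lessapprox (2^{-j})^a.
\]
For $a=0$ this is trivial. For $a\in(0,1)$ I would apply Stein's analytic interpolation theorem to the family $T_z = (1+\fstW)^{-z} M^\gamma_{\delta,j} (1+\fstW)^{-z}$ on the strip $0 \leq \Re z \leq 1/2$, using that $(1+\fstW)^{-iy}$ is unitary on $L^2(G)$ so that $\|T_{iy}\|_{L^2 \to L^2} \leq 1$; the intermediate bounds then follow once the endpoint $a=1$ is settled.

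For that endpoint, by the $TT^*$ identity applied to $T = M^\gamma_{\delta,j} (1+\fstW)^{-1/2}$, it suffices to show
\[
\|M^\gamma_{\delta,j} (1+\fstW)^{-1} M^\gamma_{\delta,j}\|_{L^2 \to L^2} \lessapprox 2^{-j},
\]
which I would analyze on the group Fourier side. Since $(1+\fstW)^{-1}$ depends only on $\fstW^2 = |z|^2$, and $\fstW^2$ acts on the Fourier dual by an explicit formula obtained by iterating \eqref{eq:partzeta1}--\eqref{eq:partzeta2}, the operator of multiplication by $(1+\fstW)^{-1}$ preserves the orthogonal decomposition of $L^2(G)$ by offset $\alpha-\beta \in \ZZ^m$. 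Combined with the fact that $M^\gamma_{\delta,j}$ is diagonal in the Hermite basis and depends only on $|\alpha|$, the sandwich operator respects this decomposition, and the analysis on each component reduces to a scalar-valued problem of the type treated in Lemma \ref{lem:radialkernelz}.

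Concretely, after the substitution $u = \pi|\mu|t$ and normalization of the Laguerre polynomials to have unit $L^2(e^{-u} u^{m-1}\,du)$-norm (yielding $\ell_k^{m-1}$), the Schur-symmetric kernel with respect to the Plancherel measure takes the form
\[
\tilde A(\mu; k, l) = \int_0^\infty \bigl(1 + \sqrt{u/(\pi|\mu|)}\bigr)^{-1} \ell_k^{m-1}(u) \, \ell_l^{m-1}(u) \, e^{-u} \, u^{m-1} \, du.
\]
The spectral support of $M^\gamma_{\delta,j}$ localizes $|\mu| \sim 2^{-j}$ and $c(k) \sim 2^j$, and Schur's test applied to $\tilde A$ restricted to this support reduces the required bound to the row-sum estimate $\sup_k \sum_l |\tilde A(\mu; k, l)| \lessapprox 2^{-j}$, uniformly for $(\mu, k, l)$ in the relevant range.

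The hardest step is to establish this row-sum bound via precise uniform asymptotic estimates on Laguerre polynomials, in direct analogy with the Jacobi polynomial analysis used in Section \ref{s:estforjacobipoly} for the second-layer weight. Heuristically, in the oscillatory bulk region $u \lesssim 4k \sim 2^j$ where the normalized Laguerre functions concentrate, the weight $(1 + \sqrt{u/(\pi|\mu|)})^{-1}$ is of pointwise size $\sqrt{\pi|\mu|/u} \sim 2^{-j}$, and since it varies slowly on the scale of the Laguerre oscillations, the off-diagonal entries $\tilde A(\mu; k, l)$ decay rapidly in $|k-l|$; the row-sum is thus dominated by its diagonal contribution of order $2^{-j}$. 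Making this rigorous requires a dyadic decomposition of the $u$-integral together with Krasikov-type uniform bounds for Laguerre polynomials in the oscillatory and transition regimes, and exponential decay beyond the turning point, thereby completing the proof of \eqref{eq:trace5}.
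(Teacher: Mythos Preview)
Your proposal has a genuine gap at the step where you pass from the offset-preserving observation to the scalar kernel $\tilde A(\mu;k,l)$. It is true that multiplication by any function of $\fstW^2$ preserves the decomposition of $\widehat f(\mu)$ by the offset $\alpha-\beta$, but the action on a sector with nonzero offset is \emph{not} given by the radial kernel of Lemma~\ref{lem:radialkernelz}. Already for $m=1$, on the offset-$p$ sector the relevant orthogonal system consists of Laguerre functions of type $|p|$, not type $m-1$; for $m>1$ the offset is a vector in $\ZZ^m$, the coefficients in the action of $\fstW^2$ (obtained from \eqref{eq:partzeta1}--\eqref{eq:partzeta2}) depend on the individual $\alpha_j,\beta_j$ rather than just on $|\alpha|,|\beta|$, and no reduction to a single scalar kernel is apparent. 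The paper flags exactly this obstruction: it notes that Schur's test on the radial kernel yields the trace estimate only for radial $f$, and that the extension to general $f$ requires the separate complex-interpolation machinery of Proposition~\ref{prp:new_nruest} (complex $a$, Leibniz rules, and the dyadic cutoffs $\tilde\Lambda_r$). Your $TT^*$ and outer Stein interpolation do not supply this missing ingredient.

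Even if the scalar reduction were valid, the row-sum bound you need is only sketched heuristically, and the weight $(1+\fstW)^{-1}$ does not admit the closed Gamma-function formula of Lemma~\ref{lem:laguerreintegrallem}\ref{en:laguerreintegrallem1}, so you would indeed be forced into delicate Laguerre asymptotics. The paper avoids this entirely: it works with the homogeneous weight $\fstW^{-a}$ for complex $a$ with $\Re(a)\in(1,2m)$, where Lemma~\ref{lem:laguerreintegrallem}\ref{en:laguerreintegrallem1} converts the Laguerre integral into a finite sum of Gamma ratios, which are handled by Stirling and the elementary summation Lemma~\ref{lem:nrstirlest} (this is Lemma~\ref{nonradialtracez1}). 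That kernel estimate then feeds into Proposition~\ref{prp:new_nruest} with $\omega=\fstW$ and $C(\delta,j)=2^{-j}$, yielding $\|M^\gamma_{\delta,j}f\|_2^2\lesssim_\epsilon 2^{-j}\|f\|^2_{L^2((1+\fstW)^{1+\epsilon})}$, and a final interpolation with the trivial $L^2$ bound gives \eqref{eq:trace5} for all $a\in[0,1]$.
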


It should be observed that, in the case $j = J_\delta$, the constants $(2^{-j}\delta)^{a/2}$ and $(2^{-j})^a$ in the right-hand sides of \eqref{eq:tracelemfinal} and \eqref{eq:trace5} are comparable (since $2^{-J_\delta} \simeq \delta$), so Theorem \ref{thm:trace5} gives a stronger estimate in this case.
In the case $j < J_\delta$, instead, the two results are not comparable, and Theorem \ref{thm:tracelemfinal} requires an independent proof. In both cases, the proof strategy will be based on the following conditional result.

Here and henceforth, $K_{\delta,j}^\gamma$ denotes the convolution kernel of the operator $M_{\delta,j}^\gamma$. Moreover, a function on $G$ is said to be \emph{$G$-homogeneous} if it is homogeneous with respect to the automorphic dilations \eqref{eq:aut_dil}.

\begin{prop}\label{prp:new_nruest}
Let $a\in (0,\infty)$.
Let $\omega$ be a $G$-homogeneous weight of degree $1$, which is a fractional power of a nonnegative polynomial.
Suppose that, for all $\theta\in \RR$, all $\delta \in \Dyad$, all integers $1\leq j \leq J_\delta$, all $\gamma\in\{-1,0,1\}$ and all compact $I\subseteq \RR^+$,
\begin{equation}\label{eq:ourassumption2}
    \sup_{\substack{\mu,k\\c(k)|\mu|\in I}} |e^{-\theta^2} \partial_{\omega^{-a/2+i\theta}} \widehat{K_{\delta,j}^\gamma}(\mu,k)|
		\lessapprox_I C(\delta,j),
\end{equation}
where the implicit constant does not depend on $\theta$, and $C(\delta,j) \gtrsim \delta^\kappa$ for some $\kappa \geq 0$.
Then, for all $\delta \in \Dyad$, all integers $1\leq j \leq J_\delta$, and all $\gamma\in\{-1,0,1\}$,
\begin{equation}\label{eq:conditional_trace}
\|M^\gamma_{\delta,j}f\|_2^2
\lessapprox C(\delta,j) \, \|f\|_{L^2((1+|\cdot|)^a)}^2;
\end{equation}
moreover, if $\omega$ is first-layer (i.e., depends only on $z$), then
\begin{equation}\label{eq:conditional_trace_fst}
\|M^\gamma_{\delta,j}f\|_2^2
\lessapprox C(\delta,j) \, \|f\|_{L^2((1+\fstW)^a)}^2.
\end{equation}
\end{prop}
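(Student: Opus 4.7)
The plan is to prove Proposition~\ref{prp:new_nruest} by an application of Stein complex interpolation, reducing the weighted $L^2$ estimate to the pointwise Fourier-dual bound provided by the hypothesis.

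\smallskip

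\emph{Step 1 (Unification of weights).} I would first reduce both \eqref{eq:conditional_trace} and \eqref{eq:conditional_trace_fst} to the single estimate
\[
\|M_{\delta,j}^\gamma f\|_2^2 \lessapprox C(\delta,j)\,\|f\|_{L^2(1+\omega^a)}^2.
\]
Since $\omega$ is $G$-homogeneous of degree $1$ and is a fractional power of a nonnegative polynomial, it is dominated by the Carnot--Carath\'eodory norm, $\omega\lesssim|\cdot|$, so that $1+\omega^a\lesssim(1+|\cdot|)^a$; in the case that $\omega$ is first-layer, one has instead $\omega\simeq\fstW$ and $1+\omega^a\simeq(1+\fstW)^a$. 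The single estimate above then implies the two claimed inequalities.

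\smallskip

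\emph{Step 2 (Duality).} Since $M_{\delta,j}^\gamma$ is a self-adjoint orthogonal projection, set $v=(1+\omega^a)^{-1/2}$ and write $M_v$ for the operator of multiplication by $v$. The desired inequality is equivalent to
\[
\|M_v\,M_{\delta,j}^\gamma\,M_v\|_{L^2\to L^2}\lessapprox C(\delta,j),
\]
and the left-hand side is the operator norm of the integral kernel $v(x)\,K_{\delta,j}^\gamma(y^{-1}x)\,v(y)$. Using the subadditivity of $\omega$ inherited from its being a homogeneous fractional power of a polynomial, one has $(1+\omega(x)^a)(1+\omega(y)^a)\gtrsim 1+\omega(y^{-1}x)^a$, hence the pointwise control $v(x)\,v(y)\lesssim v(y^{-1}x)$ on the kernel. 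This reduces the task (via arguments of Schur type that preserve the sign of $K_{\delta,j}^\gamma$) to bounding the $L^2\to L^2$ norm of the convolution operator with the radial kernel $v\cdot K_{\delta,j}^\gamma$, which by Plancherel in the H-type setting equals $\sup_{\mu,k}|\partial_v\widehat{K_{\delta,j}^\gamma}(\mu,k)|$.

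\smallskip

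\emph{Step 3 (Stein interpolation).} To obtain the Fourier multiplier bound, I would invoke Stein's complex interpolation theorem on the analytic family of radial functions $h_z=\omega^z K_{\delta,j}^\gamma$ in the strip $-a/2\leq\Re z\leq0$. Their Fourier multipliers $\widehat{h_z}=\partial_{\omega^z}\widehat{K_{\delta,j}^\gamma}$ depend analytically on $z$. On the line $\Re z=0$, one has $|\widehat{h_{i\theta}}(\mu,k)|\lesssim 1$ uniformly, using $|\omega^{i\theta}|=1$ and the decay estimate of Lemma~\ref{lem:kernelestimate}. On the line $\Re z=-a/2$, the hypothesis \eqref{eq:ourassumption2} gives $|\widehat{h_{-a/2+i\theta}}(\mu,k)|\lessapprox C(\delta,j)\,e^{\theta^2}$ uniformly over $(\mu,k)$ with $c(k)|\mu|$ in any fixed compact set, which suffices as the support of $\widehat{K_{\delta,j}^\gamma}$ in this variable is bounded (namely $[1-\delta,1]\subseteq[1/2,1]$). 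Combining these two bounds via Phragm\'en--Lindel\"of (the $e^{\theta^2}$ growth being admissible) on the scalar analytic functions $z\mapsto\widehat{h_z}(\mu,k)$, and representing $v=(1+\omega^a)^{-1/2}$ as a vertical-contour integral of $\omega^{-as}$ by a Mellin-type formula weighted so as to absorb the $e^{\theta^2}$ growth, yields $|\partial_v\widehat{K_{\delta,j}^\gamma}(\mu,k)|\lessapprox C(\delta,j)^{1/2}$ on the relevant Fourier support.

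\smallskip

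\emph{Main obstacle.} The most delicate point is the Mellin/interpolation step combining the two boundary bounds. The $e^{\theta^2}$ admissible growth in the hypothesis \eqref{eq:ourassumption2} requires careful choice of the contour and of the auxiliary analytic weight used in the representation of $(1+\omega^a)^{-1/2}$, so as to balance the Gamma-function decay of Mellin kernels against the exponential-squared growth of the bound on the hypothesis line. A further technical point is ensuring that the convolution norm bound given by Plancherel, which a priori requires control of the Fourier multiplier globally, is obtained from the compact-set hypothesis by composing with $M_{\delta,j}^\gamma$ on the Fourier side to restrict the active support.
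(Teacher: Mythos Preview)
There are genuine gaps in Steps~2 and~3. In Step~2, the subadditivity of $\omega$ that you invoke fails for the second-layer weight $\omega=\sndW^{1/2}$: taking $(z,0),(z',0)\in G$ with $[z,z']\neq 0$ gives $\sndW((z,0)(z',0))=\tfrac12|[z,z']|>0$ while $\sndW(z,0)=\sndW(z',0)=0$. Even if subadditivity held, the pointwise bound $v(x)v(y)\lesssim v(y^{-1}x)$ would only control the \emph{absolute value} of the kernel, yielding domination by convolution with $v\,|K_{\delta,j}^\gamma|$; the $L^2\to L^2$ norm of the latter is not $\sup_{\mu,k}|\partial_v\widehat{K_{\delta,j}^\gamma}(\mu,k)|$, since Plancherel applies to $vK_{\delta,j}^\gamma$, not to $v\,|K_{\delta,j}^\gamma|$, and $K_{\delta,j}^\gamma$ has no sign. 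In Step~3, your boundary bound at $\Re z=0$ appeals to Lemma~\ref{lem:kernelestimate}, but that lemma requires a smooth spectral multiplier, whereas $\widehat{K_{\delta,j}^\gamma}$ is a characteristic function; there is no evident reason why $\sup_{\mu,k}|\partial_{\omega^{i\theta}}\widehat{K_{\delta,j}^\gamma}(\mu,k)|$ should be bounded uniformly in~$\theta$, and the Mellin representation you flag as an obstacle is indeed one.

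The paper avoids these obstructions by a different architecture. It decomposes $f$ dyadically in physical space via cutoffs $\Lambda_r f=\varphi_r(|\cdot|)\,f$ (or $\tilde\Lambda_r f=\varphi_r(\fstW)\,f$ in the first-layer case), writes
\[
\|M_{\delta,j}^\gamma f\|_2\le\sum_{r\ge 0}\langle K_{\delta,j}^\gamma,\Psi(\opL)[(\Lambda_r^2 f)^* * \Lambda_r^2 f]\rangle^{1/2}
\]
for a suitable $\Psi\in C_c^\infty(\RR^+)$ with $\Psi(\opL)M_{\delta,j}^\gamma=M_{\delta,j}^\gamma$, and proves an auxiliary bound (Lemma~\ref{lem:nruest}) for the bilinear form $\langle\omega^{R-a/2+i\theta}K_{\delta,j}^\gamma,\Psi(\opL)[(\Lambda_r f)^* * \Lambda_r f]\rangle$ by complex interpolation in $R$. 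At the nodes $R=dN$ (so that $\omega^{dN}$ is a genuine polynomial), the Leibniz rules of Section~\ref{s:secleibniz} distribute $\omega^{dN}$ over the convolution factors: the pieces landing on $K_\Psi\in\Sch(G)$ are harmless, the pieces landing on $\Lambda_r f$ are converted by the spatial localisation into factors $2^{r\,\hdeg P}$, and Lemma~\ref{lem:support_poly} confines the effective $(\mu,k)$-support so that the hypothesis~\eqref{eq:ourassumption2} applies on a fixed compact $I$. Evaluating at $R=a/2$, $\theta=0$ gives $\langle\cdots\rangle\lessapprox C(\delta,j)\,2^{ar}\|\Lambda_r f\|_2^2$; summing in $r$ with Cauchy--Schwarz and a final interpolation with the trivial $L^2$ bound (using $C(\delta,j)\gtrsim\delta^\kappa$) yields the weighted estimate, without ever needing subadditivity of $\omega$, an $L^1$ bound on $K_{\delta,j}^\gamma$, or a Fourier-multiplier bound at $\Re z=0$.
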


In light of this result, the proof of our `trace lemmas' boils down to establishing the estimate \eqref{eq:ourassumption2} for an appropriate choice of the weight $\omega$. 
More precisely, for Theorem \ref{thm:trace5} we take $\omega = \fstW$, while in the case of Theorem \ref{thm:tracelemfinal} (and $j<J_\delta$) we take $\omega = \sndW^{1/2}$.
In the proof of the various instances of \eqref{eq:ourassumption2}, a crucial role is played by the explicit formulas for `dual fractional integral operators' obtained in Section \ref{s:fracint}, as well as the estimates for Jacobi polynomials discussed in Section \ref{s:estforjacobipoly}.

It should be noticed that, starting from the kernel estimate \eqref{eq:ourassumption2} with $\theta = 0$, the `trace estimates' \eqref{eq:conditional_trace} and \eqref{eq:conditional_trace_fst} could be directly derived using duality and Schur's Test (cf.\ \cite[proof of Lemma 3]{CRV}), provided one restricted to the class of radial functions $f$. Indeed, from the estimate in Lemma \ref{lem:nonradialtraceu1} one could derive the following sharpened version of Theorem \ref{thm:tracelemfinal}, that only involves second layer weights, but applies to radial functions only.

\begin{prop}
Assume that $f$ is radial.
For all $\delta \in \Dyad$, $1\leq j < J_\delta$, $\gamma\in\{-1,0,1\}$ and $a\in [0,\tfrac{2}{3}]$,
\begin{equation}
\|M^\gamma_{\delta,j}f\|_2^2
\lessapprox (2^{-j}\delta)^{a/2}\|f\|_{L^2((1+\sndW)^{a/2})}^2.
\end{equation}
\end{prop}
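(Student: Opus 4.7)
The plan is to prove the proposition by applying Schur's test directly to the Fourier-dual representation of the operator for radial functions, which bypasses the complex interpolation argument of Proposition \ref{prp:new_nruest} (the parameter $\theta$ in \eqref{eq:ourassumption2} can be taken to vanish) and follows the strategy of Carbery--Rubio de Francia--Vega \cite[Lemma 3]{CRV}. Setting $W = (1+\sndW)^{a/2}$, a standard duality and $TT^{*}$ reduction shows that the claim is equivalent to
\[
\|M^\gamma_{\delta,j}\, W^{-1}\, M^\gamma_{\delta,j}\|_{L^2_{\rad}(G) \to L^2_{\rad}(G)} \lessapprox (2^{-j}\delta)^{a/2}.
\]
Since $M^\gamma_{\delta,j}$ and multiplication by the radial function $W^{-1}$ both preserve radiality, the Plancherel formula \eqref{eq:radialplancherel} identifies this operator with an integral operator on $(\RR^n\setminus\{0\})\times\NN_0$ (equipped with the measure $\binom{k+m-1}{k}|\mu|^m\,d\mu\,d\#(k)$), whose kernel is $K_{W^{-1}}(\nu,l;\mu,k)$ multiplied on both sides by the indicator of the joint spectral region $E^\gamma_{\delta,j}$ defining $M^\gamma_{\delta,j}$; here $K_{W^{-1}}$ is the Schwartz kernel of $\partial_{W^{-1}}$ given by the second-layer formula of Lemma \ref{lem:heatkerneluformulae}, applied to $w(u) = (1+|u|)^{-a/2}$.

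Next I would apply Schur's test: since $W^{-1}$ is real, the cut-off kernel is symmetric in $(\mu,k)\leftrightarrow(\nu,l)$, so the two Schur conditions coincide and it suffices to verify
\[
\sup_{(\mu,k)\in E^\gamma_{\delta,j}}\int_{\RR^n}\sum_{l\in\NN_0}|K_{W^{-1}}(\nu,l;\mu,k)|\,\chr_{E^\gamma_{\delta,j}}(\nu,l)\,\binom{l+m-1}{l}\,|\nu|^m\,d\nu \lessapprox (2^{-j}\delta)^{a/2}.
\]
This is exactly where the pointwise bound of Lemma \ref{lem:nonradialtraceu1} enters: it controls the Jacobi polynomial factor $P^{(|k-l|,m-1)}_{\min(k,l)}$ of Lemma \ref{lem:heatkerneluformulae} together with the Euclidean Fourier decay of $(1+|\cdot|)^{-a/2}$ on $\RR^n$, via the estimates developed in Section \ref{s:estforjacobipoly}. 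Integration of that bound against the thin cutoff forcing $|\nu|\simeq 2^{-j}$, $l\simeq 2^j$ and $c_\gamma(l)|\nu|\in[1-\delta,1]$ then extracts the claimed factor $(2^{-j}\delta)^{a/2}$.

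The main technical obstacle is thus the pointwise kernel bound of Lemma \ref{lem:nonradialtraceu1} itself, where the delicate Jacobi polynomial asymptotics play their critical role; once that bound is in hand, the Schur integration in the radial setting is comparatively routine. The gain from radiality is precisely that the second-layer weight $(1+\sndW)^{a/2}$ may stand on its own: for non-radial $f$ the off-diagonal components of $\widehat f(\mu,\alpha,\beta)$ would have to be controlled via the dual Leibniz rules of Section \ref{s:secleibniz}, inflating the weight to the full homogeneous $(1+|\cdot|)^{a}$ that appears in Theorem \ref{thm:tracelemfinal}.
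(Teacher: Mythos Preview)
Your approach---duality plus Schur's test on the Fourier side for radial functions, using the kernel estimates behind Lemma~\ref{lem:nonradialtraceu1}---is precisely what the paper has in mind; the paper itself does not spell out a proof but explicitly says (just before the Proposition) that ``starting from the kernel estimate \eqref{eq:ourassumption2} with $\theta = 0$, the `trace estimates' \dots could be directly derived using duality and Schur's Test (cf.\ \cite[proof of Lemma 3]{CRV}), provided one restricted to the class of radial functions $f$.'' So you are on the intended track.

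One small slip: you set $W^{-1}=(1+\sndW)^{-a/2}$ and propose to feed $w(u)=(1+|u|)^{-a/2}$ into Lemma~\ref{lem:heatkerneluformulae}. The Fourier transform of this inhomogeneous weight is not the clean $C_{n,a}|\xi|^{a/2-n}$ that drives the proof of Lemma~\ref{lem:nonradialtraceu1}; that lemma (and the spherical-integration Lemma~\ref{lem:fsest} it invokes) is written specifically for the \emph{homogeneous} weight $\sndW^{-a/2}$. The cleanest fix is not to modify the kernel analysis at all: run your $TT^*$/Schur argument with the homogeneous weight $\sndW^{-a/2}$ instead, so that the Schur row sum is exactly the quantity $\mathscr I$ bounded (with absolute values throughout) in the proof of Lemma~\ref{lem:nonradialtraceu1}, and note that $(\mu,k)\in E^\gamma_{\delta,j}$ forces $c(k)|\mu|$ into a fixed compact interval by Lemma~\ref{lem:ckratio}. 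This yields
\[
\|M^\gamma_{\delta,j}f\|_2^2 \lessapprox (2^{-j}\delta)^{a/2}\,\|f\|_{L^2(\sndW^{a/2})}^2
\]
for radial $f$, and the stated inequality follows at once from $(1+\sndW)^{a/2}\geq \sndW^{a/2}$. With that adjustment your outline is complete and matches the paper's intended argument.
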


For general functions $f$, however, the direct approach through Schur's Test appears not to be enough; similarly to \cite[proof of Lemma 7]{GM}, the proof of Proposition \ref{prp:new_nruest} exploits a more delicate complex interpolation argument (requiring the estimate \eqref{eq:ourassumption2} for arbitrary $\theta \in \RR$), combined with the Leibniz rules of Section \ref{s:secleibniz}. The fact that the Leibniz rule \eqref{eq:psiruleexact} for second-layer polynomials produces first-layer polynomials as well explains why the final trace estimate \eqref{eq:conditional_trace} contains the `full weight' $|\cdot|$, despite being derived from a kernel estimate (Lemma \ref{lem:nonradialtraceu1}) involving a second-layer weight only.

Before discussing the proofs of the `trace lemmas', Theorems \ref{thm:tracelemfinal} and \ref{thm:trace5},  and the conditional result Proposition \ref{prp:new_nruest}, we shall prove a small lemma that will be of use in what follows.

\begin{lem}\label{lem:ckratio}
Let $k\in\ZZ$ and $x\in\NN_0$. If $c(k)> 0$, then $\tfrac{c(k+x)}{c(k)}\in[1,1+2x]$. If additionally $c(k-x)>0$, then $\tfrac{c(k-x)}{c(k)}\in[\tfrac{1}{1+2x},1]$.
\end{lem}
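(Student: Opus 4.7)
The plan is to unfold the definition \eqref{eq:eig}, namely $c(k)=2\pi(2k+m)$, and reduce everything to elementary inequalities for affine integer sequences; no machinery beyond this should be needed.

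First I would note that the hypothesis $c(k)>0$ is equivalent to $2k+m\ge 1$, because $m\in\NN$ (so $m\ge 1$) and $2k+m$ is an integer. This lets me divide safely. A direct computation then gives
\[
\frac{c(k+x)}{c(k)} \;=\; \frac{2k+2x+m}{2k+m} \;=\; 1+\frac{2x}{2k+m}.
\]
Since $x\ge 0$ the ratio is $\ge 1$, and since $2k+m\ge 1$ it is $\le 1+2x$. This settles the first assertion.

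For the second assertion I would avoid repeating the calculation and simply invoke the first part with $k$ replaced by $k-x$: the hypothesis $c(k-x)>0$ makes this legitimate, and yields
\[
\frac{c(k)}{c(k-x)} \;=\; \frac{c((k-x)+x)}{c(k-x)} \;\in\; [1,\,1+2x].
\]
Taking reciprocals gives $c(k-x)/c(k)\in[1/(1+2x),1]$, as required.

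There is no real obstacle: both bounds collapse to the single observation that $2k+m\ge 1$ whenever $c(k)>0$. The lemma is plainly a bookkeeping tool, used elsewhere to absorb an $O(1)$ shift of the Hermite index $k$ (arising from the three-term recurrences \eqref{eq:partzsqrule} and \eqref{eq:parturule}, or from the shifts by $\gamma\in\{-1,0,1\}$ in the definition \eqref{eq:mgamma}) into a factor comparable to $1$ at the level of eigenvalues of $\opL$.
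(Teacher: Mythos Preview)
Your proof is correct and follows essentially the same approach as the paper: both unfold the definition $c(k)=2\pi(2k+m)$, use that $c(k)>0$ forces $2k+m\ge 1$ to bound the ratio $1+2x/(2k+m)$, and then derive the second assertion from the first by substituting $k\mapsto k-x$ and inverting. Your version is in fact slightly cleaner, since you cancel the $2\pi$ upfront and make the integrality observation explicit.
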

\begin{proof}
Recall the definition of $c(k)$ in \eqref{eq:eig}. For the first inclusion, since $c(k)>0$ then $c(k)\geq 2\pi$, so
\begin{equation}
    1\leq \frac{c(k+x)}{c(k)}= 1+\frac{4\pi x}{c(k)}=1+\frac{4\pi x}{2\pi}\leq 1+2x.
\end{equation}
If $c(k-x)>0$, then let $l \defeq k-x$. Then $c(l)>0$, so applying the first result of this Lemma gives
\begin{equation}
    \frac{c(k)}{c(k-x)}=\frac{c(l+x)}{c(l)}\in[1,1+2x],
\end{equation}
which gives the second result.
\end{proof}

\subsection{The conditional result}\label{s:trace_conditional}
In this section we prove Proposition \ref{prp:new_nruest}.

Let $\varphi,\varphi_0\in C_c^\infty(\RR)$ be such that $\supp(\varphi)\subseteq(1,3)$ and 
\begin{equation}\label{eq:9.41}
1 = \sum_{k\in\NN_0}\varphi_k^2(t) \text{ for } t>0, \text{ where } \varphi_k(t) \defeq \varphi(2^{-k} t) \text{ for } k\geq 1.  
\end{equation}
For all $r \in \NN_0$, define the cut-off operators $\Lambda_r$ and $\tilde\Lambda_r$ by
\begin{equation}\label{eq:Lambda_cutoff}
\Lambda_r f(z,u) \defeq \varphi_r(|(z,u)|) \, f(z,u), \qquad \tilde\Lambda_r f(z,u) \defeq \varphi_r(|z|)\,  f(z,u)
\end{equation}
for all functions $f : G \to \CC$. We first prove an auxiliary estimate.

\begin{lem}\label{lem:nruest}
Under the assumptions of Proposition \ref{prp:new_nruest},
for all $\Psi \in C^\infty_c(\RR^+)$, all $R,\theta\in \RR$ with $R\geq 0$, all $\delta \in \Dyad$, all integers $1\leq j \leq J_\delta$, all $\gamma\in\{-1,0,1\}$, and all $r \in \NN_0$,
\begin{equation}\label{eq:radialtraceuest20}
    |e^{-\theta^2} \langle \omega^{R-a/2+i\theta} K_{\delta,j}^\gamma , \Psi(\opL)[(\Lambda_r f)^* * (\Lambda_r f)] \rangle|
		\lessapprox_{\Psi,R} C(\delta,j) \, \|2^{Rr} f\|^2_2,
\end{equation}
where
the implicit constant does not depend on $\theta$. In addition, if $\omega$ is first-layer, then the estimate \eqref{eq:radialtraceuest20} also holds with $\Lambda_r$ replaced by $\tilde \Lambda_r$.
\end{lem}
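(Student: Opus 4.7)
The proof proceeds via Stein's complex interpolation applied to the analytic function $\Phi(\zeta) \defeq e^{\zeta^2}\langle \omega^\zeta K_{\delta,j}^\gamma, g\rangle$, where $g = \Psi(\opL)[(\Lambda_r f)^* * (\Lambda_r f)]$. Fix $R \geq 0$ and choose an integer $N > R$ such that $\omega^N$ is (comparable to) a polynomial on $G$; such $N$ exists since $\omega$ is a fractional power of a nonnegative polynomial (any even $N$ works for $\omega = \fstW$, any multiple of $4$ for $\omega = \sndW^{1/2}$). The goal is to bound $|\Phi(\zeta)|$ on the line $\Re\zeta = R - a/2$; this implies \eqref{eq:radialtraceuest20} since $|e^{-\theta^2}\langle \cdots\rangle| = e^{-(R-a/2)^2}|\Phi(\zeta)|$ and $2^{Rr} \leq 2^{2Rr}$ for $r \geq 0$. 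I interpolate between the endpoints $\Re\zeta = -a/2$ and $\Re\zeta = N - a/2$, at each of which the $e^{\zeta^2}$ prefactor offsets the $e^{\theta^2}$ growth inherent in the hypothesis \eqref{eq:ourassumption2}, yielding a bound uniform in $\theta$.

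At the left endpoint $\Re\zeta = -a/2$, both $K_{\delta,j}^\gamma$ and $\omega$ are radial, so $\omega^\zeta K_{\delta,j}^\gamma$ is radial, and the pairing decouples diagonally via \eqref{eq:convl2es1}:
\[
\langle \omega^\zeta K_{\delta,j}^\gamma, g\rangle = \int \sum_\alpha \partial_{\omega^\zeta}\widehat{K_{\delta,j}^\gamma}(\mu,|\alpha|)\,\overline{\widehat g(\mu,\alpha,\alpha)}\,|\mu|^m\,d\mu.
\]
Since $\widehat g$ is supported where $c(|\alpha|)|\mu| \in \supp\Psi$, applying the hypothesis with $I = \supp\Psi$ bounds the first factor by $e^{\theta^2}C(\delta,j)$, and writing $g = (\Lambda_r f)^* * \Psi(\opL)\Lambda_r f$ together with \eqref{eq:convl2es} and the $L^2$-boundedness of $\Psi(\opL)$ controls the remaining integral by $\|f\|_2^2$; hence $|\Phi(\zeta)| \lessapprox_\Psi C(\delta,j)\|f\|_2^2$ uniformly in $\theta$. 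At the right endpoint $\Re\zeta = N - a/2$, we move the real factor $\omega^N$ across the inner product and apply the Leibniz rules of Section \ref{s:secleibniz} (iterating \eqref{eq:rhorule}/\eqref{eq:psiruleexact}) to expand $\omega^N g = \sum_s (P_s(\Lambda_r f)^*) * (Q_s\Lambda_r f) * (R_s K_\Psi)$, where $P_s, Q_s, R_s$ are polynomials whose $G$-homogeneous degrees sum to $N$. The support condition $|\cdot| \sim 2^r$ on $\Lambda_r f$ gives $\|P_s(\Lambda_r f)^*\|_2 \lesssim 2^{r\deg P_s}\|f\|_2$ and similarly for $Q_s\Lambda_r f$, while $\|R_s K_\Psi\|_1 \lesssim_{\Psi,N} 1$. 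Crucially, the Fourier support of each convolutional term in $\omega^N g$ remains in $\{c(|\alpha|)|\mu| \in \tilde I\}$ for a compact $\tilde I = \tilde I(\Psi,N)$, because $\widehat{R_s K_\Psi}$ arises from bounded shifts in $|\alpha|$ and $\mu$-derivatives of $\widehat{K_\Psi}$ and hence remains supported near $\supp\Psi$. Combining the hypothesis applied with $I = \tilde I$, the triple-convolution bound (\eqref{eq:convl2es} together with Young's inequality placing $R_s K_\Psi$ in $L^1$), and summing over the finitely many terms yields $|\Phi(\zeta)| \lessapprox_{\Psi,N} C(\delta,j)\,2^{Nr}\|f\|_2^2$ on $\Re\zeta = N - a/2$.

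The three-lines lemma, applied on $-a/2 \leq \Re\zeta \leq N - a/2$ (the $e^{\zeta^2}$ factor ensures the required sub-Gaussian decay in $\Im\zeta$), then gives on $\Re\zeta = R - a/2$ (with $t = R/N$):
\[
|\Phi(\zeta)| \lesssim_{N,R} [C(\delta,j)]^{1-t}[C(\delta,j)\,2^{Nr}]^{t}\|f\|_2^2 = C(\delta,j)\,2^{Rr}\,\|f\|_2^2,
\]
which is stronger than \eqref{eq:radialtraceuest20}. For $\omega$ first-layer and $\Lambda_r$ replaced by $\tilde\Lambda_r$, the Leibniz expansion via \eqref{eq:rhorule} produces only first-layer polynomials, which are bounded by $|z|^{\deg} \lesssim 2^{r\deg}$ on $\supp\tilde\Lambda_r f = \{|z| \sim 2^r\}$, so the same argument applies verbatim. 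The main obstacle is verifying that after the Leibniz expansion the Fourier supports of all resulting terms remain confined to a compact $c(|\alpha|)|\mu|$-set depending only on $\Psi$ and $N$, as this is required to invoke the hypothesis \eqref{eq:ourassumption2}, which is stated only for compact $I$.
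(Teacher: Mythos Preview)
Your proof is correct and follows essentially the same route as the paper. Both arguments interpolate in the complex exponent of $\omega$, reduce to endpoints $R=0$ and $R=dN$ (with $\omega^{dN}$ a genuine polynomial), use radiality and \eqref{eq:convl2es1} to pair diagonally, expand $\omega^{dN} g$ via the Leibniz rules of Section~\ref{s:secleibniz}, and bound the resulting polynomial factors by powers of $2^r$ on the support of $\Lambda_r f$; the ``main obstacle'' you flag, namely that the Fourier support in $c(|\alpha|)|\mu|$ remains confined to a fixed compact set depending only on $\Psi$ and $N$, is exactly what the paper establishes via Lemma~\ref{lem:support_poly}\ref{en:support_poly_mu}.
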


In the proof, for a $G$-homogeneous polynomial $P$, we denote by $\hdeg P$ its homogeneity degree with respect to the dilations \eqref{eq:aut_dil}.

\begin{proof}
Let $d \in \NN$ be such that $\omega^d$ is a polynomial.
By complex interpolation (i.e., Hadamard's three-line theorem) it is enough to consider the case where $R = dN$ for some $N \in \NN_0$. Let $K$ denote the convolution kernel of $\Psi(\opL)$. Now, by \eqref{eq:convl2es1},
\[\begin{split}
&|\langle \omega^{dN-a/2+i\theta} K_{\delta,j}^\gamma , \Psi(\opL)[(\Lambda_r f)^* * (\Lambda_r f) ] \rangle| \\
&= \left|\int_{\RR^n}\sum_{\alpha\in\NN_0^m} \partial_{\omega^{-a/2+i\theta}} \widehat{K_{\delta,j}^\gamma}(\mu,|\alpha|) \, \overline{[\omega^{dN} \Psi(\opL)[(\Lambda_r f)^* * (\Lambda_r f)]]\widehat{\phantom{;}}(\mu,\alpha,\alpha)} \,|\mu|^m \,d\mu\right| .
\end{split}\]
By Lemma \ref{lem:support_poly}\ref{en:support_poly_mu}, $[\omega^{dN} \Psi(\opL)[(\Lambda_r f)^* * (\Lambda_r f)]]\widehat{\phantom{;}}(\mu,\alpha,\alpha) \neq 0$ only if $|\mu|c(k) \in \supp\Psi$ for some $k \in \NN_0$ such that $|k-|\alpha|| \leq 2N$, which implies by Lemma \ref{lem:ckratio} that $|\mu| c(|\alpha|) \in I \defeq [(1+4N)^{-1} \min \supp \Psi, (1+4N) \max \supp\Psi]$. We can then apply \eqref{eq:ourassumption2} to deduce that
\begin{equation}\label{eq:w_est}
\begin{split}
&|e^{-\theta^2} \langle \omega^{dN-a/2+i\theta} K_{\delta,j}^\gamma , \Psi(\opL)[(\Lambda_r f)^* * (\Lambda_r f) ] \rangle| \\
&\lessapprox_N C(\delta,j) \int_{\RR^n} \sum_{\alpha\in\NN_0^m} |[\omega^{dN} \Psi(\opL)[(\Lambda_r f)^* * (\Lambda_r f)]] \widehat{\phantom{;}} (\mu,\alpha,\alpha)| \,|\mu|^m \,d\mu .
\end{split}
\end{equation}
Let $K_\Psi$ be the convolution kernel of $\Psi(\opL)$. Then, by iteratively applying the Leibniz rules from Section \ref{s:secleibniz},
\[\begin{split}
\omega^{dN} \Psi(\opL)[(\Lambda_r f)^* * (\Lambda_r f)] 
&= \omega^{dN} [(\Lambda_r f)^* * (\Lambda_r f) * K_\Psi] \\
&= \sum_l [(P_{l,1} \Lambda_r f)^* * (P_{l,2} \Lambda_r f) * (P_{l,3} K_\Psi)],
\end{split}\]
where $P_{l,1},P_{l,2},P_{l,3}$ are $G$-homogeneous polynomials on $G$ with $\sum_{s=1}^3 \hdeg P_{l,s} = dN$.
From \eqref{eq:w_est}, \eqref{eq:convl2es} and Young's convolution inequality we then deduce that
\[
\begin{split}
&|e^{-\theta^2} \langle \omega^{dN-a/2+i\theta} K_{\delta,j}^\gamma , \Psi(\opL)[(\Lambda_r f)^* * (\Lambda_r f) ] \rangle| \\
&\lessapprox_N C(\delta,j) \sum_l \| P_{l,1} \Lambda_r f\|_2 \| P_{l,2} \Lambda_r f\|_2 \| P_{l,3} K_\Psi \|_1 \\
&\lesssim_{\Psi,N} C(\delta,j) \sum_l 2^{r(\hdeg P_{l,1} + \hdeg P_{l,2})} \| f\|_2^2 \\
&\lesssim_{N} C(\delta,j) \|2^{2Nr} f\|_2^2,
\end{split}
\]
where we used that $K_\Psi \in \Sch(G)$ \cite{17},
that $|P_{l,s}| \lesssim |\cdot|^{\hdeg P_{l,s}} \lesssim 2^{r\hdeg P_{l,s}}$ on the support of $\Lambda_r f$, and that $\hdeg P_{l,1} + \hdeg P_{l,2} \leq dN$.

If $\omega$ is first-layer, then essentially the same proof works with $\Lambda_r$ replaced by $\tilde \Lambda_r$. In this case, the polynomials $P_{l,s}$ given by the Leibniz rules are first-layer as well, whence $|P_{l,s}| \lesssim \fstW^{\hdeg P_{l,s}} \lesssim 2^{r\hdeg P_{l,s}}$ on the support of $\tilde\Lambda_r f$.
\end{proof}

\begin{proof}[Proof of Proposition \ref{prp:new_nruest}]
Choose $\Psi\in C_c^\infty(\RR^+)$ such that $\Psi(x)=1$ for $x\in[\tfrac{1}{6},3]$.
From \eqref{eq:mgamma} and Lemma \ref{lem:ckratio} it is clear that $M_{\delta,j}^\gamma$ is an orthogonal projection and $\Psi(\opL) M_{\delta,j}^\gamma = M_{\delta,j}^\gamma$. 
Hence, by \eqref{eq:9.41},
\[\begin{split}
    \|M_{\delta,j}^\gamma f\|_2
		&\leq \sum_{r=0}^\infty \|M_{\delta,j}^\gamma \Lambda_r^2 f \|_2\\
		&= \sum_{r=0}^\infty \langle \Psi(\opL) M_{\delta,j}^\gamma \Lambda_r^2 f, \Lambda_r^2 f \rangle^{1/2} \\
		&= \sum_{r=0}^\infty \langle K_{\delta,j}^\gamma, \Psi(\opL) [(\Lambda_r^2 f)^* * \Lambda_r^2 f] \rangle^{1/2}.
\end{split}\]
We now apply \eqref{eq:radialtraceuest20} with $R = a/2$ and $\theta=0$ and the Cauchy--Schwarz inequality to obtain that
\[\begin{split}
    \|M_{\delta,j}^\gamma f\|_2
		&\lessapprox C(\delta,j)^{1/2} \sum_{r=0}^\infty2^{ra/2}\| \Lambda_r f\|_2\\
		&\leq C(\delta,j)^{1/2} \left(\sum_{r=0}^\infty 2^{(a+\epsilon)r/2}\| \Lambda_r f\|_2^2\right)^{1/2}\left(\sum_{r=0}^\infty 2^{-\epsilon r}\right)^{1/2}\\
		&\simeq_\epsilon C(\delta,j)^{1/2} \, \|f\|_{L^2((1+|\cdot|)^{a+\epsilon})}
\end{split}\]
for all $\epsilon > 0$. Since $C(\delta,j) \gtrsim \delta^\kappa$ for some $\kappa \geq 0$, interpolation with the trivial $L^2$-estimate for $M_{\delta,j}^\gamma$ completes the proof.

In the case $\omega$ is first-layer, a similar argument works with $\tilde\Lambda_r$ in place of $\Lambda_r$. In this case, one exploits the fact that $\sum_{r=0}^\infty 2^{(a+\epsilon)r/2}\| \tilde\Lambda_r f\|_2^2 \simeq \|f\|_{L^2((1+\fstW)^{a+\epsilon})}^2$.
\end{proof}

\subsection{The first-layer trace lemma}\label{s:trace3proof}

In this section we prove Theorem \ref{thm:trace5}, which also implies the case $j=J_\delta$ of Theorem \ref{thm:tracelemfinal}.

Recall that $K_{\delta,j}^\gamma$ denotes the convolution kernel of $M_{\delta,j}^\gamma$. From \eqref{eq:mgamma} it is clear that, if $j<J_\delta$, then
\begin{equation}\label{eq:kgamma}
\widehat{K_{\delta,j}^\gamma}(\mu,k) 
= \chr_{[1-\delta,1]}(c_\gamma(k)|\mu|) \, \chr_{[2^j,2^{j+1})}(c_\gamma(k)).
\end{equation}

First, the following estimate will be useful.

\begin{lem}\label{lem:nrstirlest}
Let $m\in\NN$ and let $a\in [1,2m]$. Then, for all $x\in\NN_0$,
\[
    \sum_{p=0}^x (1+x-p)^{a-2} (1+p)^{m-a/2-1}
		\lesssim_{m,a}
		\begin{cases}
		(1+x)^{m+a/2-2} &\text{if } a\neq 1,2m,\\
		(1+x)^{m+a/2-2}\log(2+x)  &\text{otherwise}.
		\end{cases}
\]
\end{lem}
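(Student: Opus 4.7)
The plan is to introduce $\alpha \defeq a-2$ and $\beta \defeq m-a/2-1$, so that the hypothesis $a \in [1,2m]$ translates into $\alpha, \beta \geq -1$, and the target exponent equals $\alpha + \beta + 1 = m + a/2 - 2$. The sum is then a standard discrete convolution of power weights, $\sum_{p=0}^{x}(1+x-p)^\alpha (1+p)^\beta$, whose expected asymptotic behaviour mimics that of the Beta integral $\int_0^x (x-t)^\alpha t^\beta\, dt \simeq x^{\alpha+\beta+1}$, with a logarithmic correction appearing precisely when one of the exponents saturates the integrability threshold $-1$.

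To carry this out, I would split the sum at $p = \lfloor x/2\rfloor$ into $S_1 + S_2$. On $S_1$ one has $(1+x-p) \simeq 1+x$ uniformly (indeed $(1+x-p) \in [(1+x)/2, 1+x]$, so $(1+x-p)^\alpha \simeq (1+x)^\alpha$ regardless of the sign of $\alpha$); pulling this factor out reduces $S_1$ to a constant multiple of $(1+x)^\alpha \sum_{p=0}^{\lfloor x/2\rfloor}(1+p)^\beta$, which by comparison with an integral is $\lesssim (1+x)^\alpha (1+x)^{\beta+1}$ when $\beta > -1$ and $\lesssim (1+x)^\alpha \log(2+x)$ when $\beta = -1$. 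The estimate for $S_2$ is perfectly symmetric: on the range $p > x/2$ one has $(1+p) \simeq 1+x$, so $(1+p)^\beta$ pulls out, and after the substitution $q = x-p$ the remaining sum $\sum_{q=0}^{\lfloor x/2 \rfloor}(1+q)^\alpha$ is $\lesssim (1+x)^{\alpha+1}$ when $\alpha > -1$ and $\lesssim \log(2+x)$ when $\alpha = -1$.

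Combining the two bounds and using the identity $\alpha + \beta + 1 = m + a/2 - 2$, one obtains $S_1 + S_2 \lesssim (1+x)^{m+a/2-2}$ whenever both exponents are strictly greater than $-1$, i.e., when $a \notin \{1, 2m\}$, and an extra factor of $\log(2+x)$ in the two boundary cases $a = 1$ (which forces $\alpha = -1$) and $a = 2m$ (which forces $\beta = -1$). Note that the simultaneous occurrence $\alpha = \beta = -1$ would force $m = 1/2$, so the double-log scenario does not arise and at most one $\log(2+x)$ ever appears. There is no serious obstacle in the argument; the only point requiring a small amount of bookkeeping is to verify that the logarithmic factor emerges from the correct half of the split (the one paired with the exponent equal to $-1$), so that it surfaces in exactly the two endpoint cases distinguished in the statement.
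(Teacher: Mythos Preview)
Your argument is correct. Both proofs reduce to the same dichotomy (exponent $>-1$ versus $=-1$), but the paper takes a slightly different route: it first observes that the logarithmic derivative $f'(p)/f(p)$ of the summand is uniformly bounded on $[0,x]$, invokes a sum-to-integral comparison lemma (\cite[Lemma 4.1]{51}) to pass from the discrete sum to $\int_0^x (1+x-p)^{a-2}(1+p)^{m-a/2-1}\,dp$, and then rescales and splits the integral at the midpoint before reading off the cases. Your approach dispenses with the auxiliary lemma entirely by splitting the sum itself at $\lfloor x/2\rfloor$ and comparing each half directly to a one-variable power sum; this is more elementary and self-contained, at the cost of a small amount of extra bookkeeping to check that the equivalence $(1+x-p)^\alpha \simeq (1+x)^\alpha$ holds uniformly regardless of the sign of $\alpha$ (which you correctly address). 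Either method works, and yours has the minor advantage of not importing an external lemma.
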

\begin{proof}
The case of $x=0$ is trivial, so in what follows we assume that $x>0$, and consequently $x+1\simeq x$.

Set $f(p) = (1+x-p)^{a-2} (1+p)^{m-a/2-1}$. Then
\[
f'(p)/f(p) = (2-a) (1+x-p)^{-1} + (m-a/2-1) (1+p)^{-1},
\]
whence
\[
|f'(p)/f(p)| \lesssim_{a,m} 1
\]
for $p \in [0,x]$, uniformly in $x$. Hence, by \cite[Lemma 4.1]{51},
\[
\begin{split}
&\sum_{p=0}^x (1+x-p)^{a-2} (1+p)^{m-a/2-1}\\
&\lesssim_{a,m} \int_0^x (1+x-p)^{a-2} (1+p)^{m-a/2-1} \,dp \\
&\lesssim_{a,m} x^{m+a/2-2} \left[ \int_0^{1/2} (1/x+s)^{a-2} \,ds + \int_0^{1/2} (1/x+s)^{m-a/2-1} \,ds \right] \\
&\leq x^{m+a/2-2} \left[ \int_{1/x}^{3/2} s^{a-2} \,ds + \int_{1/x}^{3/2} s^{m-a/2-1} \,ds \right],
\end{split}
\]
since $0 < 1/x \leq 1$. In the case $a \in (1,2m)$, both $a-2>-1$ and $m-a/2-1>-1$, so both integrals in the last line are bounded uniformly in $x$, and we are done. In the case $a=1$ or $a=2m$,
one of the two exponents is equal to $-1$, so the corresponding integral is bounded by a multiple of $1 + \log(x)$, and again we are done.
\end{proof}

As before, let $K_{\delta,j}^\gamma$ denote the convolution kernel of $M_{\delta,j}^\gamma$, given by \eqref{eq:kgamma}. We now establish the estimate \eqref{eq:ourassumption2} in the case $\omega = \fstW$.

\begin{lem}\label{nonradialtracez1}
Let $I\subseteq \RR^+$ be compact.
Let $a\in\CC$ with $1 < \Re(a) < 2m$. Then, for all $\delta \in \Dyad$ and $1 \leq j \leq J_\delta$,
\[
    \sup_{\substack{k,\mu\\c(k)|\mu|\in I}} |e^{a^2} \partial_{\fstW^{-a}} \widehat{K_{\delta,j}^\gamma}(\mu,k)| 
		\lesssim_{I,\Re(a)} 
 2^{-j} .
\]
The estimate
also holds for $\Re(a)=1$ and $j<J_\delta$ if we replace $\lesssim$ with $\lessapprox$.
\end{lem}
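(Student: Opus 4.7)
The plan is to combine the explicit Schwartz kernel formula from Lemma~\ref{lem:radialkernelz}, the classical evaluation of Laguerre integrals, and the Jacobi polynomial asymptotics of Section~\ref{s:estforjacobipoly}. Taking $\omega=\fstW^{-a}$, so $\omega_0(t,u)=t^{-a/2}$, the change of variables $u=\pi|\mu|t$ in the formula of Lemma~\ref{lem:radialkernelz} yields
\[
\partial_{\fstW^{-a}}\widehat{K_{\delta,j}^\gamma}(\mu,k)=\frac{C(m)\,\pi^{a/2-m}\,|\mu|^{a/2}}{\binom{k+m-1}{k}}\sum_{l\geq 0}\widehat{K_{\delta,j}^\gamma}(\mu,l)\,\mathcal{I}_{k,l}(a),
\]
where $\mathcal{I}_{k,l}(a)\defeq\int_0^\infty u^{m-a/2-1}e^{-u}L_k^{m-1}(u)L_l^{m-1}(u)\,du$ is absolutely convergent thanks to $\Re(a)<2m$. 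The aim is to produce the uniform pointwise estimate
\[
|e^{a^2}\mathcal{I}_{k,l}(a)|\lesssim_{\Re(a)}(1+|k-l|)^{\Re(a)-2}\bigl(1+\min(k,l)\bigr)^{m-\Re(a)/2-1},
\]
whose exponent structure is exactly that of the sum estimated in Lemma~\ref{lem:nrstirlest}, and then to sum in $l$.

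To prove this pointwise bound I would expand one Laguerre polynomial as a finite power series and integrate termwise against the other, using the classical identity $\int_0^\infty e^{-u}u^{s-1}L_n^{m-1}(u)\,du=\Gamma(s)(m-s)_n/n!$ (valid for $\Re(s)>0$). This produces the single finite sum
\[
\mathcal{I}_{k,l}(a)=\frac{\Gamma(m-a/2)}{l!}\sum_{p=0}^{k}\frac{(-1)^p}{p!}\binom{k+m-1}{k-p}(m-a/2)_p\,(a/2-p)_l,
\]
which, after rearrangement, is a terminating ${}_3F_2$ series and is closely related to a Jacobi polynomial at a fixed argument. Stirling's asymptotics give $|\Gamma(x+iy)|\lesssim_x(1+|y|)^{x-1/2}e^{-\pi|y|/2}$, and a corresponding exponential lower bound on $|\Gamma|$ yields growth at most $e^{\pi|\Im(a)|/2}$ for the reciprocal Gamma factors appearing in the Pochhammer symbols; the prefactor $e^{a^2}$ in the statement is exactly what absorbs this growth and makes the bound uniform in $\Im(a)$. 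The Jacobi polynomial and hypergeometric estimates of Section~\ref{s:estforjacobipoly} then deliver the factorised dependence on $|k-l|$ and $\min(k,l)$.

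With the pointwise estimate in hand, the lemma follows by substitution and summation. The conditions $c(k)|\mu|\in I$, $c_\gamma(l)|\mu|\in[1-\delta,1]$ and $c_\gamma(k)\in[2^j,2^{j+1})$ (or $[2^{J_\delta},\infty)$ when $j=J_\delta$), combined with Lemma~\ref{lem:ckratio}, force $|\mu|\simeq 2^{-j}$, $k\simeq l\simeq 2^j$, and restrict $l$ to a discrete interval of length $\simeq 1+\delta\cdot 2^j$. The prefactor satisfies $||\mu|^{a/2}|/\binom{k+m-1}{k}\simeq 2^{-j(\Re(a)/2+m-1)}$. Applying Lemma~\ref{lem:nrstirlest} with $x\simeq 2^j$, together with the elementary geometric sum of $(1+|k-l|)^{\Re(a)-2}$ over the discrete interval of length $\simeq\delta\cdot 2^j$, contributes a combined factor of order $2^{j(m+\Re(a)/2-2)}\cdot(\delta\cdot 2^j)^{\Re(a)-1}$ when $\Re(a)>1$, with a logarithmic correction at $\Re(a)=1$. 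Multiplying everything gives $2^{-j}\cdot\delta^{\Re(a)-1}\lesssim 2^{-j}$, since $\delta\leq 1$ and $\Re(a)\geq 1$; at the endpoint $\Re(a)=1$ the logarithmic loss is exactly what forces the weaker $\lessapprox$ and the restriction to $j<J_\delta$, where $\delta\cdot 2^j$ is not already $\simeq 1$.

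The principal obstacle is the analytic step of deriving the pointwise estimate on $\mathcal{I}_{k,l}(a)$ with the factorised dependence on $|k-l|$ and $\min(k,l)$, uniformly in $\Im(a)$. This requires careful bookkeeping of the terminating ${}_3F_2$ (or Jacobi polynomial) expression above, extracting the cancellations in the alternating $p$-sum responsible for the $(1+|k-l|)^{\Re(a)-2}$ decay, and controlling the Gamma-function quotients via the asymptotics collected in Section~\ref{s:estforjacobipoly}; once this is done, the passage to the full lemma is essentially mechanical.
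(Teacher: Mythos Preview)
Your proposal has a genuine gap at the central pointwise estimate. The bound
\[
|e^{a^2}\mathcal{I}_{k,l}(a)|\lesssim_{\Re(a)}(1+|k-l|)^{\Re(a)-2}\bigl(1+\min(k,l)\bigr)^{m-\Re(a)/2-1}
\]
is false for $\Re(a)>1$. Take $k=l$: then the claimed bound is $(1+k)^{m-\Re(a)/2-1}$, but the actual size is $(1+k)^{m+\Re(a)/2-2}$. For instance with $m=1$, $\Re(a)=3/2$, standard Laguerre asymptotics give $\mathcal{I}_{k,k}(3/2)\simeq (1+k)^{-1/4}$, not $(1+k)^{-3/4}$. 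The discrepancy is a factor $(1+k)^{\Re(a)-1}$, which is exactly what your final summation step then spuriously recovers as $(\delta 2^j)^{\Re(a)-1}$; so the argument only appears to close because of two compensating errors.

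The route through Jacobi polynomials is also not available here. The identity of Lemma~\ref{lem:laguerreintegrallem}\ref{en:laguerreintegrallem2} expresses $\int_0^\infty L_l^\gamma(at)L_k^\gamma(bt)e^{-ct}t^{\gamma}\,dt$ as a Jacobi polynomial only when the power of $t$ matches the Laguerre type $\gamma=m-1$; your integral has $t^{m-1-a/2}$, so that formula does not apply, and nothing in Section~\ref{s:estforjacobipoly} produces the factorised $|k-l|$ dependence you assert. What does apply is Lemma~\ref{lem:laguerreintegrallem}\ref{en:laguerreintegrallem1}, which writes $\mathcal{I}_{k,l}(a)$ as $(\Gamma(a/2))^{-2}$ times a finite $p$-sum of Gamma ratios; the factor $e^{a^2}$ kills the growth of $(\Gamma(a/2))^{-2}$ in $\Im(a)$, and then termwise Stirling bounds reduce the sum to precisely the shape of Lemma~\ref{lem:nrstirlest}, applied to the $p$-sum (not to the $l$-sum). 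No cancellation in the alternating series and no ${}_3F_2$ asymptotics are needed.

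A secondary issue: for $j=J_\delta$ the constraint is $c_\gamma(l)\geq 2^{J_\delta}$ only, so $|\mu|$ need not be $\simeq 2^{-j}$. The correct counting is that the number of admissible $l$ is $\lesssim 1+\delta/|\mu|\lesssim 2^{-j}/|\mu|$; combined with the prefactor $|\mu|^{\Re(a)/2+m-1}$ and the $p$-sum bound $\simeq |\mu|^{-(m+\Re(a)/2-2)}$, the $|\mu|$-dependence cancels and one obtains $2^{-j}$ directly, with the logarithmic loss from Lemma~\ref{lem:nrstirlest} at $\Re(a)=1$ absorbed (for $j<J_\delta$) into the $\lessapprox$.
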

\begin{proof}
From Lemmas \ref{lem:radialkernelz}
and \ref{lem:laguerreintegrallem}\ref{en:laguerreintegrallem1} we easily deduce that
\begin{multline}
K_{\fstW^{-a}}(\nu,l;\mu,k) 
= \frac{C_{a,m}}{(\Gamma(a/2))^2} \frac{\delta(\nu-\mu) \, |\nu|^{a/2-m}}{\binom{k+m-1}{k}\binom{l+m-1}{l}} \\
\times \sum_{p=0}^{\min\{k,l\}} \frac{\Gamma(a/2+k-p)}{(k-p)!} \frac{\Gamma(a/2+l-p)}{(l-p)!} \frac{\Gamma(p+m-a/2)}{p!},
\end{multline}
where $C_{a,m} = \frac{\pi^{a/2}}{(m-1)!}$.

Note that $|C_{a,m}| \lesssim_{\Re(a)} 1$. Moreover, by \cite[eq.\ 5.11.9]{27},
\begin{equation}\label{gam1}
    \left|\frac{e^{a^2}}{(\Gamma(a/2))^2}\right| \lesssim_{\Re(a)} e^{-(\Im(a))^2} e^{\pi \Im(a)} \lesssim 1.
\end{equation}
In particular, in view of \eqref{eq:integralkernel} and \eqref{eq:kgamma}, 
\[
|e^{a^2} \partial_{\fstW^{-a}} \widehat{K_{\delta,j}^\gamma}(\mu,k)| \lesssim_{\Re(a)}
A^{a,\gamma}_{\delta,j}(\mu,k),
\]
where, for $j<J_\delta$,
\begin{multline}\label{nonradialest1.5}
A^{a,\gamma}_{\delta,j}(\mu,k) \defeq 
\Biggl| \frac{|\mu|^{a/2}}{\binom{k+m-1}{k}}
\sum_{\substack{l \in \NN_0 \\ c_\gamma(l)\in [2^j,2^{j+1})}}  \chr_{[1-\delta,1]}(c_\gamma(l)|\mu|)  \\
\times \sum_{p=0}^{\min\{k,l\}} \frac{\Gamma(a/2+k-p)}{(k-p)!} \frac{\Gamma(a/2+l-p)}{(l-p)!} \frac{\Gamma(p+m-a/2)}{p!} \Biggr| ,
\end{multline}
while, if $j=J_\delta$, then the sum in $l$ is to be restricted to $c_\gamma(l) \in [2^{J_\delta},\infty)$ instead. In any case, the condition $c_\gamma(l) \geq 2^j$ is required.

Note that the conditions $c(k)|\mu|\in I$, $c_\gamma(l) |\mu| \in [1-\delta,1]$ and $c_\gamma(l) \geq 2^j$ imply that
\[
1+k \simeq_I |\mu|^{-1} \simeq 1+l \gtrsim 2^j.
\]
Hence, if $l,l' \in \NN_0$ satisfy $c_\gamma(l)|\mu|, c_\gamma(l') |\mu| \in [1-\delta,1]$,
then
\[
4\pi |l - l'| = |c_\gamma(l)-c_\gamma(l')| \leq \delta/|\mu|;
\]
in other words, for every fixed $\mu$, the number of the $l \in \NN_0$ satisfying $c_\gamma(l) |\mu| \in [1-\delta,1]$ and $c_\gamma(l) \geq 2^j$ is $\lesssim_I 1+ \delta/|\mu| \lesssim 2^{-j}/|\mu|$ (here we use that $\delta,|\mu| \lesssim 2^{-j}$).
In addition,
\[
\left|\frac{|\mu|^{a/2}}{\binom{k+m-1}{k}}\right|
\simeq |\mu|^{\Re(a/2)}(1+k)^{1-m}
\simeq_{I,\Re(a)} |\mu|^{\Re(a/2)+m-1}.
\]
Furthermore,
by \cite[eqs.\ 5.6.6 and 5.11.12]{27}, for all $h,p \in \NN_0$ with $p \leq h$,
\begin{align*}
    \left|\frac{\Gamma(a/2+h-p)}{(h-p)!}\right| 
		&\lesssim_{\Re(a)} (1+h-p)^{\Re(a/2)-1},\\
    \left|\frac{\Gamma(p+m-a/2)}{p!}\right| 
		&\lesssim_{\Re(a)} (1+p)^{m-\Re(a/2)-1}.
\end{align*}
Hence
\begin{equation*}
\begin{split}
&\sup_{\substack{k \\ c(k)|\mu|\in I}} A^{a,\gamma}_{\delta,j}(\mu,k) \\
&\lesssim_{I,\Re(a)} 2^{-j}\, |\mu|^{\Re(a/2)+m-2} 
\sup_{\substack{k,l \in \NN_0 \\ 1+k\simeq_I |\mu|^{-1} \simeq 1+l}} 
\sum_{p=0}^{\min\{k,l\}}
(1+p)^{m-\Re(a/2)-1}  \\
&\qquad\qquad\times (1+k-p)^{\Re(a/2)-1} (1+l-p)^{\Re(a/2)-1} \\
&\lesssim_{I,\Re(a)} 2^{-j} \, |\mu|^{\Re(a/2)+m-2} \sup_{\substack{h \in \NN_0 \\ 1+h\simeq_I |\mu|^{-1} }} \sum_{p=0}^{h} (1+h-p)^{\Re(a)-2} (1+p)^{m-\Re(a/2)-1},
\end{split}
\end{equation*}
since $\max\{l,k\} \simeq_I \min\{l,k\}$, and the desired estimate follows from Lemma \ref{lem:nrstirlest} (in the case $\Re(a)=1$ and $j<J_\delta$ one also uses that $|\mu| \simeq 2^{-j}$, which follows from the conditions $c_\gamma(l)|\mu| \in [1-\delta,1]$ and $c_\gamma(l) \in [2^j,2^{j+1})$ in \eqref{nonradialest1.5}).
\end{proof}

\begin{proof}[Proof of Theorem \ref{thm:trace5}]
By Lemma \ref{nonradialtracez1},
the assumptions of Proposition \ref{prp:new_nruest} are satisfied with $\omega=\fstW$, $C(\delta,j) = 2^{-j}$ (note that $2^{j} \lesssim \delta^{-1}$) and $a = 1+\epsilon$ for any $\epsilon>0$, so we get the estimate
\[
\|M^\gamma_{\delta,j} f\|^2_2 \lesssim_\epsilon 2^{-j} \|f\|^2_{L^2((1+\fstW)^{1+\epsilon})},
\]
and interpolation with the trivial $L^2$ bound for $M^\gamma_{\delta,j}$ gives the result.
\end{proof}

\subsection{The second-layer trace lemma}\label{s:trace1proof}

In this section we complete the proof of Theorem \ref{thm:tracelemfinal}, by treating the missing case $j<J_\delta$.

As already mentioned, our proof will be based on establishing the estimate \eqref{eq:ourassumption2} in the case where $\omega = \sndW$.
We first obtain a preliminary estimate, which should be compared with those obtained in the proof of \cite[Lemma 3]{CRV}.

\begin{lem}\label{lem:fsest}
Define, for all $\beta \in \RR$ and $s \in \RR^+$,
\begin{equation}
F_\beta(s) \defeq \int_{S^{n-1}}\frac{1}{|(s,0,\ldots,0)-\sigma|^{n-\beta}} \,d\sigma,
\end{equation}
where integration is with respect to the surface measure on $S^{n-1}$.
Then
\begin{equation}\label{eq:Fbetas_estimate}
F_\beta(s)\simeq\begin{cases}
    (1+s)^{1-n}|1-s|^{\beta-1} &\text{for } \beta<1,\\
    (1+(n-1)\log_+\frac{1}{|1-s|})(1+s)^{1-n} &\text{for } \beta=1,\\
    (1+s)^{\beta-n} &\text{for } \beta>1.
\end{cases}
\end{equation}
\end{lem}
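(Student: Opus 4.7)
The strategy is to reduce the spherical integral to a one-dimensional integral and then estimate it by splitting the domain of integration according to the location of $s$.

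First I would introduce spherical coordinates on $S^{n-1}$, writing $\sigma = (u, \sqrt{1-u^2}\,\omega)$ with $u \in [-1,1]$ and $\omega \in S^{n-2}$. Since $|(s,0,\ldots,0)-\sigma|^2 = s^2 - 2su + 1$ depends only on $u$, integrating out $\omega$ yields
\begin{equation*}
F_\beta(s) = |S^{n-2}| \int_{-1}^1 (s^2 - 2su + 1)^{-(n-\beta)/2} (1-u^2)^{(n-3)/2} \, du
\end{equation*}
for $n \geq 2$; the case $n=1$ reduces to $F_\beta(s) = |s-1|^{\beta-1}+|s+1|^{\beta-1}$, which is directly seen to satisfy \eqref{eq:Fbetas_estimate}. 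The substitution $t = 1-u$ then transforms this into
\begin{equation*}
F_\beta(s) \simeq \int_0^2 \bigl((1-s)^2 + 2st\bigr)^{-(n-\beta)/2} t^{(n-3)/2} (2-t)^{(n-3)/2} \, dt,
\end{equation*}
which localises the singular behaviour near $t=0$, i.e.\ at the point of $S^{n-1}$ nearest to $(s,0,\ldots,0)$.

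Next I would separate the regime $s \notin [1/2,2]$ from $s \in [1/2,2]$. In the first regime, the factor $(1-s)^2 + 2st$ is comparable to $(1+s)^2$ uniformly in $t \in [0,2]$, so the $t$-integral reduces to a finite beta integral and gives $F_\beta(s) \simeq (1+s)^{\beta-n}$; since in this regime $|1-s| \simeq 1+s$ and $\log_+ \tfrac{1}{|1-s|}$ is bounded, this matches the right-hand side of \eqref{eq:Fbetas_estimate} in all three cases of $\beta$.

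The bulk of the work is the range $s \in [1/2,2]$. Setting $\epsilon = |1-s| \in [0,1]$ and splitting the $t$-integral at $t = \epsilon^2$, on $[0,\epsilon^2]$ the factor $(1-s)^2 + 2st$ is comparable to $\epsilon^2$, producing a contribution $\simeq \epsilon^{-(n-\beta)} \int_0^{\epsilon^2} t^{(n-3)/2}\,dt \simeq \epsilon^{\beta-1}$; on $[\epsilon^2,1]$ the same factor is comparable to $t$, yielding $\int_{\epsilon^2}^1 t^{(\beta-3)/2}\,dt$, which evaluates to a constant multiple of $\epsilon^{\beta-1}$ when $\beta<1$, to $\log(1/\epsilon)$ when $\beta=1$, and to a bounded quantity when $\beta>1$; finally the piece over $t \in [1,2]$ is uniformly bounded. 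Summing these contributions gives the three cases of \eqref{eq:Fbetas_estimate}. There is no particularly hard step; the main care is in bookkeeping the dominant term in each regime of $\beta$ and verifying that the comparability constants are independent of $s$ within each range.
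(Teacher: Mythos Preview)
Your proposal is correct and follows essentially the same route as the paper's proof: reduce to a one-dimensional integral via spherical coordinates, treat $s$ away from $1$ trivially, and for $s\simeq 1$ split the integral at the scale of the singularity. The only cosmetic difference is the choice of variable: the paper works with the polar angle $\theta$ (so the denominator becomes $\bigl||1-s|+\min\{1,s\}\theta\bigr|^{n-\beta}$ and the split is at $\theta=|1-s|$), whereas you use $t=1-u=1-\cos\theta\simeq\theta^2/2$, which is why your splitting point is $t=|1-s|^2$ rather than $|1-s|$.
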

\begin{proof}
If $n=1$, then $F_\beta(s) = |1+s|^{\beta-1} + |1-s|^{\beta-1}$ and the estimate is clear.

Assume now that $n \geq 2$.
By using polar coordinates, it is easily seen that
\[
F_\beta(s)
\simeq \int_0^\pi \frac{\sin^{n-2}\theta}{\big||1-s|+\min\{1,s\}\theta\big|^{n-\beta}} \,d\theta
\simeq \int_0^1 \frac{t^{n-2}}{\big||1-s|+\min\{1,s\}t\big|^{n-\beta}} \,dt.
\]

First, suppose 
$s \in \RR^+ \setminus (\tfrac{1}{2},\tfrac{3}{2})$.
In this case $|1-s| \gtrsim 1 \gtrsim \min\{1,s\}t$,
so
\begin{equation}\label{eq:betterc1}
F_\beta(s) \simeq |1-s|^{\beta-n} \int_0^1 t^{n-2} \,dt \simeq |1-s|^{\beta-n}.
\end{equation}
Since $|1-s| \simeq 1+s$, this proves \eqref{eq:Fbetas_estimate} in this case.

Now, we assume $\tfrac{1}{2}<s<\tfrac{3}{2}$. In this case, $|1-s|\leq\tfrac{1}{2}$ and $\min\{1,s\}\simeq 1$, whence
\begin{equation}\label{eq:betterc2}
\begin{split}
F_\beta(s)
&\simeq \int_0^1 \frac{t^{n-2}}{\big||1-s|+t\big|^{n-\beta}} \,dt\\
&\simeq |1-s|^{\beta-n} \int_0^{|1-s|} t^{n-2} \,dt + \int_{|1-s|}^{1} t^{\beta-2} \,dt\\
&\simeq 
\begin{cases}
    |1-s|^{\beta-1} & \text{for } \beta<1,\\
    \log\tfrac{1}{|1-s|} & \text{for } \beta=1,\\
    1 & \text{for } \beta>1.
\end{cases}
\end{split}
\end{equation}
Since $1+s \simeq 1$, this again matches \eqref{eq:Fbetas_estimate}.
\end{proof}

In the next result, we assume that $m>1$, 
 due to a technical constraint on one of the estimates for Jacobi polynomials we will use (Corollary \ref{cor:jacobirefinedest3}).
However, if $m=1$, then $G$ is isomorphic to the first Heisenberg group $H_1$, so 
this case is effectively already covered by \cite{GM}.

\begin{lem}\label{lem:nonradialtraceu1}
Assume that $m>1$.
Let $I \subseteq (0,\infty)$ be compact.
For all $a\in\CC$ with $\Re(a)\in (0,\min\{2,n\}) \setminus \{\tfrac{2}{3}\}$, all $\delta \in \Dyad$ and all $j<J_\delta$,
\begin{equation}\label{eq:nrtu1aeq}
  \sup_{\substack{\mu,k\\c(k)|\mu|\in I}} |\partial_{\sndW^{-a/2}} \widehat{K_{\delta,j}^\gamma}(\mu,k)|
		\lesssim_{I,\Re(a)} 
		\begin{cases}
		({2^{-j}\delta})^{\Re(a)/2} &\text{if } j\leq \frac{3J_\delta(2-\Re(a))}{4}, \\
		(2^{-j})^{\Re(a)/2-2/3}\delta &\text{if } j\geq \frac{3J_\delta(2-\Re(a))}{4}.
		\end{cases}
\end{equation}
The estimate
\eqref{eq:nrtu1aeq} 
also holds for $\Re(a)=\tfrac{2}{3}$ if we replace $\lesssim$ with $\lessapprox$.
\end{lem}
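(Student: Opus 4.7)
The plan is to exploit the explicit formula for the dual fractional integration Schwartz kernel supplied by Lemma \ref{lem:heatkerneluformulae}, applied with $w(u)=|u|^{-a/2}$. Since the Euclidean Fourier transform of $|u|^{-a/2}$ on $\RR^n$ equals, up to a Gamma-function ratio that remains uniformly bounded in the indicated range of $a$ (by an estimate analogous to \eqref{gam1}), a multiple of $|\xi|^{a/2-n}$, the kernel $K_{\sndW^{-a/2}}(\nu,l;\mu,k)$ factorises into the product of $|\nu-\mu|^{a/2-n}$, a polynomial prefactor in $(|\nu|+|\mu|)$, and a weighted Jacobi term of the form $\bigl(\tfrac{|\nu|-|\mu|}{|\nu|+|\mu|}\bigr)^{|k-l|}P_{\min(k,l)}^{(|k-l|,m-1)}\!\bigl(1-2\bigl(\tfrac{|\mu|-|\nu|}{|\mu|+|\nu|}\bigr)^{\!2}\bigr)$. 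Combined with the indicator structure of $\widehat{K_{\delta,j}^\gamma}$ from \eqref{eq:kgamma}, this expresses $\partial_{\sndW^{-a/2}}\widehat{K_{\delta,j}^\gamma}(\mu,k)$ as a finite sum over $l$ with $c_\gamma(l)\in[2^j,2^{j+1})$ of integrals over $\nu$ with $c_\gamma(l)|\nu|\in[1-\delta,1]$.

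Writing $\nu=r\sigma$ with $r=|\nu|$ and $\sigma\in S^{n-1}$, I would perform the angular integration of $|\nu-\mu|^{a/2-n}$ via Lemma \ref{lem:fsest} with $\beta=\Re(a)/2$; for $\Re(a)\in(0,2)$ this yields the factor $F_{a/2}(|\mu|/r)\simeq(1+|\mu|/r)^{1-n}|1-|\mu|/r|^{\Re(a)/2-1}$, concentrated near $r=|\mu|$. The remaining radial integration runs over a shell of thickness $\simeq 2^{-j}\delta$ centred at $r\simeq 2^{-j}$, and the sum over $l$ runs over an interval of length $\simeq 2^j$. The heart of the argument is then the uniform control of the weighted Jacobi factor: writing $\cos\theta=1-2\bigl(\tfrac{|\mu|-r}{|\mu|+r}\bigr)^{\!2}$, so that $\sin(\theta/2)=\bigl||\mu|-r\bigr|/(|\mu|+r)$, the argument of $P_{\min(k,l)}^{(|k-l|,m-1)}$ is close to $1$ exactly when $r$ is close to $|\mu|$, and I would invoke the refined Jacobi polynomial estimates from Section \ref{s:estforjacobipoly}, notably Corollary \ref{cor:jacobirefinedest3} (which is the reason for the hypothesis $m>1$).

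The key point is to balance two competing bounds: a Bernstein-type estimate near the endpoint $x=1$ of the form $|P_l^{(|k-l|,m-1)}(\cos\theta)|\lesssim l^{-1/2}\theta^{-(|k-l|+1/2)}$, which combines with the prefactor $(\sin(\theta/2))^{|k-l|}$ to a quantity of order $l^{-1/2}\theta^{-1/2}$ independent of $|k-l|$; and the sharper Airy-region refinement of Krasikov and Kerov--Koev--Tyler, which improves this to $l^{-1/3}$ in an $l^{-2/3}$-sized neighbourhood of $x=1$. The main obstacle will be to optimise these Jacobi bounds against the radial singularity $||\mu|-r|^{\Re(a)/2-1}$ coming from $F_{a/2}$, while also accounting for values of $|\mu|$ far from $2^{-j}$ (where the prefactor $(\sin(\theta/2))^{|k-l|}$ provides additional gain). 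The dichotomy in the statement at the threshold $j=\tfrac{3J_\delta(2-\Re(a))}{4}$ reflects precisely the switch between these two regimes: the Bernstein bound suffices for small $j$ and gives the exponent $\Re(a)/2$, while the Airy refinement is needed for large $j$ and gives $\Re(a)/2-2/3$. The exclusion of the endpoint $\Re(a)=2/3$ matches the logarithmic loss that appears in the transition between the two estimates.
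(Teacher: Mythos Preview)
Your outline is essentially correct and follows the paper's approach: reduce to the kernel formula of Lemma~\ref{lem:heatkerneluformulae}, integrate the angular factor via Lemma~\ref{lem:fsest}, and then control the radial integral and the sum over $l$ by Jacobi polynomial bounds from Section~\ref{s:estforjacobipoly}.

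Two points deserve sharpening. First, the case analysis in the paper is organised not by the size of $j$ but by the size of $|k-l|$ relative to $\min\{k,l\}$: the diagonal term $l=k$, the moderate regime $1\le|k-l|\lesssim\min\{k,l\}$, and the extreme regime $|k-l|\gg\min\{k,l\}$. Corollary~\ref{cor:jacobirefinedest3} (and the hypothesis $m>1$) is invoked only in the extreme regime, where the Jacobi argument sits near $x=-1$; the moderate regime is handled instead by Theorem~\ref{thm:jacobirefinedest}, with a further three-way split according to whether $\bigl|\tfrac{r-s}{r+s}\bigr|$ lies well above, well below, or near $\bigl|\tfrac{k-l}{k+l+m}\bigr|$ (the transition point $x_{tr}$). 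Your ``Bernstein'' and ``Airy'' heuristics correspond respectively to the first and third of these subcases of Theorem~\ref{thm:jacobirefinedest}, not to separate lemmas.

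Second, the dichotomy at $j=\tfrac{3J_\delta(2-\Re(a))}{4}$ is not a switch between estimates used in different $j$-ranges: both the $(2^{-j}\delta)^{\Re(a)/2}$ and the $(2^{-j})^{\Re(a)/2-2/3}\delta$ contributions are present for every $j$, and the threshold merely records which one dominates. The second term, and the logarithmic loss at $\Re(a)=\tfrac23$, arise from summing the Airy-region contribution $|k-l|^{\Re(a)/2-4/3}$ over $|k-l|\lesssim 2^j$ (together with the extreme-regime term $2^{-j(\Re(a)/2+1/3)}\delta$), while the first term comes from the diagonal and the exponentially decaying piece of Theorem~\ref{thm:jacobirefinedest}.
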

\begin{proof}
For $\omega(|z|,u)=w(u)=|u|^{-a/2}$, where $0<\Re(a)<n$, recall that $\widehat{w}(\mu) = C_{n,a} |\mu|^{a/2-n}$, where
\[
C_{n,a} = \pi^{(a-n)/2} \Gamma(n/2-a/4)/\Gamma(a/4)
\]
\cite[\S V.1, Lemma 2]{9}. Hence, by \eqref{eq:heatkernelu1},
\begin{multline}\label{eq:uheatker}
|K_{\sndW^{-a/2}}(\nu,l;\mu,k)| \\
= |C_{n,a}| \frac{|\nu-\mu|^{\Re(a)/2-n}}{\binom{\min\{k,l\}+m-1}{m-1}(|\nu|+|\mu|)^m}
\left|\tfrac{|\mu|-|\nu|}{|\mu|+|\nu|}\right|^{|k-l|}
\left| P_{\min\{k,l\}}^{(|k-l|,m-1)}\left(1-2\left(\tfrac{|\mu|-|\nu|}{|\mu|+|\nu|}\right)^2\right) \right|,
\end{multline}
where
\[
|C_{n,a}| = \pi^{(\Re(a)-n)/2} |\Gamma(a/4-n/2)/\Gamma(a/4)| \lesssim_{\Re(a)} 1
\]
\cite[eq.\ (5.11.12)]{27}.
Thus, in view of \eqref{eq:kgamma}, we are required to estimate
\[\begin{split}
\mathscr{I} \defeq &\sup_{\substack{\mu,k\\c(k)|\mu|\in I}}
\int_{\RR^n} \sum_{l\in\NN_0} \frac{|\nu-\mu|^{\Re(a)/2-n}}{\binom{\min\{k,l\}+m-1}{\min\{k,l\}}(|\nu|+|\mu|)^m}
\left|\tfrac{|\mu|-|\nu|}{|\mu|+|\nu|}\right|^{|k-l|} \\
&\times \left| P_{\min\{k,l\}}^{(|k-l|,m-1)}\left(1-2\left(\tfrac{|\mu|-|\nu|}{|\mu|+|\nu|}\right)^2\right) \right| \\ 
&\times \chr_{[1-\delta,1]}(c_\gamma(l)|\nu|) \, 	\chr_{[2^j,2^{j+1})}(c_\gamma(l))\binom{l+m-1}{l} \,|\nu|^m \,d\nu.
\end{split}\]
Since the above quantity only depends on $a$ through its real part, in what follows we may assume that $a$ is real, i.e., $a = \Re(a)$.

By changing to spherical coordinates (letting $\nu=r\varrho$ and $\mu=s\sigma$ for $r,s \in (0,\infty)$ and $\varrho,\sigma \in S^{n-1}$), rotating, rescaling and then applying Lemma \ref{lem:fsest}, we obtain that
\begin{multline*}
\mathscr{I} \leq\sup_{\substack{s,k\\c(k)s\in I}}\sum_{c_\gamma(l)\in[2^j,2^{j+1})} \int_{(1-\delta)/c_\gamma(l)}^{1/c_\gamma(l)}(1+\tfrac{s}{r})^{1-n-m}|r-s|^{a/2-1}\\
\times \left|\frac{r-s}{r+s}\right|^{|k-l|}|P_{\min\{k,l\}}^{(|k-l|,m-1)}(1-2(\tfrac{r-s}{r+s})^2)|\frac{\binom{l+m-1}{l}}{\binom{\min\{k,l\}+m-1}{\min\{k,l\}}} \,dr.
\end{multline*}
For brevity, we define
\begin{multline}\label{eq:definein}
\mathscr{K} \defeq
\mathscr{K}(k,l,s)
=\int_{(1-\delta)/c_\gamma(l)}^{1/c_\gamma(l)}(1+\tfrac{s}{r})^{1-n-m}
|r-s|^{a/2-1}\\
\times\left|\frac{r-s}{r+s}\right|^{|k-l|}|P_{\min\{k,l\}}^{(|k-l|,m-1)}(1-2(\tfrac{r-s}{r+s})^2)|\frac{\binom{l+m-1}{l}}{\binom{\min\{k,l\}+m-1}{\min\{k,l\}}} \,dr.
\end{multline}

Fix $k\in\NN_0$. First, note that, if $l=k$, then the conditions $c(k)s \in I$ and $c_\gamma(l) r \in [1-\delta,1]$ imply that $s\simeq r$ (recall that $\delta \leq 1/2$), whence
\[
1+\tfrac{s}{r} \simeq 1,\qquad{\binom{l+m-1}{l}}{\binom{\min\{k,l\}+m-1}{\min\{k,l\}}}^{-1}=1,
\]
and moreover
\[
\frac{1}{2}\left(1+(1-2(\tfrac{s-r}{r+s})^2)\right) = 1-(\tfrac{r-s}{r+s})^2 = \tfrac{4rs}{(r+s)^2} \simeq 1.
\]
Then, by Theorem \ref{thm:jacobi_exist}\ref{en:jacobiest2},
\begin{equation}\label{eq:in1}
\begin{split}
    \mathscr{K} 
		&\simeq\int_{(1-\delta)/c_\gamma(l)}^{1/c_\gamma(l)}|r-s|^{a/2-1} |P_l^{(0,m-1)}(1-2(\tfrac{s-r}{r+s})^2)| \,dr\\
		&\lesssim \int_{(1-\delta)/c_\gamma(l)}^{1/c_\gamma(l)}|r-s|^{a/2-1} \,dr \\
		&\lesssim (\delta/c_\gamma(l))^{a/2} \simeq (2^{-j} \delta)^{a/2}
\end{split}
\end{equation}
whenever $c_\gamma(l) \simeq 2^j$. In estimating the last integral we used that $a > 0$ and that the value of the integral for $s\notin[\tfrac{1-\delta}{c_\gamma(l)},\tfrac{1}{c_\gamma(l)}]$ is smaller than the one for $s\in[\tfrac{1-\delta}{c_\gamma(l)},\tfrac{1}{c_\gamma(l)}]$.

Now, assume that
$1 \leq |k-l|\leq c_1(\min\{k,l\}+\tfrac{m}{2})$, where $c_1>1$ is a constant to be specified later. Then
 the conditions $c(k)s \in I$, $c_\gamma(l) r \in [1-\delta,1]$ and $c_\gamma(l) \in [2^j,2^{j+1})$ imply that
\[
1+k\simeq 1+l \simeq 2^j, \qquad r\simeq s \simeq 2^{-j},
\]
and therefore
\[
1+\tfrac{s}{r} \simeq 1, \qquad {\binom{l+m-1}{l}}{\binom{\min\{k,l\}+m-1}{\min\{k,l\}}}^{-1}\simeq 1.
\]
Moreover, note that 
\begin{equation}\label{eq:7335}
    1-2\left(\frac{s-r}{r+s}\right)^2 = -1+\frac{8rs}{(r+s)^2}  \geq -1+\epsilon,
\end{equation}
for some $\epsilon\in(0,2)$ which is independent of $r,s$ since $r\simeq s$.
Thus,
\begin{equation}\label{eq:injacobi}
\begin{split}
\mathscr{K} 
&\simeq \int_{(1-\delta)/c_\gamma(l)}^{1/c_\gamma(l)} |r-s|^{a/2-1}\left|\frac{r-s}{r+s}\right|^{|k-l|} |P_{\min\{k,l\}}^{(|k-l|,m-1)}(1-2(\tfrac{r-s}{r+s})^2)| \,dr \\
&= \mathscr{K}_1 + \mathscr{K}_2 + \mathscr{K}_3,
\end{split}
\end{equation}
where the above splitting corresponds to whether $|\tfrac{r-s}{r+s}|\geq 4|\tfrac{k-l}{k+l+m}|$, $|\tfrac{r-s}{r+s}|\leq \tfrac{1}{4}|\tfrac{k-l}{k+l+m}|$, or $\tfrac{1}{4}|\tfrac{k-l}{k+l+m}|\leq |\tfrac{r-s}{r+s}|\leq 4|\tfrac{k-l}{k+l+m}|$. Due to \eqref{eq:7335}, we may apply Theorem \ref{thm:jacobirefinedest} to estimate the Jacobi polynomial in \eqref{eq:injacobi}.

Consider first the part
 where $|\tfrac{r-s}{r+s}|\geq 4|\tfrac{k-l}{k+l+m}|$, so that
\[
|r-s|\gtrsim 2^{-2j}|k-l|.
\]
Then, by
the first estimate in \eqref{eq:jacobieq1}, 
\[\begin{split}
&|r-s|^{a/2-1}\left|\frac{r-s}{r+s}\right|^{|k-l|} |P_{\min\{k,l\}}^{(|k-l|,m-1)}(1-2(\tfrac{r-s}{r+s})^2)| \\
&\lesssim |r-s|^{a/2-1}
((k+l+m)^2 (\tfrac{r-s}{r+s})^2)^{-1/4} 
\\
&\simeq
2^{-j}
|r-s|^{a/2-3/2}  \\
&\lesssim 2^{-j(a-2)}|k-l|^{a/2-3/2},
\end{split}\]
whence
\[
\mathscr{K}_1 \lesssim 2^{-j(a-1)}|k-l|^{a/2-3/2} \delta.
\]

Next, consider the part where $|\tfrac{r-s}{r+s}|\leq \tfrac{1}{4}|\tfrac{k-l}{k+l+m}|$.
In this region we can apply
the second estimate in \eqref{eq:jacobieq1},
which
 gives that
\[
   \mathscr{K}_2 \lesssim 2^{-|k-l|}
	\int_{(1-\delta)/c_\gamma(l)}^{1/c_\gamma(l)}|r-s|^{a/2-1} \,dr
	\lesssim 2^{-|k-l|}(2^{-j}\delta)^{a/2}.
\]

Finally, consider the part where $\tfrac{1}{4}|\tfrac{k-l}{k+l+m}|\leq |\tfrac{r-s}{r+s}|\leq 4|\tfrac{k-l}{k+l+m}|$, so that
\[
|r-s|\simeq 2^{-2j}|k-l|.
\]
Here we can again apply the first estimate in \eqref{eq:jacobieq1}
and obtain that
\[\begin{split}
&|r-s|^{a/2-1}\left|\frac{r-s}{r+s}\right|^{|k-l|} |P_{\min\{k,l\}}^{(|k-l|,m-1)}(1-2(\tfrac{r-s}{r+s})^2)| \\
&\lesssim |r-s|^{a/2-1}
|k-l|^{-1/3} 
\\
&\simeq
2^{-j(a-2)}|k-l|^{a/2-4/3},
\end{split}\]
whence
\[
\mathscr{K}_3 \lesssim 2^{-j(a-1)}|k-l|^{a/2-4/3}\delta.
\]

In conclusion, for $1 \leq |k-l|\leq c_1(\min\{k,l\}+\tfrac{m}{2})$,
\begin{equation}\label{eq:smalldifference}
    \mathscr{K}
		\lesssim  2^{-j(a-1)}\delta|k-l|^{a/2-4/3}+2^{-|k-l|}(2^{-j}\delta)^{a/2}.
\end{equation}

Now, we assume that $|k-l|>c_1(\min\{k,l\}+\tfrac{m}{2})$. We consider two cases. First, let $l<k$, so that $k>c_1(l+\tfrac{m}{2})+l$.
Then, 
\[
k+\frac{m}{2}>(c_1+1)\left(l+\frac{m}{2}\right),
\]
so by Lemma \ref{lem:ckratio}, since $c_\gamma(l)>0$,
\[
c(k)>(c_1+1)c(l)\geq \frac{1}{3}(c_1+1) c_\gamma(l).
\]
Hence, from the conditions $c(k)s \in I$, $c_\gamma(l)r \in [1-\delta,1]$ and $\delta \leq 1/2$ we deduce that
\[
s\leq \tfrac{\max I}{c(k)}< \tfrac{3 \max I}{(c_1+1)c_\gamma(l)}\leq \tfrac{3 \max I}{(c_1+1)(1-\delta)}r\leq \tfrac{6 \max I}{c_1+1} r,
\]
since $\delta \leq 1/2$.
For $c_1$ sufficiently large, this means that,
\[
s \lesssim |r-s|\simeq r\simeq (1+l)^{-1} \simeq 2^{-j}, \qquad 
\frac{\binom{l+m-1}{l}}{\binom{\min\{k,l\}+m-1}{\min\{k,l\}}}=1, \qquad
1+\frac{s}{r}\simeq 1.
\]
Hence, recalling \eqref{eq:definein},
\[
\mathscr{K} \simeq \int_{(1-\delta)/c_\gamma(l)}^{1/c_\gamma(l)} r^{a/2-1}\left|\frac{r-s}{r+s}\right|^{k-l}|P_{l}^{(k-l,m-1)}(1-2(\tfrac{r-s}{r+s})^2)| \,dr.
\]
We apply Corollary \ref{cor:jacobirefinedest3} (here we need $m>1$) and the fact that
\[
|r-s|\simeq |r+s|\simeq r, \quad
1-\left(\frac{r-s}{r+s}\right)^2=\frac{4rs}{(r+s)^2}\simeq \frac{s}{r}, \quad
(1+l) r \simeq (1+k) s \simeq 1
\]
to get
\begin{equation}\label{eq:inl1}
\begin{split}
\mathscr{K}&\lesssim \int_{(1-\delta)/c_\gamma(l)}^{1/c_\gamma(l)} r^{a/2-1}(1-(\tfrac{r-s}{r+s})^2)^{-m/2+1/4}\left|\frac{r-s}{r+s}\right|^{-1/2}\\
&\qquad \times (l+1)^{-1/3} \left(\frac{l+1}{k+1}\right)^{(m-1)/2+1/4} \,dr\\
&\simeq
(l+1)^{2/3-a/2} \int_{(1-\delta)/c_\gamma(l)}^{1/c_\gamma(l)} \left(\frac{l+1}{k+1}\right)^{m/2-1/4} (r/s)^{m/2-1/4} \,dr \\
&\simeq 2^{-j(1/3+a/2)} \delta.
\end{split}
\end{equation}
Now, let $l>k$, so that $l>c_1(k+\tfrac{m}{2})+k$ and thus
\[
l+\frac{m}{2}>(c_1+1) \left(k+\frac{m}{2}\right).
\]
Then, again using Lemma \ref{lem:ckratio},
\[
3c_\gamma(l)\geq c(l) > (c_1+1)c(k),
\]
whence, from the conditions $c(k)s \in I$ and $c_\gamma(l)r \in [1-\delta,1]$ we deduce that
\[
r\leq \tfrac{1}{c_\gamma(l)} < \tfrac{3}{(c_1+1)c(k)}\leq \tfrac{3}{(c_1+1) \min I}s.
\]
Thus, for $c_1$ sufficiently large,
\[
r \lesssim |r-s|\simeq s\simeq |r+s|\simeq (1+k)^{-1}, \qquad
1+\frac{s}{r}\simeq \frac{s}{r}
\]
and
\[
\frac{\binom{l+m-1}{l}}{\binom{\min\{k,l\}+m-1}{\min\{k,l\}}} (1+\tfrac{s}{r})^{1-m} \simeq \frac{(l+1)^{m-1}r^{m-1}}{(k+1)^{m-1}s^{m-1}}\simeq 1.
\]
So, by \eqref{eq:definein},
\[
\mathscr{K} \simeq \int_{(1-\delta)/c_\gamma(l)}^{1/c_\gamma(l)} \left(\frac{r}{s}\right)^{n}s^{a/2-1}
\left|\frac{r-s}{r+s}\right|^{l-k} |P_{k}^{(l-k,m-1)}(1-2(\tfrac{r-s}{r+s})^2)| \,dr.
\]
Note also that
\[
1-\left(\frac{r-s}{r+s}\right)^2=\frac{4rs}{(r+s)^2}\simeq \frac{r}{s}, \quad l-k \simeq 1+l \simeq 2^j, \quad (1+l) r \simeq (1+k) s \simeq 1.
\]
As before, we apply Corollary \ref{cor:jacobirefinedest3} to get
\begin{equation}\label{eq:inl2}
\begin{split}
\mathscr{K} 
&\lesssim (k+1)^{-1/3} s^{a/2-1} \int_{(1-\delta)/c_\gamma(l)}^{1/c_\gamma(l)} \left(\frac{k+1}{l+1}\right)^{m/2-1/4}  \left(\frac{s}{r}\right)^{m/2-1/4-n} \,dr \\
		&\simeq (k+1)^{-1/3} s^{a/2-1-n} \int_{(1-\delta)/c_\gamma(l)}^{1/c_\gamma(l)} \,r^n \,dr\\
		&\simeq (k+1)^{n+2/3-a/2}2^{-j(n+1)}\delta\\
		&\lesssim 2^{-j(a/2+1/3)}\delta,
\end{split}
\end{equation}
where we used that $n+2/3-a/2 \geq 0$ and $1+k \lesssim 1+l \simeq 2^j$.

From \eqref{eq:in1}, \eqref{eq:smalldifference}, \eqref{eq:inl1}, \eqref{eq:inl2} we obtain that
\[
\mathscr{K} \lesssim \begin{cases}
(2^{-j} \delta)^{a/2} &\text{if } l=k,\\
2^{-j(a-1)}\delta|k-l|^{a/2-4/3}+2^{-|k-l|}(2^{-j}\delta)^{a/2} &\text{if } 1 \leq |k-l|\leq c_1(\min\{k,l\}+\tfrac{m}{2}),\\
2^{-j(a/2+1/3)}\delta &\text{if }  |k-l|>c_1(\min\{k,l\}+\tfrac{m}{2}).
\end{cases}
\]
Hence
\[\begin{split}
\sum_{c_\gamma(l) \in [2^j,2^{j+1})} \mathscr{K} 
&\lesssim (2^{-j} \delta)^{a/2} + 2^{-j(a/2-2/3)}\delta + 2^{-j(a-1)} \delta \sum_{N \lesssim 2^j} (1+N)^{a/2-4/3} \\
&\lesssim \begin{cases}
(2^{-j} \delta)^{a/2} &\text{if } 0 < a < 2/3,\\
j (2^{-j} \delta)^{a/2} &\text{if } a = 2/3,\\
(2^{-j} \delta)^{a/2} + 2^{-j(a/2-2/3)}\delta &\text{if } 2/3 < a < \min\{2,n\},\\
\end{cases}
\end{split}\]
where we used that $2^{-j(a-1)} \delta \lesssim (2^{-j} \delta)^{a/2}$ for $a \leq 2$ (this follows from the fact that $2^j \lesssim \delta^{-1}$).

Finally, since $\delta \simeq 2^{-J_\delta}$, note that
\[
  2^{-j(a/2-2/3)}\delta \lesssim (2^{-j}\delta)^{a/2} \iff 2^j \lesssim 2^{3J_\delta(2-a)/4},
\]
completing the proof.
\end{proof}

These results lead to the following `trace lemma'.

\begin{cor}\label{cor:tracelemfinalcor}
For all $\delta \in \Dyad$, all $1\leq j < J_\delta$ and all $\gamma\in\{-1,0,1\}$,
\[
\|M^\gamma_{\delta,j}f\|_2^2\lessapprox (2^{-j}\delta)^{1/3}\|f\|_{L^2((1+|\cdot|)^{2/3})}^2.
\]
\end{cor}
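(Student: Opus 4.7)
The plan is to derive this corollary as a straightforward application of the conditional trace estimate of Proposition \ref{prp:new_nruest}, with the kernel bound provided by Lemma \ref{lem:nonradialtraceu1} at its borderline exponent. The weight I would take is the pure second-layer weight $\omega = \sndW^{1/2}$. A brief check confirms that this $\omega$ satisfies the structural hypotheses of Proposition \ref{prp:new_nruest}: under the automorphic dilations $\delta_r$, which scale second-layer coordinates by $r^2$, the function $\sndW^{1/2}$ scales by $r$, so it is $G$-homogeneous of degree $1$; moreover $\sndW^{1/2} = (|u|^2)^{1/4}$ is a fractional power of the nonnegative polynomial $|u|^2 = \sum_{l=1}^n u_l^2$.

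Next I would verify the kernel estimate \eqref{eq:ourassumption2} with parameters set so that the output is $C(\delta,j) = (2^{-j}\delta)^{1/3}$ together with the final weight $(1+|\cdot|)^{2/3}$. With $\omega = \sndW^{1/2}$, the expression $\partial_{\omega^{-a/2+i\theta}} \widehat{K^\gamma_{\delta,j}}$ is precisely of the form $\partial_{\sndW^{-a'/2}} \widehat{K^\gamma_{\delta,j}}$ for a complex exponent $a'$ with $\Re(a') = 2/3$, which is exactly the borderline case singled out by Lemma \ref{lem:nonradialtraceu1}. Since the corollary only deals with $j < J_\delta$ and the Lemma's threshold is $3J_\delta(2-2/3)/4 = J_\delta$, the entire range lies in the first case of the Lemma, so I obtain the bound $(2^{-j}\delta)^{1/3}$, uniformly in $k,\mu$ subject to $c(k)|\mu|\in I$ and uniformly in the imaginary parameter $\theta$ (the latter being built into the Lemma, since its constants depend only on $\Re(a')$). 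The borderline $\lessapprox$ in Lemma \ref{lem:nonradialtraceu1} is absorbed into the $\lessapprox$ already present in the statement of Proposition \ref{prp:new_nruest}, so no additional loss is incurred.

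Proposition \ref{prp:new_nruest} then delivers the desired conclusion
\[
\|M^\gamma_{\delta,j} f\|_2^2 \lessapprox (2^{-j}\delta)^{1/3} \|f\|_{L^2((1+|\cdot|)^{2/3})}^2
\]
directly. The only technical caveat is that Lemma \ref{lem:nonradialtraceu1} is stated under the assumption $m > 1$ (owing to the use of Corollary \ref{cor:jacobirefinedest3}); in the excluded case $m = 1$ the group $G$ is the first Heisenberg group $H_1$, for which Gorges and Müller \cite{GM} have already proved the stronger estimate \eqref{eq:introest1} with $a = 1$, and this implies in particular the present corollary.

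There is no real obstacle remaining at this stage: the substantive analytic work has been carried out in Lemma \ref{lem:nonradialtraceu1}, where the Jacobi-polynomial asymptotics (Theorem \ref{thm:jacobirefinedest}, Corollary \ref{cor:jacobirefinedest3}) determine both the exponent $1/3$ and the natural threshold $3J_\delta(2-\Re(a))/4$ between the two regimes of $j$. Proposition \ref{prp:new_nruest} then supplies the abstract complex-interpolation/Leibniz-rule packaging that converts such a pointwise kernel estimate on the Fourier dual into a weighted $L^2$ trace estimate on $G$. The proof of the corollary is accordingly a matter of matching parameters between these two results.
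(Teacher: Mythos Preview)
Your approach matches the paper's: apply Proposition \ref{prp:new_nruest} with the second-layer weight, verify the kernel hypothesis \eqref{eq:ourassumption2} via Lemma \ref{lem:nonradialtraceu1} at its borderline exponent, and dispose of the case $m=1$ by appealing to \cite{GM}.

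One bookkeeping point deserves attention. With $\omega = \sndW^{1/2}$, the expression $\omega^{-a/2+i\theta}$ appearing in \eqref{eq:ourassumption2} equals $\sndW^{-a/4+i\theta/2}$; matching this with the $\sndW^{-a'/2}$ of Lemma \ref{lem:nonradialtraceu1} gives $\Re(a') = a/2$. Since the target weight $(1+|\cdot|)^{2/3}$ forces $a = 2/3$ in Proposition \ref{prp:new_nruest}, you would land at $\Re(a') = 1/3$, not the $2/3$ you assert, and the Lemma at $\Re(a')=1/3$ only yields $(2^{-j}\delta)^{1/6}$. The paper's own proof of the Corollary sidesteps this by writing $\omega = \sndW$, which makes $\Re(a') = a = 2/3$ come out directly---but $\sndW$ is $G$-homogeneous of degree $2$, in tension with the degree-$1$ hypothesis of Proposition \ref{prp:new_nruest} (the discussion at the start of Section \ref{s:chaptrace} says $\omega = \sndW^{1/2}$, as you have). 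This is a notational inconsistency internal to the paper (the exponent $-a/2$ in \eqref{eq:ourassumption2} versus the homogeneity degree of $\omega$), not a flaw in your strategy: the intended input is precisely Lemma \ref{lem:nonradialtraceu1} at $\Re(a') = 2/3$, giving $C(\delta,j) = (2^{-j}\delta)^{1/3}$, and the same discrepancy affects the parallel application in the proof of Theorem \ref{thm:trace5}.
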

\begin{proof}
As noted earlier, we must defer to \cite[Lemma 7]{GM} if $m=1$. Otherwise, by Lemma \ref{lem:nonradialtraceu1},
the assumptions of Proposition \ref{prp:new_nruest} are satisfied with $\omega = \sndW$, $a = 2/3$ and $C(\delta,j) = (2^{-j} \delta)^{1/3}$ for $j<J_\delta$ (note that $2^{j} \lesssim \delta^{-1}$, and that we can trivially set $C(\delta,J_\delta) = 1$), so the desired estimate is given by Proposition \ref{prp:new_nruest}.
\end{proof}

\begin{rem}\label{rem:trace1rem}
If we instead consider the result of Lemma \ref{lem:nonradialtraceu1} with $a=1$, then the results of this section combine to prove the `stronger' estimate 
\[
\|M^\gamma_{\delta,j}f\|_2^2\lessapprox (2^{-j}\delta)^{1/2}\|f\|_{L^2(1+|\cdot|)}^2,
\]
but on a reduced range of $j$, specifically $1\leq j\leq \tfrac{3}{4} J_\delta$. Note that the same estimate also holds at the `endpoint' $j=J_\delta$ by Theorem \ref{thm:trace5}. This leaves a `middle region' $\tfrac{3}{4} J_\delta < j < J_\delta$ where pure first- or second-layer weights do not appear to be sufficient to prove this estimate.
\end{rem}

\begin{proof}[Proof of Theorem \ref{thm:tracelemfinal}]
By interpolation, it suffices to prove Theorem \ref{thm:tracelemfinal} for $a=\tfrac{2}{3}$. 
For $j=J_\delta$, this follows from from Theorem \ref{thm:trace5}, while Corollary \ref{cor:tracelemfinalcor} gives the required estimate for $j<J_\delta$.
\end{proof}

\section{Jacobi polynomials}\label{s:estforjacobipoly}

As observed in Section \ref{s:fracint}, when calculating integral kernels for fractional integration operators on the dual of an H-type group, we encounter integrals over the positive half-line of a pair of Laguerre polynomials against an exponential and a polynomial weight.
The following lemma contains a few identities that allow us to rewrite these integrals in a more manageable form; in particular, the identity \eqref{eq:lagintlem} shows that some of these integrals can be expressed in terms of Jacobi polynomials.

\begin{lem}\label{lem:laguerreintegrallem}
The following hold.
\begin{enumerate}[label=(\roman*)]
\item\label{en:laguerreintegrallem1} Let $k,l \in \NN_0$, $m \in \NN$, $a \in \CC$ with $\Re(a) \in (0,m)$. Then
\begin{multline*}
\int_0^\infty L_k^{m-1}(t) \, L_l^{m-1}(t) \, e^{-t} \,t^{m-1-a} \,dt\\
=(\Gamma(a))^{-2} \sum_{p=0}^{\min\{k,l\}} \frac{\Gamma(a+k-p)}{(k-p)!} \frac{\Gamma(a+l-p)}{(l-p)!} \frac{\Gamma(p+m-a)}{p!}.
\end{multline*}
\item\label{en:laguerreintegrallem2} Let $a,b,c>0$, $\gamma>-1$ and $l,k\in\NN_0$ with $l \leq k$. Then
\begin{multline}\label{eq:lagintlem}
\int_0^\infty L_l^{\gamma}(at) \,L_k^{\gamma}(bt) \,e^{-ct} \,t^{\gamma} \,dt \\
= \begin{cases}
 \frac{\Gamma(k+l+\gamma+1)}{l!k!} \frac{b^la^k}{c^{k+l+\gamma+1}} &\text{if } a+b=c,\\
 \frac{\Gamma(k+\gamma+1)}{k!} \frac{(c-b)^{k-l}(a+b-c)^l}{c^{k+\gamma+1}} P_{l}^{(k-l,\gamma)}\left(1-2\frac{(c-a)(c-b)}{c(c-a-b)}\right) &\text{otherwise}.
\end{cases}
\end{multline}
\end{enumerate}
\end{lem}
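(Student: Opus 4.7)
The plan is to derive both identities from the standard generating function for the Laguerre polynomials,
\[
\sum_{k \geq 0} L_k^{\gamma}(t) \, z^k = (1-z)^{-\gamma-1} \exp\Bigl(-\frac{tz}{1-z}\Bigr), \qquad |z|<1,
\]
combined with the elementary Gamma-function integral $\int_0^\infty e^{-\beta t} t^{\nu-1} \, dt = \Gamma(\nu) \beta^{-\nu}$, and then to extract coefficients of $z^k w^l$ via binomial series expansions.

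For part \ref{en:laguerreintegrallem1}, I would multiply the generating function in $z$ by its counterpart in $w$, obtaining a generating function for $L_k^{m-1}(t) L_l^{m-1}(t)$. Multiplying by $e^{-t} t^{m-1-a}$ and integrating in $t$ gives, via the identity $1 + \tfrac{z}{1-z} + \tfrac{w}{1-w} = \tfrac{1-zw}{(1-z)(1-w)}$, the closed form
\[
\sum_{k,l\geq 0} z^k w^l \int_0^\infty L_k^{m-1}(t) L_l^{m-1}(t) e^{-t} t^{m-1-a} \, dt = \Gamma(m-a) (1-z)^{-a}(1-w)^{-a}(1-zw)^{a-m}.
\]
Expanding each of the three factors on the right as a binomial series, the coefficient of $z^k w^l$ becomes a triple sum where the indexing constraints force a single summation in $p \leq \min\{k,l\}$; the Pochhammer relations $\Gamma(a+q)/\Gamma(a) = (a)_q$ then identify the result with the expression claimed in \ref{en:laguerreintegrallem1}.

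For part \ref{en:laguerreintegrallem2}, the same strategy yields
\[
\sum_{k,l\geq 0} z^l w^k \int_0^\infty L_l^\gamma(at) L_k^\gamma(bt) e^{-ct} t^\gamma \, dt = \frac{\Gamma(\gamma+1)}{[c(1-z)(1-w) + az(1-w) + bw(1-z)]^{\gamma+1}},
\]
with the bracket simplifying to $c - (c-a)z - (c-b)w + (c-a-b)zw$. When $a+b=c$ the cross term vanishes and the bracket factors as $c-bz-aw$ (using $c-a=b$ and $c-b=a$); extracting the coefficient of $z^l w^k$ from $(1-bz/c-aw/c)^{-\gamma-1}$ by the multinomial expansion gives immediately the first branch of \eqref{eq:lagintlem}. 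In the general case $a+b\neq c$, I would write the bracket as $(1-Az) - w(B-Cz)$, with $A \defeq (c-a)/c$, $B \defeq (c-b)/c$, $C \defeq (c-a-b)/c$, extract the coefficient of $w^k$ via the geometric-type series, and then extract the coefficient of $z^l$ from $(B-Cz)^k (1-Az)^{-(\gamma+1+k)}$ by expanding each factor.

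The main computational obstacle is recognising the resulting power series in the variable $y \defeq (c-a)(c-b)/[c(c-a-b)]$ as a Jacobi polynomial. After separating the prefactor $\Gamma(\gamma+1+k) A^l B^k / (k! \, c^{\gamma+1})$, the remaining sum takes the form $\sum_{j=0}^l \binom{k}{j} \tfrac{(\gamma+1+k)_{l-j}}{(l-j)!} (-1/y)^j$, which after the substitution $i=l-j$ matches the standard hypergeometric representation $\binom{k}{l} \HypF(-l, k+\gamma+1; k-l+1; y) = P_l^{(k-l,\gamma)}(1-2y)$ up to a factor of $(-y)^{-l}$. Careful bookkeeping of signs and the factor $(c-a)^l(c-b)^k/c^{l+k} \cdot (-y)^{-l}$ then rearranges to $(c-b)^{k-l}(a+b-c)^l/c^k$, matching the second branch of \eqref{eq:lagintlem}.
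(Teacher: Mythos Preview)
Your proposal is correct and complete; the generating-function computations you outline all check out, including the coefficient extraction and the identification with $P_l^{(k-l,\gamma)}(1-2y)$ via the representation $P_l^{(k-l,\gamma)}(1-2y) = \binom{k}{l}\,\HypF(-l,k+\gamma+1;k-l+1;y)$.

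The paper takes a quite different, more citation-driven route. For part~\ref{en:laguerreintegrallem1} it uses the connection coefficients \cite[eq.\ 18.18.18]{27} to rewrite $L_k^{m-1}$ as a linear combination of Laguerre polynomials of type $m-1-a$, and then applies orthogonality; for part~\ref{en:laguerreintegrallem2} it simply quotes the tabulated integral \cite[p.~175, (35)]{26}, which already gives the answer as a $\HypF$, and then applies one Kummer-type transformation \cite[eq.\ 15.8.6]{27} together with the Jacobi--hypergeometric identity \cite[eq.\ 18.5.7]{27}. Your approach is more self-contained and uniform (one technique for both parts, no external tables), at the cost of a longer calculation and some bookkeeping of signs and prefactors; the paper's approach is shorter on the page but leans entirely on standard references. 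Note also that both arguments ultimately pass through the same hypergeometric representation of $P_l^{(k-l,\gamma)}$ in part~\ref{en:laguerreintegrallem2}, so the final identification step is essentially shared.
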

\begin{proof}
\ref{en:laguerreintegrallem1}. The identity \cite[eq.\ 18.18.18]{27} allows us to turn a Laguerre polynomial of type $m-1$ into a linear combination of Laguerre polynomials of type $m-1-a$; the desired identity then follows from the orthogonality relations \cite[Lemma 1.1.4]{4}.

\ref{en:laguerreintegrallem2}. Assume that $c\neq a$ and $c \neq b$ (the cases $c=a$ and $c=b$ can be recovered a posteriori by continuity). By \cite[page 175, entry (35)]{26},
\begin{multline}\label{eq:laghypergeom1}
\int_0^\infty L_l^{\gamma}(at)L_k^{\gamma}(bt) \,e^{-ct} \,t^{\gamma} \,dt 
= \frac{\Gamma(k+l+\gamma+1)}{l!k!} \frac{(c-a)^l(c-b)^k}{c^{k+l+\gamma+1}} \\
\times \HypF\left[-l,-k;-l-k-\gamma;\frac{c(c-a-b)}{(c-a)(c-b)}\right],
\end{multline}
where $\HypF$ is the hypergeometric function \cite[Chapter 15]{27}.

If $a+b=c$, then \eqref{eq:lagintlem} immediately follows, because $\HypF[-l,-k;-l-k-\gamma;0]=1$.
Suppose instead that $a+b\neq c$.
By applying the transformation formula \cite[eq.\ 15.8.6]{27},
we easily obtain that
\begin{multline}\label{eq:laghypergeom2}
\int_0^\infty L_l^{\gamma}(at)L_k^{\gamma}(bt) \,e^{-ct} \,t^{\gamma} \,dt 
= \frac{(c-b)^{k-l}}{c^{k+\gamma+1}} \frac{\Gamma(k+\gamma+1)}{l!(k-l)!} (a+b-c)^l \\
\times \HypF\left[-l,k+\gamma+1;1+k-l;\frac{(c-a)(c-b)}{c(c-a-b)}\right],
\end{multline}
and \eqref{eq:lagintlem} follows by applying the formula expressing Jacobi polynomials in terms of the hypergeometric function \cite[eq.\ 18.5.7]{27}.
\end{proof}

The remaining of this section is devoted to the discussion of
estimates for the Jacobi polynomials that appear in our formulae.

We first note some uniform, weighted bounds that are available in the literature. 

\begin{thm}\label{thm:jacobi_exist}
The following estimates hold.
\begin{enumerate}[label=(\roman*)]
\item\label{en:jacobiest2}
For all $x\in[-1,1]$, for all $\beta\geq 0$ and $\alpha\geq\beta-\lfloor\beta\rfloor$ and for all $n\in\NN_0$,
\begin{equation}
\left(\frac{1+x}{2}\right)^{\beta/2}|P_n^{(\alpha,\beta)}(x)|\leq\binom{n+\alpha}{n}.
\end{equation}
In particular, this estimate holds whenever $\alpha,\beta\in\NN_0$.
\item\label{en:jacobihs}
For all $x\in[-1,1]$, for all $\alpha,\beta\geq 0$ and for all $n\in\NN_0$,
\begin{multline}
\left(\frac{1-x}{2}\right)^{\alpha/2+1/4} \left(\frac{1+x}{2}\right)^{\beta/2+1/4} |P_n^{(\alpha,\beta)}(x)|\\
\lesssim (2n+\alpha+\beta+1)^{-1/4}\left(\frac{\Gamma(n+\alpha+1)\Gamma(n+\beta+1)}{\Gamma(n+1)\Gamma(n+\alpha+\beta+1)}\right)^{1/2}.
\end{multline}
\item\label{en:jacobikrasikov}
For all $x\in[-1,1]$, for all $\alpha\geq\beta\geq \tfrac{1+\sqrt{2}}{4}$ and for all $n\in\NN_0$ with $n\geq 6$,
\begin{multline}
\left(\frac{1-x}{2}\right)^{\alpha/2+1/4}\left(\frac{1+x}{2}\right)^{\beta/2+1/4} |P_n^{(\alpha,\beta)}(x)|\\
\lesssim \alpha^{1/6}\left(1+\frac{\alpha}{n}\right)^{1/12} \left(\frac{\Gamma(n+\alpha+1)\Gamma(n+\beta+1)}{(2n+\alpha+\beta+1)\Gamma(n+1)\Gamma(n+\alpha+\beta+1)}\right)^{1/2}.
\end{multline}
\end{enumerate}
\end{thm}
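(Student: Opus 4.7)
My plan is to deduce each of the three estimates from the literature, since each has a distinct provenance and a proof ``from first principles'' would needlessly duplicate well-known material on Jacobi polynomials. I would state Theorem \ref{thm:jacobi_exist} as a consolidation of three known bounds, tailored to the normalization used in Section \ref{s:fracint}, and verify in each case that the conventions match.

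For part (ii), this is exactly Haagerup's classical weighted bound \cite{Haagerup}. His proof proceeds from a Gegenbauer-type integral representation of $P_n^{(\alpha,\beta)}$ and then estimates the resulting oscillatory integral via Bessel-function asymptotics, producing the characteristic $(2n+\alpha+\beta+1)^{-1/4}$ decay alongside the endpoint weights $((1\pm x)/2)^{\cdot/2+1/4}$. I would invoke this directly, first checking that Haagerup's normalization of $P_n^{(\alpha,\beta)}$ coincides with ours. Part (iii) is Krasikov's refined inequality \cite{Krasikov}, whose proof exploits Sonin-type monotonicity for the Sturm--Liouville equation satisfied by the `amplitude' function built from $P_n^{(\alpha,\beta)}$ and its derivative, together with Krasikov's sharper control of that amplitude near the transition regions. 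The improvement factor $\alpha^{1/6}(1+\alpha/n)^{1/12}$ over (ii), and the conditions $\alpha \geq \beta \geq (1+\sqrt{2})/4$ and $n \geq 6$, come directly from Krasikov's hypotheses, and I would invoke this as a black box.

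Part (i) is the most classical of the three. In the integer-parameter case $\beta \in \NN_0$, the assumption $\alpha \geq \beta - \lfloor\beta\rfloor$ reduces to $\alpha \geq 0$, and the bound follows from the explicit binomial representation
\[
P_n^{(\alpha,\beta)}(x) = \sum_{k=0}^n \binom{n+\alpha}{n-k}\binom{n+\beta}{k}\left(\tfrac{x-1}{2}\right)^k\left(\tfrac{x+1}{2}\right)^{n-k},
\]
combined with the fact that $((1+x)/2)^{\beta/2} \leq 1$ on $[-1,1]$ and an elementary estimate of the alternating-sign sum, noting that $P_n^{(\alpha,\beta)}(1) = \binom{n+\alpha}{n}$ saturates the inequality at $x=1$. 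For general real $\beta \geq 0$ I would write $\beta = \lfloor\beta\rfloor + \sigma$ with $\sigma \in [0,1)$ and use a Dijksma--Koornwinder (or Riemann--Liouville) fractional-integral intertwining between $P_n^{(\alpha,\beta)}$ and $P_n^{(\alpha-\sigma,\lfloor\beta\rfloor)}$; the hypothesis $\alpha \geq \sigma$ then ensures that the reduced polynomial has nonnegative first index, so the integer-parameter bound applies without loss.

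The principal obstacle I anticipate is the fractional-parameter reduction in part (i): one needs to locate or verify the precise intertwining identity that preserves the constant $\binom{n+\alpha}{n}$ on the right-hand side rather than a larger binomial coefficient, and to check that the weight $((1+x)/2)^{\beta/2}$ is correctly absorbed. If this turns out to be more delicate than expected, I would instead invoke the reference \cite{KKT}, which from its placement in the bibliography alongside \cite{Haagerup,Krasikov,Dunster} appears to contain exactly such uniform weighted bounds for Jacobi polynomials in the fractional-parameter range considered here.
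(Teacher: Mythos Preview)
Your proposal is correct and matches the paper's approach: the paper's proof is purely by citation, invoking \cite[Theorem 5.1]{KKT} for (i), \cite[eq.\ (2)]{Haagerup} for (ii), and \cite[Theorem 2]{Krasikov} for (iii). Your identifications for (ii) and (iii) are exactly right; for (i) you eventually land on \cite{KKT} as well, and indeed the paper cites it directly without any intermediate argument. Your sketched direct proof of (i) via the binomial representation and a fractional-integral intertwining is unnecessary here and the ``elementary estimate of the alternating-sign sum'' step is vaguer than it may appear---the bound with the weight $((1+x)/2)^{\beta/2}$ is precisely the content of the Bernstein-type inequality in \cite{KKT} and is not immediate from the hypergeometric sum alone---so your fallback is the right move.
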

\begin{proof}
\ref{en:jacobiest2}. This is Theorem 5.1 of \cite{KKT}.

\ref{en:jacobihs}. This may be found as equation (2) in \cite{Haagerup}.

\ref{en:jacobikrasikov}. This is Theorem 2 of \cite{Krasikov}.
\end{proof}

Here is an immediate consequence of the previous estimates.

\begin{cor}\label{cor:jacobirefinedest3}
Let $\beta \in \NN$ and $c>0$. 
Then, for all $x\in [-1,1]$ and all $\alpha,n\in\NN_0$ with $\alpha\geq c(1+n)$,
\begin{multline}\label{eq:jacobirefinedest3eq} 
\left(\frac{1-x}{2}\right)^{\alpha/2+1/4}\left(\frac{1+x}{2}\right)^{\beta/2+1/4} |P_n^{(\alpha,\beta)}(x)| \\
\lesssim_{\beta,c} (n+1)^{-1/3} \left(\frac{n+1}{\alpha+1}\right)^{\beta/2+1/4}.
\end{multline}
\end{cor}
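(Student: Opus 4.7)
The plan is to deduce the estimate from Krasikov's bound (Theorem \ref{thm:jacobi_exist}\ref{en:jacobikrasikov}) in the main regime, with a short argument disposing of the boundary cases via Haagerup's bound (Theorem \ref{thm:jacobi_exist}\ref{en:jacobihs}).

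I would first address the main case $n\geq 6$ and $\alpha\geq\beta$. Since $\beta\in\NN$ satisfies $\beta\geq 1>\tfrac{1+\sqrt{2}}{4}$, Krasikov's bound applies and controls the left-hand side of \eqref{eq:jacobirefinedest3eq} by
\[
\alpha^{1/6}\Bigl(1+\tfrac{\alpha}{n}\Bigr)^{1/12}\Bigl(\frac{\Gamma(n+\alpha+1)\,\Gamma(n+\beta+1)}{(2n+\alpha+\beta+1)\,\Gamma(n+1)\,\Gamma(n+\alpha+\beta+1)}\Bigr)^{1/2}.
\]
Under the standing hypothesis $\alpha\geq c(1+n)$, standard Gamma-function asymptotics (cf.\ \cite[eq.\ 5.11.12]{27}) yield, with implicit constants depending only on $\beta$ and $c$, the equivalences
\[
2n+\alpha+\beta+1\simeq \alpha+1,\quad \frac{\Gamma(n+\alpha+1)}{\Gamma(n+\alpha+\beta+1)}\simeq(\alpha+1)^{-\beta},\quad \frac{\Gamma(n+\beta+1)}{\Gamma(n+1)}\simeq (n+1)^{\beta},
\]
together with $1+\alpha/n\simeq (\alpha+1)/(n+1)$. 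Substituting these and collecting exponents, the bound rearranges to
\[
(\alpha+1)^{1/4}(n+1)^{-1/12}\cdot(n+1)^{\beta/2}(\alpha+1)^{-\beta/2-1/2}=(n+1)^{-1/3}\Bigl(\tfrac{n+1}{\alpha+1}\Bigr)^{\beta/2+1/4},
\]
which is exactly the required inequality.

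It remains to handle the exceptional regime $n\leq 5$ or $\alpha<\beta$. If $\alpha<\beta$, then $c(1+n)\leq\alpha<\beta$ forces $n<\beta/c-1$, so in either case $n$ belongs to a finite set whose cardinality depends only on $\beta$ and $c$. In this bounded-$n$ range I would apply Haagerup's bound (uniform in all three parameters) and simplify via the same Gamma-ratio asymptotics, obtaining an upper bound of the form $(n+1)^{-1/4}\bigl((n+1)/(\alpha+1)\bigr)^{\beta/2+1/4}$, which matches the claim up to a constant depending only on $\beta$ and $c$, since $(n+1)^{-1/4}$ and $(n+1)^{-1/3}$ are comparable when $n$ is bounded. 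The whole argument is thus an algebraic reduction, and the only genuine obstacle is the careful bookkeeping of exponents required to verify that the prefactor $\alpha^{1/6}(1+\alpha/n)^{1/12}$ in Krasikov's bound combines with the Gamma-ratio asymptotics to produce precisely the $-1/3$ exponent on $n+1$; this is the precise reason Krasikov's refined estimate, rather than Haagerup's, is needed in the unbounded regime.
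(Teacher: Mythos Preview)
Your proof is correct and follows essentially the same approach as the paper: Krasikov's bound for the main regime $n\geq 6$, $\alpha\geq\beta$, and Haagerup's bound (or triviality for finitely many pairs) for the boundary cases. Your explicit exponent bookkeeping fills in exactly the details the paper leaves as ``an easy corollary.''
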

\begin{proof}
First, if $\alpha\geq\beta$ and $n\geq 6$, then this is an easy corollary of Theorem \ref{thm:jacobi_exist}\ref{en:jacobikrasikov}. If $\alpha\geq\beta$ and $0\leq n\leq 5$, then $n+1\simeq 1$ and \eqref{eq:jacobirefinedest3eq} follows from Theorem \ref{thm:jacobi_exist}\ref{en:jacobihs}. The remaining case ($\alpha < \beta$) involves only finitely many pairs $(\alpha,n)$ (note that $\beta$ is fixed and $\alpha \gtrsim_c 1+n$), so the desired estimate is trivial in this case.
\end{proof}

Next, we prove some more specialised estimates, which give sharper bounds than the above, but only on a restricted range of indices $\alpha,\beta,n$. 
The above estimates are essentially weighted $L^\infty$ estimates for Jacobi polynomials, where the weight is independent of the degree $n$. The estimates below, instead, involve a `transition point' depending on $n$, away from which much better estimates may be obtained. We proceed similarly to Proposition 3.5 of \cite{51} in order to prove such estimates.

\begin{thm}\label{thm:jacobirefinedest}
Let $\beta \in \NN_0$, $\epsilon\in(0,2)$ and $c > 0$.
Then, for all $x\in [-1+\epsilon,1]$ and all $\alpha,n\in\NN_0$ with
 $1\leq\alpha \leq c(1+n)$,
\begin{equation}\label{eq:jacobieq1}
\left|\left(\frac{1-x}{2}\right)^{\alpha/2}P_{n}^{(\alpha,\beta)}(x)\right|
\lesssim_{\beta,c,\epsilon}
\begin{cases}
(u^2|x-x_{tr}|+\alpha^{4/3})^{-1/4} &\text{in any case},\\
2^{-\alpha} &\text{if } 1-x\leq \tfrac{1}{16}(1-x_{tr});
\end{cases}
\end{equation}
here
\begin{align}
\label{eq:jacobiu}
    u = u(\alpha,\beta,n) &\defeq n+\frac{\alpha+\beta+1}{2}, \\
\label{eq:jacobitr2}
    x_{tr} = x_{tr}(\alpha,\beta,n) &\defeq 1-\frac{\alpha^2}{2u^2}.
\end{align}
\end{thm}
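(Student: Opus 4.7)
The plan is to establish this estimate via uniform Airy-type asymptotic expansions for Jacobi polynomials across their turning point, in the spirit of Dunster \cite{Dunster} and following the strategy of Proposition 3.5 in \cite{51}.

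The first step is to reduce to the trigonometric Sturm--Liouville form of the Jacobi equation. With $x = \cos\theta$ and $W(\theta) = \sin^{\alpha+1/2}(\theta/2)\cos^{\beta+1/2}(\theta/2) P_n^{(\alpha,\beta)}(\cos\theta)$, one has $W'' + u^2 Q(\theta) W = 0$, where
\[
Q(\theta) = 1 - \frac{\alpha^2 - 1/4}{4u^2 \sin^2(\theta/2)} - \frac{\beta^2 - 1/4}{4u^2 \cos^2(\theta/2)}.
\]
In the regime $1\leq\alpha\leq c(1+n)$, $Q$ has a simple zero at $\theta_{tr}$ with $\sin(\theta_{tr}/2) \simeq \alpha/(2u)$, matching $x_{tr} = 1 - \alpha^2/(2u^2)$; the restriction $x \in [-1+\epsilon,1]$ keeps $\cos(\theta/2)$ bounded below. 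Langer's uniform Airy approximation, with $\zeta$ defined by $\tfrac{2}{3}\zeta^{3/2} = u\int_{\theta_{tr}}^\theta\sqrt{Q(t)}\,dt$, then gives
\[
W(\theta) = u^{-1/2}(\zeta/Q)^{1/4}\bigl[\mathrm{Ai}(-\zeta) + O(u^{-1})\bigr],
\]
where a direct computation yields $(\zeta/Q)^{1/4} \simeq \alpha^{1/6}$ at the turning point and $|\zeta| \simeq (u/\alpha^{1/3})|\theta-\theta_{tr}|$ globally.

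For the first bound in \eqref{eq:jacobieq1}, using $|\mathrm{Ai}(-t)| \lesssim (1+|t|)^{-1/4}$ on $\RR$ together with the conversion $|\theta - \theta_{tr}| \simeq u|x-x_{tr}|/\alpha$ yields $|W(\theta)| \lesssim u^{-1/2}\alpha^{1/2}(\alpha^{4/3}+u^2|x-x_{tr}|)^{-1/4}$. Dividing by $\sin^{1/2}(\theta/2)\cos^{\beta+1/2}(\theta/2)$ and using that $\sin(\theta/2) \gtrsim \alpha/u$ in the oscillatory region $\theta \geq \theta_{tr}$, the factors $u^{-1/2}\alpha^{1/2}$ cancel exactly, yielding the desired $((1-x)/2)^{\alpha/2}|P_n^{(\alpha,\beta)}(x)| \lesssim (\alpha^{4/3}+u^2|x-x_{tr}|)^{-1/4}$. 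For the second bound, the hypothesis $1-x \leq (1-x_{tr})/16$ forces $\theta \leq \theta_{tr}/4$, placing us deep in the classically forbidden region where Airy decays exponentially with exponent $\tfrac{2}{3}|\zeta|^{3/2} = u\int_\theta^{\theta_{tr}}\sqrt{-Q(t)}\,dt$. Approximating $-Q(t) \simeq \alpha^2/(u^2 t^2)-1$ and substituting $s = tu/\alpha$ reduces the exponent to $\alpha\int_{\theta/\theta_{tr}}^1\sqrt{1/s^2-1}\,ds \geq C\alpha$ with $C > \log 2$ (explicit computation at $\theta/\theta_{tr} = 1/4$ gives $C \approx 1.09$); the exponential thus dominates polynomial prefactors and gives the claimed $2^{-\alpha}$ bound, with values of $\theta$ too close to $0$ for Langer's approximation to be useful handled separately via the trivial estimate $|P_n^{(\alpha,\beta)}(x)| \leq \binom{n+\alpha}{n}$ from Theorem \ref{thm:jacobi_exist}\ref{en:jacobiest2}.

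The main obstacle will be establishing the uniformity of Langer's expansion across the full admissible parameter range, in particular as $\alpha/u$ varies between small values (where $x_{tr}$ approaches $1$) and unit-order values (where $x_{tr}$ is bounded away from $1$). The amplitude $(\zeta/Q)^{1/4}$ and the Langer variable $\zeta$ must be tracked precisely enough to extract the exact $\alpha^{4/3}$ scaling in the first bound, and the exponent in the second bound must be computed carefully enough to ensure it exceeds $\log 2$ throughout the specified region.
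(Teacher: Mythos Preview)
Your approach via the Langer--Airy turning-point approximation is genuinely different from the paper's, and it has a gap. The paper does not use an Airy approximant; instead, following \cite{Dunster} and \cite{49}, it uses the \emph{Bessel}-type uniform asymptotic
\[
\left(\tfrac{1+y}{2}\right)^{\alpha/2} P_n^{(\beta,\alpha)}(y) \approx \varkappa_{\alpha,\beta,n}\left(\tfrac{\zeta-\tilde\alpha^2}{y-y_{tr}}\right)^{1/4} J_\alpha(u\zeta^{1/2})
\]
(after swapping parameters via $P_n^{(\alpha,\beta)}(x)=(-1)^n P_n^{(\beta,\alpha)}(-x)$), where $\zeta$ is determined by the Liouville transformation \eqref{eq:cov1}--\eqref{eq:cov2}. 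The point is that the problem has \emph{both} a turning point at $x_{tr}$ and a regular singularity at $x=1$, and under the hypothesis $1\leq\alpha\leq c(1+n)$ these two features can be arbitrarily close (when $\alpha\ll n$, one has $1-x_{tr}=\alpha^2/(2u^2)\to 0$). The Boyd--Dunster theory in \cite{49} is designed precisely for this ``turning point near a regular singularity'' situation, and its error bounds are automatically uniform in $\alpha,n$ on bounded $\zeta$-intervals. By contrast, the standard Airy/Langer theory (Olver \cite{53}, Chapter 11) controls the error by the total variation of a Schwarzian-type term that blows up at the singularity; your $O(u^{-1})$ error claim is therefore not justified uniformly when $\theta_{tr}\simeq\alpha/u$ is small, and neither is the relation ``$|\zeta|\simeq(u/\alpha^{1/3})|\theta-\theta_{tr}|$ globally.'' Your patching with the trivial bound $\binom{n+\alpha}{n}$ only helps for $\theta$ below some fraction of $\theta_{tr}$, not in the intermediate zone where the Airy error degrades.

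The Bessel approximant also makes the second estimate much cleaner: the paper shows that the hypothesis $1-x\leq\tfrac{1}{16}(1-x_{tr})$ forces $u\zeta^{1/2}\leq\alpha/4$, and then the elementary bound $|J_\alpha(x)|\leq(x/2)^\alpha/\Gamma(\alpha+1)$ together with Stirling gives $\lesssim(e/8)^\alpha\leq 2^{-\alpha}$ directly, with no need for numerics like ``$C\approx 1.09$'' or a separate argument near $\theta=0$. The first estimate comes from the uniform bound on $u^{1/2}|\zeta-\tilde\alpha^2|^{1/4}M_\alpha(u\zeta^{1/2})$ in \cite[Appendix B]{49} (yielding $(u^2|x-x_{tr}|)^{-1/4}$) combined with Landau's bound $|J_\alpha|\lesssim\alpha^{-1/3}$ (yielding $\alpha^{-1/3}=(\alpha^{4/3})^{-1/4}$), after verifying that $(\zeta-\tilde\alpha^2)/(y-y_{tr})\simeq 1$ near the turning point. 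Your scaling heuristics match these conclusions, but the rigorous input should be the Bessel, not the Airy, expansion.
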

\begin{proof}
By means of the well-known relation
\begin{equation}\label{eq:jacobiminus}
    P_n^{(\alpha,\beta)}(x) = 
		(-1)^n P_n^{(\beta,\alpha)}(-x)
\end{equation}
and the change of variables $y=-x$, we may equivalently restate the above estimate as follows:
\begin{equation}\label{eq:jacobieq1_y}
\left|
\left(\frac{1+y}{2}\right)^{\alpha/2}
P_{n}^{(\beta,\alpha)}(y)
\right|
\lesssim_{\beta,c,\epsilon}
\begin{cases}
(u^2|y-y_{tr}|+\alpha^{4/3})^{-1/4} &\text{in any case},\\
2^{-\alpha} &\text{if } 1+y\leq \tfrac{1}{16}(1+y_{tr});
\end{cases}
\end{equation}
here $y \in [-1,1-\epsilon]$ and
\begin{equation}\label{eq:jacobitr2_y}
y_{tr} = y_{tr}(\alpha,\beta,n) \defeq \frac{\alpha^2}{2u^2} - 1.
\end{equation}

We will derive the estimate \eqref{eq:jacobieq1_y} from the asymptotic approximation for Jacobi polynomials given in \cite[Section 3]{Dunster}, which in turn makes use of the theory of \cite{49}. Namely, under our assumptions on $n,\alpha,\beta,y$, from \cite[eq.\ (3.49)]{Dunster} (applied with $N=0$) and the error bound in \cite[eq.\ below (3.11)]{49} we deduce that
\begin{multline}\label{eq:J_D_approximation}
      \left|\left(\frac{1-y}{2}\right)^{\beta/2+1/4} \left(\frac{1+y}{2}\right)^{\alpha/2} P_{n}^{(\beta,\alpha)}(y)\right| \\
			= \varkappa_{\alpha,\beta,n} 
				\Bigg| \left(\frac{\zeta-\tilde\alpha^2}{y-y_{tr}}\right)^{1/4}
				\bigl[ J_\alpha(u\zeta^{1/2})
				+ E_{\alpha}^{-1}M_{\alpha}(u\zeta^{1/2}) \, O(u^{-1}) \bigr] \Biggr|,
\end{multline}
where $\tilde\alpha = \alpha/u$,
\begin{equation}\label{eq:Dunster_constant}
\varkappa_{\alpha,\beta,n} = 2^{-1/4} \left(\frac{\Gamma(n+\alpha+1)\Gamma(n+\beta+1)}{\Gamma(n+\alpha+\beta+1)\Gamma(n+1)}\right)^{1/2}(1+O(u^{-1})),
\end{equation}
\cite[eqs.\ (3.22) and (3.34)]{Dunster}, $J_\alpha$ is the Bessel function of the first kind and index $\alpha$,
\begin{equation}\label{eq:besseldef}
    J_\alpha(x)=\sum_{m=0}^\infty \frac{(-1)^m}{m! \,\Gamma(m+\alpha+1)}\left(\frac{x}{2}\right)^{2m+\alpha},
\end{equation}
$E^{-1}_\alpha M_\alpha$ is the pointwise ratio of the auxiliary functions $E_\alpha$ and $M_\alpha$ defined in \cite[Section 3]{49}, and the relation between $y$ and $\zeta$ is implicitly given by
\begin{gather}
\label{eq:cov1}
    \int_{\tilde\alpha^2}^\zeta \frac{(\tau-\tilde\alpha^2)^{1/2}}{2\tau} \,d\tau = \int_{y_{tr}}^y \frac{(t-y_{tr})^{1/2}}{(1-t)^{1/2}(1+t)} \,dt  \qquad (y_{tr}\leq y\leq 1), \\
\label{eq:cov2}
   \int^{\tilde\alpha^2}_\zeta \frac{(\tilde\alpha^2-\tau)^{1/2}}{2\tau} \,d\tau = \int^{y_{tr}}_y\frac{(y_{tr}-t)^{1/2}}{(1-t)^{1/2}(1+t)} \,dt  \qquad (-1<y\leq y_{tr}).
\end{gather}
\cite[eqs.\ (3.7) and (3.10)]{Dunster}. We remark that the asymptotic approximation of \cite[Section 3]{Dunster} is obtained by invoking \cite[Theorem 3]{49}, which is a generalisation of \cite[Theorem 1]{49} that allows one to consider complex values of the argument; since we are only interested in real values of $y$, the approximation given by \cite[Theorem 1]{49} is enough, which justifies the simpler form of the error bound that we are using.
Furthermore, according to \cite[Section 3]{49}, the error bound is uniform provided $\zeta$ remains in a bounded interval; now, by definition, $\tilde \alpha^2 \leq 4$ and $y_{tr} \geq -1$, hence,
 by \eqref{eq:cov1}, if $\zeta \geq 4$, then
\begin{multline*}
    \int_4^\zeta \frac{(\tau-4)^{1/2}}{2\tau} \,d\tau
		\leq \int_{\tilde\alpha^2}^\zeta \frac{(\tau-\tilde\alpha^2)^{1/2}}{2\tau} \,d\tau
		=
		\int_{y_{tr}}^y\frac{(t-y_{tr})^{1/2}}{(1-t)^{1/2}(1+t)} \,dt \\
		\leq \int_{-1}^{1}\frac{1}{(1-t^2)^{1/2}} \,dt
		=\pi
		=\int_4^{\zeta_1} \frac{(\tau-4)^{1/2}}{2\tau} \,d\tau
\end{multline*}
for some $\zeta_1\in(4,\infty)$ independent of all parameters, so that $\zeta \in [0,\zeta_1]$.

From our assumptions on $\alpha,\beta,n,y$ and \eqref{eq:Dunster_constant} it is easily derived that
\[
\kappa_{\alpha,\beta,n} \simeq_{\beta,c} 1, \qquad 1-y \simeq_\epsilon 1;
\]
hence \eqref{eq:J_D_approximation} immediately gives that
\begin{equation}\label{eq:J_D_approximation_s}
      \left|\left(\frac{1+y}{2}\right)^{\alpha/2} P_{n}^{(\beta,\alpha)}(y)\right| 
			\lesssim_{\beta,c,\epsilon} 
			\left|\frac{\zeta-\tilde\alpha^2}{y-y_{tr}}\right|^{1/4}
				\bigl[ |J_\alpha(u\zeta^{1/2})|
				+ E_{\alpha}^{-1}M_{\alpha}(u\zeta^{1/2}) \bigr] .
\end{equation}

Note that, by \cite[Section 12.1.3]{53}, the pointwise estimate
\begin{equation}\label{eq:JM_bd}
    |J_\alpha| \leq E^{-1}_\alpha M_\alpha \leq M_\alpha
\end{equation}
holds, and furthermore, by \cite[Appendix B, Lemma 2]{49}, the quantity
\[
u^{1/2} |\zeta - \tilde\alpha^2|^{1/4} M_\alpha(u\zeta^{1/2})
\]
is uniformly bounded. Thus, from \eqref{eq:J_D_approximation_s} we immediately deduce that
\begin{equation}\label{eq:jacobiunif}
\left|\left(\frac{1+y}{2}\right)^{\alpha/2}P_{n}^{(\beta,\alpha)}(y)\right|\lesssim_{\beta,c,\epsilon} (u^{2} |y-y_{tr}|)^{-1/4}.
\end{equation}
Hence, in order to conclude the proof of the first estimate of \eqref{eq:jacobieq1}, it is enough to prove the uniform bound
\begin{equation}\label{eq:jacobiunif_unif}
\left|\left(\frac{1+y}{2}\right)^{\alpha/2}P_{n}^{(\beta,\alpha)}(y)\right|\lesssim_{\beta,c,\epsilon} \alpha^{-1/3}.
\end{equation}

Now, define $I$ to be the interval of the $y \in [-1,1]$ satisfying
\begin{equation}\label{eq:transitioninterval}
    \frac{2}{3}(1+y_{tr})\leq 1+y\leq \frac{3}{2}(1+y_{tr}).
\end{equation}
We first observe that, for all $y\notin I$,
\begin{multline}\label{eq:jacobienough}
    (u^{2} |y-y_{tr}|)^{-1/4}
		= (u^{2} |(1+y)-(1+y_{tr})|)^{-1/4} \\
		\lesssim (u^{2}|1+y_{tr}|)^{-1/4}
		=(\alpha^2/2)^{-1/4}
		\simeq \alpha^{-1/2}\leq \alpha ^{-1/3}. 
\end{multline}
This shows that, if $y \notin I$, then \eqref{eq:jacobiunif} implies \eqref{eq:jacobiunif_unif}; so we only need to prove \eqref{eq:jacobiunif_unif} for $y \in I$.

We now claim that, for $y \in I \cap [-1,1-\epsilon]$,
\begin{equation}\label{eq:alphaytrratio}
    \frac{\zeta-\tilde\alpha^2}{y-y_{tr}} \simeq_{\beta,c,\epsilon} 1.
\end{equation}

If we assume this claim, then from \eqref{eq:J_D_approximation_s} and \eqref{eq:JM_bd} we deduce that, for $y \in I \cap [-1,1-\epsilon]$,
\begin{equation}\label{eq:simpler_est}
      \left|\left(\frac{1+y}{2}\right)^{\alpha/2} P_{n}^{(\beta,\alpha)}(y)\right| 
			\lesssim_{\beta,c,\epsilon} 
				|J_\alpha(u\zeta^{1/2})|
				+ E_{\alpha}^{-1}M_{\alpha}(u\zeta^{1/2}) 
			\lesssim
				 M_{\alpha}(u\zeta^{1/2}).				
\end{equation}
On the other hand, for each $\alpha \in \NN$,
$M_\alpha$ is a bounded continuous function on $\RR^+$ \cite[eqs.\ (1.23) and (1.24), p.\ 437]{53}, whence the bound \eqref{eq:jacobiunif_unif} trivially holds for each fixed $\alpha \in \NN$, and it is enough to prove \eqref{eq:jacobiunif_unif} for $\alpha \geq \alpha_0$ for some large $\alpha_0 \in \NN$.

Note that $E_\alpha^{-1}M_\alpha(x) = \sqrt{2} J_\alpha(x)$ for all $x \in [0,X_\alpha]$, where $X_\alpha$ is defined in \cite[Section 12.1.3]{53} and satisfies
\[
X_\alpha = \alpha + 2c\alpha^{1/3} + O(\alpha^{-1/3})
\]
for some $c \in (0,1)$ as $\alpha \to \infty$ \cite[Chapter 12, Ex.\ 1.1, p.\ 438]{53}.
is a fixed constant which may be inferred from \cite{53}. Thus, there exists $\alpha_0 \in \NN$ such that, for all $\alpha \geq \alpha_0$,
\[
X_\alpha \geq \alpha(1+c\alpha^{-2/3}).
\]
In particular, if $\alpha \geq \alpha_0$ and $u\zeta^{1/2} \leq \alpha(1+c\alpha^{-2/3})$, then from \eqref{eq:simpler_est} we deduce that
\begin{equation}\label{eq:jacobi_bessel}
      \left|\left(\frac{1+y}{2}\right)^{\alpha/2} P_{n}^{(\beta,\alpha)}(y)\right| 
			\lesssim_{\beta,c,\epsilon} 
				|J_\alpha(u\zeta^{1/2})| \lesssim \alpha^{-1/3},
\end{equation}
where we used the uniform bound for Bessel functions,
\begin{equation}\label{eq:besselbound}
    |J_\alpha(x)|\lesssim \alpha^{-1/3},
\end{equation}
for all $\alpha,x > 0$, discussed in \cite{52}; this proves \eqref{eq:jacobiunif_unif} in this case. If instead $u\zeta^{1/2} \geq \alpha(1+c\alpha^{-2/3})$, then
\[
\zeta \geq \tilde\alpha^2 (1+2c\alpha^{-2/3}),
\]
and therefore, by \eqref{eq:alphaytrratio},
\[
u^2 (y-y_{tr}) \simeq u^2 (\zeta-\tilde\alpha^2) \geq 2c \alpha^{4/3},
\]
which again implies $(u^2 |y-y_{tr}|)^{-1/4} \lesssim \alpha^{-1/3}$, so in this case \eqref{eq:jacobiunif_unif} follows from \eqref{eq:jacobiunif}. This concludes the proof of the first estimate in \eqref{eq:jacobieq1_y}, conditional to the validity of the claim \eqref{eq:alphaytrratio}.

We now prove the second estimate in \eqref{eq:jacobieq1_y}.
Due to the uniform bound given by the first estimate in \eqref{eq:jacobieq1_y}, it is clearly enough to prove the second estimate in \eqref{eq:jacobieq1_y} for $\alpha \geq \alpha_0$.
Note now that
\[
1+y_{tr} = \tilde\alpha^2/2.
\]
Hence, by \eqref{eq:cov2}, if $y \leq y_{tr}$ then
\[\begin{split}
   \int^{\tilde\alpha^2}_\zeta \frac{(\tilde\alpha^2-\tau)^{1/2}}{2\tau} \,d\tau
	&=\int^{1+y_{tr}}_{1+y} \frac{(1+y_{tr}-t)^{1/2}}{(2-t)^{1/2}} \frac{dt}{t}\\
	&=\int^{\tilde\alpha^2}_{2(1+y)} \frac{(\tilde\alpha^2-\tau)^{1/2}}{(4-\tau)^{1/2}} \frac{d\tau}{\tau}
	\geq \int^{\tilde\alpha^2}_{2(1+y)} \frac{(\tilde\alpha^2-\tau)^{1/2}}{2\tau} \,d\tau.
\end{split}\]
Since the integrand is non-negative, this is only possible if
\[
\zeta\leq 2(1+y).
\]
Under the assumption $1+y \leq \tfrac{1}{16} (1+y_{tr}) = \tfrac{1}{32} \tilde\alpha^2$, this implies that
\begin{equation}\label{eq:zeta_bound}
u\zeta^{1/2} \leq 
\frac{\alpha}{4},
\end{equation}
and therefore the bound \eqref{eq:jacobi_bessel} applies.
From \cite[eq.\ 10.14.4]{27}
we deduce that, for all $\alpha\geq -\tfrac{1}{2}$ and all $x\in\RR$,
\[
    |J_\alpha(x)|\leq \frac{|x/2|^\alpha}{\Gamma(\alpha+1)}.
\]
Thus, by \eqref{eq:jacobi_bessel}, \eqref{eq:zeta_bound} and Stirling's formula,
\[
     \left|\left(\frac{1+y}{2}\right)^{\alpha/2}P_{n}^{(\beta,\alpha)}(y)\right|
		\lesssim_{\beta,c,\epsilon}|J_\alpha(u\zeta^{1/2})|
		\leq \frac{|u\zeta^{1/2}/2|^\alpha}{\alpha!}\\
		\lesssim \alpha^{-1/2} \left(\frac{e}{4}\right)^\alpha 2^{-\alpha}
		\leq 2^{-\alpha},
\]
which proves the second estimate in \eqref{eq:jacobieq1_y}.

\bigskip

Finally, we prove the claim \eqref{eq:alphaytrratio}.
First, assume that
\begin{equation}\label{eq:positive_interval}
 1+y_{tr} \leq 1+y\leq \frac{3}{2}(1+y_{tr})
\end{equation}
so \eqref{eq:cov1} is applicable.
Now, recalling that $1+y_{tr}=\tilde\alpha^2/2$ and that $\epsilon\leq 1-y\leq 1-y_{tr}\leq 2$, from \eqref{eq:positive_interval} we deduce that
\begin{multline}\label{eq:worsefy}
		\int_{y_{tr}}^y \frac{(t-y_{tr})^{1/2}}{(1-t)^{1/2}(1+t)} \,dt
		\simeq_\epsilon \frac{1}{1+y_{tr}} \int_{y_{tr}}^y (t-y_{tr})^{1/2} \,dt \\
		\simeq \frac{(y-y_{tr})^{3/2}}{\tilde\alpha^2}
		\lesssim (y-y_{tr})^{1/2},
\end{multline}
where we used the fact that, by \eqref{eq:positive_interval},
\begin{equation}\label{eq:yalpha}
    y-y_{tr}=(1+y)-(1+y_{tr})\leq (1+y_{tr})/2 = \tilde\alpha^2/4
\end{equation}

We now claim that, under the assumption \eqref{eq:positive_interval},
\begin{equation}\label{eq:Calpha}
    \zeta \simeq_\epsilon \tilde\alpha^2.
\end{equation}
This is certainly true if $\zeta \leq 2\tilde\alpha^2$, since we already know that $\zeta \geq \tilde\alpha^2$. Suppose instead that $\zeta \geq 2\tilde\alpha^2$; 
then
\[
	\int_{\tilde\alpha^2}^\zeta \frac{(\tau-\tilde\alpha^2)^{1/2}}{2\tau} \,d\tau
	\geq \frac{1}{2\zeta} \int_{\tilde\alpha^2}^\zeta(\tau-\tilde\alpha^2)^{1/2} \,d\tau
	= \frac{1}{3}\frac{(\zeta-\tilde\alpha^2)^{3/2}}{\zeta} \simeq \zeta^{1/2}.
\]
Combining this with \eqref{eq:cov1}, \eqref{eq:worsefy} and \eqref{eq:yalpha} proves that
\[
\zeta \lesssim_\epsilon y-y_{tr} \leq \tilde\alpha^2/4,
\]
whence \eqref{eq:Calpha} follows.

Now, from \eqref{eq:Calpha} we deduce that
\[
	\int_{\tilde\alpha^2}^\zeta \frac{(\tau-\tilde\alpha^2)^{1/2}}{2\tau} \,d\tau
	\simeq_\epsilon \frac{1}{\tilde\alpha^2} \int_{\tilde\alpha^2}^\zeta(\tau-\tilde\alpha^2)^{1/2} \,d\tau
	\simeq
	\frac{(\zeta-\tilde\alpha^2)^{3/2}}{\tilde\alpha^2},
\]
which, combined with \eqref{eq:cov1} and \eqref{eq:worsefy}, gives that
\[
    \frac{(\zeta-\tilde\alpha^2)^{3/2}}{\tilde\alpha^2}
		\simeq_\epsilon \frac{(y-y_{tr})^{3/2}}{\tilde\alpha^2},
\]
that is, \eqref{eq:alphaytrratio}.

Assume now that
\begin{equation}\label{eq:negative_interval}
\frac{2}{3}(1+y_{tr})\leq 1+y\leq (1+y_{tr}),
\end{equation}
which makes \eqref{eq:cov2} applicable. Note that, under our assumptions on $n,\alpha,\beta$,
\[
\frac{1-y_{tr}}{2} =  \frac{2u-\alpha}{2u} \cdot \frac{2u+\alpha}{2u} \geq \frac{2n+\beta+1}{2n+\alpha+\beta+1} \geq \frac{2n+\beta+1}{(2+c)n+c+\beta+1} \gtrsim_{\beta,c} 1,
\]
which implies that $1-y_{tr} \simeq_{\beta,c} 1-y \simeq_{\beta,c} 1$, and moreover
\[
y_{tr}-y = (1+y_{tr}) - (1+y) \leq \frac{1}{3} (1+y_{tr}) =\tilde\alpha^2/6.
\]
From \eqref{eq:cov2} we then deduce that
\begin{multline}\label{eq:weaker_lower}
\frac{\tilde\alpha}{2\sqrt{2}} \log_+\left(\frac{\tilde\alpha^2}{2\zeta}\right)
\leq \int^{\tilde\alpha^2/2}_{\min\{\zeta,\tilde\alpha^2/2\}} \frac{(\tilde\alpha^2-\tau)^{1/2}}{2\tau} \,d\tau 
\leq \int^{\tilde\alpha^2}_\zeta \frac{(\tilde\alpha^2-\tau)^{1/2}}{2\tau} \,d\tau \\
= \int^{y_{tr}}_y\frac{(y_{tr}-t)^{1/2}}{(1-t)^{1/2}(1+t)} \,dt 
\simeq_{\beta,c} \frac{(y_{tr}-y)^{3/2}}{\tilde\alpha^2}
 \lesssim \tilde\alpha,
\end{multline}
which again implies that
\[
\zeta \simeq_{\beta,c} \tilde\alpha^2
\]
(note that we already know that $\zeta \leq \tilde\alpha^2$ in this case). Consequently
\[
\int^{\tilde\alpha^2}_\zeta \frac{(\tilde\alpha^2-\tau)^{1/2}}{2\tau} \,d\tau \simeq_{\beta,c} \frac{(\tilde\alpha^2-\zeta)^{3/2}}{\tilde\alpha^2},
\]
and again \eqref{eq:cov2} and \eqref{eq:weaker_lower} give that
\[
\frac{(\tilde\alpha^2-\zeta)^{3/2}}{\tilde\alpha^2} \simeq_{\beta,c} \frac{(y_{tr}-y)^{3/2}}{\tilde\alpha^2},
\]
that is, \eqref{eq:alphaytrratio}.
\end{proof}


\begin{thebibliography}{60}

\bibitem{ADM}
D. Albrecht, X.T. Duong and A. McIntosh,
\newblock{\em `Operator theory and harmonic analysis'},
\newblock Instructional Workshop on Analysis and Geometry, Part III (Canberra, 1995),
\newblock Proc. Centre Math. Appl. Austral. Nat. Univ., 34, Austral. Nat. Univ., Canberra, 1996, 77--136.

\bibitem{Annoni}
M. Annoni,
\newblock{\em `Almost everywhere convergence for modified Bochner--Riesz means at the critical index for $p \geq 2$'},
\newblock Pacific J. Math. 286 (2017), 257--275.

\bibitem{ACDS}
F. Astengo, M. Cowling, B. Di Blasio and M. Sundari, 
\newblock{\em `Hardy's uncertainty principle on certain Lie groups'},
\newblock J. Lond. Math. Soc. (2) 62 (2000), 461--472. 

\bibitem{22}
C. Bennett and R. Sharpley,
\newblock{\em `Interpolation of Operators'},
\newblock Academic Press, Inc, Orlando, Florida, 1988.

\bibitem{45}
J. Bourgain and L. Guth,
\newblock{\em `Bounds on oscillatory integral operators based on multilinear estimates'},
\newblock Geom. Funct. Anal. 21 (2011), 1239--1295.

\bibitem{49}
W.G.C. Boyd and T.M. Dunster,
\newblock{\em `Uniform asymptotic solutions of a class of second-order linear differential equations having a turning point and a regular singularity, with an application to Legendre functions'},
\newblock SIAM J. Math. Anal. 17 (1986), 422--450.

\bibitem{CZ_interpolation}
A.P. Calder\'on and A. Zygmund,
\newblock{\em `A note on the interpolation of sublinear operations'},
\newblock Amer. J. Math. 78(2) (1956), 282--288.

\bibitem{40}
A. Carbery,
\newblock{\em `The boundedness of the maximal Bochner--Riesz operator on $L^4(\RR^2)$'},
\newblock Duke Math. J. 50 (1983), 409--416.

\bibitem{CRV}
A. Carbery, J.L. Rubio de Francia and L. Vega,
\newblock{\em `Almost everywhere summability of Fourier integrals'},
\newblock J. Lond. Math. Soc. (2) 38 (1988), 513--524.

\bibitem{CarberySoria}
A. Carbery and F. Soria,
\newblock{\em `Almost everywhere convergence of Fourier integrals for functions in Sobolev spaces, and an $L^2$-localisation principle'},
\newblock Rev. Mat. Iberoam. 4 (1988), 319--337.

\bibitem{39}
L. Carleson and P. Sj\"olin,
\newblock{\em `Oscillatory integrals and a multiplier problem for the disc'},
\newblock Studia Math. 44 (1972), 287--299.

\bibitem{CC_restriction}
V. Casarino and P. Ciatti,
\newblock{\em `A restriction theorem for M\'etivier groups'},
\newblock Adv. Math. 245 (2013) 52--77,

\bibitem{51}
V. Casarino, P. Ciatti and A. Martini,
\newblock{\em `From refined estimates for spherical harmonics to a sharp multiplier theorem on the Grushin sphere'},
\newblock Adv. Math. 350 (2019), 816--859.

\bibitem{CLSY}
P. Chen, S. Lee, A. Sikora and L. Yan,
\newblock{\em `Bounds on the maximal Bochner--Riesz means for elliptic operators'},
\newblock Trans. Amer. Math. Soc. 373 (2020), 3793--3828.

\bibitem{Christ_multipliers}
M. Christ,
\newblock{\em `$L^p$ bounds for spectral multipliers on nilpotent Lie groups'},
\newblock Trans. Amer. Math. Soc. 328 (1991), 73--81.

\bibitem{Christ_BochnerRiesz}
M. Christ,
\newblock{\em `On almost everywhere convergence for Bochner--Riesz means in higher dimensions'},
\newblock Proc. Amer. Math. Soc. 95 (1985), 155--167.

\bibitem{CDKR}
M.G. Cowling, A. Dooley, A. Koranyi and F. Ricci,
\newblock{\em `$H$-type groups and Iwasawa decompositions'},
\newblock Adv. Math. 87 (1991), 1--41.

\bibitem{DR}
E. Damek and F. Ricci,
\newblock{\em `Harmonic analysis on solvable extensions of $H$-type groups'},
J. Geom. Anal. 2 (1992), 213--248.

\bibitem{35}
K. Davis and Y. Chang, 
\newblock{\em `Lectures on Bochner--Riesz Means'},
\newblock London Mathematical Society Lecture Notes, Vol. 114, Cambridge University Press, 1987.

\bibitem{DeMicheleMauceri}
L. De Michele and G. Mauceri,
\newblock{\em `$L^p$ multipliers on the Heisenberg group'},
\newblock Michigan Math. J. 26(3) (1979), 361--371.

\bibitem{Dunster}
T.M. Dunster,
\newblock{\em `Asymptotic approximations for the Jacobi and ultraspherical polynomials, and related functions'},
\newblock Methods Appl. Anal. 6(3) (1999), 281--316.

\bibitem{26}
A. Erd\'elyi, W. Magnus, F. Oberhettinger and F.G. Tricomi,
\newblock{\em `Tables of Integral Transforms, Volume I'},
\newblock McGraw-Hill, New York, 1954.

\bibitem{58}
V. Fischer and M. Ruzhansky,
\newblock{\em `Quantization on Nilpotent Lie Groups'},
\newblock Birkh\"auser, Cham 2016.

\bibitem{Folland_Abstract}
G.B. Folland,
\newblock{\em `A Course in Abstract Harmonic Analysis, Second Edition'},
\newblock CRC Press, 2016.

\bibitem{Folland_PhaseSpace}
G.B. Folland,
\newblock{\em `Harmonic Analysis in Phase Space'},
\newblock Annals of Mathematics Studies 122, Princeton University Press, Princeton, NJ, 1989.

\bibitem{10}
G.B. Folland,
\newblock{\em `Subelliptic estimates and function spaces on nilpotent Lie groups'},
\newblock Ark. Mat. 13 (1975), 161--207.

\bibitem{8}
G.B. Folland and E.M. Stein,
\newblock{\em `Hardy Spaces on Homogeneous Groups'},
\newblock Princeton University Press, Princeton, NJ, 1982.

\bibitem{GargJotsaroop}
R. Garg and K. Jotsaroop,
\newblock{\em `Localisation of Bochner--Riesz means corresponding to the sub-Laplacian on the Heisenberg group'},
\newblock\texttt{arXiv:1804.02837}.

\bibitem{Goodman}
R.W. Goodman,
\newblock{\em `Nilpotent Lie Groups: Structure and Applications to Analysis'},
\newblock Lecture Notes in Mathematics 562, Springer-Verlag, Berlin, 1976.

\bibitem{GM}
D. Gorges and D. M\"{u}ller,
\newblock{\em `Almost everywhere convergence of Bochner--Riesz means on the Heisenberg group and fractional integration on the dual'},
\newblock Proc. Lond. Math. Soc. 85 (2002), 139--167.

\bibitem{Grafakos}
L. Grafakos,
\newblock{\em `Modern Fourier Analysis'},
\newblock Graduate Texts in Mathematics 250, Springer, New York, 2009.

\bibitem{GuthHickmanIliopoulou}
L. Guth, J. Hickman and M. Iliopoulou,
\newblock{\em `Sharp estimates for oscillatory integral operators via polynomial partitioning'},
\newblock Acta Math. 223 (2019), 251--376.

\bibitem{Haagerup}
U. Haagerup and H. Schlichtkrull,
\newblock{\em `Inequalities for Jacobi polynomials'},
\newblock Ramanujan J. 33 (2014), 227--246.

\bibitem{Hebisch_multipliers}
W. Hebisch,
\newblock{\em `Multiplier theorem on generalized Heisenberg groups'},
\newblock Colloq. Math. 65 (1993), 231--239.

\bibitem{herz}
C. Herz,
\newblock{\em `On the mean inversion of Fourier and Hankel transforms'},
\newblock Proc. Nat. Acad. Sci. U.S.A. 40 (1954), 996--999.

\bibitem{17}
A. Hulanicki,
\newblock{\em `A functional calculus for Rockland operators'},
\newblock Studia Math. 78 (1984), 253--266.

\bibitem{HJ}
A. Hulanicki and J.W. Jenkins,
\newblock{\em `Almost everywhere summability on nilmanifolds'},
\newblock Trans. Amer. Math. Soc. 278 (1983), 703--715.

\bibitem{Kaplan}
A. Kaplan,
\newblock{\em `Fundamental solutions for a class of hypoelliptic PDE generated by composition of quadratic forms'},
\newblock Trans. Amer. Math. Soc. 258 (1980), 147--153.

\bibitem{KaplanRicci}
A. Kaplan and F. Ricci,
\newblock{\em `Harmonic analysis on groups of Heisenberg type'},
\newblock Harmonic analysis (Cortona, 1982), Lecture Notes in Mathematics 992, Springer-Verlag, Berlin, 1983, 416--435.

\bibitem{KKT}
T. Koornwinder, A. Kostenko and G. Teschl,
\newblock{\em `Jacobi polynomials, Bernstein-type inequaliities and dispersion estimates for the discrete Laguerre operator'},
\newblock Adv. Math. 333 (2018), 796--821.

\bibitem{Krasikov}
I. Krasikov,
\newblock{\em `An upper bound on Jacobi polynomials'},
\newblock J. Approx. Theory 149 (2007), 116--130.

\bibitem{KW_paleylittlewood}
C. Kriegler and L. Weis,
\newblock{\em `Paley--Littlewood decomposition for sectorial operators and interpolation spaces'},
\newblock Math. Nachr. 289 (2016), 1488--1525.

\bibitem{52}
L.J. Landau,
\newblock{\em `Bessel functions: monotonicity and bounds'},
\newblock J. Lond. Math. Soc. (2) 61 (2000), 197--215.

\bibitem{32}
N.N. Lebedev,
\newblock{\em `Special Functions and Their Applications'},
\newblock Dover, New York, 1972.

\bibitem{42}
S. Lee,
\newblock{\em `Improved bounds for Bochner--Riesz and maximal Bochner--Riesz operators'},
\newblock Duke Math. J. 122 (2004), 205--232.

\bibitem{43}
S. Lee,
\newblock{\em `Square function estimates for the Bochner--Riesz means'},
\newblock Anal. PDE 11 (2018) 1535--1586.

\bibitem{LeeSeeger}
S. Lee and A. Seeger,
\newblock{\em `On radial Fourier multipliers and almost everywhere convergence'},
\newblock J. Lond. Math. Soc. 91 (2015), 105--126.

\bibitem{LuYan}
S. Lu and D. Yan,
\newblock{\em `Bochner--Riesz Means on Euclidean Space'},
\newblock World Scientific Publishing Co., Hackensack, NJ, 2013.

\bibitem{13}
A. Martini,
\newblock{\em `Analysis of joint spectral multipliers on Lie groups of polynomial growth'},
\newblock Ann. Inst. Fourier (Grenoble) 62 (2012), 1215--1263.

\bibitem{M_HeisenbergReiter}
A. Martini,
\newblock{\em `Spectral multipliers on Heisenberg--Reiter and related groups'},
\newblock Ann. Mat. Pura Appl. (4) 194 (2015), 1135--1155.

\bibitem{MM_newclasses}
A. Martini and D. M\"uller,
\newblock{\em `Spectral multiplier theorems of Euclidean type on new classes of 2-step stratified groups'},
\newblock Proc. Lond. Math. Soc. (3) 109 (2014), 1229--1263. 

\bibitem{MM_gafa}
A. Martini and D. M\"uller,
\newblock{\em `Spectral multipliers on 2-step groups: topological versus homogeneous dimension'},
\newblock Geom. Funct. Anal. 26 (2016), 680--702.

\bibitem{MM_N32}
A. Martini and D. M\"uller,
\newblock{\em `$L^p$ spectral multipliers on the free group $N_{3,2}$'},
\newblock Studia Math. 217(1) (2013), 41--55.

\bibitem{MMG_lowerbound}
A. Martini, D. M\"uller and S. Nicolussi Golo,
\newblock{\em `Spectral multipliers and wave equation for sub-Laplacians: lower regularity bounds of Euclidean type'},
\newblock \texttt{arXiv:1812.02671}.

\bibitem{MateuOrobitgVerdera}
J. Mateu, J. Orobitg, and J. Verdera,
\newblock{\em `Estimates for the maximal singular integral in terms of the singular integral: the case of even kernels'},
\newblock Ann. Math. (2) 174 (2011), 1429--1483.

\bibitem{Mauceri}
G. Mauceri,
\newblock{\em `Riesz means for the eigenfunction expansions for a class of hypoelliptic differential operators'},
\newblock Ann. Inst. Fourier (Grenoble) 31 (1981), 115--140.

\bibitem{MauceriSymposia}
G. Mauceri,
\newblock{\em `Maximal operators and Riesz means on stratified groups'},
\newblock Symposia Mathematica, Vol. XXIX (Cortona, 1984), Academic Press, New York, 1987, 47--62.

\bibitem{MauceriMeda}
G. Mauceri and S. Meda,
\newblock{\em `Vector-valued multipliers on stratified groups'},
\newblock Rev. Mat. Iberoam. 6 (1990), 141--154.

\bibitem{14}
R. Melrose,
\newblock{\em `Propagation for the wave group of a positive subelliptic secord-order differential operator'},
\newblock Hyperbolic equations and related topics, Proceedings of the Taniguchi International Symposium, Katata ad Kyoto, 1984 (Academic Press, Boston 1986), 181--192.

\bibitem{M_restriction}
D. M\"uller,
\newblock{\em `A restriction theorem for the Heisenberg group'},
\newblock Ann. Math. (2) 131 (1990), 567--587.

\bibitem{M_rieszmeans}
D. M\"uller,
\newblock{\em `On Riesz means of eigenfunction expansions for the Kohn Laplacian'},
\newblock J. reine Angew. Math. 401 (1989), 113--121.

\bibitem{2}
D. M\"uller, F. Ricci and E.M. Stein,
 \newblock {\em `Marcinkiewicz multipliers and multi-parameter structure on Heisenberg (-type) groups, II'},
 \newblock Math. Z. 221 (1996), 267--291. 
 
\bibitem{MS_heisenberg}
D. M\"uller and E.M. Stein,
 \newblock {\em `On spectral multipliers for Heisenberg and related groups'},
 \newblock J. Math. Pures Appl. (9) 73 (1994), 413--440. 

\bibitem{53}
F.W.J. Olver,
\newblock{\em `Asymptotics and Special Functions'},
\newblock Academic Press, New York, 1974.

\bibitem{27}
F.W.J. Olver, D.W. Lozier, R.F. Boisvert and C.W. Clark (eds.),
{\em `NIST Handbook of Mathematical Functions'},
NIST, Washington, DC, and Cambridge University Press, Cambridge, 2010.
\newblock \texttt{http://dlmf.nist.gov/}
 
\bibitem{Ricci_dispense}
F. Ricci,
\newblock{\em `Fourier and Spectral Multipliers in $\RR^n$ and in the Heisenberg Group'},
\newblock lecture notes, \texttt{http://homepage.sns.it/fricci/papers/multipliers.pdf}.

\bibitem{20}
A. Sikora,
\newblock{\em `Riesz transform, Gaussian bounds and the method of wave equation'},
\newblock Math. Z. 247 (2004), 643--662.

\bibitem{15}
E.M. Stein,
\newblock{\em `Harmonic Analysis: Real-Variable Methods, Orthogonality and Oscillatory Integrals'},
\newblock Princeton University Press, Princeton, NJ, 1993.

\bibitem{9}
E.M. Stein,
\newblock{\em `Singular Integrals and Differentiability Properties of Functions'},
\newblock Princeton University Press, Princeton, NJ, 1970.

\bibitem{stempak}
K. Stempak,
\newblock{\em `A weighted multiplier theorem for Rockland operators'},
\newblock Coll. Math. 51 (1987), 335--344.

\bibitem{ST}
J.-O. Str\"omberg and A. Torchinsky,
\newblock{\em `Weighted Hardy Spaces'},
\newblock Lecture Notes in Mathematics 1381, Springer-Verlag, Berlin, 1989.

\bibitem{36}
T. Tao,
\newblock{\em `Recent progress on the restriction conjecture'},
\newblock Fourier Analysis and Convexity, Birkh\"auser, Boston, MA, 2004, 217--243.

\bibitem{TaoMaximal}
T. Tao,
\newblock{\em `On the maximal Bochner-Riesz conjecture in the plane for $p<2$'},
\newblock Trans. Amer. Math. Soc. 354 (2002), 1947--1959.

\bibitem{ThHeis}
S. Thangavelu,
\newblock{\em `Harmonic analysis on the Heisenberg group'},
\newblock Progress in Mathematics 159, Birk\"auser, Boston, MA, 1998.

\bibitem{4}
S. Thangavelu,
\newblock{\em `Lectures on Hermite and Laguerre Expansions'},
\newblock Princeton University Press, Princeton, NJ, 1993.

\bibitem{19}
N.T. Varopoulos, L. Saloff-Coste and T. Coulhon,
\newblock{\em `Analysis and Geometry on Groups'},
\newblock Cambridge University Press, Cambridge, 1992.

\bibitem{18}
H. Whitney,
\newblock{\em `Differentiable even functions'},
\newblock Duke Math. J. 10 (1943), 159--160.


\end{thebibliography}
\end{document}